\theoremstyle{plain}
\newtheorem{thm}{Theorem}[section]
\newtheorem{lem}[thm]{Lemma}
\newtheorem{prop}[thm]{Proposition}
\newtheorem{cor}[thm]{Corollary}
\newtheorem*{thmge}{Theorem \ref{ge}}
\newtheorem*{thmase}{Theorem \ref{Ase}}
\newtheorem{prob}{Problem}
\newtheorem*{pr3}{Problem E5, \cite{grouptheory} and Problem 3, \cite{BMR1}}
\theoremstyle{definition}
\newtheorem{defn}[thm]{Definition}
\newtheorem*{NB}{Nota Bene}
\newtheorem{rem}{Remark}
\newcommand{\bS}{\mathbf{S}}
\newcommand{\cL}{\mathcal{L}}
\newcommand{\edge}{\mathbf{e}}
\renewcommand{\t}{\mathfrak{t}}
\renewcommand{\a}{\mathfrak{a}}
\renewcommand{\b}{\mathfrak{b}}
\newcommand{\p}{\mathfrak{p}}
\newcommand{\s}{\mathfrak{s}}
\renewcommand{\ss}{\mathbf{s}}
\DeclareMathOperator{\Cal}{Cay}
\newcommand{\cA}{\mathcal{A}}
\newcommand{\cc}{\mathfrak{c}}
\DeclareMathOperator{\dec}{dec}
\DeclareMathOperator{\sol}{sol}
\DeclareMathOperator{\Aut}{Aut}
\DeclareMathOperator{\lef}{left}
\DeclareMathOperator{\rig}{right}
\DeclareMathOperator{\sign}{sign}
\DeclareMathOperator{\Hom}{Hom}
\DeclareMathOperator{\id}{id}
\DeclareMathOperator{\Ker}{Ker}
\DeclareMathOperator{\ncl}{ncl}
\DeclareMathOperator{\itype}{i-type}
\DeclareMathOperator{\ttype}{t-type}
\DeclareMathOperator{\type}{type}
\DeclareMathOperator{\tp}{tp}
\DeclareMathOperator{\ET}{ET}
\DeclareMathOperator{\D}{D}
\DeclareMathOperator{\Sh}{Sh}
\DeclareMathOperator{\comp}{comp}
\newcommand{\VV}{\mathfrak{V}}
\renewcommand{\AA}{\mathfrak{A}}
\newcommand{\BB}{\mathfrak{B}}
\newcommand{\PT}{\mathcal{PT}}
\newcommand{\T}{\mathcal{T}}
\renewcommand{\P}{\mathcal{P}}
\newcommand{\nn}{\mathfrak{n}}
\newcommand{\m}{\mathfrak{m}}
\newcommand{\BC}{\mathcal{BC}}
\newcommand{\BS}{\mathcal{BS}}
\newcommand{\BD}{\mathcal{BD}}
\newcommand{\GE}{\mathcal{GE}}
\newcommand{\NN}{\mathcal{N}}
\newcommand{\M}{\mathcal{M}}
\newcommand{\B}{\mathcal{B}}
\newcommand{\N}{\mathbb{N}}
\newcommand{\Z}{\mathbb{Z}}
\newcommand{\factor}[2]{{\raise0.7ex\hbox{$#1$} \!\mathord{\left/ {\vphantom {#1 {#2}}}\right.\kern-\nulldelimiterspace} \!\lower0.7ex\hbox{${#2}$}}}
\title[Equations over Free Products]{On Systems of Equations over Free Products of Groups}
\author[M. Casals-Ruiz]{Montserrat Casals-Ruiz}\thanks{The first author is supported by Programa de Formaci\'{o}n
de Investigadores del Departamento de Educaci\'{o}n, Universidades e Investigaci\'{o}n del Gobierno Vasco}
\address{Montserrat Casals-Ruiz,
Department of Mathematics and Statistics, McGill University,
805 Sherbrooke St. West, Montreal,
Quebec H3A 2K6, Canada}
\email{casalsruiz@math.mcgill.ca}
\author[I. Kazachkov]{Ilya V. Kazachkov}\thanks{The second author is supported by Seymour Schulich Fellowship and le Bourse d'Excellence de l'Institut des Sciences Math\'{e}matiques}
\address{Ilya V. Kazachkov,
Department of Mathematics and Statistics, McGill University,
805 Sherbrooke St. West, Montreal,
Quebec H3A 2K6, Canada}
\email{kazachkov@math.mcgill.ca}
\subjclass{Primary 20F10;\\  Secondary 20E06}
\keywords{Equations in groups, free products of groups, Makanin-Razborov diagrams}
\begin{document}

\begin{abstract}
Using an analogue of Makanin-Razborov diagrams, we give a description of the solution set of systems of equations over an equationally Noetherian free product of groups $G$. Equivalently, we give a parametrisation of the set $\Hom(H, G)$ of all homomorphisms from a finitely generated group $H$ to $G$.
Furthermore, we show that every algebraic set over $G$ can be decomposed as a union of finitely many images of algebraic sets of NTQ systems.

If the universal Horn theory of $G$ (the theory of quasi-identities) is decidable, then our constructions are effective.
\end{abstract}
\maketitle
In this paper we study the structure of the set of solutions of an arbitrary system of equations over an equationally Noetherian free product of groups $G=G_1*G_2$. It is clear that the structure of the free product is at least as complicated as is the structure of its factors. But a general guideline is that it is not much more complicated. From this perspective, a lot of work has been done in order to understand the relations between the factors and the free product itself, i.e. in order to establish which properties of the free product reduce to the factors and, conversely, which properties are stable under taking free products.

Intuitively, one might expect that the properties of the factors that lift to the free product are those that are also satisfied by free groups. As an illustration, properties such as: to be finite, to be abelian (nilpotent, solvable), to be a torsion group, do not lift to the free product. On the other hand, properties such as: to be torsion free, to have unique roots, to be residually finite, to be CSA, to be (relatively) hyperbolic, to be CAT(0), to have decidable word, conjugacy and isomorphism problems lift to free products. However, as in any rule, there are exceptions, or at least subtleties: it was proved by  I.~Dey and H.~Neumann that the Hopf property is stable under taking free products in the case when the factors are finitely generated, see \cite{DN}, but if one does not assume the factors to be finitely generated, then the free product of Hopfian groups may be non-Hopfian, see \cite{NS}.

The interest in understanding the relations between first-order theories of the free product and of its factors, probably, goes back to A.~Tarski. It appears that R.~L.~Vaught, while doing his PhD under Tarski's supervision, was working on the following problem, see \cite{FV} and references there.
\begin{prob}\label{prob:1}
Does the elementary theory of the free product of groups reduce to the elementary theories of the factors? More precisely:
\begin{enumerate}
\item Let $G_1$ be elementarily equivalent to $H_1$, and $G_2$ be elementarily equivalent to $H_2$. Is it true that $G_1*G_2$ is elementarily equivalent to $H_1*H_2$?
\item Let the elementary theories of $G_1$ and $G_2$ be decidable, is the elementary theory of $G$ decidable?
\end{enumerate}
\end{prob}
It was communicated to us by V.~Remeslennikov that this problem was in the school of A.~Malcev since at least early 1960s. It also appears in Z.~Sela's list of open problems \cite{prsela}, where he attributes this problem to G.~Sabbagh, see Problem III.10.

Similar problems for other algebraic structures were studied by P. Olin, see \cite{Olin}, who proved that free product preserves  elementary equivalence of groupoids, but does not preserve the elementary equivalence of semigroups. For free products of groups, some partial results were obtained by V.~Diekert and M.~Lohrey. In their paper \cite{DL}, they proved that two free products of groups with universally (existentially) equivalent factors are universally (existentially) equivalent. Moreover, an analogous statement holds if the factors have the same positive theories. Furthermore, Diekert and Lohrey proved that the positive and universal (existential) theories of the free product are decidable, provided that the corresponding theories of the factors are decidable.

The possibility of approaching questions about the elementary theory of free products is suggested by the recent solution of Tarski's problems on the elementary theory of free groups. In a series of papers O.~Kharlampovich and A.~Myasnikov (see \cite{KhMTar} and references there), and Z.~Sela  (see \cite{SelaTar} and references there) proved that non-abelian free groups are elementarily equivalent.

In order to understand the elementary theory of a group, one has to study the structure of algebraic varieties and their projections. It is therefore natural to ask to which extent algebraic varieties over the factors determine the structure of algebraic varieties over the group.

The first description of algebraic varieties over a free group (viewed as solution sets of systems of equations), was given by A.~Razborov in \cite{Razborov1}, \cite{Razborov3}. In his work, Razborov developed Makanin's approach to the compatibility problem for systems of equations over free groups, see \cite{Mak82}, and gave a construction of what nowadays is called Makanin-Razborov diagrams. In this paper we present an analogue of Makanin-Razborov process and generalise some of the results from \cite{KMIrc} to free products. Therefore, this paper can be viewed as a first step towards solution of Problem \ref{prob:1}.

\medskip

 The main results of this paper are the following two theorems. Recall that for any finitely generated group $H=\langle x_1,\dots, x_n\mid S\rangle$ the set of homomorphisms $\Hom(H, G)$ is in one-to-one correspondence with the set of solutions of the system $S$ over $G$.

\begin{thmge}
Let $G=G_1\ast G_2$ be an equationally Noetherian free product of groups and let $H$ be a finitely generated {\rm(}$G$-{\rm)}group. Then the set of all {\rm(}$G$-{\rm)}homomorphisms  $\Hom(H,G)$ {\rm(}$\Hom_G(H,G)$, correspondingly{\rm)} from $H$ to $G$ can be described by a finite rooted tree. This tree is oriented from the root, all its vertices except for the root vertex are labelled by coordinate groups of generalised equations. To each vertex group we assign the group of automorphisms $A(\Omega_v)$. The leaves of this tree are labelled by groups of the form $F(X)\ast {G_1}_{R(S_1)}\ast {G_2}_{R(S_2)}$ {\rm(}$G\ast F(X)\ast {G_1}_{R(S_1)}\ast {G_2}_{R(S_2)}$, correspondingly{\rm)}, where ${G_i}_{R(S_i)}$ is a coordinate group over $G_i$.

Each edge of this tree is labelled by a {\rm(}$G$-{\rm)}homomorphism. Furthermore, every edge except for the edges from the root is labelled by a proper epimorphism. Each edge from the root vertex corresponds to a {\rm(}$G$-{\rm)}homomorphism from $H$ to the coordinate group of a generalised equation.

Every {\rm(}$G$-{\rm)}homomorphism from $H$ to $G$ can be written as a composition of the {\rm(}$G$-{\rm)}homomorphisms corresponding to the edges, automorphisms of the groups assigned to the vertices, and a specialisation of the variables of the generalised equation corresponding to a leaf of the tree.

This description is effective, provided that the universal Horn theory {\rm(}the theory of quasi-identities{\rm)} of $G$ is decidable.
\end{thmge}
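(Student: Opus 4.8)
The plan is to construct the tree by running an analogue of the Makanin--Razborov elimination process over the free product $G=G_1\ast G_2$, following and adapting the free-group treatment of \cite{KMIrc} (and, behind it, \cite{Mak82}). Writing $H=\langle x_1,\dots,x_n\mid S\rangle$, a homomorphism $H\to G$ is the same as a solution of the system $S=1$ over $G$. The first step is to reduce to generalised equations: using normal forms in the free product together with the cut-equation construction, I would attach to $S$ a finite family of generalised equations $\Omega_1,\dots,\Omega_k$ over $G$, with their coordinate groups $\mathrm{Gr}(\Omega_i)$, and homomorphisms $\pi_i\colon H\to\mathrm{Gr}(\Omega_i)$, so that every solution of $S$ in $G$ factors as $\pi_i$ followed by a solution of some $\Omega_i$, and conversely every solution of an $\Omega_i$ pushes forward to one of $S$. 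The $\pi_i$ are precisely the edges issuing from the root and the groups $\mathrm{Gr}(\Omega_i)$ its children; these edges need not be epimorphisms, which is why the statement excepts them.

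Next, to each $\Omega$ I would apply the branching elimination process: iteratively perform elementary transformations, each inducing an epimorphism of coordinate groups and possibly splitting the current vertex into finitely many children according to case distinctions imposed on a putative solution. At a vertex carrying a quadratic or periodic part whose processing cannot terminate, I would instead pass to the quotient by the group $A(\Omega_v)$ of canonical automorphisms generated by the associated Dehn-twist-type transformations; this is exactly the point at which the failure of $\Hom$ to be finite along a single path is absorbed into the automorphism groups attached to the vertices. After contracting the edges that happen to be isomorphisms, every remaining non-root edge is a proper epimorphism. The process stops at vertices whose generalised equation has no bases essentially linking distinct intervals; there the coordinate group splits as $F(X)\ast {G_1}_{R(S_1)}\ast {G_2}_{R(S_2)}$, where $S_i$ is the system over $G_i$ surviving on the $G_i$-intervals, and these are the leaves.

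The heart of the matter is the finiteness of the tree. By König's lemma it suffices to show that each vertex has finitely many children — clear, since each elementary transformation yields finitely many cases — and that every path is finite. For the latter I would combine a Makanin-style complexity function that never increases along an edge and strictly decreases sufficiently often, so that any infinite branch must eventually become quadratic or periodic and hence get pruned as above, with the descending chain condition on coordinate groups: along the pruned tree a path is a chain $\mathrm{Gr}(\Omega_{v_1})\twoheadrightarrow\mathrm{Gr}(\Omega_{v_2})\twoheadrightarrow\cdots$ of proper epimorphisms, which must be finite since $G$ is equationally Noetherian. I expect the genuinely hard part — exactly as over free groups — to be the bookkeeping that guarantees the complexity eventually drops, i.e. that the quadratic and periodic situations are completely captured by the groups $A(\Omega_v)$ and cannot recur forever; the presence of the non-free factors $G_1,G_2$ along the intervals forces case analysis beyond the classical one and is where most of the new work lies.

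Finally, the lifting statement follows by tracing a given $\phi\colon H\to G$ through the construction: it factors through some $\mathrm{Gr}(\Omega_i)$, the induced solution of $\Omega_i$ followed through the elimination process singles out a path to a leaf, and reconstructing $\phi$ from the leaf specialisation by composing with the canonical epimorphisms and inserting suitable automorphisms at the intermediate vertices gives the asserted decomposition. For effectiveness, every decision taken along the way — which transformation to apply, whether a vertex is terminal, whether an induced epimorphism is proper, whether a generalised equation is consistent — can be phrased as the validity of a quasi-identity over $G$, so an oracle for the universal Horn theory of $G$ makes the whole construction algorithmic.
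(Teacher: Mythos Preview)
Your proposal is correct and follows essentially the same route as the paper: reduce $\Hom(H,G)$ to solutions of a system $S$, pass to finitely many generalised equations via partition tables, run the branching elimination process with automorphism groups absorbing the quadratic and periodic infinite branches, and invoke K\"onig's lemma together with the equational Noetherian property to terminate the resulting chain of proper epimorphisms. One small omission: you should use equational Noetherianity \emph{twice}---once at the very start, to replace the possibly infinite relator set $S$ by a finite subsystem $S_0$ with $R(S)=R(S_0)$ (the generalised-equation construction requires a finite system), and again at the end for the descending chain condition; the paper does this explicitly.
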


\begin{thmase}
For any system of equations $S(X)=1$ over an equationally Noetherian free product of groups $G=G_1*G_2$, there exists a finite family of nondegenerate triangular quasi-quadratic systems $C_1,\ldots, C_k$ over the groups of the form $G \ast  {G_1}_{R(S_1)}\ast {G_2}_{R(S_2)}$, where ${G_i}_{R(S_i)}$
is a coordinate group over $G_i$, and morphisms of algebraic sets $p_i: V_G(C_i) \rightarrow V_G(S)$, $i = 1, \ldots,k$ such that for every $b \in V_G(S)$ there exists $i$ and $c \in V_G(C_i)$ for which $b = p_i(c)$, i.e.
$$
V_G(S) = p_1(V_G(C_1)) \cup \ldots \cup p_k(V_G(C_k)).
$$
If the universal Horn theory of $G$ is decidable, then the finite families $\{C_1,\ldots, C_k\}$ and $\{p_1,\dots, p_k\}$ can be constructed effectively.
\end{thmase}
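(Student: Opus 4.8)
The plan is to derive Theorem~\ref{Ase} from Theorem~\ref{ge}: the Makanin--Razborov tree supplied by Theorem~\ref{ge} is converted, branch by branch, into the required finite family of nondegenerate triangular quasi-quadratic (NTQ) systems. First I would make the routine reduction to a single system $S=S(X)$ and pass to its coordinate group $H=G_{R(S)}$, viewed as a finitely generated $G$-group, so that $V_G(S)$ is identified with $\Hom_G(H,G)$. Applying the $G$-group version of Theorem~\ref{ge} to $H$ yields a finite rooted tree $T$; being finite, $T$ has finitely many branches $b_1,\dots,b_k$, and these will index the systems $C_1,\dots,C_k$.

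Fix a branch $b$, say $H=H_0\twoheadrightarrow H_1\twoheadrightarrow\cdots\twoheadrightarrow H_m=L_b$, where each $H_j$ with $1\le j<m$ is the coordinate group of a generalised equation $\Omega_j$ carrying its canonical automorphism group $A(\Omega_j)$, the maps $H_j\to H_{j+1}$ for $j\ge 1$ are proper epimorphisms, and $L_b=\widehat G\ast F(X)$ with $\widehat G:=G\ast {G_1}_{R(S_1)}\ast {G_2}_{R(S_2)}$. I would build $C_b$ by reading the branch from the bottom upwards: the leaf $L_b$ contributes the coefficient group $\widehat G$ together with a bottom block of variables $X$, and each vertex $\Omega_j$ contributes one quasi-quadratic layer on top of the part already constructed. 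The essential ingredient is a structural correspondence — to be established in the body of the paper in the course of proving Theorem~\ref{ge} — between a generalised equation $\Omega$ together with its canonical automorphism group $A(\Omega)$ and a quasi-quadratic equation $Q$ over a suitable base group together with its group of canonical (Dehn-twist and surface-type) automorphisms: concretely, $H_j$ should embed into the extension of $H_{j+1}$ obtained by adjoining a solution of such a $Q_j$, in such a way that the $A(\Omega_j)$-orbit of solutions of $\Omega_j$ is recovered from the canonical automorphisms of $Q_j$. Iterating over $j=m-1,\dots,1$ produces a triangular quasi-quadratic system $C_b$ over $\widehat G$ whose coordinate group $N_b$ carries a canonical $G$-homomorphism $\theta_b\colon H\to N_b$, namely the composite $H\to L_b\to N_b$.

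The remaining steps are comparatively formal. Nondegeneracy of $C_b$ — solvability of each layer $Q_j$ in the coordinate group of the lower part of the tower — holds because only solvable generalised equations survive in the Makanin--Razborov process, so $Q_j$ inherits the tautological solution induced by a solution of $\Omega_j$; this is what makes the branch a genuine tower rather than a formal chain of quotients. The morphism $p_b\colon V_G(C_b)\to V_G(S)$ is the one dual to $\theta_b$, that is $\phi\mapsto\phi\circ\theta_b$, and it is a morphism of algebraic sets by construction. The covering $V_G(S)=\bigcup_i p_i(V_G(C_i))$ uses Theorem~\ref{ge} in full: any $\psi\in\Hom_G(H,G)$ factors, along some branch $b$, as the branch epimorphisms, automorphisms drawn from the groups $A(\Omega_j)$ encountered on $b$, and a specialisation of $L_b$; the correspondence above shows that precisely this combination is realised by some $\phi\in\Hom_G(N_b,G)=V_G(C_b)$ with $\psi=\phi\circ\theta_b=p_b(\phi)$. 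Finiteness of the family is immediate from finiteness of $T$, and the whole construction is effective once the universal Horn theory of $G$ is decidable, since $T$ is then produced effectively by Theorem~\ref{ge} and the passage from generalised equations to quasi-quadratic layers is purely combinatorial.

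I expect the main obstacle to be the structural correspondence invoked above: showing that the canonical automorphism group $A(\Omega)$ of a generalised equation is captured \emph{exactly} — neither too coarsely nor too finely — by the canonical automorphisms of an associated quasi-quadratic equation over the appropriate base group, so that the covering in Theorem~\ref{Ase} really follows from the factorisation statement in Theorem~\ref{ge}. Over free groups this is the technical heart of the Kharlampovich--Myasnikov description, relating generalised equations, surfaces and quadratic equations; over a free product it is further complicated by the ``constant'' blocks of $\Omega$ coming from the factors $G_1$ and $G_2$, which are exactly what force quasi-quadratic rather than strictly quadratic layers and which must be fitted into the triangular tower as coefficients without destroying nondegeneracy. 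A secondary difficulty is arranging the branches of $T$ in a normal form for which this conversion, and the accompanying nondegeneracy, go through uniformly.
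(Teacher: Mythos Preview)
Your strategy is the paper's strategy: take the solution tree $T_{\sol}$ (equivalently, the tree of Theorem~\ref{ge}), and convert each branch into an NTQ tower over the leaf group $G\ast F(X)\ast {G_1}_{R(S_1)}\ast {G_2}_{R(S_2)}$, reading upward and letting the morphism $p_b$ be dual to the composite along the branch. So at the level of architecture there is nothing to correct.

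The gap is precisely the one you flag yourself: the ``structural correspondence'' between a generalised equation $\Omega$ with its automorphism group $A(\Omega)$ and a quasi-quadratic layer. In the paper this is \emph{not} a single lemma but the entire content of the proof, and it is not uniform in the way your write-up suggests. One does not add a layer per vertex; one partitions each branch into maximal subpaths according to the \emph{type} of the vertices carrying nontrivial automorphism groups, and each type produces its layer by a different mechanism: for $\tp=12$ the active part is genuinely quadratic and the layer is a standard quadratic system (via Razborov's Lemma~2.6); for $7\le\tp\le 10$ one uses the kernel decomposition $G_{R(\Omega^*)}\simeq G_{R(\overline{\Ker(\Omega)}^*)}\ast F(Z)$ (Lemma~\ref{7-10}), so the layer is an empty equation in free variables; for $\tp=15$ one splits off the quadratic part as in Case~12 and, for each \emph{regular} periodic structure, uses the presentation of Lemma~\ref{2.10''} to add an extension-of-centraliser layer in new variables $y_i$; for $\tp=2$ with a \emph{singular} periodic structure one again uses Lemma~\ref{2.10''} to introduce commuting variables over the proper quotient. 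The point is that the groups $A(\Omega_v)$ have genuinely different shapes in these cases (surface-type, kernel-invariant, periodic-structure automorphisms), and the NTQ block that captures each is correspondingly of type~(A), (D), or (B)/(C) in the TQ taxonomy. Your proposal collapses all of this into one unestablished correspondence; until that case analysis is carried out, the covering statement $V_G(S)=\bigcup p_i(V_G(C_i))$ does not follow from Theorem~\ref{ge} alone.

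A smaller point: your inductive scheme (``each vertex contributes one layer'') would overcount. The edges of $T_{\sol}$ between consecutive vertices with nontrivial $A(\Omega_v)$ are typically isomorphisms or housekeeping epimorphisms that do not themselves give rise to a new NTQ block; the paper's induction is on maximal subpaths containing a vertex with nontrivial automorphism group, bounded below by the next proper epimorphism.
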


The above theorems require the group $G$ to be equationally Noetherian.  Recall that a group $G$ is called equationally Noetherian if for any system of equations $S$ over $G$ there exists a finite subsystem $S_0$ of $S$ such that the sets of solutions of $S_0$ and $S$ coincide.

The notion of Noetherian ring is central in classical algebraic geometry. So, it is natural, when studying algebraic geometry over a group $H$, to require $H$ to be equationally Noetherian. The property of $H$ to be equationally Noetherian is equivalent to the Zariski topology on $H^n$ to be Noetherian for all $n$. It is therefore necessary in order that every closed set in the Zariski topology be a \emph{finite} union of irreducible algebraic sets, and every coordinate group be a subdirect product of \emph{finitely many} coordinate groups of irreducible algebraic sets. Furthermore, the property of a group  to be equationally Noetherian gives a lot of structure to the coordinate groups. As shown in \cite{BMR1}, \cite{AG2}, \cite{OH} and \cite{DMR}, it allows one to characterise coordinate groups from many different viewpoints: as groups that satisfy some residual properties; from the viewpoint of universal algebra; as direct limits of finite substructures; as algebraic limit groups; from model-theoretic viewpoint; from the viewpoint of the theory of atomic types.

Equationally Noetherian groups are abundant. It is known that every linear group (over a commutative, Noetherian, unitary ring) is equationally Noetherian (see \cite{Gub}, \cite{Br}, \cite{BMR1}). On the other hand, hyperbolic groups are equationally Noetherian (torsion free case is due to Z.~Sela, see \cite{SelaHyp} and torsion case is due to C.~Reinfeldt and R.~Weidmann), toral relatively hyperbolic groups and CAT(0) groups with isolated flats are also equationally Noetherian (due to D.~Groves, \cite{groves}), etc.  All these classes are closed under taking free products of groups. However, it is not known if the free product of equationally Noetherian groups is equationally Noetherian.
\begin{pr3}
Let $G_1$ and $G_2$ be two equationally Noetherian groups. Is the free product $G_1\ast G_2$ equationally Noetherian?
\end{pr3}

Furthermore, the class of equationally Noetherian groups is closed under taking subgroups, finite direct products and ultrapowers; it is also closed under universal equivalence and separation, see Theorem B2, \cite{BMR1}.

\bigskip

When this paper was in the final stage of preparation, a preprint \cite{JS} of E.~Jaligot and Z.~Sela addressing similar questions to the ones studied in this paper was published. Note that the general problem of describing the set of homomorphism from a \emph{finitely generated} group $H$ to an \emph{arbitrary} free product remains open. In their work Jaligot and Sela describe homomorphisms from finitely presented groups to (arbitrary) free products. In this paper, we require that the free product $G$ be equationally Noetherian, but give a description of the set of homomorphisms from any finitely generated ($G$-)group to $G$.

\section{Preliminaries}

\subsection{Free products}
Throughout this paper we denote by $G$ the free product of two fixed groups $G_1$ and $G_2$,  $G=G_1\ast G_2$. Note that we do not impose any restrictions on the finiteness properties (e.g. finite presentability or finite generation) and the cardinality of $G_1$ and $G_2$. Any element from $G$ can be represented as an alternating product of the elements from $G_1$ and $G_2$:
\begin{equation} \label{eq:nf}
g=g_{1,1}g_{2,1}\cdots g_{1,l}g_{2,l},
\end{equation}
where $g_{1,i}\in G_1$, $g_{2,j}\in G_2$, $g_{1,2},\dots, g_{1,l}\ne 1$, $g_{2,1},\dots, g_{2,l-1}\ne 1$ and $g_{1,1},g_{2,l}$ are possibly trivial. In the latter case we omit writing them. We think of elements $g_{1,i},g_{2,j}$ as \emph{elements} of the factors and presentation (\ref{eq:nf}) is unique for any $g\in G$, see \cite{LS}. In particular, we adopt the following convention: if $g_{i,j_1}$, $g_{i,j_2}$ are terms of a word and $g_{i,j_1}=g_{i,j_2}^{-1}$, we only use either $g_{i,j_1}$ and write $g_{i,j_2}$ as $g_{i,j_1}^{-1}$ or vice-versa. Note that we do not impose any restrictions on the groups $G_1$ and $G_2$, e.g. the groups $G_1$ and $G_2$ may be infinitely presented or infinitely generated.

We call elements written in the form (\ref{eq:nf}) \emph{reduced}. We refer to elements $g_{1,i}\in G_1$, $g_{2,j}\in G_2$ as to the \emph{terms} of $g$. Below if not stated otherwise we assume that all words are reduced.

Henceforth, by the symbol \glossary{name={`$\doteq$'}, description={graphical equality of words}, sort=Z}`$\doteq$' we denote graphical equality of words, i.e. for two words $g$, $g'$  of $G$ written in normal form (\ref{eq:nf}) we write $g\doteq g'$ if and only if $g=g_{1,1}g_{2,1}\cdots g_{1,l}g_{2,l}$, $g'=g_{1,1}g_{2,1}\cdots g_{1,l}g_{2,l}$.

The length $|g|$ of an element $g$ written in the form (\ref{eq:nf}) is the number of non-trivial terms in $g$.

We now define the initial and terminal types of an element. These functions determine the factors to which the initial and terminal letters of a word in $G$ belong. Define the functions $\itype$ and $\ttype$ from $G$ to the set $\{1,2\}$ as follows:
$$
\itype(g)=
\left\{
  \begin{array}{ll}
    1, & \hbox{if, when written in the form (\ref{eq:nf}), $g=g_{1,1}\cdots g_{2,l}$, $g_{1,1}\ne 1$;} \\
    2, & \hbox{if, when written in the form (\ref{eq:nf}), $g=g_{2,1}\cdots g_{2,l}$, $g_{2,1}\ne 1$;}
\end{array}
\right.
$$
$$
\ttype(g)=
\left\{
  \begin{array}{ll}
 1, & \hbox{if, when written in the form (\ref{eq:nf}) $g=g_{1,1}\cdots g_{1,l}$, $g_{1,l}\ne 1$;} \\
 2, & \hbox{if, when written in the form (\ref{eq:nf}) $g=g_{1,1}\cdots g_{2,l}$, $g_{2,l}\ne 1$.}
  \end{array}
\right.
$$
Set $\type(i,j)=\itype^{-1}(i)\cap\ttype^{-1}(j)$, $i,j\in \{1,2\}$, where $\itype^{-1}(i)$ and $\ttype^{-1}(j)$ are the full preimages in $G$ of $i$ and $j$ with respect to $\itype$ and $\ttype$ correspondingly. By the definition, set $\type(i,0)=\itype^{-1}(i)$ and $\type(0,j)=\ttype^{-1}(j)$. Finally, $\type(0,0)$ is the set of all non-trivial elements of the group $G$.

We say that an element $h\ne 1$ is a subword of $g$ if the normal form of $h$ is a subword of the normal form of $g$. In particular if $g\doteq g_1hg_2$, $|g|=|g_1|+|h|+|g_2|$ and if $h\in \type(i,j)$, then $g_1\in \type(k_1,3-i)$ and $g_2\in \type(3-j,k_2)$, where $i,j,k_1,k_2\in \{1,2\}$

We say that $g_1g_2$ is a subdivision of $g$ if $g_1$ and $g_2$ are subwords of $g$ and $g\doteq g_1g_2$. In particular, we have $|g|=|g_1|+|g_2|$ and if $g_1\in \type(i,j)$ then $g_2\in \type(3-i,k)$ and, moreover, $g\in\type(i,k)$, where $i,j,k\in \{1,2\}$. If $g\doteq g_1g_2$ is a subdivision of $g$, then we say that $g_1$ and $g_2$ are, respectively, initial and terminal subwords of $g$.

Let $g\in G$ and $|g| \ge 2$. Then $g$  is called \emph{cyclically reduced} if and only if, $\itype(g)\ne\ttype(g)$. An element $h\in G$ is called a \emph{cyclic permutation} of a cyclically reduced element $g\in G$ if and only if there exists a subdivision $g\doteq g_1g_2$ of $g$ such that $h\doteq g_2g_1$. Note that in this case $h$ is also cyclically reduced.

\begin{NB}
We would like to note that we assume that the group $G$ is a free product of two groups $G_1$ and $G_2$ for the sake of simplicity of notation only. All the definitions, proofs and results in this paper carry over in a straightforward way to the case when $G$ is a free product of \emph{finitely many} groups, $G=G_1*\dots *G_n$.
\end{NB}

\subsection{Graphs}

In this section we introduce notation for graphs that we use in this paper. Let $\Gamma=(V(\Gamma),E(\Gamma))$ be an oriented graph, where $V(\Gamma)$ is the set of vertices of $\Gamma$ and $E(\Gamma)$ is the set of edges of $\Gamma$. If an edge $e:v\to v'$ has \emph{origin} $v$ and \emph{terminus} $v'$, we sometimes write $e=v\to v'$. We always denote the paths in a graph by letters $\p$ and $\s$, and cycles by the letter $\cc$. To indicate that a path $\p$ begins at a vertex $v$ and ends at a vertex $v'$ we write $\p(v,v')$. If not stated otherwise, we assume that all paths we consider are simple. For a path $\p(v,v')=e_1\dots e_l$ by ${(\p(v,v'))}^{-1}$ we denote the reverse (if it exists) of the path $\p(v,v')$, that is a path $\p'$ from $v'$ to $v$, $\p'= e_l^{-1}\dots e_1^{-1}$, where $e_i^{-1}$ is the inverse of the edge $e_i$, $i=1,\dots l$.

Usually, the edges of the graph are labelled by certain words or letters. The \emph{label} of a path $\p=e_1\dots e_l$ is the concatenation of labels of the edges $e_1\dots e_l$.

Let $\Gamma$ be an oriented rooted tree, with the root at a vertex $v_0$ and such that for any vertex $v\in V(\Gamma)$ there exists a unique path $\p(v_0,v)$ from $v_0$ to $v$. The length of this path from $v_0$ to $v$ is called the \index{height!of a vertex}\emph{height of the vertex $v$}. The number $\max\limits_{v\in V(\Gamma)} \{\hbox{height of } v\}$, if it exists, is called the \index{height!of a tree}\emph{height of the tree $\Gamma$}, otherwise we say that the height of $\Gamma$ is infinite. We say that a vertex $v$ of height $l$ is \emph{above} a vertex $v'$ of height $l'$ if and only if $l>l'$ and there exists a path of length $l-l'$ from $v'$ to $v$.

\subsection{Elements of algebraic geometry over groups}
\label{se:2-4}

In \cite{BMR1} G.~Baumslag, A.~Miasnikov and V.~Remeslennikov lay down the foundations of algebraic geometry over groups and introduce group-theoretic counterparts of basic notions from algebraic geometry over fields. We now recall some of the basics of algebraic geometry over groups. We refer to \cite{BMR1} for details.

Algebraic geometry over groups centers around the notion of a \index{group@$G$-group}\emph{$G$-group}, where $G$ is a fixed group generated by a set $A$. These $G$-groups can be likened to algebras over a unitary commutative ring, more specially a field, with $G$ playing the role of the coefficient ring. We therefore, shall consider the category of $G$-groups, i.e. groups which contain a designated subgroup isomorphic to the group $G$. If $H$ and $K$ are $G$-groups then a homomorphism $\varphi: H \rightarrow K$ is a \index{homomorphism@$G$-homomorphism}\emph{$G$-homomorphism} if $\varphi(g)= g$ for every $g \in G$. In the category of $G$-groups morphisms are $G$-homomorphisms; subgroups are \index{subgroup@$G$-subgroup}$G$-subgroups, etc. By \glossary{name={$\Hom_G(H,K)$}, description={set of $G$-homomorphisms from $H$ to $K$}, sort=H} $\Hom_G(H,K)$ we denote the set of all $G$-homomorphisms from $H$ into $K$. A $G$-group $H$ is termed \index{group@$G$-group!finitely generated}{\em finitely generated $G$-group} if there exists a finite subset $C \subset H$ such that the set $G \cup C$ generates $H$. It is not hard to see that the free product $G \ast F(X)$ is a free object in the category of $G$-groups, where $F(X)$ is a free group with basis $X = \{x_1, x_2, \ldots  x_n\}$. This group is called a free $G$-group with basis $X$, and we denote it  by $G[X]$.

For any element $s\in G[X]$ the formal equality $s=1$ can be treated, in an obvious way, as an \index{equation over a group}\emph{equation} over $G$. In general, for a subset  $S \subset G[X]$ the formal equality $S=1$ can be treated as \index{system of equations over a group}{\em a system of equations} over $G$ with coefficients in $A$. In other words, every equation is a word in $(X \cup A)^{\pm 1}$. Elements from $X$ are called {\em variables}, and elements from $A^{\pm 1}$ are called {\em coefficients} or {\em constants}. To emphasize this we sometimes write $S(X,A) = 1$.

A \index{solution!of a system of equations} {\em solution} $U$ of the system $S(X) = 1$ over a group $G$ is a tuple of elements $g_1, \ldots, g_n \in G$ such that every equation from $S$ vanishes at $(g_1, \ldots, g_n)$, i.e. $S_i(g_1, \ldots, g_n)=1$ in $G$, for all $S_i\in S$. Equivalently, a solution $U$ of the system $S = 1$ over $G$ is a $G$-homomorphism \glossary{name={$\pi_U$}, description={homomorphism defined by the solution $U$}, sort=P}$\pi_U: G[X] \to G$ induced by the map $\pi_U: x_i\mapsto g_i$ such that $S\subseteq \ker(\pi_U)$. When no confusion arises, we abuse the notation and write $U(w)$, where $w\in G[X]$, instead of $\pi_U(w)$.

Denote by \glossary{name=$\ncl\langle S\rangle$, description={normal closure of $S$}, sort=N}$\ncl\langle S\rangle$ the normal closure of $S$ in $G[X]$. Then every solution of $S(X) = 1$ in $G$ gives rise to a $G$-homomorphism $\factor{G[X]}{\ncl\langle S\rangle )} \to G$, and vice-versa. The set of all solutions over $G$ of the system $S=1$ is denoted by \glossary{name={$V_G(S)$},description={algebraic set defined by $S$ over $G$}, sort=V}$V_G(S)$ and is called the \index{algebraic set}{\em algebraic set defined by} $S$.

For every system of equations $S$ we set the \index{radical!of a system}{\em radical of the system $S$}  to be the following subgroup of  $G[X]$:\glossary{name=$R(S)$, description={radical of the system $S$}, sort=R}
$$
R(S) = \left\{ T(X) \in G[X] \ \mid \ \forall g_1,\dots,\forall g_n  \left( S(g_1,\dots,g_n) = 1 \rightarrow T(g_1,\dots, g_n) = 1\right) \right\}.
$$
It is easy to see that  $R(S)$ is a normal subgroup of $G[X]$ that contains $S$. There is a one-to-one correspondence between algebraic sets $V_G(S)$ and radical subgroups $R(S)$ of $G[X]$. Notice that if $V_G(S) = \emptyset$, then $R(S) = G[X]$.

It follows from the definition that
$$
R(S)=\bigcap\limits_{U\in V_G(S)}\ker(\pi_U).
$$
The factor group \glossary{name=$G_{R(S)}$, description={coordinate group of the system $S$ over $G$}, sort=G}
$$
G_{R(S)}=\factor{G[X]}{R(S)}
$$
is called the \index{coordinate group!of an algebraic set}\index{coordinate group!of a system of equations}{\em coordinate group} of the algebraic set  $V_G(S)$ (or of the system $S$). There exists a one-to-one correspondence between the algebraic sets and  coordinate groups $G_{R(S)}$. More formally, the categories of algebraic sets and coordinate groups are equivalent, see Theorem 4, \cite{BMR1}.

A $G$-group $H$ is called \index{equationally Noetherian@$G$-equationally Noetherian group}{\em $G$-equationally Noetherian} if every system $S(X) = 1$ with coefficients from $G$ is equivalent over $G$ to a finite subsystem $S_0 = 1$, where $S_0 \subset S$, i.e. the system $S$ and its subsystem $S_0$ define the same algebraic set. If $G$ is $G$-equationally Noetherian, then we say that $G$ is equationally Noetherian. If $G$ is equationally Noetherian then the Zariski topology over $G^n$ is {\em Noetherian} for every $n$, i.e., every proper descending chain of closed sets in $G^n$ is finite. This implies that every algebraic set $V$ in $G^n$ is a finite union of irreducible subsets, called \index{irreducible!component}{\em irreducible components} of $V$, and such a decomposition of $V$ is unique. Recall that a closed subset $V$ is \index{irreducible!algebraic set}{\em irreducible} if it is not a union of two proper closed (in the induced topology) subsets.

\medskip

If $V_G(S) \subseteq G^n$ and $V_G(S') \subseteq G^m$ are algebraic sets, then a map $\phi: V_G(S) \to V_G(S')$ is a
\index{morphism of algebraic sets}\emph{morphism} of algebraic sets if there exist $f_1,\dots,f_m \in G[x_1,\ldots ,x_n]$ such that, for any $(g_1,\ldots,g_n) \in V_G(S)$,
$$
\phi(g_1,\dots,g_n)=(f_1(g_1,\dots,g_n),\ldots,f_m(g_1,\dots,g_n)) \in V_G(S').
$$
Occasionally we refer to morphisms of algebraic sets as \emph{word mappings}.

Algebraic sets $V_G(S)$ and $V_G(S')$ are called \index{isomorphism of algebraic sets}\emph{isomorphic} if there exist morphisms $\psi: V_G(S) \rightarrow V_G(S')$  and $\phi: V_G(S') \rightarrow V_G(S)$ such that $\phi \psi  = \id_{V_G(S)}$ and $\psi \phi = \id_{V_G(S')}$.

Two systems of equations $S$ and $S'$ over $G$ are called \index{equivalence!of systems of equations}\emph{equivalent} if the algebraic sets $V_G(S)$ and  $V_G(S')$ are isomorphic.

\begin{prop}
Let $G$ be a group and let $V_G(S)$ and $V_G(S')$ be two algebraic sets over $G$. Then the algebraic sets $V_G(S)$ and $V_G(S')$ are isomorphic if and only if the coordinate groups $G_R(S)$ and $G_{R(S')}$ are $G$-isomorphic.
\end{prop}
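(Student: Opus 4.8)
The plan is to deduce the proposition from the duality between algebraic sets and coordinate groups that implicitly underlies this whole subsection; concretely, I would first make the two passages between the two sides explicit. Given a morphism $\psi\colon V_G(S)\to V_G(S')$, represented by a tuple $(f_1,\dots,f_m)$ with $f_j\in G[x_1,\dots,x_n]$, the assignment $\overline{x'_j}\mapsto\overline{f_j}$ extends to a $G$-homomorphism $\psi^{*}\colon G_{R(S')}\to G_{R(S)}$. To see that $\psi^{*}$ is well defined I would check that the $G$-homomorphism $G[X']\to G[X]$, $x'_j\mapsto f_j$, carries $R(S')$ into $R(S)$: for $T\in R(S')$ and any $b\in V_G(S)$ we have $\psi(b)\in V_G(S')$, hence $T(f_1(b),\dots,f_m(b))=1$, so $T(f_1,\dots,f_m)$ vanishes on all of $V_G(S)$ and thus lies in $R(S)$. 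Conversely, given a $G$-homomorphism $\alpha\colon G_{R(S)}\to G_{R(S')}$, I would lift the images $\alpha(\overline{x_i})$ to words $a_i\in G[x'_1,\dots,x'_m]$ and define $\alpha_{*}(b')=(a_1(b'),\dots,a_n(b'))$ for $b'\in V_G(S')$. This is a morphism $V_G(S')\to V_G(S)$: indeed $\alpha_{*}(b')$ is precisely the point corresponding, under the bijection $V_G(S)\leftrightarrow\Hom_G(G_{R(S)},G)$ recalled in Subsection~\ref{se:2-4}, to the $G$-homomorphism $\pi_{b'}\circ\alpha\colon G_{R(S)}\to G$, so in particular $\alpha_{*}(b')\in V_G(S)$. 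Both constructions are independent of the chosen representatives, since word tuples that agree on every point of the relevant algebraic set differ by elements of the radical.

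Next I would verify the bookkeeping identities that make these assignments behave like a contravariant functor and its quasi-inverse: $(\id_{V_G(S)})^{*}=\id_{G_{R(S)}}$ and $(\phi\psi)^{*}=\psi^{*}\phi^{*}$, together with $(\id_{G_{R(S)}})_{*}=\id_{V_G(S)}$ and $(\mu\nu)_{*}=\nu_{*}\mu_{*}$ for composable $G$-homomorphisms $\mu,\nu$; each follows by evaluating on the generators $\overline{x_i}$, $\overline{x'_j}$ and substituting words, or equivalently by composing the homomorphisms $\pi_b$. With this, both implications are immediate. If $\psi\colon V_G(S)\to V_G(S')$ and $\phi\colon V_G(S')\to V_G(S)$ witness $V_G(S)\cong V_G(S')$, then $\psi^{*}\phi^{*}=(\phi\psi)^{*}=\id_{G_{R(S)}}$ and $\phi^{*}\psi^{*}=(\psi\phi)^{*}=\id_{G_{R(S')}}$, so $\psi^{*}$ is a $G$-isomorphism. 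Conversely, if $\alpha\colon G_{R(S)}\to G_{R(S')}$ is a $G$-isomorphism with inverse $\beta$, then $\alpha_{*}$ and $\beta_{*}$ are morphisms of algebraic sets with $\alpha_{*}\beta_{*}=(\beta\alpha)_{*}=\id_{V_G(S)}$ and $\beta_{*}\alpha_{*}=(\alpha\beta)_{*}=\id_{V_G(S')}$; e.g.\ for $b\in V_G(S)$ the point $\alpha_{*}\beta_{*}(b)$ corresponds to $\pi_b\circ\beta\circ\alpha=\pi_b$, i.e.\ to $b$ itself. Hence $V_G(S)$ and $V_G(S')$ are isomorphic.

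In fact the quickest route is simply to quote Theorem~4 of \cite{BMR1}, which states that the category of algebraic sets over $G$ and the category of coordinate groups over $G$ are equivalent: an equivalence of categories preserves and reflects isomorphisms, so the proposition follows at once, and the two paragraphs above only spell out the part of that equivalence that is needed here. I do not expect any genuine obstacle; the only points that require a little care are the well-definedness of $\psi^{*}$ and $\alpha_{*}$ --- which rests on the identity $R(S)=\bigcap\limits_{U\in V_G(S)}\ker\pi_U$ --- and keeping track of variances when composing.
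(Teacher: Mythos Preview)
Your proof is correct. Note, however, that the paper does not actually supply a proof of this proposition: it is stated without proof in Subsection~\ref{se:2-4} as a standard fact of algebraic geometry over groups, immediately after the remark that the categories of algebraic sets and coordinate groups are equivalent (Theorem~4 of \cite{BMR1}). Your write-up spells out exactly that equivalence on objects and morphisms, and your final paragraph identifies the one-line route the paper implicitly takes --- simply invoke the categorical equivalence from \cite{BMR1}, which preserves and reflects isomorphisms. So your approach and the paper's (implicit) approach coincide; you have just made the functoriality explicit.
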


\subsection{Formulas in the languages $\cL_{A}$ and $\cL_{G}$}
\label{se:2-3}

In this section we recall some basic notions of first-order logic and model theory. We refer the reader to \cite{ChKe} for details.

Let $G$ be a group generated by the set $A$. The standard first-order language of group theory, which we denote by \glossary{name=$\cL$, description={first-order language of groups}, sort=L}$\cL$, consists of a symbol for multiplication $\cdot$, a symbol for inversion $^{-1}$, and a symbol for the identity $1$.  To deal with $G$-groups, we have to enrich the language $\cL$ by all the elements from $G$ as constants. In fact, as $G$ is generated by $A$, it suffices to enrich the language $\cL$ by the constants that correspond to elements of $A$, i.e. for every element of $a\in A$ we introduce a new constant $c_a$. We denote language $\cL$ enriched by constants from $A$ by \glossary{name={$\cL_A$}, description={first-order language of groups enriched by constants from $A$}, sort=L}$\cL_{A}$, and by constants from $G$ by \glossary{name={$\cL_G$}, description={first-order language of groups enriched by constants from $G$}, sort=L}$\cL_G$. In this section we further consider only the language $\cL_A$, but everything stated below carries over to the case of the language $\cL_G$.

A group word in variables $X$ and constants $A$ is a word $W(X,A)$ in the alphabet $(X\cup A)^{\pm 1}$. One  may consider the word $W(X,A)$ as a term in the language $\cL_A$. Observe that every term in the language $\cL_A$ is  equivalent modulo the axioms of group theory to a group word in variables $X$ and constants $A$. An \index{formula!atomic}{\em atomic formula}  in the language $\cL_A$ is a formula of the type $W(X,A) = 1$, where $W(X,A)$ is a group word in $X$ and $A$.  We interpret atomic formulas in $\cL_A$ as equations over $G$, and vice versa.  A {\em Boolean combination} of atomic formulas in the language $\cL_A$ is a disjunction of conjunctions of atomic formulas and their negations. Thus every Boolean combination $\Phi$  of atomic  formulas in $\cL_A$ can be written in the form $\Phi =  \bigvee\limits_{i=1}^m\Psi_i$, where each $\Psi_i$ has one of following form:
$$
\bigwedge\limits_{j = 1}^n(S_j(X,A) = 1),  \hbox{ or } \bigwedge\limits_{j =1}^n(T_j(X,A) \neq 1),  \hbox{ or }
\bigwedge\limits_{j = 1}^n (S_j(X,A) = 1) \ \wedge \  \bigwedge_{k = 1}^m (T_k (X,A) \neq 1).
$$

It follows from general results on disjunctive normal forms in propositional logic that every quantifier-free formula in the language $\cL_\cA$ is logically equivalent (modulo the axioms of group theory) to a Boolean combination of  atomic ones. Moreover, every formula $\Phi$ in $\cL_A$ with \emph{free variables} $Z=\{z_1,\ldots ,z_k\}$  is logically equivalent to a formula of the type
$$
Q_1x_1 Q_2 x_2 \ldots Q_l x_l \Psi(X,Z,A),
$$
where  $Q_i \in \{\forall, \exists \}$, and  $\Psi(X,Z,A)$ is a Boolean combination of atomic formulas in variables from $X \cup Z$.
Introducing fictitious quantifiers, if necessary, one can always rewrite the formula $\Phi$ in the form
$$
\Phi(Z)  = \forall x_1 \exists y_1 \ldots \forall x_k \exists y_k \Psi(x_1, y_1, \ldots, x_k, y_k, Z).
$$

A first-order formula $\Phi$ is called a \index{sentence}\emph{sentence}, if $\Phi$ does not contain free variables.

A sentence $\Phi$ is called \index{formula!universal}\emph{universal} if and only if $\Phi$ is equivalent to a formula of the type:
$$
\forall x_1 \forall x_2 \ldots \forall x_l \Psi(X,A),
$$
where $\Psi(X,A)$ is a Boolean combination of atomic formulas in variables from $X$. We sometimes refer to universal sentences as to universal formulas.

A \index{quasi identity}\emph{quasi identity} in the language $\cL_A$  is a universal formula of the form
\[
\forall x_1 \cdots\forall x_l \left(\mathop  \bigwedge \limits_{i = 1}^m \;r_i (X,A) = 1 \rightarrow s(X, A) = 1\right),
\]
where $r_i (X,A)$ and $S(X,A)$ are terms. The collection of all quasi identities satisfied by the group $G$ is called the \emph{universal Horn theory} of $G$.

\subsection{Quadratic equations and NTQ systems}
\begin{defn}
A standard quadratic equation over the group $G$ is an equation of the one of the following forms:
\begin{equation}\label{eq:st2}
\prod_{i=1}^{n}[x_i,y_i] \prod_{i=1}^{m}z_i^{-1}c_iz_i d = 1,\ n,m \geq 0, m+n \geq 1 ;
\end{equation}
\begin{equation}\label{eq:st4}
\prod_{i=1}^{n}x_i^2 \prod_{i=1}^{m}z_i^{-1}c_iz_i d = 1, \ n,m \geq 0, n+m \geq 1.
\end{equation}
Equations of the form (\ref{eq:st2}) are called {\em orientable} of genus $n$, and equations of the form (\ref{eq:st4}) are called {\em non-orientable} of genus $n$.
\end{defn}
Let $W$ be a strictly quadratic equation (that is an equation in which every variable occurs exactly twice) over a group $G$. Then there is a $G$-automorphism $\phi \in \Aut_G(G[X])$ such that  $\phi(W)$ is a standard quadratic equation over $G$.

To each quadratic equation one can associate a punctured surface. For example, the orientable surface associated to an Equation (\ref{eq:st2}) will have genus $n$ and $m+1$ punctures.

A system of equations $S = 1$  is called {\em triangular quasi-quadratic} over $G$ if it can be partitioned into the following subsystems
$$
\begin{array}{r}
S_1(X_1, X_2, \ldots, X_n,A) = 1,\\
 S_2(X_2, \ldots, X_n,A) = 1,\\
\cdots\phantom{1,} \\
 S_n(X_n,A) = 1,
 \end{array}
 $$
where for each $i$ one of the following holds:
\begin{enumerate}
    \item[(A)]  $S_i$ is quadratic  in variables $X_i$;
    \item[(B)] \label{it:2)} $S_i= \{[y,z]=1, [y,u]=1, [z,u]=1 \mid y, z \in X_i\}$, where $u$ is a group word in $X_{i+1} \cup  \ldots \cup X_n \cup A$. In this case we say that $S_i=1$ corresponds to an extension of a centraliser;
    \item[(C)] $S_i= \{[y,z]=1 \mid y, z \in X_i\}$;
    \item[(D)] $S_i$ is the empty equation.
\end{enumerate}

In the above notation, define $G_{i}=G_{R(S_{i}, \ldots, S_n)}$ for $i = 1, \ldots, n$ and put $G_{n+1}=G.$ The triangular quasi-quadratic system $S = 1$ is called {\em non-degenerate} or simply NTQ if the following conditions hold:
 \begin{enumerate}
    \item each system $S_i=1$, where $X_{i+1}, \ldots, X_n$ are viewed as the corresponding constants from $G_{i+1}$ (under the canonical maps $X_j \rightarrow G_{i+1}$, $j = i+1, \ldots, n$)  has a solution in $G_{i+1}$;
    \item the element in $G_{i+1}$ represented by the word $u$ from case (C) above, is  not a proper power in $G_{i+1}$.
  \end{enumerate}

\section{Reducing systems of equations over $G$  to generalised equations}
\label{se:4-1}

The notion of a generalised equation was introduced by Makanin in \cite{Makanin}. A generalised equation is a combinatorial object which encodes a system of equations. The motivation for defining a generalised equation is two-fold. One the one hand, it gives an efficient way of encoding all the information about a system of equations and, on the other hand, elementary transformations, that are essential for Makanin's algorithm, see Section \ref{se:5.1}, have a cumbersome description in terms of systems of equations, but admit an intuitive one in terms of graphic representations of combinatorial generalised equations. In this sense graphic representations of generalised equations can be likened to matrices. In linear algebra there is a correspondence between systems of equations over a field $k$ and matrices with elements from $k$. To describe the set of solutions of a system of equations, one uses Gauss elimination which is usually applied to matrices, rather than systems of equations.

In \cite{Mak82}, Makanin reduced the decidability of equations over a free group to the decidability of finitely many systems of equations over a free monoid, in other words, he reduced the compatibility problem for a free group to the compatibility problem for generalised equations. In fact, Makanin essentially proved that the study of solutions of systems of equations over free groups reduces to the study of solutions of generalised equations in the following sense: every solution of the system of equations $S$ factors trough one of the solutions of one of the generalised equations and, conversely, every solution of the generalised equation extends to a solution of $S$.

A crucial fact for this reduction is that the set of solutions of a given system of equations $S$ over a free group, defines only finitely many   different cancellation schemes (cancellation trees).  By each of these cancellation trees, one can construct a generalised equation.

The goal of this section is to generalise this approach to systems of equations over a free product of groups $G$.

Though the ideas of the reduction are easy, the formal definitions of generalised equation and partition table are technical. The reader non-familiar with the definitions may wish to consult some of the examples given in \cite{CK2}.

Let $G=G_1*G_2$ be the free product of two arbitrary groups. Let $X = \{x_1, \ldots, x_n\}$ be a set of variables, let $\cA= G_1\cup G_2$ be a set of constants from $G$ and let  $G[X] = G \ast F(X)$. Recall, that we assume that $G=G_1*G_2$ for simplicity of notation only. All the definitions, proofs and result carry over to the case when $G$ is the free product of finitely many groups.

\begin{NB}
It will be convenient for us to assume that \emph{all} equations of a system $S$ over  $G$ are triangular  i.e. every equation $S_i=1$ of $S$ has the form $xy=z$, where $x, y \in X^{\pm 1}$ and $z\in X^{\pm 1} \cup \cA$. The assumption that all equations in $S$ are triangular is used in Section \ref{se:4-1}. The only reason to triangulate $S$ is to simplify the notation. It is not hard to generalise the constructions given in Section \ref{se:4-1} to the case where $S$ is an arbitrary finite system of equations.
\end{NB}

\subsection{Definition of the generalised equation}

\begin{defn}[Combinatorial Generalised Equation]
A \emph{combinatorial generalised equation} $\Omega$ (with constants from $\cA$) consists  of the following objects:
\begin{enumerate}
\item A finite set of \emph{bases} $\BS = \BS(\Omega)$.  Every base is one of the following three types:
\begin{itemize}
    \item   a $G_1$-\emph{constant base}, a $G_2$-constant base,
    \item   a $G_1$-\emph{factor base}, a $G_2$-factor base,
    \item   or a \emph{variable base}.
\end{itemize}
If no confusion arises, we will further omit the prefix $G_1$- or $G_2$-. Each constant base is associated with exactly one element from $\cA$. The set of variable bases ${\M}$ consists of $2n$ elements ${\M} = \{\mu_1, \ldots, \mu_{2n}\}$. The set ${\M}$ comes equipped with two functions: a function $\varepsilon: {\M}\rightarrow \{-1,1\}$ and an involution $\Delta: {\M} \rightarrow {\M}$ (i.e. $\Delta$ is a bijection such that $\Delta^2$ is the identity on $\M$). Bases $\mu$ and $\Delta(\mu)$ are called {\em dual bases}.

The set of $G_j$-factor bases $\NN_j$ consists of $3m$ elements, comes equipped with a function $\varepsilon: \NN_j\rightarrow \{-1,1\}$ and is partitioned into disjoint subsets of $3$ elements each: $\NN_j=\NN_{j,1}\cup \dots \cup\NN_{j,k_j}$. The sets $\NN_{j,i}$ are called \emph{sets of dual bases}, bases belonging to $\NN_{j,i}$ are numbered sequentially: $\NN_{j,i}=\{\nu_{j,i,1},\nu_{j,i,2},\nu_{j,i,3}\}$. Given a $G_j$-factor base $\nu_{j,i,k}$, there are precisely two dual to $\nu_{j,i,k}$ $G_j$-factor bases.

\item A set of \emph{boundaries} $\BD = \BD(\Omega)$. The set $\BD$ is a finite initial segment of the set of positive integers  $\BD = \{1, 2, \ldots, \rho+1\}$.

\item Two functions $\alpha : \BS \rightarrow \BD$ and $\beta : \BS \rightarrow \BD$. We call $\alpha(\mu)$ and $\beta(\mu)$ the \emph{initial} and \emph{terminal} boundaries of the base $\mu$. These functions satisfy the following conditions: $\alpha(\eta) < \beta(\eta)$ for every base $\eta \in \BS$; if $b$ is a constant base then $\beta(\eta) = \alpha(\eta) +1$.

\item Two more functions $\a: \BD\to \{0,1,2\}$ and $\b: \BD\to  \{0,1,2\}$. We call $\a(i)$ and $\b(i)$ the \emph{initial and terminal terms} of the boundary $i$.

\item A finite set of \emph{boundary connections} $\BC = \BC(\Omega)$. A ($\mu$-)boundary connection is a triple $(i,\mu,j)$, where $i, j \in \BD$, $\mu \in {\M}$ such that $\alpha(\mu) <  i < \beta(\mu)$, $\alpha(\Delta(\mu)) <  j < \beta(\Delta(\mu))$ and $\a(i)=\a(j)$, $\b(i)=\b(j)$ if $\varepsilon(\mu)\varepsilon(\Delta(\mu))=1$, and $\a(i)=\b(j)$, $\b(i)=\a(j)$ if $\varepsilon(\mu)\varepsilon(\Delta(\mu))=-1$. We assume that if $(i,\mu,j) \in \BC$ then $(j,\Delta(\mu),i) \in \BC$. This allows one to identify the boundary connections $(i,\mu,j)$ and $(j,\Delta(\mu),i)$.
\end{enumerate}
\end{defn}
Though, by the definition, a combinatorial generalised equation is a combinatorial object, it is not practical to work with combinatorial generalised equations describing its sets and functions. It is more convenient to encode all this information in its graphic representation. We refer the reader to Figure \ref{fig:ET1} for an example of a graphic representation of a generalised equation. Further examples along with the formal construction of the graphic representation can be found in \cite{CK2}.

To a combinatorial generalised equation $\Omega$, one can canonically associate:
\begin{itemize}
    \item a system of equations in \emph{variables} $h_1, \ldots, h_\rho$, $\rho=\rho_\Omega$ and coefficients from $\cA$ (variables $h_i$ are often called \emph{items}),
    \item a system of equations over $G_1$,
    \item a system of equations over $G_2$,
    \item and constraints.
\end{itemize}
This collection is called a \emph{generalised equation}, and (abusing the language) we denote it by the same symbol $\Omega$. By a combinatorial generalised equation, the equations and constrains of the generalised equation  $\Omega$ are constructed as follows.
\begin{enumerate}
    \item Each pair of dual variable bases $(\lambda, \Delta(\lambda))$ gives rise to an equation $s_\lambda=1$:
            $$
    {\left[h_{\alpha (\lambda )}h_{\alpha (\lambda )+1}\cdots h_{\beta (\lambda )-1}\right]}^ {\varepsilon (\lambda)}= {\left[h_{\alpha (\Delta (\lambda ))}h_{\alpha (\Delta (\lambda ))+1} \cdots h_{\beta (\Delta (\lambda ))-1}\right]}^{\varepsilon (\Delta (\lambda))}.
            $$
            These equations are called \emph{basic equations}.

    \item   Every set of dual bases $\NN_{j,i}$, $i=1,\dots, k_j$, $j=1,2$ gives rise to the following \emph{$G_j$-factor equation}:
            $$
            {h_{\alpha(\nu_{j,i,1})}}^{\varepsilon(\nu_{j,i,1})} \cdot  {h_{\alpha(\nu_{j,i,2})}}^{\varepsilon(\nu_{j,i,2})} \cdot
            {h_{\alpha(\nu_{j,i,3})}}^{\varepsilon(\nu_{j,i,1})}=1.
            $$
    \item For each constant base $b$ we write a \emph{coefficient equation}:
            $$
            h_{\alpha (b)}= a,
            $$
            where $a \in \cA$ is the constant associated with $b$.
    \item Every boundary connection $(p,\lambda,q)$ gives rise to a \emph{boundary equation}
            $$
            [h_{\alpha (\lambda )}h_{\alpha (\lambda)+1}\cdots h_{p-1}]= [h_{\alpha (\Delta (\lambda ))}h_{\alpha (\Delta (\lambda ))+1} \cdots h_{q-1}],
            $$
            if $\varepsilon (\lambda)= \varepsilon (\Delta(\lambda))$ and
            $$
            [h_{\alpha(\lambda )}h_{\alpha (\lambda )+1}\cdots h_{p-1}]= [h_{q}h_{q+1}\cdots h_{\beta (\Delta (\lambda))-1}]^{-1},
            $$
            if $\varepsilon(\lambda)= -\varepsilon (\Delta(\lambda))$.
    \item Every item $h_j$, $j=1,\dots,\rho$ gives rise to a \emph{type constraint}
    $$
    h_j \in \type(\a(j),\b(j+1)).
    $$
\end{enumerate}

\begin{rem}
We assume that every generalised equation comes associated with a combinatorial one.
\end{rem}

\begin{defn}
Let
$$
\Omega = \left\{
\begin{array}{lll}
L_1(h)=R_1(h),\ldots, L_k(h) = R_k(h) & - &\hbox{basic and coefficient equations}; \\
V_1(h)=1, \dots, V_m(h)=1 & - &\hbox{factor equations};\\
T_{i,j}(h)\subset \type(i,j), i,j=0,1,2 & - & \hbox{type constraints}.
\end{array}
\right\}
$$
be a generalised equation in variables $h = \{h_1, \ldots,h_{\rho}\}$ with constants from $\cA$ (here $L_1(h),R_1(h), \dots, L_k(h), R_k(h)$, $V_1,\dots,V_m$ are words in $h_1,\dots,h_\rho$ and $T_{i,j}(h)$ are subsets of the sets $\{h_1,\dots, h_\rho\}$).

A tuple $H = (H_1, \ldots, H_{\rho})$ of nontrivial elements of $G$  written in the form (\ref{eq:nf}) is a \index{solution!of a generalised equation}{\em solution} of $\Omega $ if:
\begin{enumerate}
\item  all words $L_i(H), R_i(H)$ are reduced as written (i.e. written in the form (\ref{eq:nf}));
\item  $L_i(H) \doteq  R_i(H)$ in $G$, $i = 1, \ldots k$;
\item  $V_i(H)=1$ in the corresponding factor, $i=1,\dots,m$;
\item  the elements $H_1, \ldots, H_{\rho}$ satisfy the type constraints, i.e. $T_{i,j}(H)\subset \type(i,j)$.
\end{enumerate}

The \index{length!of a solution of the generalised equation}\emph{length} of a solution $H$ is defined to be
\glossary{name={$|H|$}, description={length of a solution of a generalised equation, $|H|=\sum\limits_{i=1}^\rho |H_i|$}, sort=H}
$$
|H|=\sum\limits_{i=1}^\rho |H_i|.
$$
\end{defn}
The notation $(\Omega, H)$ means that $H$ is a solution of the generalised equation $\Omega$.

We now introduce a number of notions that we use throughout the text. Let $\Omega$ be a generalised equation.

\begin{defn}[Glossary of terms]
A boundary $i$ \index{boundary!intersects a base}\emph{intersects} the base $\mu$ if $\alpha (\mu)<i<\beta (\mu)$. A boundary $i$ \index{boundary!touches a base}\emph{touches} the base $\mu$ if $i=\alpha (\mu)$ or $i=\beta (\mu)$. A boundary is said to be \index{boundary!open}\emph{open} if it intersects at least one base, otherwise it is called \index{boundary!closed}\emph{closed}. We say that a boundary $i$ is \index{boundary!tied}\emph{tied} in a base $\mu$ (or is \index{boundary!$\mu$-tied}\emph{$\mu$-tied}) if there exists a boundary connection $(p,\mu,q)$ such that $i = p$ or $i = q$. A boundary is \index{boundary!free}\emph{free} if it does not touch any base and it is not tied by a boundary connection.

An item $h_i$ \index{item!belongs to a base}\emph{belongs} to a base $\mu$ or, equivalently, $\mu$ \index{base!contains an item}\emph{contains} $h_i$, if $\alpha (\mu)\leq i\leq \beta (\mu)-1$ (in this case we sometimes write $h_i\in \mu$). An item $h_i$ is called a \index{item!constant}\emph{$G_k$-constant} item if it belongs to a constant base $b$ and the constant $a$ associated to the base $b$ belongs to $G_k$, $k=1,2$. The item $h_i$ is called a \emph{$G_k$-factor} item if it belongs to a $G_k$-factor base, $k=1,2$, and is called a \index{item!free}\emph{free} item if it does not belong to any base. By \glossary{name={$\gamma(h_i)$, $\gamma_i$},description={the number of bases which contain $h_i$},sort=G}$\gamma(h_i) =\gamma_i$ we denote the number of bases which contain $h_i$, in this case we also say that $h_i$ is \index{item!covered $\gamma_i$ times}\emph{covered} $\gamma_i$ times. An item $h_i$ is called \index{item!linear}\emph{linear} if $\gamma_i=1$ and is called \index{item!quadratic}\emph{quadratic} if $\gamma_i=2$.

Let $\mu, \Delta(\mu)$ be a pair of dual bases such that $\alpha (\mu)=\alpha (\Delta(\mu))$ and  $\beta (\mu)=\beta (\Delta(\mu))$ in this case we say that bases $\mu$ and $\Delta(\mu)$ form \index{pair of matched bases}\emph{a pair of matched bases}. A base $\lambda$ is \index{base!contained in another base}\emph{contained} in a base $\mu$ if $\alpha(\mu) \leq \alpha(\lambda) < \beta(\lambda) \leq \beta(\mu)$. We say that two bases $\mu$ and $\nu$ \index{base!intersects another base}\emph{intersect} or \index{base!overlaps with another base}\emph{overlap}, if $[\alpha(\mu),\beta(\mu)]\cap [\alpha(\nu),\beta(\nu)]\ne \emptyset$. A base $\mu$ is called \index{base!linear}\emph{linear} if there exists an item $h_i\in\mu$ so that $h_i$ is linear.

A set of consecutive items \glossary{name={$[i,j]$}, description={section $\{h_i,\ldots, h_{j-1}\}$ of a generalised equation}, sort=S}$[i,j] = \{h_i,\ldots, h_{j-1}\}$ is called a \index{section}\emph{section}. A section is said to be \index{section!closed}\emph{closed} if the boundaries $i$ and $j$ are closed and all the boundaries between them are open. If $\mu$ is a base then by \glossary{name={$\sigma(\mu)$}, description={the section $[\alpha(\mu),\beta(\mu)]$}, sort=S}$\sigma(\mu)$ we denote the section $[\alpha(\mu),\beta(\mu)]$ and by \glossary{name={$h(\mu)$}, description={the product of items $h_{\alpha(\mu)}\ldots h_{\beta(\mu)-1}$}, sort=H}$h(\mu)$ we denote the product of items $h_{\alpha(\mu)}\ldots h_{\beta(\mu)-1}$. In general for a section $[i,j]$ by \glossary{name={$h[i,j]$}, description={the product of items $h_i \ldots h_{j-1}$}, sort=H}$h[i,j]$ we denote the product  $h_i \ldots h_{j-1}$. A base $\mu$ \index{base!belongs to a section}\emph{belongs} to a section $[i,j]$ if $i\le\alpha(\mu)<\beta(\mu)\le j$. Similarly an item $h_k$ \index{item!belongs to a section}\emph{belongs} to a section $[i,j]$ if $i\le k<j$. In these cases we write $\mu\in [i,j]$ or $h_k\in [i,j]$.

Let $H=(H_1,\dots,H_{\rho})$ be a solution of a generalised equation $\Omega$ in variables $h=\{h_1,\dots, h_\rho\}$. We use the following notation. For any word $W(h)$ in $G[h]$ set \glossary{name={$W(H)$}, description={for any word $W(h)$ in $G[h]$ and any solution $H$ of a generalised equation, $W(H)=H(W(h))$}, sort=W}$W(H)=H(W(h))$. In particular, for any base $\mu$ (section $\sigma=[i,j]$) of $\Omega$,  we have \glossary{name={$H(\mu)$}, description={if $H$ is a solution  of a generalised equation, $H(h(\mu))=H_{\alpha(\mu)} \cdots H_{\beta(\mu)-1}$}, sort=H}$H(\mu)=H(h(\mu))=H_{\alpha(\mu)} \cdots H_{\beta(\mu)-1}$ (\glossary{name={$H[i,j]$, $H(\sigma)$},description={if $H$ is a solution  of a generalised equation, $H(\sigma)=H_i\cdots H_{j-1}$}, sort=H}$H[i,j]=H(\sigma)=H(h(\sigma))=H_i\cdots H_{j-1}$, respectively).
\end{defn}

We now formulate some necessary conditions for a generalised equation to have a solution.

\begin{defn} \label{defn:formcons}
A generalised equation $\Omega$ is called \emph{formally consistent} if it satisfies the following conditions.
\begin{enumerate}
    \item \label{item:fc1} If $\varepsilon (\mu)=-\varepsilon (\Delta (\mu))$, then the bases $\mu$ and $\Delta (\mu )$ do not intersect.
     \item \label{it:forcon2}  Given two boundary connections $(p,\lambda ,q)$ and $(p_1,\lambda ,q_1)$, if $p\le p_1$, then $q\le q_1$ in the case when $\varepsilon (\lambda)\varepsilon (\Delta (\lambda))=1$, and $q\ge q_1$ in the case when $\varepsilon (\lambda)\varepsilon (\Delta (\lambda))=-1$. In particular, if $p=p_1$ then $q = q_1$.
   \item  Let $\mu$ be a base such that  $\alpha (\mu)=\alpha (\Delta (\mu))$, in other words, let $\mu$ and $\Delta(\mu)$ be a pair of matched bases. If $(p,\mu ,q)$ is a $\mu$-boundary connection  then  $p=q$.
    \item \label{item:fc4} Let $\eta_1$ be either a $G_i$-constant base or a $G_i$-factor base, $i=1,2$. Similarly, let $\eta_2$ be either a $G_j$-constant base or a $G_j$-factor base, $j=1,2$. If $\alpha(\eta_1)=\alpha(\eta_2)$ then $i=j$.
    \item \label{item:clean} Let $\eta$ be a $G_j$-factor base or a $G_j$-constant base. If $\eta$ is contained in a base $\mu$ and $(i,\mu,q_1)$, $(i+1,\mu,q_2)$ are boundary connections, then $|q_1- q_2|=1$.
    \item  \label{item:fc6} Let $\mu$ be a variable base or a $G_j$-factor base. If $\varepsilon (\mu)=-\varepsilon (\Delta (\mu))$, then $\a(\alpha(\mu))=\b(\beta(\Delta(\mu)))$ and $\b(\beta(\mu))=\a(\alpha(\Delta(\mu)))$. If $\varepsilon (\mu)=\varepsilon (\Delta (\mu))$, then $\a(\alpha(\mu))=\a(\alpha(\Delta(\mu)))$ and $\b(\beta(\mu))=\b(\beta(\Delta(\mu)))$.
    \item  \label{item:fc7} Let $\nu$ be a $G_j$-factor base, $\nu\in \NN_{j,i}$. If $\varepsilon (\nu)=-\varepsilon (\nu')$, $\nu'\in\NN_{j,i}$, then $\a(\alpha(\nu))=\b(\beta(\nu'))$ and $\b(\beta(\nu))=\a(\alpha(\nu'))$. If $\varepsilon (\nu)=\varepsilon (\nu')$, then $\a(\alpha(\nu))=\a(\alpha(\nu'))$ and $\b(\beta(\nu))=\b(\beta(\nu'))$.
  \end{enumerate}
\end{defn}

\begin{lem} \label{le:4.1} \
\begin{enumerate}
    \item \label{it:fcl1} If a generalised equation $\Omega$ has a solution, then $\Omega$ is formally consistent;
    \item \label{it:fcl2} There is an algorithm to check whether or not a given generalised equation is formally consistent.
\end{enumerate}
\end{lem}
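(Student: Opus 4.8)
The plan is to handle the two parts separately. For part (\ref{it:fcl1}) I would fix an arbitrary solution $H=(H_1,\dots,H_\rho)$ of $\Omega$ and verify each of the seven conditions of Definition~\ref{defn:formcons} against $H$ in turn. Two elementary observations drive every verification. First, since all items $H_i$ are nontrivial, the function $d(i,j):=\sum_{k=i}^{j-1}|H_k|$ is \emph{strictly} monotone — increasing in its second argument and decreasing in its first — on the integer segments on which it is considered. Second, whenever $H[p,q]=H_p\cdots H_{q-1}$ is a subword of one of the reduced-as-written words $L_\ell(H),R_\ell(H)$ (in particular whenever $[p,q]\subseteq\sigma(\mu)$ for some base $\mu$), the word $H[p,q]$ is itself reduced, its length equals $d(p,q)$, and by the type constraints the factors of its first and last letters are recorded by $\a(p)$ and $\b(q)$, via $\itype(H_i)=\a(i)$ and $\ttype(H_i)=\b(i+1)$.

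Granting these, conditions (\ref{it:forcon2}) and (3) drop out at once. A boundary equation reads $H[\alpha(\lambda),p]\doteq H[\alpha(\Delta(\lambda)),q]$ in the equi-oriented case and $H[\alpha(\lambda),p]\doteq H[q,\beta(\Delta(\lambda))]^{-1}$ otherwise; taking lengths gives $d(\alpha(\lambda),p)=d(\alpha(\Delta(\lambda)),q)$, resp. $d(\alpha(\lambda),p)=d(q,\beta(\Delta(\lambda)))$, and strict monotonicity of $d$ turns $p\le p_1$ into $q\le q_1$, resp. $q\ge q_1$, and turns $\alpha(\mu)=\alpha(\Delta(\mu))$ into $p=q$. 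Conditions (\ref{item:fc4}), (\ref{item:clean}), (\ref{item:fc6}) and (\ref{item:fc7}) are read off similarly from the second observation: for (\ref{item:fc4}) the item $h_{\alpha(\eta_1)}$ is specialised, by the relevant coefficient or factor equation, to an element of $G_i$, so $\a(\alpha(\eta_1))=i$, and likewise $\a(\alpha(\eta_2))=j$, whence $\alpha(\eta_1)=\alpha(\eta_2)$ forces $i=j$; for (\ref{item:clean}) a factor or constant item has length one, so two consecutive $\mu$-tied boundaries flanking it must be tied to consecutive boundaries; and for (\ref{item:fc6}), (\ref{item:fc7}) one compares the factors of the extreme letters of the two sides of a basic, resp. factor, equation.

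The real work is condition (\ref{item:fc1}): if $\varepsilon(\mu)=-\varepsilon(\Delta(\mu))$ then $\mu$ and $\Delta(\mu)$ do not intersect. First, $\mu\neq\Delta(\mu)$, since otherwise $\varepsilon(\mu)=-\varepsilon(\mu)$ is impossible. Assuming $\sigma(\mu)$ and $\sigma(\Delta(\mu))$ intersect, I would run through the possible relative positions of these two segments, in each case invoking the basic equation $H(\mu)\doteq H(\Delta(\mu))^{-1}$. If one segment is nested in the other, or the two share exactly one boundary, comparing lengths via the second observation forces a subword spanning at least one nontrivial item to have length zero — impossible. In the remaining proper-overlap configuration one writes $H(\mu)\doteq A\cdot M$ and $H(\Delta(\mu))\doteq M\cdot B$, with $M$ the overlap word; the basic equation then forces $A\doteq B^{-1}$ with $A,B$ of equal, strictly positive length, and $M\doteq M^{-1}$ with $M$ a nonempty reduced word, and one excludes this using reducedness of $M$ together with the constraints the term functions $\a,\b$ impose on the extreme letters of $M$. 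Carrying out this case analysis — and in particular being scrupulous about which subwords are genuinely reduced as written, so that graphical equalities really do yield equalities of length — is the step I expect to demand the most care.

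Part (\ref{it:fcl2}) is then immediate. A generalised equation is a finite datum: $\BS$ is finite, $\BD=\{1,\dots,\rho+1\}$, the maps $\alpha,\beta,\a,\b$ and the set $\BC$ are finite, and so is the assignment of constants to constant bases. Each of the seven conditions of Definition~\ref{defn:formcons} is a statement ranging over finitely many bases, pairs of bases, or boundary connections, whose atomic parts are comparisons of the integers $\alpha(\eta),\beta(\eta)$, equalities of values of $\a,\b$ in $\{0,1,2\}$, and look-ups of which base carries which constant. All of these can be checked by exhaustive inspection, which yields the algorithm — in fact in time polynomial in the size of $\Omega$.
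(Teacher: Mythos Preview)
Your approach---fix a solution $H$ and verify each clause of Definition~\ref{defn:formcons} via strict monotonicity of $d(i,j)=\sum_{k=i}^{j-1}|H_k|$ and the type constraints---is exactly the right one, and is considerably more explicit than the paper's own proof, which for conditions (\ref{item:fc1})--(\ref{item:fc4}), (\ref{item:fc6}), (\ref{item:fc7}) merely says ``it is easy to see'' and for (\ref{item:clean}) invokes a forward reference to Lemma~\ref{le:R1}. Part~(\ref{it:fcl2}) is indeed immediate from the finiteness of the data.

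There is, however, a genuine gap in your treatment of condition~(\ref{item:fc1}). Your case analysis for the overlapping configuration terminates in the claim that a nonempty reduced word $M$ with $M\doteq M^{-1}$ can be excluded ``using reducedness of $M$ together with the constraints the term functions $\a,\b$ impose''. In a free product whose factors carry $2$-torsion this is false: if $g\in G_1$ has order two then $M=g$ is reduced, nontrivial, and graphically equals its inverse; more generally any palindrome in involutions works. The functions $\a,\b$ do not rescue you, since they may take the value $0$ (no constraint), and even when they lie in $\{1,2\}$ they only force $\itype(M)=\ttype(M)$, which $M\doteq M^{-1}$ already entails. The same obstruction hits the subcase of matched bases (both endpoints equal, $\alpha(\mu)=\alpha(\Delta(\mu))$ and $\beta(\mu)=\beta(\Delta(\mu))$), which your length argument does not exclude and which again reduces to $H(\mu)\doteq H(\mu)^{-1}$. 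In the torsion-free setting of the original Makanin--Razborov theory your argument would be complete; for general free products it is not, and the paper's one-line proof does not address this point either.
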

\begin{proof}
It is easy to see that if $\Omega$ has a solution then it satisfies conditions (\ref{item:fc1})-(\ref{item:fc4}), (\ref{item:fc6}), (\ref{item:fc7}) of Definition \ref{defn:formcons}. Condition (\ref{item:clean}) follows from statement (\ref{item:le14-3}) of Lemma \ref{le:R1}, since if for a given $G_i$-factor base or $G_i$-constant base $\nu$ we have $\beta(\nu)-\alpha(\nu)\ge 2$, the equality $U=P(H)$ is not graphical.
\end{proof}

\begin{rem}
We further consider only formally consistent generalised equations.
\end{rem}

\subsection{Reduction to generalised equations: constructing generalised equations by a system of equations over $G$} \label{sec:consgeneq}
In this section, we show how for a given finite system of triangular equations $S(X,\cA) = 1$ over $G$ one can canonically associate a finite collection  of generalised equations ${\GE}(S)$ with constants from $\cA$, which to some extent describes all solutions of the system $S(X,\cA) = 1$.

Let $S(X,\cA) = 1$ be a finite system of triangular equations $S_ 1 =1,\ldots, S_\m =1$  over the group $G$. Write $S(X,\cA) = 1$ in the form
\begin{equation}\label{*}
\begin{array}{c}
 r_{11}r_{12} r_{13}=1,\\
 r_{21}r_{22} r_{23}=1,\\
 \ldots \\
 r_{\m1}r_{\m2}r_{\m3}=1,\\
\end{array}
\end{equation}
where $r_{ij}$ are letters in the alphabet $ X^{\pm 1}\cup \cA^{\pm 1}$.

We aim to define a combinatorial object called a partition table, that encodes a particular type of cancellation that happens when one substitutes a  solution $U\in G^n$ into $S(X,\cA)=1$ and then reduces the words in $S(U, \cA)$ to the empty word. To acquire an intuition on how the partition tables are constructed, the reader may consult Section \ref{sec:relsol2ge}.

Informally, a partition table is a collection of words. Each word corresponds to an occurrence of a variable in the system $S$. Letters from $Z$ (see the definition below) encode the pieces of the variable that cancel freely. The letters from $T_i$ encode cancellation in the factor $G_i$.

\begin{defn} \label{defn:partab}
A \emph{partition table} $\T$  of $S(X,\cA)$  is  a set of reduced words
$$
\T = \{V_{i,j}(Z, T_1, T_2)\}, \ 1\leq i\leq \m,\ 1\leq j\leq 3
$$
from the free group $F(Z\cup T_1 \cup T_2)$, where $Z = \{z_1,\ldots ,z_p\}$, $p\leq 3{\m}$,
$T_1=\{t_{1,1,1},t_{1,1,2},t_{1,1,3},t_{1,2,1}\ldots,t_{1,\m,1},t_{1,\m,2}, t_{1,\m,3}\}$, $T_2=\{t_{2,1,1},t_{2,1,2},t_{2,1,3},t_{2,2,1}\ldots,t_{2,\m,1},t_{2,\m,2}, t_{2,\m,3}\}$, which satisfy the conditions below.
(Note, that the triple index $(i,j,k)$ of $t$ is decoded as follows:
$$
\hbox{(factor $G_1$ or $G_2$, sequential number of the triangle, number of the edge in the triangle)},
$$
see Section \ref{sec:relsol2ge} for details.)
\begin{enumerate}
    \item \label{it:defpt1} The equality $V_i\doteq V_{i1}V_{i2}V_{i3}=1$, $1\leq i\leq \m$ holds in $\factor{F(Z\cup T_1\cup T_2)}{\ncl\langle T_1, T_2\rangle}$.
    \item \label{it:defpt2} Every letter from $Z^{\pm 1}\cup T_1 \cup T_2$ occurs at most once in the word $V$. Moreover, if a letter $l \in Z^{\pm 1}$ occurs in $V_{ij}$, then the letter $l^{-1}$ does not occur in $V_{ij}$. The letters from $T_1^{-1}\cup T_2^{-1}$ do not occur in $V$.
    \item \label{it:defpt3} There are no subwords of the form $t_{k,j_1,i_1}t_{k,j_2,i_2}$ in the words $V_{ij}$.
    \item \label{it:defpt4} For every subword of $V_i$ of the form $t_{k_1,j_1,i_1} W t_{k_2,j_2,i_2}$, where $W \in F(Z)$, the following holds
    $$
    (k_1=k_2 \hbox{ and } j_1=j_2) \hbox{ if and only if } W=1.
    $$
    \item \label{it:defpt5} if $r_{ij} =a \in \cA$, then $|V_{ij}|=1$.
\end{enumerate}
\end{defn}

\begin{rem}
Since the number of letters in $\{Z\cup T_1\cup T_2\}$ is bounded, conditions (\ref{it:defpt2}), (\ref{it:defpt3}) and (\ref{it:defpt4}) of Definition \ref{defn:partab} imply that the length $|V_{ij}|$ of $V_{ij}$ is bounded above by a function of the system $S(X,\cA)=1$.
\end{rem}

\begin{lem} \label{le:4.2}
Let $S(X,\cA) = 1$ be a finite system of equations over $G$. Then
\begin{enumerate}
    \item the set $\PT(S)$ of all partition tables  of the system $S$ is finite, and its cardinality is bounded above by a number which depends only on $\m$;
    \item one can effectively construct the set $\PT(S)$.
\end{enumerate}
\end{lem}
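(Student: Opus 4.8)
The plan is to exhibit $\PT(S)$ as a subset of an explicitly bounded, finite set of \emph{candidate} tuples of words, with the bound depending only on the number $\m$ of equations, and then to observe that deciding membership of a candidate in $\PT(S)$ is effective. Both parts of the lemma then follow at once. The argument is essentially one of bookkeeping, made possible by the length restriction on the words $V_{ij}$ that is sketched in the remark following Definition \ref{defn:partab}; I would begin by pinning that restriction down in a form that visibly depends on $\m$ alone.

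Fix a candidate table $\{V_{ij}\}$ and an index $i$, and consider $V_i\doteq V_{i1}V_{i2}V_{i3}$. By condition (\ref{it:defpt2}) of Definition \ref{defn:partab}, each of the $p\le 3\m$ letters $z_j$ and each of the $p\le 3\m$ letters $z_j^{-1}$ occurs at most once in $V_i$, each of the $6\m$ letters of $T_1\cup T_2$ occurs at most once in $V_i$, and no letter of $T_1^{-1}\cup T_2^{-1}$ occurs at all. Hence $V_i$ is a reduced word of length at most $12\m$, so $|V_{ij}|\le 12\m$ for all $i,j$; crucially this bound is a function of $\m$ only and does not see the constants appearing in $S$. (Condition (\ref{it:defpt5}) only lowers this bound when $r_{ij}\in\cA$.)

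Now the alphabet $Z^{\pm 1}\cup T_1\cup T_2$ has size at most $2\cdot 3\m+3\m+3\m=12\m$, again a function of $\m$ only. Combined with the length bound, this shows that there are at most $N(\m)$ possible words $V_{ij}$ for some explicit $N(\m)$, and a partition table is the choice of one such word for each of the $3\m$ pairs $(i,j)$ (together with the choice of $p\le 3\m$, a further bounded factor). Thus the collection of candidate tuples has cardinality bounded by a function of $\m$, and since $\PT(S)$ is the subset of those candidates that additionally satisfy conditions (\ref{it:defpt1})--(\ref{it:defpt5}), it is finite with cardinality bounded by a function of $\m$; this is statement (1).

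For statement (2) it suffices to check that membership in $\PT(S)$ is decidable for a given candidate, since one may then enumerate the finitely many candidates and retain those that pass. Conditions (\ref{it:defpt2})--(\ref{it:defpt5}) are elementary combinatorial properties of finite words and are plainly decidable. Condition (\ref{it:defpt1}) requires $V_i=1$ in $\factor{F(Z\cup T_1\cup T_2)}{\ncl\langle T_1,T_2\rangle}$; but that quotient is canonically isomorphic to the free group $F(Z)$, the isomorphism being induced by the map that deletes every letter of $T_1\cup T_2$, so the condition amounts to verifying that the word obtained from $V_{i1}V_{i2}V_{i3}$ by erasing all $T$-letters freely reduces to the empty word, which is decidable by the word problem in $F(Z)$. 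The only steps that need a moment's thought are extracting the length bound in a form independent of the constants of $S$ and recognising the relevant quotient as $F(Z)$; neither is a genuine obstacle, and there is no hard part.
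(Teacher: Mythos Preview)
Your proof is correct and follows essentially the same approach as the paper's: bound the length of the $V_{ij}$ via condition~(\ref{it:defpt2}), enumerate the finitely many candidate tables over the bounded alphabet, and check the remaining conditions effectively. You have simply made explicit what the paper leaves terse---the numerical bound $|V_{ij}|\le 12\m$ and the identification of $\factor{F(Z\cup T_1\cup T_2)}{\ncl\langle T_1,T_2\rangle}$ with $F(Z)$ for checking condition~(\ref{it:defpt1})---but the structure is identical.
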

\begin{proof}
Since the words $V_{ij}$ have  bounded length,  one can effectively construct the finite set  of all collections of words $\{V_{ij}\}$ in $F(Z\cup T_1\cup T_2)$ which satisfy conditions (\ref{it:defpt2}), (\ref{it:defpt3}), (\ref{it:defpt5}) of Definition \ref{defn:partab}.  Now for each such collection $\{V_{ij}\}$,  one can effectively check  whether or not conditions (\ref{it:defpt1}) and (\ref{it:defpt4}) hold. This allows one to list all partition tables for $S(X,\cA) = 1$.
\end{proof}

\medskip

By a partition table $\T=\{V_{ij}\}$ we construct a generalised equation $\Omega _\T$ in the following way. Consider the following word $V$ in the free monoid $M(\cA \cup Z^{\pm 1}\cup T_1^{\pm 1}\cup T_2^{\pm 1})$
$$
V\doteq V_{11}V_{12} V_{13}\cdots V_{\m 1}V_{\m 2}V_{\m 3} = y_1 \ldots y_\rho,
$$
where $y_i \in \cA \cup Z^{\pm 1}\cup T_1^{\pm 1}\cup T_2^{\pm 1}$ and $\rho = |V|$ is the length of $V$. Then the generalised equation $\Omega_\T = \Omega_\T(h)$ has $\rho + 1$ boundaries and $\rho$ variables $h_1,\ldots ,h_{\rho}$, $h = \{h_1,\ldots,h_{\rho}\}$.

Now we define the bases of $\Omega_\T$ and the functions $\alpha, \beta, \a, \b, \varepsilon$.

Put $\a(i)=\b(i)=0$ for all $i$.

Let $z \in Z$.  For the (unique) pair of distinct occurrences $y_i$ and $y_j$, $i<j$ of  $z$ in $V$
$$
y_i = z^{\epsilon _i}, \quad y_j = z^{\epsilon _j} \quad \epsilon _i, \epsilon _j \in \{-1,1\}
$$
we introduce a pair of dual variable bases $\eta_{z,i,j}, \Delta(\eta_{z,i,j})$ . Put
\begin{gather}\notag
\begin{split}
\alpha(\eta_{z,i,j}) = i, \quad &\beta(\eta_{z,i,j}) = i+1, \quad \varepsilon(\eta_{z,i,j}) = \epsilon _i;\\
& \alpha(\Delta(\eta_{z,i,j})) = j, \quad \beta(\Delta(\eta_{z,i,j})) = j+1, \quad \varepsilon(\Delta(\eta_{z,i,j})) = \epsilon _j.
\end{split}
\end{gather}
The basic equation that corresponds to this pair of dual bases is $h_{i}^{\epsilon_i}=h_{j}^{\epsilon _j}$.

Let $x \in X$.  For any two distinct occurrences of  $x$ in $S(X,\cA) = 1$:
$$
r_{i,j} = x^{\epsilon_{ij}}, \ r_{s,t} = x^{\epsilon_{st}}; \ \epsilon _{ij}, \epsilon _{st} \in \{-1,1\},
$$
so that $(i,j)$ precedes $(s,t)$ in left-lexicographical order, we introduce a pair of dual bases $\mu_{x,q}$ and $\Delta(\mu_{x,q})$, $q=(i,j,s,t)$.
Now suppose that $V_{ij}$ and $V_{st}$ occur in the word $V$ as subwords
$$
V_{ij} = y_{c_1} \ldots y_{d_1}, \quad V_{st} = y_{c_2} \ldots y_{d_2} \hbox{ correspondingly}.
$$
Then we put
\begin{gather}\notag
\begin{split}
\alpha(\mu_{x,q}) = {c_1}, \quad &\beta(\mu_{x,q}) = d_1+1, \quad \varepsilon(\mu_{x,q}) = \epsilon_{ij},\\
&\alpha(\Delta(\mu_{x,q})) = {c_2}, \quad \beta(\Delta(\mu_{x,q})) = d_2+1, \quad \varepsilon(\Delta(\mu_{x,q})) = \epsilon_{st}.
\end{split}
\end{gather}
The basic equation which corresponds to this pair of dual bases is
$$
\left(h_{\alpha(\mu_{x,q})}\ldots h_{\beta(\mu_{x,q})-1}\right)^{\epsilon _{ij}}= \left(h_{\alpha(\Delta(\mu_{x,q}))}\ldots h_{\beta(\Delta(\mu_{x,q}))- 1}\right)^{\epsilon _{st}}.
$$

For the (unique) triple of occurrences
$$
y_{n_1} = t_{k,j,1},\  y_{n_2} = t_{k,j,2}, \  y_{n_3} = t_{k,j,3},
$$
of $t_{k,j,1}$, $t_{k,j,2}$ and $t_{k,j,3}$ in $V$, we introduce a set of dual $G_k$-factor bases $\nu_{t_{k,j,1},n},\nu_{t_{k,j,2},n},\nu_{t_{k,j,3},n}$, where $n=(n_1,n_2,n_3)$.

Put
\begin{gather}\notag
\begin{split}
\alpha(\nu_{t_{k,j,m},n}) = n_m, &\quad \beta(\nu_{t_{k,j,m},n})= n_m+1,\quad \varepsilon(\nu_{t_{k,j,m},n})= 1, \\
&\a(n_m)=k, \b(n_m)=3-k, \quad \a(n_{m}+1)=3-k, \b(n_{m}+1)=k, \hbox{ where } m=1,2,3.
\end{split}
\end{gather}

The $G_k$-factor equation that corresponds to this set of dual bases is
$$
h_{n_1}h_{n_2}h_{n_3}=1.
$$

Let $r_{ij} = a \in \cA$ be an element of $G_k$.  In this case we introduce a constant base $\mu_{ij}$ with the \emph{label} $a$. If $V_{ij}$ occurs in $V$ as $V_{ij} = y_l$, then we put
\begin{gather}\notag
\begin{split}
\alpha(\mu_{ij}) = l, \ \beta(\mu_{ij}) = l+1,& \ \a(l)=k, \ \b(l)=3-k, \\
&\a(l+1)=3-k, \ \b(l+1)=k.
\end{split}
\end{gather}
The corresponding coefficient equation is $h_{c}=a$.

For every base $\mu$ so that $\a(\alpha(\mu))\ne 0$ (or $\b(\beta(\mu))\ne 0$) we set:
\begin{itemize}
    \item $\a(\alpha(\mu))=\b(\beta(\Delta(\mu)))$ (or $\b(\beta(\mu))=\a(\alpha(\Delta(\mu)))$), if $\varepsilon (\mu)=-\varepsilon (\Delta (\mu))$;
    \item $\a(\alpha(\mu))=\a(\alpha(\Delta(\mu)))$ (or $\b(\beta(\mu))=\b(\beta(\Delta(\mu)))$), if $\varepsilon (\mu)=\varepsilon (\Delta (\mu))$.
\end{itemize}

The list of boundary connections (and hence the boundary equations) is empty. This defines the generalised equation
$\Omega_\T$. Put
$$
{\GE}(S) = \{\Omega_\T \mid \T \hbox{ is a partition  table for } S(X,\cA)= 1 \}.
$$
Then ${\GE}(S)$ is a finite collection of generalised equations which can be effectively constructed for a given system $S(X,\cA) = 1$.

\subsection{Relating solutions of $S(X,\cA)=1$ to the solutions of generalised equations}
\label{sec:relsol2ge}
We shall need an analogue of the notion of a cancellation scheme for free groups.

Consider the free product of groups $G=G_1*G_2$. Take the multiplication table presentations of $G_1$ and $G_2$:
$$
G_1=\langle G_1 \mid R_1 \rangle; \quad G_2=\langle G_2 \mid R_2 \rangle.
$$
Then the Cayley graph $\Cal(G)$ of $G=\langle G_1, G_2 \mid R_1, R_2 \rangle$ can be obtained as follows. Take the Cayley graph $\Cal(G_1)$ of $G_1$. To each vertex $g_1$ of $\Cal(G_1)$ we attach the Cayley graph $\Cal(G_2)$ of $G_2$ by identifying $g_1$ and the vertex of $\Cal(G_2)$ corresponding to the identity in $G_2$. To each vertex $g_2\ne 1$ of each copy of $\Cal(G_2)$ just attached, we attach $\Cal(G_1)$ by identifying $g_2$ and the vertex of $\Cal(G_1)$ corresponding to the identity in $G_1$, and so on. Summarising, the Cayley graph $\Cal(G)$ is a tree-graded space with respect to the \emph{pieces}: copies of $\Cal(G_1)$ and $\Cal(G_2)$
(we refer the reader to \cite{TrGr} for definition and basic properties of tree-graded spaces).

Let $U\in G^n$, $U=(U_1,\dots, U_n)$ be a solution of $S(X)=S(X,\cA) =\{S_ 1 =1,\ldots, S_\m =1\}$. We write $S(X,\cA)$ in the form (\ref{*}) and substitute $U$ in (\ref{*}) graphically, denote this substitution by $\varphi: X\to U$. Then $S_i(U)$ represents the trivial element in $G$, thus describes a cycle $\cc_i$ in $\Cal(G)$ based at the identity element of $G$. Since $\Cal(G)$ is a tree-graded space, the intersection of the cycle $\cc_i$ with any piece of $\Cal(G)$ is either empty or a point or a non-trivial cycle.

Below we treat $\cc_i$ as a cycle labelled by $r_{i1}r_{i2}r_{i3}$, where the $r_{ij}$'s are letters in $X^{\pm 1}\cup \cA^{\pm 1}$. Every term in $\varphi(r_{ij})$ labels an edge of either a  $2$-cycle or a $3$-cycle. From the structure of a tree-graded space on $\Cal(G)$ and since the words $\varphi(r_{ij})$ are reduced, it follows that distinct edges of any cycle belong to different words $\varphi(r_{ij})$. Therefore, the structure of the cancellation in $\varphi(r_{i1}r_{i2}r_{i3})$ is described by one of the five cancellation schemes shown on Figure \ref{fig:cancsc}.

\begin{figure}[h!tb]
  \centering
   \includegraphics[keepaspectratio,width=6in]{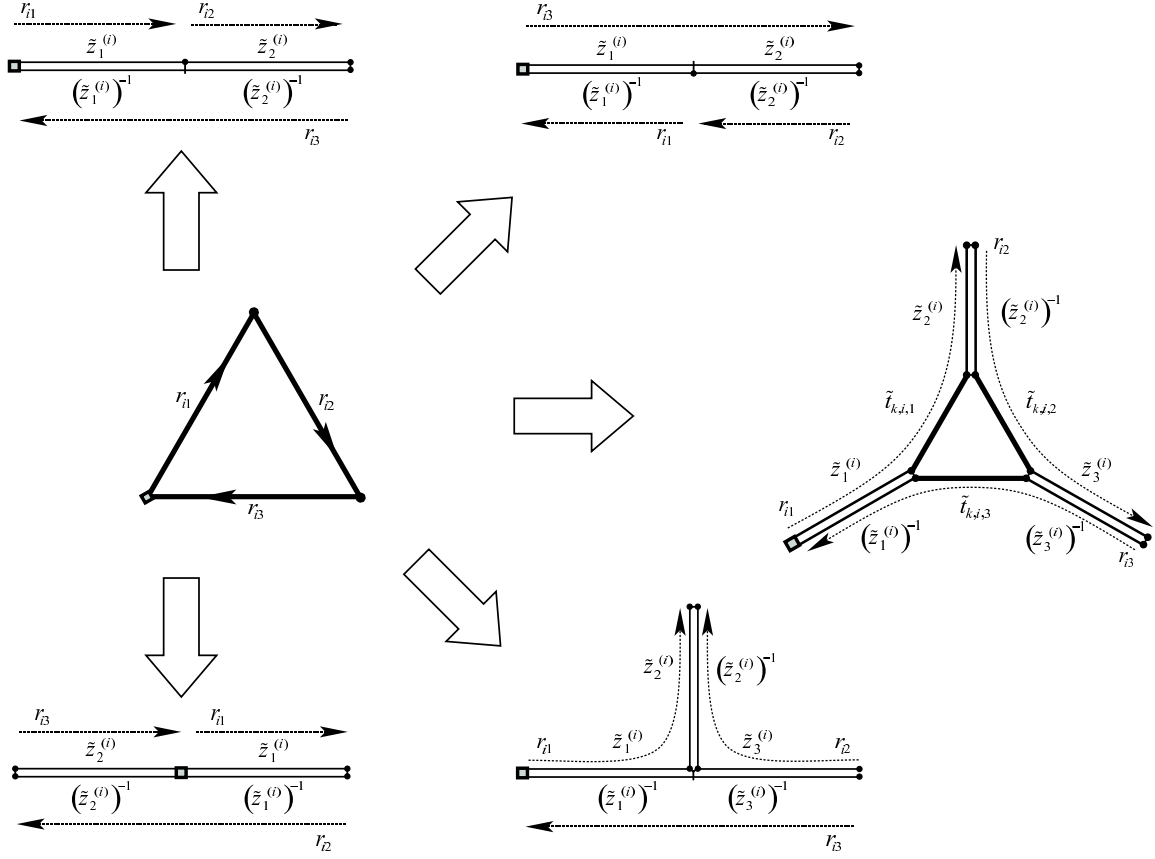}
\caption{Cancellation schemes for $\varphi(r_{i1}r_{i2}r_{i3})$.}\label{fig:cancsc}
\end{figure}

We denote the label (a $G_k$-term) of an edge of the triangle (see Figure \ref{fig:cancsc}) by the letter ${\tilde t_{k,i,l}}$, $l=1,2,3$. The parts of the word  $\varphi(r_{i1}r_{i2}r_{i3})$ that ``cancel freely'' when reducing the word $S_i(U)$ to the identity, are denoted by the letters ${\tilde{z}^{(i)}}_{j_1,j_2}$, $1\le j_1,j_2\le 3$ and their inverses, see Figure \ref{fig:cancsc} (here we write $^{(i)}$ to indicate that ${\tilde{z}^{(i)}_{j_1,j_2}}$ corresponds to the $i$-th equation of the system $S(X)$). Formally, if two terms label a $2$-cycle we call them \emph{dual}. These two terms belong to two distinct words $\varphi(r_{ij_1})$ and $\varphi(r_{ij_2})$, $j_1<j_2$ correspondingly. Denote by $\tilde{z}_{j_1,j_2}^{(i)}$ the maximal non-trivial sequence of consecutive terms of $\varphi(r_{ij_1})$ such that
\begin{itemize}
    \item every term of the sequence labels an edge of a $2$-cycle,
    \item their duals are terms of $\varphi({r_{ij_2}}^{(i)})$;
\end{itemize}
and by ${\left(\tilde{z}_{j_1,j_2}^{(i)}\right)}^{-1}$ the sequence of their duals in $\varphi({r_{ij_2}^{(i)}})$. Notice that the structure of a tree-graded space on $\Cal(G)$ and the fact that the words $\varphi(r_{ij})$ are reduced, imply that for every pair $\varphi(r_{ij_1})$ and $\varphi(r_{ij_2})$ there exists at most one maximal non-trivial sequence of consecutive terms of $\varphi(r_{ij_1})$ with duals in $\varphi(r_{ij_2})$.

It follows that we can write $\varphi(r_{ij})$ as a word in the $\tilde t_{k,i,l}$'s and the $\tilde{z}^{(i)}_{j_1,j_2}$'s and their inverses:
$$
\varphi(r_{ij})=V_{ij}(\tilde{z}^{(i)}_1, \tilde{z}^{(i)}_2, \tilde{z}^{(i)}_3, {\tilde t}_{1,i,j}, {\tilde t}_{2,i,j})
$$
Straightforward verification shows that the set $\T=\{V_{ij}\}$ is a partition table of $S(X,\cA)=1$.

Obviously,
$$
H = ({\tilde z}_1, \ldots, {\tilde z}_{3\m},{\tilde t}_{1,1,1},\ldots,{\tilde t}_{1,\m,3},{\tilde t}_{2,1,1},\dots, {\tilde t}_{2,\m,3})
$$
is a solution of the generalised equation $\Omega_\T$ induced by $U$.

\medskip

\begin{defn}
For a generalised equation $\Omega$ in variables $h$ we can consider the same system of equations over the group $G$. We denote this system by \glossary{name={${\Omega}^*$}, description={generalised equation $\Omega$ treated as a system of equations over the group $G$}, sort=O}
${\Omega}^*$. In other words, if
$$
\Omega = \left\{
\begin{array}{lll}
L_1(h)=R_1(h),\ldots, L_k(h) = R_k(h) & - &\hbox{basic and coefficient equations}; \\
V_1(h)=1, \dots, V_m(h)=1 & - &\hbox{factor equations};\\
T_{i,j}(h)\subset \type(i,j), i,j=0,1,2 & - & \hbox{type constraints}.
\end{array}
\right\}
$$
is a generalised equation, then by $\Omega^*$ we denote the system of equations
$$
\Omega^*= \{L_1(h)R_1(h)^{-1} = 1, \ldots, L_s(h)R_s(h)^{-1} = 1, V_1(h)=1, \dots, V_m(h)=1\}
$$
over the group $G$.

Let
\glossary{name={$G_{R(\Omega^\ast)}$}, description={coordinate group of the generalised equation $\Omega$}, sort=G}
$$
G_{R(\Omega^\ast)}= \factor{G\ast F(h)}{R({\Omega}^*)}\, ,
$$
where $F(h)$ is a free group with basis $h$. We call $G_{R(\Omega^\ast)}$ the \index{coordinate group!of the generalised equation}\emph{coordinate group} of the generalised equation $\Omega$.
\end{defn}

Note that every solution of a generalised equation  gives rise to a homomorphism from $G_{R(\Omega^\ast)}$ to $G$, but the converse is not true. Indeed, the variables that occur in $G_k$-factor equations must take values in the group $G_k$, $k=1,2$ and the items have to satisfy the type constraints - obviously, this is not true for an arbitrary homomorphism from $G_{R(\Omega^\ast)}$ to $G$.

Now we explain the relations between the coordinate groups of the system $S(X,\cA) = 1$ and of the generalised equation $\Omega_\T^\ast$.

We use the notation from Definition \ref{defn:partab}. For a letter $x$ in $X$, we choose an arbitrary  occurrence $r_{ij} = x^{\epsilon_{ij}}$ of $x$ in $S(X,\cA) = 1$. Let $\mu = \mu_{x,q}$, $q=(i,j,s,t)$ be the base that corresponds to the pair of occurrences $r_{ij} = x^{\epsilon_{ij}}$ and $r_{st} = x^{\epsilon_{st}}$ of $x$.  Then $V_{ij}$ occurs in $V$ as a subword
$$
V_{ij} = y_{\alpha(\mu)} \ldots y_{\beta(\mu) -1}.
$$
Define the word $P_x(h) \in G[h]$, where $h = \{h_1, \ldots,h_\rho\}$, as follows:
$$
P_x(h,\cA) ={( h_{\alpha(\mu)} \ldots h_{\beta(\mu)-1})}^{\epsilon_{ij}},
$$
and put
$$
P(h) = (P_{x_1}, \ldots, P_{x_n}).
$$
The tuple of words $P(h)$ depends on the choice of occurrences of letters from $X$ in $V$. It follows from the construction above that the map $X \to G[h]$ defined by $x \mapsto  P_x(h,\cA)$ gives rise to a $G$-homomorphism
\begin{equation}\label{eq:hompt}
\pi : G_{R(S)}\rightarrow G_{R(\Omega _\T^\ast)}.
\end{equation}
Indeed,  if $f(X) \in R(S)$ then $\pi(f(X))= f(P(h))$. It follows from the definition of partition table that $f(P(h)) = 1$ in $G_{R(\Omega_\T^\ast)}$, thus $f(P(h))\in R(\Omega_\T^*)$. Therefore $R(f(S))\subseteq R(\Omega_\T^*)$ and $\pi$ is a homomorphism.

Observe that the image  $\pi (x)$ in $G_{R(\Omega _\T^*)}$ does not depend on a particular choice of the occurrence of $x$ in $S(X,\cA)$ (the basic equations of $\Omega_\T$ make these images equal). Hence $\pi$ depends only on $\Omega_\T$. Thus, every solution $H$ of the generalised equation gives rise to a solution $U$ of $S$ so that $\pi_U=\pi\pi_H$

\begin{rem} \label{rem:epi}
Let $\T = \{V_{i,j}(Z, T_1, T_2)\}$ be a partition table of the  system of equations $S(X,\cA)$ that satisfies the following condition
\begin{equation}\label{3.31}
z_k, t_{l,m,n} \in \langle V_{i,j} \rangle,\ k=1,\dots,p;\ l=1,2;\  m=1,\dots \m;\  n=1,2,3.
\end{equation}
Then for the generalised equation $\Omega_\T$, the homomorphism $\pi_{\Omega_\T}:G_{R(S)} \to G_{R(\Omega^*_\T)}$ induced by the map $x_i\mapsto P_{x_i}(h,\cA)$ defined above is surjective.
\end{rem}

We summarise the discussion above in the following lemma.

\begin{lem} \label{le:R1}
For a system of equations $S(X,\cA)=1$ over the group $G$,  one can  effectively  construct a finite set
$$
{\GE}(S) = \{\Omega_\T \mid \T \hbox{ is  a partition table for } S(X,\cA)= 1\}
$$
of generalised equations  such  that
\begin{enumerate}
    \item if the set ${\GE}(S)$ is empty, then  $S(X,\cA)= 1$ has no solutions in $G$;
    \item for each $\Omega (h) \in {\GE}(S)$ and  for each $x \in X$  one can effectively find a word $P_x(h,\cA) \in G[h]$ of length at most $|h|$ such that the map $x\mapsto P_x(h,\cA)$, $x \in X$ gives rise to a $G$-homomorphism $\pi_\Omega: G_{R(S)}\rightarrow G_{R(\Omega^\ast)}$;
    \item \label{item:le14-3} for any solution $U  \in G^n$ of the system $S(X,\cA)=1$ there exists $\Omega (h) \in {\GE}(S)$ and a solution $H$ of $\Omega$ such that $U = P(H)$, where $P(h) = (P_{x_1}, \ldots, P_{x_n})$, and this equality is graphical, $U\doteq P(H)$;
\end{enumerate}
\end{lem}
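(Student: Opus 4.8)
The plan is to assemble Lemma~\ref{le:R1} from the three ingredients that were developed in the preceding subsections: the construction of $\GE(S)$ from partition tables (Subsection~\ref{sec:consgeneq}), the finiteness and effectiveness of the set of partition tables (Lemma~\ref{le:4.2}), and the analysis of cancellation schemes in the tree-graded Cayley graph $\Cal(G)$ (Subsection~\ref{sec:relsol2ge}). Part~(1) is essentially immediate: each $\Omega_\T\in\GE(S)$ carries a $G$-homomorphism $\pi_\Omega\colon G_{R(S)}\to G_{R(\Omega^\ast)}$, and since any solution of $S$ factors through such a $\pi_\Omega$ (part~(3)), an empty $\GE(S)$ forces $V_G(S)=\emptyset$. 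Part~(2) is a restatement of the construction of the words $P_x(h,\cA)$ together with the verification, already carried out around~\eqref{eq:hompt}, that $x\mapsto P_x(h,\cA)$ induces a well-defined $G$-homomorphism $G_{R(S)}\to G_{R(\Omega^\ast)}$; the only thing to add is the length bound $|P_x|\le|h|$, which is clear because $P_x(h,\cA)$ is read off from a single occurrence $V_{ij}$, hence is a product of distinct items $h_{\alpha(\mu)}\cdots h_{\beta(\mu)-1}$.

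The substantive part is~(3). Here I would fix a solution $U=(U_1,\dots,U_n)\in G^n$ of $S(X,\cA)=1$, substitute it \emph{graphically} into the triangular system~\eqref{*}, and for each $i$ regard $S_i(U)=\varphi(r_{i1}r_{i2}r_{i3})$ as a label of a closed cycle $\cc_i$ based at $1$ in $\Cal(G)$. Using that $\Cal(G)$ is tree-graded with pieces the copies of $\Cal(G_1)$ and $\Cal(G_2)$, and that the words $\varphi(r_{ij})$ are reduced, I would argue that the reduction of $S_i(U)$ to the identity is governed by exactly one of the five cancellation schemes of Figure~\ref{fig:cancsc}: the only cancellations are in $2$-cycles (pairs of dual terms, recorded by the letters $\tilde z^{(i)}_{j_1,j_2}$) and in the at most one $3$-cycle per triangle (recorded by the letters $\tilde t_{k,i,l}$), and distinct edges of a cycle lie in different $\varphi(r_{ij})$. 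This yields an expression $\varphi(r_{ij})=V_{ij}(\tilde z^{(i)}_1,\tilde z^{(i)}_2,\tilde z^{(i)}_3,\tilde t_{1,i,j},\tilde t_{2,i,j})$, and a routine check of Definition~\ref{defn:partab}'s conditions~(1)--(5) shows $\T=\{V_{ij}\}\in\PT(S)$. The induced tuple
$$
H=(\tilde z_1,\dots,\tilde z_{3\m},\tilde t_{1,1,1},\dots,\tilde t_{1,\m,3},\tilde t_{2,1,1},\dots,\tilde t_{2,\m,3})
$$
is then a solution of $\Omega_\T$ — one verifies the basic equations from the $2$-cycle decomposition, the factor equations from the $3$-cycles, the coefficient equations from the constant bases, and the type constraints from the $\a,\b$ assignments made in the construction of $\Omega_\T$ — and by the definition of $P_x(h,\cA)$ as the relevant subword of $V$ specialised via $H$, one has $U_x=P_x(H)$ graphically, i.e. $U\doteq P(H)$. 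The effectiveness of constructing $\GE(S)$ is inherited from Lemma~\ref{le:4.2} and the explicitly algorithmic nature of the passage $\T\mapsto\Omega_\T$.

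The main obstacle is the cancellation-scheme analysis underlying the assertion that \emph{only} the five schemes of Figure~\ref{fig:cancsc} can occur, and that for each pair $(\varphi(r_{ij_1}),\varphi(r_{ij_2}))$ there is at most one maximal block of mutually dual terms. This is where the tree-graded structure of $\Cal(G)$ must be used carefully: one needs that a cycle meets each piece in a point or a (sub)cycle, that the three sides of the triangle $\cc_i$ can share at most the structure allowed by tree-gradedness, and that reducedness of the $\varphi(r_{ij})$ prevents internal cancellation within a single side. Getting the bookkeeping of which letters are the $\tilde z$'s and which are the $\tilde t$'s exactly right — so that the resulting $\{V_{ij}\}$ genuinely satisfies conditions~(\ref{it:defpt3}) and~(\ref{it:defpt4}) of Definition~\ref{defn:partab} (no adjacent $t$-letters, and an empty $Z$-word between two $t$-letters iff they have the same first two indices) — is the delicate combinatorial point; everything else is either a direct appeal to Lemma~\ref{le:4.2} or a mechanical verification against the definitions.
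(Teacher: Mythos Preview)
Your proposal is correct and takes essentially the same approach as the paper: Lemma~\ref{le:R1} is stated there explicitly as a summary of the discussion in Subsections~\ref{sec:consgeneq} and~\ref{sec:relsol2ge}, with no separate proof given, and you have faithfully assembled exactly those ingredients (Lemma~\ref{le:4.2} for effectiveness and finiteness, the construction of $P_x(h,\cA)$ and the homomorphism~\eqref{eq:hompt} for part~(2), and the tree-graded cancellation-scheme analysis for part~(3)). Your identification of the five-scheme classification and the verification of Definition~\ref{defn:partab} conditions~(\ref{it:defpt3})--(\ref{it:defpt4}) as the delicate point matches precisely where the paper invests its effort.
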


\begin{cor} \label{co:R1}
In the notation of {\rm Lemma \ref{le:R1}},  for any solution $U  \in G^n$ of the system $S(X,\cA)=1$ there exist a generalised equation  $\Omega (h) \in {\GE}(S)$  and a solution $H$ of $\Omega(h)$ such that the following diagram commutes:
$$
\xymatrix@C-1em{
 G_{R(S)}  \ar[rd]_{\pi_U} \ar[rr]^{\pi_\Omega}  &  &G_{R(\Omega^* )} \ar[ld]^{\pi_H}
                                                                             \\
                               &  G &
}$$

Conversely, for every generalised equation $\Omega\in {\GE}(S)$  and a solution $H$ of $\Omega$, there exists a solution $U \in G^n$ of the systems $S(X, \cA)=1$ such that the above diagram commutes.
\end{cor}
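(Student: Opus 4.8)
The plan is to observe that this corollary is essentially a repackaging of Lemma \ref{le:R1} together with the discussion preceding \eqref{eq:hompt}, translated from the level of tuples and solutions to the level of coordinate groups and $G$-homomorphisms. The identities to keep in mind are: $\pi_\Omega\colon G_{R(S)}\to G_{R(\Omega^*)}$ is the $G$-homomorphism determined by $x\mapsto P_x(h,\cA)$ (well defined by Lemma \ref{le:R1}(2), and independent of the choices made in the construction of $P$); a solution $H=(H_1,\dots,H_\rho)$ of $\Omega$ is, in particular, a solution of the system $\Omega^*$ over $G$, hence induces a $G$-homomorphism $\pi_H\colon G_{R(\Omega^*)}\to G$ with $h_i\mapsto H_i$ (this is the already-noted fact that every solution of a generalised equation gives rise to a homomorphism from $G_{R(\Omega^*)}$ to $G$); and, for any word $W(X)\in G[X]$, $\pi_\Omega(W(X))=W(P(h))$, whence $\pi_H\pi_\Omega(W(X))=W(P(H))$.

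For the forward direction I would apply Lemma \ref{le:R1}(\ref{item:le14-3}) to the given $U\in V_G(S)$, obtaining $\Omega\in\GE(S)$ and a solution $H$ of $\Omega$ with $U=P(H)$ (we use only the equality in $G$, not its graphical refinement). Both $\pi_U$ and $\pi_H\pi_\Omega$ are $G$-homomorphisms $G_{R(S)}\to G$, so it suffices to check that they agree on the generating set $X$: indeed $\pi_H\pi_\Omega(x)=\pi_H(P_x(h,\cA))=P_x(H,\cA)$, which is exactly the $x$-component of $P(H)=U$, i.e.\ $\pi_U(x)$. Since both maps restrict to the identity on $G$, they coincide, which is the asserted commutativity.

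For the converse I would fix $\Omega\in\GE(S)$ and a solution $H$ of $\Omega$ and set $U:=P(H)\in G^n$. To see $U\in V_G(S)$, take any $S_i$ from $S$; then $S_i\in R(S)$, so $\pi_\Omega(S_i)=S_i(P(h))=1$ in $G_{R(\Omega^*)}$ (the computation $R(S)\subseteq\ker\pi_\Omega$ made when constructing $\pi$), and applying $\pi_H$ yields $S_i(U)=S_i(P(H))=\pi_H\bigl(S_i(P(h))\bigr)=1$ in $G$. Hence $U$ solves $S(X,\cA)=1$, and the induced $\pi_U\colon G_{R(S)}\to G$ again satisfies $\pi_U(x)=U_x=P_x(H,\cA)=\pi_H\pi_\Omega(x)$ for every $x\in X$, so $\pi_U=\pi_H\pi_\Omega$ exactly as before.

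I do not expect a substantive obstacle; the work is bookkeeping. The two places where I would be careful in a full write-up are: (i) confirming that $\pi_H$ genuinely descends to the quotient $G_{R(\Omega^*)}$ — here one uses that $\Omega^*$ is obtained from $\Omega$ by discarding the type constraints, while retaining the basic, coefficient and factor equations, all of which $H$ satisfies in $G$ (the factor equations holding in $G_k\le G$); and (ii) making explicit that the equality $U=P(H)$ supplied by Lemma \ref{le:R1}(\ref{item:le14-3}) is graphical, but that only equality in $G$ is needed for the commutativity of the diagram.
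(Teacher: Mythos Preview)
Your proposal is correct and matches the paper's approach: the paper gives no explicit proof of this corollary, treating it as immediate from Lemma~\ref{le:R1} and the discussion preceding it (where it is already noted that ``every solution $H$ of the generalised equation gives rise to a solution $U$ of $S$ so that $\pi_U=\pi\pi_H$''). Your write-up simply unpacks this bookkeeping carefully, and the two cautionary points you flag are the right ones.
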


\section{The Process} \label{se:5}

In the previous section we reduced the study of the set of solutions of a system of equations over a free product of groups to the study of solutions of generalised equations. In order to describe the solutions of generalised equations, in this section we describe a branching rewriting process.

The original version of this process for free monoids was introduced by G.~Makanin in his important papers \cite{Makanin} and \cite{Mak82}, where he solves the compatibility problem of systems of equations over a free monoid (over a free group). In his work \cite{Razborov1}, \cite{Razborov3}, A.~Razborov gave a complete description of all solutions of a system of equations over a free group. The process was further developed in many different directions by O.~Kharlampovich and A.~Miasnikov, see \cite{KMIrc}, \cite{IFT}, \cite{EJSJ}.

\bigskip

The process we describe is a rewriting system based on the ``divide and conquer'' algorithm design paradigm.

For a given generalised equation $\Omega_{v_0}$, this branching process results in a locally finite and possibly infinite oriented rooted at $v_0$ tree $T$, $T=T(\Omega_{v_0})$. The vertices of the tree $T$ are labelled by generalised equations $\Omega_{v_i}$. The edges of the tree $T$ are labelled by epimorphisms of the corresponding coordinate groups. Moreover, for every solution $H$ of $\Omega_{v_0}$, there exists a path in the tree $T$ from the root vertex to a vertex $v_l$ and a solution $H^{(l)}$ of $\Omega_{v_l}$ such that the solution $H$ is a composition of the epimorphisms corresponding to the edges in the tree and the solution  $H^{(l)}$.  Conversely, every path from the root to a vertex $v_l$ in $T$ and any solution $H^{(l)}$ of $\Omega_{v_l}$ give rise to a solution of $\Omega_{v_0}$, see Proposition \ref{prop:TO}.

The tree is constructed by induction on the height. Let $v$ be a vertex of height $n$. One can check under the assumptions of which of the 15 cases described in Section \ref{se:5.2} the generalised equation $\Omega_v$ falls. If $\Omega_v$ falls under the assumptions of Case 1 or Case 2, then $v$ is a leaf of the tree $T$. Otherwise, using the combination of elementary and derived transformations (defined in Sections \ref{se:5.1} and \ref{se:5.2half}) given in the description of the corresponding (to $v$) case, one constructs finitely many generalised equations and epimorphisms from the coordinate group of $\Omega_v$ to the coordinate groups of the generalised equations constructed.

We finish this section (see Lemma \ref{3.2}) by proving that infinite branches of the tree $T$, as in the case of free groups, correspond to one of the following three cases:
\begin{enumerate}
\item[(A)] Case 7-10: Linear case (Levitt type, thin type);
\item[(B)] Case 12: Quadratic case (surface type, interval exchange type);
\item[(C)] Case 15: General case (toral type, axial type).
\end{enumerate}

\subsection{Preliminary definitions}
\label{se:parametric}
\begin{defn} \label{de:gepar}
Let $\Omega$ be a generalised equation. If the set \glossary{name={$\Sigma$, $\Sigma(\Omega)$}, description={the set of all closed sections of a generalised equation $\Omega$}, sort=S}$\Sigma = \Sigma(\Omega)$ of all closed sections  of $\Omega$ is partitioned into a disjoint union of subsets
\glossary{name={$V\Sigma$}, description={the set of all variable sections of a generalised equation}, sort=V}
\glossary{name={$C\Sigma$}, description={the set of all constant sections of a generalised equation}, sort=C}
\glossary{name={$F\Sigma$}, description={the set of all free sections of a generalised equation}, sort=F}
\glossary{name={$G_i\Sigma$}, description={the set of all $G_i$-sections of a generalised equation}, sort=G}
$$
\Sigma(\Omega) = V\Sigma\cup G_1\Sigma\cup G_2\Sigma \cup F\Sigma.
$$
Sections from $V\Sigma$, $G_1\Sigma$, $G_2\Sigma$ and $F\Sigma$ are called correspondingly, \index{section!variable}\emph{variable}, \index{section!$G_i$-}$G_1$-, $G_2$-, and \index{section!free}\emph{free} sections. We sometimes refer to $G_1$- and $G_2$-sections as to \index{section!constant}\emph{constant} sections. Set $C\Sigma=G_1\Sigma\cup G_2\Sigma$.
\end{defn}

To organise the process properly, we partition the closed sections of $\Omega$ in another way into a disjoint union of two disjoint sets, which we refer to as \index{part of a generalised equation!non-active}\index{part of a generalised equation!active}\emph{parts}:
\glossary{name={$A\Sigma$}, description={the active part of a generalised equation}, sort=A}
\glossary{name={$NA\Sigma$}, description={the non-active part of a generalised equation}, sort=N}
\begin{equation}  \label{eq:geq1-1}
\Sigma(\Omega) = A\Sigma\cup NA\Sigma
\end{equation}
Sections from $A\Sigma$ are called \index{section!active}{\em active}, and sections from $NA\Sigma$ are called \index{section!non-active}{\em non-active}. We set
$$
C\Sigma \cup F\Sigma\subseteq NA\Sigma .
$$
If not stated otherwise, we assume that all sections from  $V\Sigma$ belong to the active part $A\Sigma$.

If $\sigma \in \Sigma$, then every item (or base) from $\sigma$ is called \index{item!$G_i$-}\index{base!$G_i$-} \index{item!free}\index{base!free} \index{item!active} \index{base!active}\index{item!non-active}\index{base!non-active} active, non-active, $G_1$-, $G_2$-, constant, free item (base), depending on the type of $\sigma$. Note the abuse of terminology: free item has already been defined as an item that does not belong to any base.

\begin{defn}
We say that a generalised equation $\Omega$ is in the \index{standard form of a generalised equation}\emph{standard form} if the following conditions hold.
\begin{enumerate}
    \item All non-active sections from $V\Sigma\cap NA\Sigma$ are located to the right of all active sections, all $G_1$-sections are located to the right  of all non-active sections from $V\Sigma\cap NA\Sigma$, all $G_2$-sections are located to the right of all $G_1$-sections and all free sections are located to the right of all $G_2$-sections. Formally, there are numbers
        \glossary{name={$\rho_A$}, description={the boundary between active and non-active parts of a generalised equation}, sort=R}
        $$
        1 \leq \rho_A  \leq  \rho_{NA} \leq \rho_{G_1} \leq\rho_{G_2} \leq \rho_F=\rho = \rho_\Omega
        $$
        such that $[1,\rho_A +1]$, $[\rho_A +1, \rho_{NA}+1]$, $[\rho_{NA}+1, \rho_{G_1} +1]$, $[\rho_{G_1} +1, \rho_{G_2} +1]$ and $[\rho_{G_2} +1, \rho+1]$ are, correspondingly, unions of all active, all variable non-active, all $G_1$-, all $G_2$-, and all free sections.
    \item All $G_i$-constant bases  and $G_i$-factor bases belong to $G_i\Sigma$, $i=1,2$, and for every letter $a \in \cA$ there is at most one constant base in $\Omega$ labelled by $a$, if there exists a base labelled by $a\in \cA$, then there are no bases labelled by $a^{-1}$.
    \item Every free variable $h_i$ of $\Omega$ belongs to a section from $F\Sigma$.
 \end{enumerate}
 \end{defn}

We will show in Lemma \ref{lem:stform} that every generalised equation can be taken to the standard form.

\subsection{Elementary transformations} \label{se:5.1}

In this section we describe \index{transformation of a generalised equation!elementary}{\em elementary transformations} of  generalised equations. Recall that we consider only formally consistent generalised equations. In general, an elementary transformation $\ET$ associates  to a generalised equation $\Omega$ a finite set of generalised equations $\ET(\Omega) = \left\{\Omega_1, \dots, \Omega_r\right\}$  and a collection of surjective homomorphisms $\theta_i: G_{R(\Omega^\ast)}\rightarrow G_{R({\Omega _i}^\ast)}$ such that for every pair $(\Omega,H)$ there exists a unique pair $(\Omega_i, H^{(i)})$ such that the following diagram commutes.
$$
\xymatrix@C3em{
 G_{R(\Omega^\ast)}  \ar[rd]_{\pi_H} \ar[rr]^{\theta_i}  &  &G_{R(\Omega_i^\ast )} \ar[ld]^{\pi_{H^{(i)}}}
                                                                             \\
                               &  G &
}
$$
Since the pair $(\Omega_i,H^{(i)})$ is defined uniquely, we have a well-defined map $\ET:(\Omega,H) \to (\Omega_i,H^{(i)})$. It is a good exercise to understand what how the elementary transformations change the system of equations corresponding to the generalised equation.

\subsubsection*{\glossary{name={$\ET 1$}, description={elementary transformation $\ET 1$}, sort=E}$\ET 1$: Cutting a base}
Suppose that $\Omega$ contains a boundary connection $(p,\lambda ,q )$.

The transformation $\ET 1$ carries $\Omega$ into a single generalised equation $\Omega_1$ which is obtained from $\Omega$ as follows. To obtain $\Omega_1$ from $\Omega$ we
\begin{itemize}
\item replace (cut in $p$) the base $\lambda$ by two new bases  $\lambda _1$ and $\lambda _2$, and
\item replace (cut in $q$) $\Delta(\lambda)$ by two new bases $\Delta (\lambda _1)$ and $\Delta (\lambda _2)$,
\end{itemize}
so that the following conditions hold.

If $\varepsilon(\lambda) =  \varepsilon(\Delta(\lambda))$, then
\begin{gather}\notag
\begin{split}
&\alpha(\lambda_1) = \alpha(\lambda), \ \beta(\lambda_1) = p, \quad \alpha(\lambda_2) = p, \  \beta(\lambda_2) = \beta(\lambda);
\\
\alpha(\Delta(\lambda_1))= \alpha(\Delta(\lambda)),&\ \beta(\Delta(\lambda_1))= q, \quad \alpha(\Delta(\lambda_2)) = q, \ \beta(\Delta(\lambda_2))= \beta(\Delta(\lambda));
\end{split}
\end{gather}
If  $\varepsilon(\lambda) = - \varepsilon(\Delta(\lambda))$, then
\begin{gather}\notag
\begin{split}
&\alpha(\lambda_1) = \alpha(\lambda), \ \beta(\lambda_1) = p, \quad \alpha(\lambda_2) = p, \  \beta(\lambda_2) = \beta(\lambda);
\\
\alpha(\Delta(\lambda_1)) = q,  &\ \beta(\Delta(\lambda_1)) =\beta(\Delta(\lambda)), \quad \alpha(\Delta(\lambda_2)) =\alpha(\Delta(\lambda)),  \  \beta(\Delta(\lambda_2)) = q;
\end{split}
\end{gather}

Put
$$
\varepsilon(\lambda_i) = \varepsilon(\lambda), \ \varepsilon(\Delta(\lambda_i)) = \varepsilon(\Delta(\lambda)), i= 1,2.
$$

Let $(p', \lambda, q')$ be  a boundary connection in $\Omega$. If $p' < p$,  then we replace $(p', \lambda, q')$ by  $(p',\lambda_1, q')$. If $p' > p$, then we replace   $(p', \lambda, q')$ by  $(p',\lambda_2, q')$.

Notice that from property (\ref{it:forcon2}) of Definition \ref{defn:formcons}, it follows that $(p',\lambda_1, q')$ (or  $(p',\lambda_2, q')$) is a boundary connection in the new generalised equation. The resulting generalised equation $\Omega_1$ is formally consistent.   Put $\ET 1(\Omega) =
\{\Omega_1\}$, see Figure \ref{fig:ET1}.

\begin{figure}[h!tb]
  \centering
   \includegraphics[keepaspectratio,width=6in]{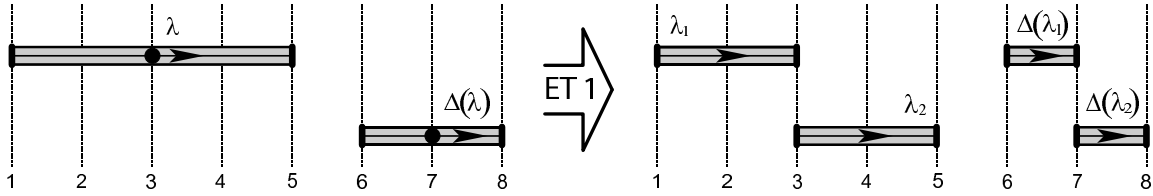}
\caption{Elementary transformation $\ET 1$: Cutting a base.} \label{fig:ET1}
\end{figure}

\subsubsection*{\glossary{name={$\ET 2$}, description={elementary transformation $\ET 2$}, sort=E}$\ET 2$: Transferring a base}
Let a base $\lambda$ of a generalised equation $\Omega$  be contained in the base $\mu$, i.e. $\alpha (\mu)\leq \alpha (\lambda)<\beta (\lambda)\leq\beta(\mu)$. Suppose that the boundaries $\alpha(\lambda)$ and $\beta(\lambda)$ are $\mu$-tied, i.e. there are boundary connections of the form $(\alpha (\lambda),\mu, q_1)$ and $(\beta (\lambda),\mu,q_2)$.  Suppose also that every $\lambda$-tied boundary is $\mu$-tied.

The transformation $\ET 2$ carries $\Omega$ into a single generalised equation $\Omega_1$ which is obtained from $\Omega$ as follows. We transfer $\lambda$ from the base $\mu$ to the base $\Delta (\mu)$ and adjust all the basic and boundary equations (see Figure \ref{fig:ET2}). Formally, we replace $\lambda$ by a new base $\lambda^\prime$ such that $\alpha(\lambda^\prime) = q_1, \beta(\lambda^\prime) = q_2$ and replace each $\lambda$-boundary connection $(p,\lambda,q)$ with a new one $(p^\prime,\lambda^\prime,q)$ where $p$ and $p^\prime$ are related by a $\mu$-boundary connection $(p,\mu, p^\prime)$.

Set $\ET 2(\Omega)=\{\Omega_1\}$.

\begin{figure}[h!tb]
  \centering
   \includegraphics[keepaspectratio,width=6in]{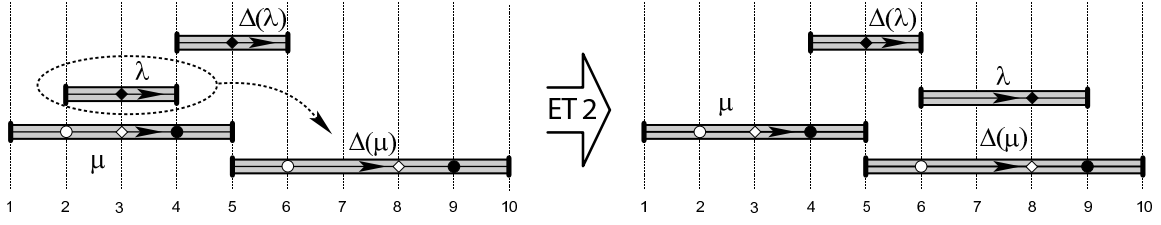}
\caption{Elementary transformation $\ET 2$: Transferring a base.} \label{fig:ET2}
\end{figure}

\subsubsection*{\glossary{name={$\ET 3$}, description={elementary transformation $\ET 3$}, sort=E}$\ET 3$: Removing a pair of matched bases}

Let $\mu$ and $\Delta(\mu)$ be a pair of matched bases in $\Omega$. Since $\Omega$ is formally consistent, one has $\varepsilon(\mu) = \varepsilon(\Delta(\mu))$, $\beta(\mu) = \beta(\Delta(\mu))$ and every $\mu$-boundary connection is of the form $(p,\mu,p)$.

The transformation $\ET 3$ applied to $\Omega$ results in a single generalised equation $\Omega_1$ which is obtained from $\Omega$ by
removing the pair of bases  $\mu, \Delta(\mu)$ with all the $\mu$-boundary connections, see Figure \ref{fig:ET3}.

\begin{figure}[h!tb]
  \centering
   \includegraphics[keepaspectratio,width=6in]{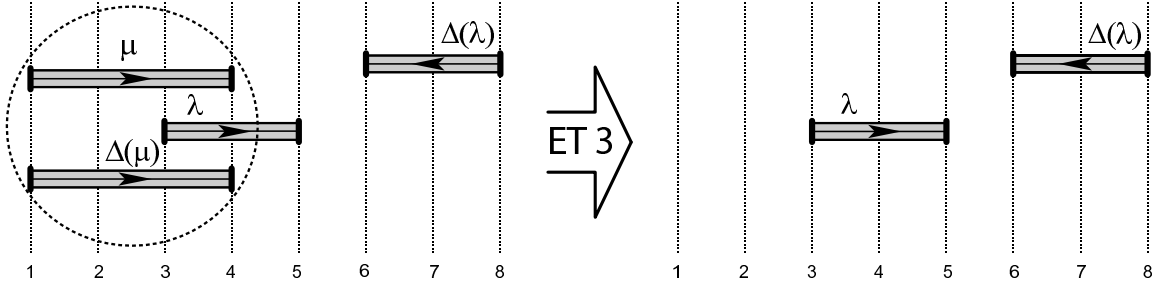}
\caption{Elementary transformation $\ET 3$: Removing a pair of matched bases.} \label{fig:ET3}
\end{figure}

\begin{rem} \label{rem:et123}
Observe that for $i = 1,2,3$, the set $\ET i(\Omega)$ consists of a single generalised equation $\Omega_1$,  such that  $\Omega$ and $\Omega_1$ have the same set of variables $h$. The identity map $G[h] \rightarrow G[h]$ induces a $G$-isomorphism $G_{R(\Omega^\ast )} \rightarrow G_{R({\Omega^{\prime}}^\ast)}$. Moreover, $H$ is a solution of $\Omega$ if and only if $H$ is a solution of $\Omega_1$.
\end{rem}

\subsubsection*{\glossary{name={$\ET 4$}, description={elementary transformation $\ET 4$}, sort=E}$\ET 4$: Removing a linear base}

Suppose that in $\Omega$ a variable base $\mu$ does not intersect any other base, i.e. the items $h_{\alpha(\mu)}, \ldots, h_{\beta(\mu)-1}$ are contained only in one variable base $\mu$. Moreover, suppose that all boundaries that intersect $\mu$ are $\mu$-tied, i.e. for every $i$, $\alpha(\mu)< i< \beta(\mu)$ there exists a boundary $\t(i)$ such that $(i,\mu ,\t(i))$ is a boundary connection in $\Omega$. Since $\Omega$ is formally consistent, we have $\t(\alpha(\mu)) = \alpha(\Delta(\mu))$ and  $\t(\beta(\mu)) = \beta(\Delta(\mu))$ if $\varepsilon(\mu)\varepsilon(\Delta(\mu)) = 1$, and $\t(\alpha(\mu)) = \beta(\Delta(\mu))$ and $\t(\beta(\mu)) = \alpha(\Delta(\mu))$ if $\varepsilon(\mu)\varepsilon(\Delta(\mu)) = -1$.

The transformation $\ET 4$ carries $\Omega $ into a single generalised equation $\Omega_1$ which is obtained from $\Omega$ by deleting the pair of bases $\mu$ and $\Delta(\mu)$; deleting all the boundaries $\alpha(\mu)+1, \ldots, \beta(\mu)-1$, deleting all $\mu$-boundary connections, re-enumerating the remaining boundaries and adjusting the initial and terminal terms as appropriate, see Remark \ref{rem:boundterm} (see Figure \ref{fig:ET4}).

\begin{figure}[!h]
  \centering
   \includegraphics[keepaspectratio,width=6in]{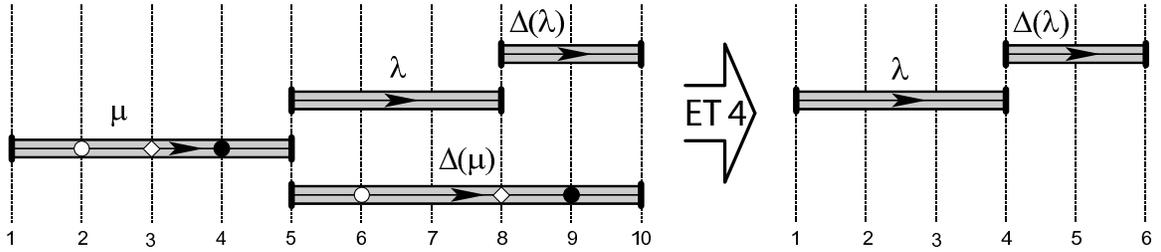}
\caption{Elementary transformation $\ET 4$: Removing a linear base.} \label{fig:ET4}
\end{figure}

We define the homomorphism $\theta_1: G_{R(\Omega^\ast )} \rightarrow G_{R({\Omega_1}^\ast)}$, where $h^{(1)}$ is the set of variables of $\Omega_1$, as follows:
$$
\theta_1(h_j)=
\left\{
  \begin{array}{ll}
h_j^{(1)},                            & \hbox{if $j<\alpha (\mu)$ or $j\ge \beta (\mu)$;} \\
h^{(1)}_{\t(j)} \dots h^{(1)}_{\t(j+1)-1},   & \hbox{if $\alpha +1 \leq j\leq\beta (\mu)-1$ and  $\varepsilon (\mu)=\varepsilon (\Delta(\mu))$;} \\
h^{(1)}_{\t(j-1)}\dots h^{(1)}_{\t(j+1)},  & \hbox{if $\alpha +1 \leq j\leq\beta (\mu)-1$ and $\varepsilon (\mu)=-\varepsilon (\Delta(\mu))$.}
\end{array}
\right.
$$
It is not hard to see that $\theta_1$ is a $G$-isomorphism.

\begin{rem} \label{rem:boundterm}
Every time when we transport or delete a closed section $[i,j]$ (see $\ET 4$, $\D 2$, $\D 5$, $\D 6$), we re-enumerate the boundaries and adjust the initial and terminal terms of the boundary $i$ as follows.

The re-enumeration of the boundaries is defined by the correspondence $\mathcal{C}:\BD(\Omega)\to \BD(\Omega_1)$, $\Omega_1\in\ET 4(\Omega)$:
$$
\mathcal{C}:k\mapsto \left\{
           \begin{array}{ll}
             k, & \hbox{if $k\le i$;} \\
             k-(j-i), & \hbox{if $k\ge j$.}
           \end{array}
         \right.
$$
Define $\a(\mathcal{C}(k))=\left\{
                              \begin{array}{ll}
                                \a(k), & \hbox{if $k\ne i,j$;} \\
                                \a(j), & \hbox{if $k=i$ or $k=j$;}
                              \end{array}
                            \right.$
and $\b(\mathcal{C}(k))=\left\{
                              \begin{array}{ll}
                                \b(k), & \hbox{if $k\ne i,j$;} \\
                                \b(i), & \hbox{if $k=i$ or $k=j$.}
                              \end{array}
                            \right.$
\end{rem}

\subsubsection*{\glossary{name={$\ET 5$}, description={elementary transformation $\ET 5$}, sort=E}$\ET 5$: Introducing a new boundary}

Suppose that a boundary $p$ intersects a base $\mu$ and $p$ is not $\mu$-tied.

The transformation $\ET 5$ $\mu$-ties the boundary $p$ in all possible ways, producing  finitely many different generalised equations. To this end, let $q$ be a boundary intersecting $\Delta (\mu)$. Then we perform one of the following two transformations (see Figure \ref{fig:ET5}):

\begin{figure}[!h]
  \centering
   \includegraphics[keepaspectratio,width=6in]{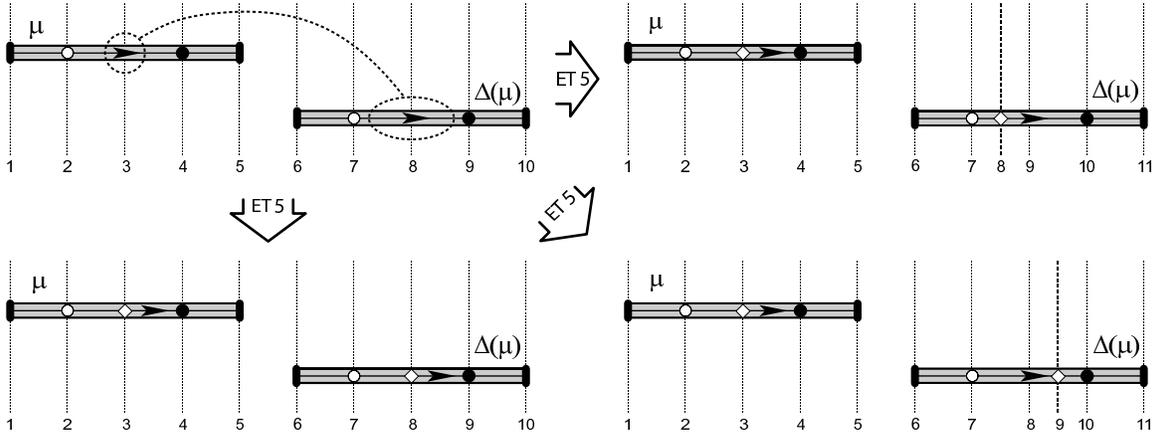}
\caption{Elementary transformation $\ET 5$: Introducing a new boundary.} \label{fig:ET5}
\end{figure}

\begin{enumerate}
\item Introduce the boundary connection $(p,\mu ,q)$, provided that the resulting equation $\Omega_q$ is formally consistent.

The identity map from $G[h]$ to $G[h]$ induces a $G$-epimorphism $\theta_q$ from $G_{R(\Omega^\ast)}$ to $G_{R({\Omega_q}^\ast)}$.

Observe that $\theta_q$ is not necessarily an isomorphism. More precisely, $\theta_q$ is not an isomorphism whenever the boundary equation corresponding to the boundary connection $(p,\mu ,q)$ does not belong to $R(\Omega^\ast)$. In this case $\theta_q$ is a proper epimorphism.

\item Introduce a new boundary $q^\prime$ between $q$ and $q+1$; introduce a new boundary connection $(p,\mu,q^\prime)$ in $\Omega$ (and introduce the initial and terminal terms of $q^\prime$ to satisfy the conditions of the definition of a boundary connection). Denote the resulting generalised equation by $\Omega_{q'}$. The corresponding $G$-homomorphism  $\theta_{q^\prime}: G_{R(\Omega^*)}\to G_{R(\Omega _{q^{\prime}}^*)}$ is induced by the map (below we assume that the boundaries of $\Omega_{q'}$ are labelled $1,2,\dots,q,q',q+1,\dots, \rho_{\Omega_{q^\prime}}+1$):
$$
\theta_{q'} (h_i)=
\left\{
  \begin{array}{ll}
    h_i, & \hbox{ if $i\neq q$;} \\
    h_{q'-1}h_{q'}, & \hbox{ if $i=q$.}
  \end{array}
\right.
$$
Observe that $\theta _{q^\prime}$ is a $G$-isomorphism.
\end{enumerate}

\begin{lem}\label{le:hom-check}
Suppose that the universal Horn theory (the theory of quasi-identities)  of $G$ is decidable. Let $\Omega_1\in \{\Omega_i\}=\ET(\Omega)$ be a generalised equation obtained from $\Omega$ by an elementary transformation $\ET$ and let $\theta_1: G_{R(\Omega^*)}\to G_{R(\Omega_1^*)}$ be the corresponding epimorphism. There exists an algorithm which determines whether or not the epimorphism $\theta_1$ is a proper epimorphism.
\end{lem}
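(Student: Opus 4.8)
The plan is to observe that only one of the elementary transformations can produce a proper epimorphism, and that for that transformation the question is equivalent to a single membership test in a radical, which the decidability of the universal Horn theory of $G$ settles. So I would first reduce to that one case, then rephrase properness as non-membership $w \notin R(\Omega^\ast)$, and finally decide that membership by invoking the oracle on a concrete quasi-identity.

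First I would dispose of the transformations for which $\theta_1$ is automatically an isomorphism. By Remark~\ref{rem:et123}, for $\ET 1$, $\ET 2$, $\ET 3$ the map $\theta_1$ is the $G$-isomorphism induced by $\id_{G[h]}$; for $\ET 4$ and for alternative~(2) of $\ET 5$ the map $\theta_1$ was already shown to be a $G$-isomorphism. Since the transformation $\ET$ producing $\Omega_1$ is given as part of the input, the algorithm checks whether $\ET$ is one of these, and if so reports that $\theta_1$ is not proper. The only remaining possibility is alternative~(1) of $\ET 5$: a boundary connection $(p,\mu,q)$ is introduced, producing a generalised equation which I denote $\Omega_q$, with $\theta_1 = \theta_q$.

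Next I would analyse properness in this case. Introducing $(p,\mu,q)$ adds to $\Omega^\ast$ exactly one new equation — the boundary equation $w = w_{p,\mu,q}$ associated with $(p,\mu,q)$ (one of the two displayed forms, according to the sign $\varepsilon(\mu)\varepsilon(\Delta(\mu))$) — so that $\Omega_q^\ast = \Omega^\ast \cup \{w\}$ and $\theta_q$ is induced by $\id_{G[h]}$. Its kernel is $R(\Omega_q^\ast)/R(\Omega^\ast) = R(\Omega^\ast \cup \{w\})/R(\Omega^\ast)$. One always has $R(S \cup \{w\}) \supseteq R(S)$, with equality if and only if $w \in R(S)$: if $w \in R(S)$ then $S$ and $S \cup \{w\}$ have the same solution set in $G$, so the radicals coincide; conversely $w$ always lies in $R(S \cup \{w\})$ since a radical contains the system defining it. Therefore $\theta_q$ is a proper epimorphism if and only if $w \notin R(\Omega^\ast)$.

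It remains to decide whether $w \in R(\Omega^\ast)$. By the definition of the radical, this holds if and only if every solution in $G$ of the system $\Omega^\ast = 1$ also satisfies $w = 1$; equivalently, $G$ satisfies the sentence
$$
\forall h_1 \cdots \forall h_{\rho}\ \left(\ \bigwedge_{i} \bigl(W_i(h) = 1\bigr)\ \rightarrow\ w(h) = 1\ \right),
$$
where $W_1(h) = 1, \ldots, W_N(h) = 1$ are the equations of $\Omega^\ast$. A generalised equation has only finitely many bases, constant bases and boundary connections, hence $\Omega^\ast$ is a \emph{finite} system; so the conjunction above is finite and the displayed sentence is a quasi-identity over $G$ (in $\cL_G$, and — after replacing each constant from $\cA = G_1 \cup G_2$ by a word in the generators $A$ — in $\cL_A$). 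Since the universal Horn theory of $G$ is decidable, we can decide whether this quasi-identity holds in $G$, and this tells us whether $w \in R(\Omega^\ast)$, hence whether $\theta_1$ is proper. The one delicate point is precisely this translation: one has to confirm that $R(\Omega^\ast)$ is defined by quantifying over solutions of the \emph{plain} system $\Omega^\ast = 1$ — not over the more restrictive notion of a solution of the generalised equation $\Omega$ subject to its type and factor constraints — and that $\Omega^\ast$ and $w$ are finite and effectively available, so that a concrete quasi-identity can be handed to the decision algorithm. Everything else is routine checking against the definitions of the elementary transformations $\ET 1$--$\ET 5$.
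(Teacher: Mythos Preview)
Your proof is correct and follows essentially the same approach as the paper's: reduce to alternative~(1) of $\ET 5$ (since all other elementary transformations yield isomorphisms), identify the single new boundary equation $w$ added to $\Omega^\ast$, and decide whether $w\in R(\Omega^\ast)$ via the quasi-identity $\forall h\,(\Omega^\ast(h)=1\to w(h)=1)$. Your version is in fact more careful than the paper's, in that you explicitly justify why the quasi-identity is finite and why the radical refers to solutions of the plain system $\Omega^\ast$ rather than the constrained generalised equation.
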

\begin{proof}
The only non-trivial case is when $\ET = \ET 5$ and no new boundaries were introduced. In this case $\Omega_1$ is obtained from $\Omega$ by adding a new boundary equation $s = 1$, which is effectively determined by $\Omega$ and $\Omega_1$.
In this event, the coordinate group
$$
G_{R({\Omega_1}^\ast)} = G_{R({\{\Omega^* \cup \{s\}\}})}
$$
is a quotient of the group $G_{R(\Omega^\ast)}$. The homomorphism $\theta_1$ is an isomorphism if and only if $R(\Omega^\ast) = R({\Omega^* \cup \{s\}})$, or, equivalently, $s \in R(\Omega^\ast)$. The latter condition holds if and only if $s$ vanishes on all solutions of the system of equations $\Omega^\ast = 1$ in $G$, i.e. if the following universal formula (quasi identity) holds in $G$:
$$
\forall x_1 \ldots \forall x_\rho (\Omega^\ast (x_1, \ldots, x_\rho) = 1 \rightarrow s(x_1, \ldots, x_\rho) = 1).
$$
\end{proof}

\begin{rem}
Note that in \cite{DL} V.~Diekert and M.~Lohrey prove that if the universal theories of  the factors $G_1$ and $G_2$ are decidable, so is the universal theory of $G$.
\end{rem}

\subsection{Derived transformations} \label{se:5.2half}

In this section we describe several useful transformations of generalised equations. Some of them are finite sequences of elementary transformations, others result in equivalent generalised equations but cannot be presented as a composition of finitely many elementary transformations.

In general, a \index{transformation of a generalised equation!derived}\emph{derived transformation} $\D$ associates  to a generalised equation $\Omega$ a finite set of generalised equations $\D(\Omega) = \left\{\Omega_{1}, \dots, \Omega_{r}\right\}$,  and a collection of surjective homomorphisms $\theta_i: G_{R(\Omega^\ast)}\to G_{R({\Omega _i}^\ast)}$ such that for every pair $(\Omega, H)$ there exists a unique pair $(\Omega_i, H^{(i)})$ such that the following diagram commutes.
$$
\xymatrix@C3em{
 G_{R(\Omega^*)}  \ar[rd]_{\pi_H} \ar[rr]^{\theta_i}  &  &G_{R(\Omega_i^*)} \ar[ld]^{\pi_{H^{(i)}}}
                                                                             \\
                               &  G &
}
$$
Since the pair $(\Omega_i,H^{(i)})$ is defined uniquely, we have a well-defined map $\D:(\Omega,H) \to (\Omega_i,H^{(i)})$.

\subsubsection*{\glossary{name={$\D 1$}, description={derived transformation $\D 1$}, sort=D}$\D 1:$ Closing a section}

Let $\sigma=[i,j]$ be a section of $\Omega $. The transformation $\D 1$ makes the section $\sigma$ closed. To perform $\D 1$, using  $\ET 5$, we $\mu$-tie the boundary $i$ (the boundary $j$) in every base $\mu$ containing $i$ ($j$, respectively). Using $\ET 1$, we cut all the bases containing $i$ (or $j$) in the boundary $i$ (or in $j$), see Figure \ref{fig:D1}.

\begin{figure}[!h]
  \centering
   \includegraphics[keepaspectratio,width=6in]{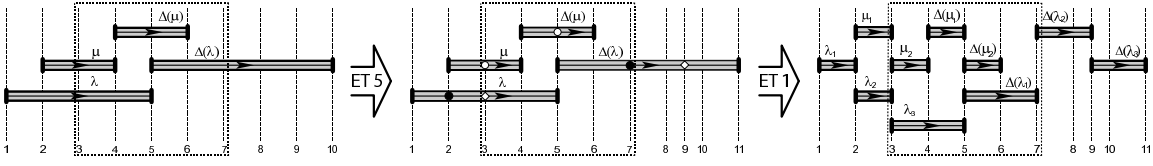}
\caption{Derived transformation $\D 1$: Closing a section.} \label{fig:D1}
\end{figure}

\subsubsection*{\glossary{name={$\D 2$}, description={derived transformation $\D 2$}, sort=D}$\D 2:$ Transporting a closed section}

Let $\sigma$ be a closed section of a generalised equation $\Omega$. The derived transformation $\D 2$ takes $\Omega$ to a single generalised equation $\Omega_1$ obtained from $\Omega$ by cutting $\sigma$ out from the interval $[1,\rho_\Omega+1]$ together with all the bases, and boundary connections on $\sigma$ and moving $\sigma$ to the end of the interval or between two consecutive closed sections of $\Omega$, see Figure \ref{fig:D2}. Then we re-enumerate all the items and boundaries of the obtained generalised equation and adjust the initial and terminal terms of the boundaries as appropriate, see Remark \ref{rem:boundterm}. Clearly, the original equation $\Omega$ and the new one $\Omega_1$ have the same solution sets and their coordinate groups are isomorphic (the isomorphism is induced by a permutation of the variables $h$).

\begin{figure}[!h]
  \centering
   \includegraphics[keepaspectratio,width=6in]{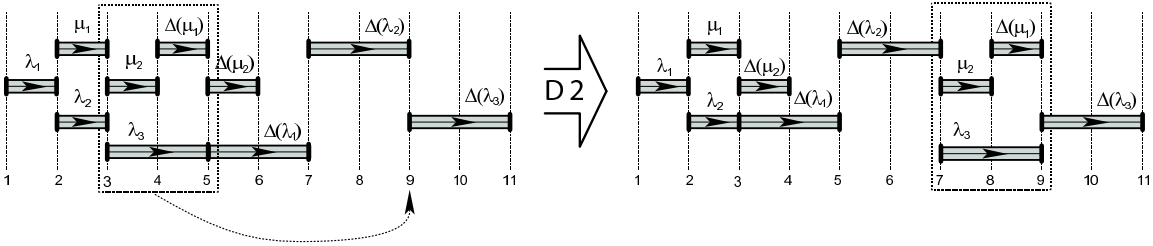}
 \caption{Derived transformation $\D 2$: Transporting a closed section.} \label{fig:D2}
\end{figure}

\subsubsection*{\glossary{name={$\D 3$}, description={derived transformation $\D 3$}, sort=D}$\D 3:$ Completing a cut}

Let $\Omega $ be a generalised equation. The derived transformation $\D 3$ carries $\Omega$ into a single generalised equation \glossary{name={$\widetilde{\Omega}$}, description={the generalised equation obtained from $\Omega$ by $\D 3$}, sort=O}$\widetilde{\Omega}=\Omega_1$ by applying $\ET 1$ to all boundary connections in $\Omega$. Formally, for every boundary connection $(p,\mu,q)$ in $\Omega$ we cut the base $\mu$ in $p$  applying $\ET 1$. Clearly, the generalised equation $\widetilde\Omega$ does not depend on the choice of a sequence of transformations $\ET 1$. Since, by Remark \ref{rem:et123}, $\ET 1$ induces the identity isomorphism between the coordinate groups, equations $\Omega$ and $\widetilde\Omega$ have isomorphic coordinate groups and the isomorphism is induced by the identity map $G[h] \to G[h]$.

\subsubsection*{\glossary{name={$\D 4$}, description={derived transformation $\D 4$}, sort=D}$\D 4:$ \index{kernel of a generalised equation}Kernel of a generalised equation}
The derived transformation $\D 4$ applied to a generalised equation $\Omega$ results in a single generalised equation \glossary{name={$\Ker(\Omega)$}, description={kernel of a generalised equation}, sort=K}$\Ker(\Omega)=\D 4(\Omega)$ constructed below.

Applying $\D 3$, if necessary, one can assume that a generalised equation $\Omega$ does not contain boundary connections. An active variable base $\mu  \in A\Sigma(\Omega)$ is called \index{base!eliminable}\emph{eliminable} if at least one of the following holds:
\begin{enumerate}
    \item [(a)] $\mu$ contains an  item $h_i$ with $\gamma(h_i)=1$;
    \item [(b)] at least one of the boundaries $\alpha (\mu),\beta (\mu)$ is different from $1,\rho +1 $ and it does not touch any other base (except $\mu$).
\end{enumerate}

An \index{elimination process for a generalised equation}\emph{elimination  process} for $\Omega$ consists of consecutive removals (\index{elimination of a base}\emph{eliminations}) of  eliminable bases until there are no eliminable bases left in the generalised equation. The resulting generalised equation is called the \emph{kernel} of $\Omega$ and we denote it by $\Ker(\Omega)$. It is easy to see that $\Ker(\Omega)$ does not depend on the choice of the elimination process. Indeed, if $\Omega$ has two different eliminable bases $\mu_1$, $\mu_2$, and elimination of  $\mu_i$ results in a generalised equation $\Omega_i$ then  by induction on the number of eliminations $\Ker(\Omega_i)$ is defined uniquely for $i = 1,2$. Obviously, $\mu_1$ is still eliminable in $\Omega_2$, as well as $\mu_2$ is eliminable in $\Omega_1$. Now eliminating $\mu_1$ and $\mu_2$ from $\Omega_2$ and $\Omega_1$ respectively, we get the same generalised equation $\Omega_0$. By induction $\Ker(\Omega_1) = \Ker(\Omega_0) = \Ker(\Omega_2)$, hence the result.

We say that a variable $h_i$ \index{item!belongs to the kernel}{\em belongs to the kernel}, if $h_i$ either belongs to at least one base in the kernel, or is constant.

We now establish a relation between the coordinate group of the generalised equation $\Omega$ and the coordinate group of its kernel $\Ker(\Omega)$.

For a generalised equation $\Omega$ we denote by $\overline{\Omega}$  the generalised equation which is obtained from $\Omega$ by deleting all free variables in $\Omega$. Obviously,
$$
G_{R(\Omega^\ast)} = G_{R({\overline {\Omega}}^\ast)} \ast F(Y)
$$
where $Y$ is the set of free variables in $\Omega$.

Let us consider what happens in the elimination process on the level of coordinate groups.

We start with the case  when just one base is eliminated. Let $\mu$ be an eliminable base in $\Omega = \Omega(h_1, \ldots, h_\rho)$. Denote by $\Omega_1$  the generalised equation obtained from $\Omega$ by eliminating $\mu$ .

\begin{enumerate}
\item  \label{item:7-10c1} Suppose  $h_i \in \mu$ and $\gamma(h_i) = 1$, i.e. $\mu$ falls under the assumption (a) of the definition of an eliminable base. Then the variable $h_i$ occurs only once in $\Omega$ (in the equation $h(\mu)^{\varepsilon(\mu)}=h(\Delta(\mu))^{\varepsilon(\Delta(\mu))}$ corresponding to the base $\mu$, denote this equation by $s_\mu$).  Therefore, in the coordinate group $G_{R(\Omega^\ast )}$ the relation $s_\mu$ can be written as $h_i = w_\mu$, where $w_\mu$ does not contain $h_i$. Consider the generalised equation $\Omega'$ obtained from $\Omega$ by deleting the equation $s_\mu$ and the item $h_i$. The presentation of the coordinate group of $\Omega'$ is obtained from the presentation of the coordinate group of $\Omega$ using a sequence of Tietze transformations, thus these groups are isomorphic.

It follows immediately that
    $$
    G_{R({\Omega_1}^\ast)} \simeq  G_{R({\Omega^\prime}^\ast)} \ast \langle h_i \rangle.
    $$
We therefore get that
    \begin{equation} \label{eq:ker1}
        G_{R(\Omega^\ast)} \simeq G_{R({\Omega^\prime}^\ast)} \simeq G_{R({\overline {\Omega}_1}^\ast)} \ast F(Z),
    \end{equation}
    where $F(Z)$ is the free group generated by the free variables of $\Omega_1$. Note that all the groups and equations which occur above can be found effectively.

\item \label{item:7-10c2} Suppose now that $\mu$ falls under the assumptions of case b) of the definition of an eliminable base, with respect to a boundary $i$. Then in all the equations of the generalised equation $\Omega$ but $s_\mu$, the variable $h_{i-1}$ occurs as a part of the subword $(h_{i-1}h_i)^{\pm 1}$. In the equation $s_\mu$ the variable $h_{i-1}$ either occurs once or it occurs precisely twice and in this event the second occurrence of $h_{i-1}$ (in $\Delta(\mu)$) is a part of the subword $(h_{i-1}h_i)^{\pm 1}$.
It is obvious that the tuple
    $$
    (h_1, \ldots, h_{i-2},h_{i-1}, h_{i-1}h_i, h_{i+1}, \ldots,h_\rho)=(h_1, \ldots, h_{i-2},h_{i-1}, h_i', h_{i+1}, \ldots,h_\rho)
    $$
forms a basis of the ambient free group generated by $(h_1,\ldots, h_\rho)$. We further assume that $\Omega$ is a system of equations in these variables. Notice, that in the new basis, the variable $h_{i-1}$ occurs only once in the generalised equation $\Omega$, in the equation $s_\mu$. Hence, as in case (\ref{item:7-10c1}), the relation $s_\mu$ can be written as $h_{i-1} = w_\mu$, where $w_\mu$ does not contain $h_{i-1}$.

Therefore, if we eliminate the relation $s_\mu$, then the set $Y = (h_1, \ldots, h_{i-2}, h_i', h_{i+1}, \ldots,h_{\rho})$ is a generating set of $G_{R(\Omega^\ast)}$. This shows that
    $$
    G_{R(\Omega^*)} \simeq \factor{G\ast F(Y \cup \cA)}{R(w_\lambda(Y) \mid \lambda \neq \mu)} \simeq G_{R({\Omega^\prime}^*)},
    $$
    where $\Omega^\prime$ is the generalised equation obtained from $\Omega_1$ by deleting the boundary $i$, and $G_{R(\Omega^*)}$, $G_{R({\Omega^\prime}^*)}$ are defined as in Section \ref{sec:relsol2ge}. Denote by $\Omega^{\prime \prime}$ an equation obtained from $\Omega^\prime$ by adding a free variable $z$ to the free part $F\Sigma$ of $\Omega^\prime$. It now follows that
    $$
        G_{R({\Omega_1}^\ast)} \simeq  G_{R({\Omega^{\prime \prime}}^\ast)} \simeq  G_{R({\Omega^\prime}^\ast)} \ast \langle z \rangle
    $$
    and
    \begin{equation}\label{eq:ker2}
        G_{R(\Omega^\ast)} \simeq G_{R({\overline{\Omega^{\prime\prime}}^\ast})} \ast F(Z),
    \end{equation}
where $F(Z)$ is the free group generated by the free items of $\Omega^{\prime\prime}$ distinct from $z$. Notice that all the groups and equations which occur above can be found effectively.
\end{enumerate}

By induction on the number of steps in the elimination process we obtain the following lemma.
\begin{lem}\label{7-10}
In the above notation,
$$
G_{R(\Omega^\ast)} \simeq G_{R({\overline {\Ker(\Omega)}}^\ast)} \ast F(Z),
$$
where $F(Z)$ is the free group generated by a set of free items from ${\Ker(\Omega)}$. Moreover, all the groups and equations which occur above can be found effectively.
\end{lem}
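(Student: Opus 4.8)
Since $\ET 1$ induces the identity isomorphism on coordinate groups, applying $\D 3$ changes neither side of the asserted isomorphism nor $\Ker(\Omega)$, so we may assume $\Omega$ has no boundary connections. The plan is to prove, by induction on the number $N$ of eliminations in an elimination process carrying $\Omega$ to $\Ker(\Omega)$, the slightly more precise statement: $G_{R(\overline{\Omega}^\ast)} \simeq G_{R(\overline{\Ker(\Omega)}^\ast)} \ast F(W)$, where $W$ is the set of items that are free in $\Ker(\Omega)$ but not in $\Omega$, with all the data effectively computable. Granting this, the lemma follows: by the splitting $G_{R(\Omega^\ast)} = G_{R(\overline{\Omega}^\ast)}\ast F(Y)$ recorded before the statement ($Y$ the free items of $\Omega$) we get $G_{R(\Omega^\ast)} \simeq G_{R(\overline{\Ker(\Omega)}^\ast)}\ast F(W)\ast F(Y) = G_{R(\overline{\Ker(\Omega)}^\ast)}\ast F(Z)$ with $Z=Y\sqcup W$; and since a free item of a generalised equation stays free under every elimination, $Z$ is precisely the set of free items of $\Ker(\Omega)$.

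For $N=0$ there is no eliminable base, so $\Ker(\Omega)=\Omega$, $W=\emptyset$, and the statement is trivial. Let $N\ge 1$, pick an eliminable base $\mu$ of $\Omega$, and let $\Omega_1$ be the result of eliminating it. By the order-independence of the elimination process proved in the description of $\D 4$, $\Ker(\Omega_1)=\Ker(\Omega)$, and $\Ker(\Omega_1)$ is reached from $\Omega_1$ in $N-1$ steps, so the inductive hypothesis applies to $\Omega_1$ and gives $G_{R(\overline{\Omega_1}^\ast)}\simeq G_{R(\overline{\Ker(\Omega)}^\ast)}\ast F(W_1)$, with $W_1$ the set of items free in $\Ker(\Omega)$ but not in $\Omega_1$. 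On the other hand, running the single-base computation of item (\ref{item:7-10c1}) when $\mu$ satisfies (a) of the definition of an eliminable base (solve the relation $s_\mu$ for the unique once-occurring variable by a Tietze transformation, cf. (\ref{eq:ker1})), or that of item (\ref{item:7-10c2}) when $\mu$ satisfies only (b) (first change the ambient free basis to $(\dots,h_{i-2},h_{i-1},h_{i-1}h_i,h_{i+1},\dots)$ so that $h_{i-1}$ occurs once, then reduce and delete the boundary $i$, cf. (\ref{eq:ker2})), and cancelling the common free factor $F(Y)$, one obtains $G_{R(\overline{\Omega}^\ast)}\simeq G_{R(\overline{\Omega_1}^\ast)}\ast F(W')$, where $W'$ is the set of items that become free when $\mu$ is eliminated (in case (b) this includes the auxiliary variable $z$). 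Substituting the first isomorphism into the second and using that a free product of free groups is free on the union of the free generating sets, we get $G_{R(\overline{\Omega}^\ast)}\simeq G_{R(\overline{\Ker(\Omega)}^\ast)}\ast F(W'\sqcup W_1)$; finally $W'\sqcup W_1=W$, because the sets of free items grow along the process, so $(\text{free in }\Ker(\Omega))\setminus(\text{free in }\Omega)$ splits as $(\text{free in }\Ker(\Omega))\setminus(\text{free in }\Omega_1)$ together with $(\text{free in }\Omega_1)\setminus(\text{free in }\Omega)$. Effectiveness is immediate: the process terminates (each elimination deletes one of the finitely many bases) and every coordinate-group manipulation used (Tietze transformations, change of free basis, deletion of a boundary, $\D 3$) is explicitly computable from the data of the generalised equation.

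The genuine obstacle is the bookkeeping concentrated in case (b): one must check that the auxiliary modifications made there — passing to the new free basis (which leaves the combinatorial object itself untouched), deleting the boundary $i$, and adjoining the free variable $z$ — neither create nor destroy eliminable bases, so that $\Ker(\Omega_1)=\Ker(\Omega)$ holds as stated and the counting $W=W'\sqcup W_1$ is exact, as well as keeping precise track of which items are free at each stage so that the accumulated free factor is $F(Z)$ with $Z$ the set of free items of $\Ker(\Omega)$. Once these routine verifications are in place, the proof is just an iteration of the two single-base computations (\ref{item:7-10c1}) and (\ref{item:7-10c2}).
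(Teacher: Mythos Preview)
Your inductive strategy mirrors the paper's, but the precise inductive hypothesis you chose is false, and this breaks the argument. The single-step claim
\[
G_{R(\overline{\Omega}^{\ast})}\ \simeq\ G_{R(\overline{\Omega_1}^{\ast})}\ast F(W'),
\qquad W'=\{\text{items that become free when }\mu\text{ is eliminated}\},
\]
does not hold; in fact in case (b) the containment can go the other way. Concretely: take items $h_1,\dots,h_6$, a pair $\mu,\Delta(\mu)$ with $\mu$ on $h_1h_2$ and $\Delta(\mu)$ on $h_4h_5$, and a pair $\lambda,\Delta(\lambda)$ with $\lambda$ on $h_1h_2h_3$ and $\Delta(\lambda)$ on $h_4h_5h_6$. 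Then $\mu$ is eliminable via (b) (boundary $3=\beta(\mu)$ touches only $\mu$) but not via (a), and no item becomes free upon eliminating $\mu$, so your $W'=\{z\}$. A direct computation gives $G_{R(\Omega^{\ast})}\simeq G\ast F_4$ while $G_{R(\Omega_1^{\ast})}\simeq G\ast F_5$; since there are no free items in either equation, $\overline{\Omega}=\Omega$ and $\overline{\Omega_1}=\Omega_1$, and your asserted isomorphism would force $G\ast F_4\simeq (G\ast F_5)\ast F_1$, which is absurd. The underlying reason is that in both cases one always has $G_{R(\Omega_1^{\ast})}\simeq G_{R(\Omega^{\ast})}\ast\Z$ (the coordinate group \emph{grows} by a rank-one free factor at each elimination), so after $l$ steps $G_{R(\Ker(\Omega)^{\ast})}\simeq G_{R(\Omega^{\ast})}\ast F_l$, and combining with $G_{R(\Ker(\Omega)^{\ast})}=G_{R(\overline{\Ker(\Omega)}^{\ast})}\ast F(f_l)$ yields $|Z|=f_l-l$, as the paper notes --- \emph{not} $f_l$. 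Your identification $Z=\{\text{all free items of }\Ker(\Omega)\}$ is therefore also wrong; the lemma only claims $Z$ is \emph{a} subset of those free items.

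The repair is to set up the induction as the paper does: do not compare $\overline{\Omega}$ with $\overline{\Omega_1}$, but instead use the equalities $G_{R(\Omega^{\ast})}\simeq G_{R({\Omega'}^{\ast})}$ from (\ref{eq:ker1}) and (\ref{eq:ker2}) and iterate on the modified equations $\Omega'$ (or $\Omega''$), which have the \emph{same} coordinate group as $\Omega$ and whose kernel satisfies $\overline{\Ker(\Omega')}=\overline{\Ker(\Omega_1)}=\overline{\Ker(\Omega)}$. At each step one free item (case (a): the Tietze-eliminated $h_i$; case (b): the auxiliary $z$) is absorbed rather than contributed to $F(Z)$, which is exactly the $-l$ in the final count.
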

\begin{proof} Let
$$
\Omega = \Omega_0 \rightarrow \Omega_1 \rightarrow \ldots \rightarrow \Omega_l = \Ker(\Omega),
$$
where $\Omega_{j+1}$ is obtained from $\Omega_j$ by eliminating an eliminable base, $j = 0, \ldots,l-1$. It is easy to see (by induction on $l$)  that for every $j = 0, \ldots,l-1$
$$
\overline{\Ker(\Omega_j)} = \overline{\Ker (\overline{\Omega_j})}.
$$
Moreover, if $\Omega_{j+1}$ is obtained from $\Omega_j$ as in case (\ref{item:7-10c2}) above, then (in the above notation)
$$
\overline{(\Ker (\Omega_j))_1} = \overline{\Ker(\Omega_j^{\prime\prime})} .
$$
The statement of the lemma now follows from the remarks above and Equations (\ref{eq:ker1}) and (\ref{eq:ker2}). Note that the cardinality of the set $Z$ is the number of free variables of $\Ker(\Omega)$ minus $l$.
\end{proof}

\subsubsection*{\glossary{name={$\D 5$}, description={derived transformation $\D 5$}, sort=D}$\D 5:$ \index{transformation of a generalised equation!entire}Entire transformation}

In order to define the derived transformation $\D 5$ we need to introduce several notions.  A base $\mu$ of the generalised equation $\Omega$ is called a \index{base!leading}{\em leading} base if $\alpha(\mu)=1$. A leading base is called a \index{base!carrier}\index{carrier}{\em carrier} if for any other leading base $\lambda$ we have $\beta (\lambda)\leq \beta (\mu)$. Let $\mu $ be a carrier base of $\Omega$. Any active base $\lambda \neq \mu$ with $\beta(\lambda )\leq \beta (\mu )$ is called a \index{base!transfer}{\em transfer} base (with respect to $\mu$).

Suppose now that $\Omega$ is a generalised equation with $\gamma(h_i)\geq 2$ for each $h_i$ in the active part of $\Omega$. The \emph{entire transformation} is the following sequence of elementary transformations.

We fix a carrier base $\mu$ of $\Omega$. For any transfer base $\lambda$ we $\mu$-tie (applying $\ET 5$) all boundaries that intersect $\lambda$. Using $\ET 2$ we transfer all transfer bases from $\mu$ onto $\Delta (\mu)$. Now, there exists some $k<\beta (\mu)$ such that $h_1,\ldots ,h_k$ belong to only one base $\mu,$ while  $h_{k+1}$ belongs to at least two bases. Applying $\ET 1$ we cut $\mu$ in the boundary $k+1$. Finally, applying $\ET 4$, we delete the section $[1,k+1]$, see Figure \ref{quadratic}.

Let $\rho_A$ be the boundary between the active and non-active parts of $\Omega$, i.e. $[1,\rho_A]$ is the active part $A\Sigma$ of $\Omega$ and $[\rho_A,\rho+1]$ is the non-active  part $NA\Sigma$ of $\Omega$.

As we will see later, roughly speaking, the process has two main sub-processes: first it eliminates redundant equations (cleaning), and then applies the entire transformation, which is the motor of the process. We now introduce the excess of a solution of the generalised equation and prove that it is invariant under the entire transformation. This fact will be used later, in Section \ref{5.5.3}.

\begin{defn}\label{defn:excess}
For a pair $(\Omega,H)$, we introduce the following notation
\glossary{name={$d_{A\Sigma}(H)$}, description={length of the active part of the solution, $d_{A\Sigma}(H)=\sum \limits_{i=1}^{\rho_A-1}|H_i|$}, sort=D}
\begin{equation}\label{3.12}
d_{A\Sigma}(H)=\sum \limits_{i=1}^{\rho_A-1}|H_i|,
\end{equation}
\glossary{name={$\psi_{A\Sigma}(H)$}, description={excess of the solution, $\psi_{A\Sigma}(H)=\sum \limits_{\mu\in \omega_1}|H({\mu})|-2d_{A\Sigma}(H)$}, sort=P}
\begin{equation}\label{3.13}
\psi_{A\Sigma}(H)=\sum \limits_{\mu\in \omega_1}|H({\mu})|-2d_{A\Sigma}(H),
\end{equation}
where \glossary{name={$\omega_1$}, description={the set of all variable bases $\nu $ for which either $\nu$ or $\Delta (\nu)$ belongs to the active part of a generalised equation}, sort=O}$\omega _1$ is the set of all variable bases $\nu$ for which either $\nu$ or $\Delta (\nu)$ belongs to the active part $[1,\rho_A]$ of $\Omega$.

We call the number $\psi_{A\Sigma}(H)$ the \index{excess of the solution}\emph{excess} of the solution $H$ of the generalised equation $\Omega$.
\end{defn}

Since, by assumption, every item $h_i$ of the section $[1,\rho_A]$ belongs to at least two bases and each of these bases belongs to $A\Sigma$, hence $\psi_{A\Sigma}(H)\ge 0$.

Notice, that if for every item $h_i$ of the section $[1,\rho_A]$ one has $\gamma(h_i)=2$, for every solution $H$ of $\Omega$ the excess $\psi_{A\Sigma}(H)=0$.  Informally, in some sense, the excess of $H$ measures how far the generalised equation $\Omega$ is from being \index{generalised equation!quadratic}quadratic (every item in the active part is covered twice).

\begin{lem}\label{lem:excess}
Let $\Omega$ be a generalised equation such that  every item $h_i$ from the active part $A\Sigma=[1,\rho_A]$ of $\Omega$ is covered at least twice. Suppose that $\D 5(\Omega,H)=(\Omega_i,H^{(i)})$ and the carrier base $\mu$ of $\Omega$ and its dual $\Delta(\mu)$ belong to the active part of $\Omega$.
Then the excess of the solution $H$ equals the excess of the solution $H^{(i)}$, i.e. $\psi_{A\Sigma}(H)=\psi_{A\Sigma}(H^{(i)})$.
\end{lem}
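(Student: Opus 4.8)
The plan is to trace the entire transformation through the elementary transformations it is built from, and to check that each of them leaves the excess unchanged. Recall that $\D 5$ applied to $(\Omega,H)$ is realised as: several applications of $\ET 5$ that $\mu$-tie the boundaries meeting the transfer bases; several applications of $\ET 2$ that carry the transfer bases from the carrier $\mu$ onto $\Delta(\mu)$; one application of $\ET 1$ cutting $\mu$ (and $\Delta(\mu)$) at the boundary $k+1$; and finally one application of $\ET 4$ deleting the closed section $[1,k+1]$ (the assumption that every active item is covered at least twice is what makes $\D 5$ applicable in the first place). I will write $\mu_1,\mu_2$, resp.\ $\Delta(\mu_1),\Delta(\mu_2)$, for the two halves of $\mu$, resp.\ of $\Delta(\mu)$, produced by the cut, so $\alpha(\mu_1)=1$ and $\beta(\mu_1)=k+1$, and I will follow the pair $(\Omega,H)\mapsto(\Omega_1,H^{(1)})\mapsto\cdots\mapsto(\Omega_i,H^{(i)})$ along the way. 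Throughout I use that for a solution $H$ and a base $\nu$ one has $|H(\nu)|=\sum_{h_j\in\nu}|H_j|$, since each side of a basic equation is reduced as written.

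First I would dispose of the $\ET 5$, $\ET 2$ and $\ET 1$ steps, where essentially nothing happens: no base is newly created or destroyed — a base is at worst split in two ($\ET 1$) or carried to a new position ($\ET 2$) — and, because the hypothesis puts \emph{both} $\mu$ and $\Delta(\mu)$ in the active part, a transfer base carried onto $\Delta(\mu)$ stays active and hence stays in $\omega_1$, while the halves $\mu_j,\Delta(\mu_j)$ remain in $\omega_1$ after the cut; moreover no item moves between the active and the non-active part. Whenever a base is split ($\ET 1$), or an item is split (the variant of $\ET 5$ introducing a new boundary), the relevant boundary and basic equations, valid for $H$ since $H$ is a solution, force the lengths to add up: $|H(\mu_1)|+|H(\mu_2)|=|H(\mu)|$, $|H(\Delta(\mu_1))|+|H(\Delta(\mu_2))|=|H(\Delta(\mu))|$, and analogously for a split item and for the position change $\lambda\mapsto\lambda'$ in $\ET 2$, where the boundary equations give $|H(\lambda')|=|H(\lambda)|$. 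Hence neither $\sum_{\nu\in\omega_1}|H(\nu)|$ nor $d_{A\Sigma}(H)$ changes, so $\psi_{A\Sigma}$ is preserved by all of these steps.

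The one step that genuinely alters the two quantities is the final $\ET 4$. By the choice of $k$ the items $h_1,\dots,h_k$ lie in no base but $\mu_1$, and $\mu_1$ is disjoint from $\Delta(\mu_1)$; so $\ET 4$ deletes exactly $h_1,\dots,h_k$ together with the bases $\mu_1,\Delta(\mu_1)$, while the items formerly covered by $\Delta(\mu_1)$ survive, re-indexed, inside the active part, and no other base is affected. Since $h_1\cdots h_k$ is a side of the basic equation of $\mu_1$ it is reduced as written, so $\sum_{i=1}^k|H_i|=|H(\mu_1)|$ and $d_{A\Sigma}$ drops by exactly $|H(\mu_1)|$. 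At the same time $\omega_1$ loses only the two bases $\mu_1$ and $\Delta(\mu_1)$, contributing $|H(\mu_1)|$ and $|H(\Delta(\mu_1))|$ to $\sum_{\nu\in\omega_1}|H(\nu)|$, and $|H(\mu_1)|=|H(\Delta(\mu_1))|$ by the basic equation, while no surviving base changes value; hence $\sum_{\nu\in\omega_1}|H(\nu)|$ drops by $2|H(\mu_1)|$. Combining,
\[
\psi_{A\Sigma}(H^{(i)})=\Bigl(\sum_{\nu\in\omega_1}|H(\nu)|-2|H(\mu_1)|\Bigr)-2\bigl(d_{A\Sigma}(H)-|H(\mu_1)|\bigr)=\psi_{A\Sigma}(H),
\]
which together with the previous paragraph proves the claim.

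The delicate point, and the one I would spend most effort on, is the bookkeeping for $\ET 4$: checking that deleting the base $\Delta(\mu_1)$ subtracts precisely $|H(\Delta(\mu_1))|$ from $\sum_{\nu\in\omega_1}|H(\nu)|$ and nothing more — that is, that $\omega_1$ loses only $\mu_1$ and $\Delta(\mu_1)$, that no surviving base loses items, and that no item is expelled from the active part. This is exactly where the assumption that $\Delta(\mu)$ (and not just $\mu$) is active is used: it keeps the transferred transfer bases in $\omega_1$ during the $\ET 2$ steps and keeps $\Delta(\mu)$, in particular $\Delta(\mu_1)$, inside the active part when $[1,k+1]$ is excised; dropping it would let a transfer base leave $\omega_1$ while its items remain active, and the excess would strictly decrease.
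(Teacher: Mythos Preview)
Your proof is correct and reaches the same conclusion by the same underlying computation: the drop in $d_{A\Sigma}$ equals $|H(\mu_1)|$ while the drop in $\sum_{\nu\in\omega_1}|H(\nu)|$ equals $2|H(\mu_1)|$, so the excess is preserved.

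The organisation differs from the paper's. You trace the entire transformation step by step through $\ET 5$, $\ET 2$, $\ET 1$, $\ET 4$, checking that the first three leave both summands untouched and that the last changes them in the required $1:2$ ratio. The paper instead argues globally from the graphical equalities $H[1,\rho+1]\doteq U_iH^{(i)}[1,\rho_{\Omega_i}+1]$ and $H^{(i)}[1,\rho_{\Omega_i}+1]\doteq V_iH[\rho_A,\rho+1]$: since $\mu,\Delta(\mu)\in A\Sigma$ the non-active parts coincide, so $d_{A\Sigma}(H)-d_{A\Sigma}(H^{(i)})=|U_i|=|H(\mu)|-|H^{(i)}(\mu)|$, and one reads off the result directly from the definition of $\psi_{A\Sigma}$. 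Your step-by-step bookkeeping is more explicit about where the hypothesis $\Delta(\mu)\in A\Sigma$ enters (it keeps the transferred bases inside $\omega_1$ and keeps $\Delta(\mu_1)$ active), while the paper's word-level argument is shorter and yields the intermediate identity (\ref{3.22}), which is reused later in the analysis of Case~15.
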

\begin{proof}
By the definition of the entire transformation $\D 5$, it follows that the word $H^{(i)}[1,\rho_{\Omega_i}+1]$ is a terminal subword of the word $H[1,\rho+1]$, i.e.
$$
H[1,\rho+1]\doteq U_iH^{(i)}[1,\rho_{\Omega_i}+1], \hbox{ where } \rho=\rho_\Omega.
$$
On the other hand, since $\mu, \Delta(\mu)\in A\Sigma$, the non-active parts of $\Omega$ and $\Omega_i$ coincide. Therefore, $H[\rho_A,\rho+1]$ is the terminal subword of the word $H^{(i)}[ 1,\rho _{\Omega_i}+1]$, i.e. the following graphical equality holds:
$$
H^{(i)}[1,\rho_{\Omega_i}+1]\doteq V_i H[\rho_A,\rho +1].
$$
So we have
\begin{equation}\label{3.22}
d_{A\Sigma}(H)-d_{A\Sigma}(H ^{(i)})= |H[1,\rho_A]|-|V_{i}|=|U_{i}|= |H({\mu})|-|H^{(i)}({\mu})|,
\end{equation}
where, in the above notation, if $k=\beta(\mu)-1$ (and thus $\mu$ has been completely eliminated in $\Omega^{(i)}$), then  $H^{(i)}({\mu})=1$; otherwise, $H^{(i)}({\mu})=H[k+1,\beta(\mu)]$.

From (\ref{3.13}) and (\ref{3.22}) it follows that $\psi _{A\Sigma}(H)=\psi _{A\Sigma}(H^{(i)})$.
\end{proof}

\subsubsection*{\glossary{name={$\D 6$}, description={derived transformation $\D 6$}, sort=D}$\D 6:$ Identifying closed constant sections}

Let $\lambda$ and $\mu$ be two $G_i$-constant bases in $\Omega$ with labels $a^{\epsilon_\lambda}$ and $a^{\epsilon_\mu}$, where $a \in \cA$ and $\epsilon_\lambda, \epsilon_\mu \in \{-1,1\}$. Suppose that the sections $\sigma(\lambda) = [i,i+1]$ and $\sigma(\mu) = [j,j+1]$ are closed.

The transformation $\D 6$ applied to $\Omega$ results in a single generalised equation $\Omega_1$ which is obtained from $\Omega$ in the following way, see Figure \ref{constants}. Introduce a new variable base $\eta$ with its dual $\Delta(\eta)$ such that
$$
\sigma(\eta) = [i,i+1], \ \sigma(\Delta(\eta)) = [j,j+1],\  \varepsilon(\eta) = \epsilon_\lambda, \ \varepsilon(\Delta(\eta)) = \epsilon_\mu.
$$
Then we transfer all bases from $\eta$ onto $\Delta(\eta)$ using $\ET 2$, remove the bases $\eta$ and $\Delta(\eta)$, remove the item $h_i$, re-enumerate the remaining items and adjust the initial and terminal terms of the boundaries as appropriate, see Remark \ref{rem:boundterm}.

The corresponding homomorphism $\theta_1:G_{R(\Omega^*)}\to G_{R(\Omega_1^*)}$ is induced by the composition of the homomorphisms defined by the respective elementary transformations. Obviously, $\theta_1$ is an isomorphism.

\bigskip

We now exhibit the behaviour of the length of a solution of a generalised equation under elementary and derived transformations.

\begin{lem} \label{lem:solleng}
Let $\Omega_i$ be a generalised equation and $H^{(i)}$ be a solution of $\Omega_i$ so that
$$
\ET:(\Omega,H)\to (\Omega_i,H^{(i)}) \hbox{ or } \D:(\Omega,H)\to (\Omega_i,H^{(i)}).
$$
Then one has $|H^{(i)}|\le |H|$. Furthermore, in the case that $\ET=\ET 4$ or $\D=\D 5$  this inequality is strict $|H^{(i)}|<|H|$.
\end{lem}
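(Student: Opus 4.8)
The plan is to go through the transformations one at a time and, in each case, use the explicit description (given in Sections \ref{se:5.1} and \ref{se:5.2half}) of how the unique solution $H^{(i)}$ is obtained from $H$. For $\ET 1$, $\ET 2$, $\ET 3$ there is nothing to do: by Remark \ref{rem:et123} the equations $\Omega$ and $\Omega_1$ have the same set of items and the same solutions, so $H^{(1)}=H$ and $|H^{(1)}|=|H|$. For $\ET 5$ in case (1) no new item is created and again $H^{(1)}=H$. In case (2) a new boundary $q'$ is inserted inside the item $h_q$; the defining property of $\theta_{q'}$ together with commutativity of the diagram forces $H^{(1)}$ to agree with $H$ on every item except that $H_q$ is split as $H^{(1)}_{q'-1}H^{(1)}_{q'}\doteq H_q$. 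Since all solutions are written in the reduced form (\ref{eq:nf}), this is a genuine subdivision, so $|H_q|=|H^{(1)}_{q'-1}|+|H^{(1)}_{q'}|$ and hence $|H^{(1)}|=|H|$.

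The first strict inequality occurs for $\ET 4$. Here, under the $G$-isomorphism $\theta_1$, the block $h_{\alpha(\mu)}\cdots h_{\beta(\mu)-1}$ is absorbed into the dual block $h(\Delta(\mu))$, so that, up to re-enumeration, the items of $\Omega_1$ are precisely the items of $\Omega$ lying outside the section $\sigma(\mu)$; correspondingly the unique solution $H^{(1)}$ is obtained from $H$ by deleting the entries $H_{\alpha(\mu)},\dots,H_{\beta(\mu)-1}$. Hence $|H^{(1)}|=|H|-\sum_{i=\alpha(\mu)}^{\beta(\mu)-1}|H_i|$, and since $\alpha(\mu)<\beta(\mu)$ and each $H_i$ is a nontrivial element of $G$, the subtracted sum is at least $1$; thus $|H^{(1)}|<|H|$.

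It remains to treat the derived transformations. The transformation $\D 1$ is a composition of applications of $\ET 5$ and $\ET 1$, $\D 3$ is a composition of applications of $\ET 1$, and $\D 2$ is merely a permutation of the items; so by the cases already handled $|H^{(i)}|=|H|$ for these. (The kernel transformation $\D 4$ is, by the analysis preceding Lemma \ref{7-10}, also length-preserving, since each elimination step either turns an item into a free item without changing its value, or replaces one item by the product of two items with no cancellation while deleting the other.) For $\D 6$ a single $G_j$-constant item $h_i$, with $|H_i|=1$, is deleted and the remaining items are untouched, whence $|H^{(1)}|=|H|-1\le|H|$. Finally, $\D 5$ is by definition the sequence $\ET 5$, $\ET 2$, $\ET 1$, $\ET 4$; the first three steps preserve the length by the above, and the concluding $\ET 4$ deletes the section $[1,k+1]$ with $k\ge 1$ (because $h_1$ lies in the unique base $\mu$ after the transfer step), so it strictly decreases the length. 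Therefore $|H^{(i)}|<|H|$; alternatively one reads this off from the graphical equality $H[1,\rho+1]\doteq U_iH^{(i)}[1,\rho_{\Omega_i}+1]$ with $U_i\ne 1$ established inside the proof of Lemma \ref{lem:excess}.

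The only steps that need real care are the descriptions of $H^{(i)}$ for $\ET 4$ and for $\ET 5$(2): one must check that the absorption of a block of items, respectively the subdivision of one item, is graphical. This rests on the facts that every word occurring in a generalised equation, and every solution, is reduced as written, so that any subdivision $g\doteq g_1g_2$ of a reduced word satisfies $|g|=|g_1|+|g_2|$; everything else is routine bookkeeping with boundaries and re-enumeration.
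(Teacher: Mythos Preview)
Your proof is correct and follows exactly the approach the paper indicates: the paper's own proof is simply ``Proof is by straightforward examination of descriptions of elementary and derived transformations,'' and you have carried out precisely that examination in detail. The case-by-case verification you give for $\ET 1$--$\ET 5$ and $\D 1$--$\D 6$ is accurate, including the key points that $\ET 4$ deletes a nonempty section and that $\D 5$ terminates with an application of $\ET 4$.
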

\begin{proof}
Proof is by straightforward examination of descriptions of elementary and derived transformations.
\end{proof}

\begin{lem} \label{lem:stform}
Using derived transformations, every generalised equation can be taken to the standard form.

Let $\Omega_1$ be obtained from $\Omega$ by an elementary or a derived transformation and let $\Omega$ be in the standard form. Then $\Omega_1$ is in the standard form.
\end{lem}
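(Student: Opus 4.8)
The plan is to prove the two assertions separately. For the first --- that any (formally consistent) generalised equation $\Omega$ can be brought to standard form --- I would apply a fixed sequence of derived transformations. First, recalling that each $G_i$-constant base and each $G_i$-factor base occupies a single item, I would use $\D 1$ to close the section $[\alpha(\eta),\alpha(\eta)+1]$ for every such base $\eta$, so that each constant or factor base is isolated in its own closed section, which I declare to be a $G_i$-section in $NA\Sigma$. Next, since for a free item $h_i$ the boundaries $i$ and $i+1$ are automatically closed (no base intersects them), the section $[i,i+1]$ is already closed; I declare each such section free, put it in $NA\Sigma$, and declare every remaining closed section (which now contains only variable bases) a variable section in $A\Sigma$ --- this secures condition (3). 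Then I would apply $\D 6$ repeatedly to identify any two closed $G_i$-constant sections carrying a common label $a^{\pm1}$, deleting the base thereby made redundant; since each step strictly lowers the number of constant bases with label in $\{a,a^{-1}\}$, finitely many steps give condition (2). Finally I would apply $\D 2$ repeatedly to sort the closed sections into the prescribed left-to-right order (active variable, non-active variable, $G_1$, $G_2$, free), which is condition (1). The only thing to verify is that these last steps do not undo the earlier ones, and this is clear: permuting or merging closed sections changes neither which bases are constant or factor bases nor which items are free.

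For the second assertion I would go through the list $\ET 1$--$\ET 5$, $\D 1$--$\D 6$ and check that each one preserves the three conditions. The organising observation is that none of these transformations ever creates a new constant base, alters the label of a constant base (apart from $\D 6$, which only deletes one), or moves a base from a section of one type into a section of another type; and that each of them is of one of the following kinds: it acts locally, inside one or two closed sections, subdividing them into sections of the same type ($\ET 1$, $\ET 3$, $\ET 5$, $\D 1$, $\D 3$); it permutes closed sections ($\D 2$); it identifies two $G_i$-sections ($\D 6$); or it only deletes bases, items or boundaries from the active part ($\ET 2$, $\ET 4$, $\D 4$, $\D 5$ --- in particular $\D 5$ deletes a leftmost block $[1,k+1]$ via $\ET 4$, so the active part stays leftmost). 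Conditions (1) and (2) then follow in each case: the block structure $A\Sigma$, $NA\Sigma\cap V\Sigma$, $G_1\Sigma$, $G_2\Sigma$, $F\Sigma$ can only be refined or shrunk, never reordered, and the $G_i$-constant and $G_i$-factor bases remain in their $G_i$-sections.

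The point requiring genuine care --- and the one I expect to be the main obstacle --- is condition (3) under the base-deleting transformations $\ET 4$, $\D 4$ and $\D 5$: an active item that was covered only by a base which gets deleted becomes a free item, and the definitions of $\Ker(\Omega)$ and of the entire transformation leave such an item inside a variable section. I would dispose of this exactly as in the first part: after $\ET 4$, $\D 4$ or $\D 5$, close (via $\D 1$, if necessary) the singleton section around each newly-freed item, reclassify it as free, and transport it (via $\D 2$) to the free part. These cleanup moves are derived transformations which, by the discussion above, preserve conditions (1) and (2), so the output is again in standard form; for $\D 4$ this cleanup is precisely the passage from $\Ker(\Omega)$ to $\overline{\Ker(\Omega)}$ appearing in Lemma~\ref{7-10}. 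A second point worth spelling out carefully is that $\ET 2$, the only transformation that actually relocates a base, never moves a base between sections of different types when $\Omega$ is in standard form: there a constant or factor base sits alone in a single-item $G_i$-section, hence can neither contain nor be properly contained in another base, so $\ET 2$ is only ever applied to a variable base $\lambda$ contained in a variable base $\mu$, and $\Delta(\mu)$ again lies in a variable section.
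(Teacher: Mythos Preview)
The paper's own proof is literally the single sentence ``Proof is straightforward,'' so your proposal is already far more detailed than the original. Your approach to the first assertion is correct and is essentially the only reasonable one: isolate constant/factor bases into their own closed sections via $\D 1$, merge duplicate constant labels via $\D 6$, and sort via $\D 2$.

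For the second assertion you have correctly identified the one genuine subtlety the paper suppresses: base-deleting transformations can leave a newly-uncovered item sitting inside a variable section, violating condition~(3). Your list should also include $\ET 3$ (removing matched bases can uncover the items beneath them). Your fix---post-composing with $\D 1$/$\D 2$ cleanup---is exactly what the process does in Cases~5--6, and Remark~\ref{rem:stform} makes clear that this is the intended reading: standard form is only asserted once Cases~1--6 fail to apply. Strictly speaking you are proving a slightly weaker statement than the lemma literally claims, but this weaker statement is what the paper actually uses, and the paper is simply being loose. One minor point in your $\ET 2$ paragraph: condition~(2) of standard form does not forbid \emph{variable} bases from lying in a $G_i$-section (it only says constant and factor bases must lie there), so your argument that $\ET 2$ never moves a base between section types is not quite complete; however, a variable base landing in a $G_i$- or free section violates none of conditions (1)--(3), so this does no damage.
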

\begin{proof}
Proof is straightforward.
\end{proof}

\subsection{Construction of the tree  $T(\Omega)$} \label{se:5.2}

In this section we describe a branching process for rewriting a generalised equation $\Omega$. This process results in a locally finite and possibly infinite tree \glossary{name={$T(\Omega)$}, description={the infinite, locally finite tree of the process}, sort=T}
$T(\Omega)$. In the end of the section we describe infinite paths in $T(\Omega)$. We summarise the results of this section in the proposition below.

\begin{prop} \label{prop:TO}
For a generalised equation $\Omega=\Omega_{v_0}$, one can effectively construct a locally finite, possibly infinite, oriented rooted at $v_0$ tree $T$, $T=T(\Omega_{v_0})$, such that:
\begin{enumerate}
\item The vertices $v_i$ of $T$ are labelled by generalised equations $\Omega_{v_i}$.
\item The edges $v_i\to v_{i+1}$ of $T$ are labelled by epimorphisms
$$
\pi(v_i,v_{i+1}):G_{R(\Omega_{v_i}^\ast)}\to G_{R(\Omega_{v_{i+1}}^\ast)}.
$$
The edges $v_k\to v_{k+1}$, where $v_{k+1}$ is a leaf of $T$ and $\tp(v_{k+1})=1$, are labelled by proper epimorphisms. All the other epimorphisms $\pi(v_i,v_{i+1})$ are isomorphisms, in particular, edges that belong to infinite branches of the tree $T$ are labelled by isomorphisms.
\item Given a solution $H$ of $\Omega_{v_0}$, there exists a finite path $v_0\to v_1\to \dots \to v_l$ and a solution $H^{(l)}$ of $\Omega_{v_l}$ such that
    $$
\pi_H=\pi(v_0,v_1)\cdots \pi(v_{l-1},v_l)\pi_{H^{(l)}}.
    $$
    Conversely, for every finite path $v_0\to v_1\to \dots \to v_l$ in $T$ and every solution $H^{(l)}$ of $\Omega_{v_l}$ the homomorphism
    $$
    \pi(v_0,v_1)\cdots \pi(v_{l-1},v_l)\pi_{H^{(l)}}
    $$
    gives rise to a solution of $\Omega_{v_0}$.
\end{enumerate}
\end{prop}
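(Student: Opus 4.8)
The plan is to build $T(\Omega_{v_0})$ recursively, one level at a time, and then to extract the properties (1)--(3) from the bookkeeping already attached to the elementary and derived transformations of Sections~\ref{se:5.1}--\ref{se:5.2half}. I would first set up the recursion. Label the root $v_0$ by $\Omega_{v_0}$, which, by Lemma~\ref{lem:stform}, may be assumed formally consistent and in standard form --- both properties persisting under every transformation used (cf.\ Section~\ref{se:5.1} and Lemma~\ref{lem:stform}). Given a vertex $v$ of height $n$ with generalised equation $\Omega_v$, one decides, from the combinatorial criteria of Section~\ref{se:5.2}, which of the $15$ cases $\Omega_v$ falls under; if it is Case~$1$ or Case~$2$, declare $v$ a leaf, and otherwise apply the finite combination of transformations $\ET 1$--$\ET 5$, $\D 1$--$\D 6$ prescribed by that case. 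This produces finitely many generalised equations $\Omega_{v_1},\dots,\Omega_{v_r}$ together with surjections $\pi(v,v_j)\colon G_{R(\Omega_v^\ast)}\to G_{R(\Omega_{v_j}^\ast)}$; take $v_1,\dots,v_r$ as the children of $v$, labelled by these equations, and label the edge $v\to v_j$ by $\pi(v,v_j)$. Because each $\ET i(\Omega)$ and each $\D i(\Omega)$ is a \emph{finite} set, $T$ is locally finite; and because formal consistency is decidable (Lemma~\ref{le:4.1}), the case is read off decidable combinatorial data, and the transformations are explicit operations, the construction is effective level by level. This gives~(1) together with the fact that the edge labels are epimorphisms.

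Next I would prove~(2) by sorting the transformations into isomorphism-inducing and potentially-proper ones. By Remark~\ref{rem:et123}, $\ET 1,\ET 2,\ET 3$ induce the identity $G$-isomorphism of coordinate groups; $\ET 4$ and the ``new-boundary'' variant of $\ET 5$ induce $G$-isomorphisms; $\D 1,\D 2,\D 3,\D 6$ induce $G$-isomorphisms, $\D 5$ is a composite of elementary transformations, and $\D 4$ behaves as in Lemma~\ref{7-10}. The sole transformation that can yield a \emph{proper} epimorphism is the first variant of $\ET 5$, which adjoins a boundary equation $s=1$ with $s$ possibly outside $R(\Omega^\ast)$ (whether it lies inside being decidable when the universal Horn theory of $G$ is, by Lemma~\ref{le:hom-check}). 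Going through the $15$ cases, one checks that every occurrence of $\ET 5$ in Cases~$3$--$15$ is of the isomorphism kind --- so those edges are isomorphisms --- whereas the step terminating a branch at a leaf of type~$1$ is precisely a proper $\ET 5$-step, the defining conditions of that case forcing $s\notin R(\Omega^\ast)$. Hence every edge whose endpoint is not a type-$1$ leaf --- in particular every edge belonging to an infinite branch --- is labelled by an isomorphism.

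Then I would handle~(3). For the forward direction, let $H$ be a solution of $\Omega_{v_0}$ and put $H^{(0)}=H$. As long as $v_i$ is not a leaf, the defining commuting triangle of the transformation applied at $v_i$ produces a unique child $v_{i+1}$ and a unique solution $H^{(i+1)}$ of $\Omega_{v_{i+1}}$ with $\pi_{H^{(i)}}=\pi(v_i,v_{i+1})\pi_{H^{(i+1)}}$. If this continued forever we would obtain an infinite branch, which by Lemma~\ref{3.2} lies in one of the Cases~$7$--$10$, $12$, $15$ and hence applies the entire transformation $\D 5$ infinitely often; but then, by Lemma~\ref{lem:solleng}, the sequence $|H^{(0)}|\ge|H^{(1)}|\ge\cdots$ of non-negative integers would be non-increasing yet decrease strictly infinitely often --- impossible. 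So the path stops at some leaf $v_l$, and concatenating the triangles gives $\pi_H=\pi(v_0,v_1)\cdots\pi(v_{l-1},v_l)\pi_{H^{(l)}}$. For the converse, fix a finite path $v_0\to\cdots\to v_l$ and a solution $H^{(l)}$ of $\Omega_{v_l}$ and argue by descending induction on $l$: inspection of the explicit descriptions of $\ET 1$--$\ET 5$, $\D 1$--$\D 6$ shows that each transformation also lets one pull a solution $H'$ of the child generalised equation back to a solution $H$ of the parent with $\pi_H=\theta\pi_{H'}$, where $\theta$ is the associated homomorphism (for $\ET 5$(1) a solution of the child satisfies the added boundary equation, so \emph{a fortiori} solves the parent; for $\ET 4$ one inverts the substitution defining $\theta_1$; the boundary terms and type constraints are arranged so that all concatenations that arise stay reduced and nontrivial; and so on). Iterating $l$ times yields a solution $H^{(0)}$ of $\Omega_{v_0}$ with $\pi_{H^{(0)}}=\pi(v_0,v_1)\cdots\pi(v_{l-1},v_l)\pi_{H^{(l)}}$.

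The main difficulty is not the assembly above but everything it defers to Section~\ref{se:5.2} and to Lemma~\ref{3.2}: pinning down the $15$ cases, checking in each non-terminal case that the prescribed transformations are applicable and keep the generalised equation formally consistent and in standard form, verifying the isomorphism/proper-epimorphism dichotomy of~(2) case by case (in particular that the $\ET 5$-steps hidden inside $\D 1$ and $\D 5$ are isomorphisms except in the terminal step producing a type-$1$ leaf), and analysing the infinite branches --- that analysis is exactly what makes the termination argument in the forward half of~(3) go through. Once those pieces are in place, the Proposition follows by the telescoping and case-sorting sketched here.
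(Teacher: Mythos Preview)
Your overall plan coincides with the paper's: Proposition~\ref{prop:TO} is a summary of the construction carried out in Section~\ref{se:5.2}, so its ``proof'' is exactly the level-by-level assembly via the fifteen cases, together with the termination argument for the forward half of~(3) using Lemma~\ref{3.2} and the strict length decrease recorded in Remark~\ref{rem:leng<}.

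There is, however, a genuine error in your argument for~(2). You claim that ``every occurrence of $\ET 5$ in Cases~3--15 is of the isomorphism kind'' and that a separate ``step terminating a branch at a leaf of type~1 is precisely a proper $\ET 5$-step''. This is backwards. Whenever $\ET 5$ is invoked in Cases~3, 8, 9, 10, 11, 14, 15 (and inside $\D 1$, $\D 5$), it produces \emph{several} children simultaneously: one for each admissible existing boundary $q$ (variant~(1), whose $\theta_q$ may well be a proper epimorphism) and one for each new boundary $q'$ (variant~(2), always an isomorphism). There is no additional terminal transformation performed at a type-1 vertex; Case~1 is purely a \emph{detection} condition applied to the child. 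A vertex $v$ has type~1 exactly when the incoming edge $v_1\to v$ fails to be an isomorphism, and in that event $v$ is declared a leaf. Thus property~(2) is essentially definitional rather than something to be verified case by case: the dichotomy ``edge is proper $\Longleftrightarrow$ its target has type~1 and is a leaf'' is built into the recursion, and in particular any edge lying on an infinite branch is an isomorphism because a proper edge forces its endpoint to be a leaf.

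A minor correction in~(3): your termination argument says an infinite branch ``applies $\D 5$ infinitely often'', but Cases~7--10 never use $\D 5$; they use $\ET 4$. The right reference is Remark~\ref{rem:leng<} (equivalently Lemma~\ref{lem:solleng}): in each of Cases~7--10, 12, 15 the solution length strictly decreases, whether via $\ET 4$ or via $\D 5$, and that is what rules out an infinite path defined by a fixed solution.
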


We begin with a general description of the tree $T(\Omega)$ and then construct it using induction on its height.  For each vertex $v$ in $T(\Omega)$ there exists a generalised equation $\Omega_v$ associated to $v$. Recall, that we consider only formally consistent generalised equations. The initial generalised equation $\Omega$ is associated to the root $v_0$, $\Omega_{v_0} = \Omega$. For each edge $v\to v'$ there exists a unique surjective homomorphism $\pi(v,v'):G_{R(\Omega _v^* )}\rightarrow G_{R(\Omega _{v'}^* )}$ associated to $v\to v'$.

If
$$
v\rightarrow v_1\rightarrow\ldots\rightarrow v_s\rightarrow u
$$
is a path in $T(\Omega )$, then by $\pi (v,u)$ we denote the composition of corresponding homomorphisms
$$
\pi (v,u) = \pi (v,v_1)   \cdots \pi (v_s,u).
$$
We call this epimorphism the \index{canonical homomorphism of coordinate groups of generalised equations}\emph{canonical homomorphism from $G_{R(\Omega _v^*)}$ to $G_{R(\Omega _{u}^*)}$}

There are two kinds of edges in $T(\Omega)$: \emph{principal} and \emph{auxiliary}. Every constructed edge is principal, if not stated otherwise.
Let $v \to v^\prime$ be an edge of $T(\Omega)$, we assume that active (non-active) sections in $\Omega_{v^\prime}$ are naturally inherited from $\Omega_v$. If $v \to v^\prime$ is a principal edge, then there exists a finite sequence of elementary  and derived transformations from $\Omega_v$ to $\Omega_{v^\prime}$ and the homomorphism $\pi(v,v')$ is a composition of the homomorphisms corresponding to these transformations.

Since both elementary and derived transformations uniquely associate a pair $(\Omega_i,H^{(i)})$ to $(\Omega,H)$:
$$
\ET:(\Omega,H)\to (\Omega_i,H^{(i)}) \hbox{ and } \D:(\Omega,H)\to (\Omega_i,H^{(i)}),
$$
a solution $H$ of $\Omega$ defines a path in the tree $T(\Omega)$. We call such a path \index{path!defined by a solution in the tree $T(\Omega)$}\emph{the path defined by a solution $H$ in $T$}.

Let $\Omega $ be a generalised equation. We construct an oriented rooted tree $T(\Omega)$. We start from the root $v_0$ and proceed by induction on the height of the tree.

Suppose, by induction, that the tree $T(\Omega)$ is constructed  up to  height $n$, and let $v$ be a vertex of height $n$. We now describe how to extend the tree from $v$.  The construction of the  outgoing edges from $v$ depends on which of the case described below takes place at the vertex $v$. We always assume that:
\begin{center}
\emph{If the generalised equation $\Omega_v$ satisfies the assumptions of Case $i$, then $\Omega_v$ does not satisfy the assumptions of all the Cases $j$, with $j < i$.}
\end{center}

The general guideline of the process is to use the entire transformation to transfer bases to the right of the interval. Before applying the entire transformation one has to make sure that the generalised equation is ``clean''. The stratification of the process into 15 cases, though seemingly unnecessary, is convenient in the proofs since each of the cases has a different behaviour with respect to ``complexity'' of the generalised equation, see Lemma \ref{3.1}. Furthermore, it allows one to narrow the particular type of the generalised equations that can occur in each of the infinite branches of the process, see Lemma \ref{3.2}.

\subsubsection*{Preprocessing}

In $\Omega_{v_0}$ we transport closed sections using $\D 2$ in such a way that all active sections are at the left end of the interval (the active part of the generalised equation), then come all variable non-active sections, behind them come $G_1$-sections, followed by $G_2$-sections, and then the free sections.

\subsubsection*{Termination conditions: Cases 1 and 2}

\subsubsection*{Case 1: The homomorphism $\pi (v_0,v)$ is not an isomorphism, or, equivalently, the canonical homomorphism $\pi(v_1,v)$, where $v_1\to v$ is an edge of $T(\Omega)$, is not an isomorphism.}

The vertex $v$, in this case, is a leaf of the tree $T(\Omega)$. There are no outgoing edges from $v$.

\subsubsection*{Case 2: The generalised equation $\Omega _v$ does not contain active sections.}

Then the vertex $v$ is a leaf of the tree $T(\Omega)$. There are no outgoing edges from $v$.

\subsubsection*{Moving $G_i$-constant and $G_i$-factor bases to the non-active part: Cases 3 and 4}

\subsubsection*{Case 3: The generalised equation $\Omega _v$ contains a $G_i$-constant base $\lambda$ {\rm(}a $G_i$-factor base $\lambda${\rm)} in an active section such that the section $\sigma(\lambda)$ is not closed.}

In this case, we make the section $\sigma(\lambda)$ closed using the derived transformation $\D 1$.

\subsubsection*{Case 4: The generalised equation $\Omega _v$ contains a $G_i$-constant base $\lambda$ {\rm(}a $G_i$-factor base $\lambda${\rm)} such that the section $\sigma(\lambda)$ is closed.}

In this case, we transport the section $\sigma(\lambda)$ to $G_i\Sigma$ using the derived transformation $\D 2$. Then (only in the case when $\lambda$ is a constant base labelled by $a\in \cA$) we identify  all closed sections of the type $[i,i+1]$, which contain a constant base with the label $a^{\pm 1}$, with the transported section $\sigma(\lambda)$, using the derived transformation $\D 6$. In the resulting generalised equation $\Omega_{v^\prime}$ the section $\sigma(\lambda)$ becomes a $G_i$-section, and the corresponding edge $(v,v^\prime)$ is auxiliary, see Figure \ref{constants} (note that $\D 1$ produces a finite set of generalised equations, Figure \ref{constants} shows only one of them).

\begin{figure}[h!tb]
  \centering
   \includegraphics[keepaspectratio,width=6in]{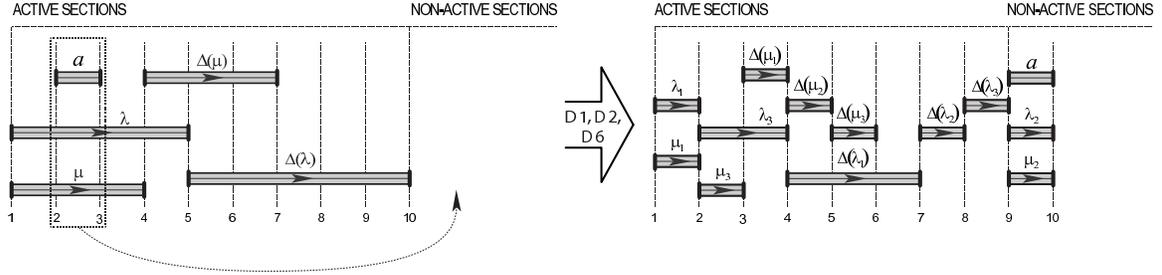}
\caption{Cases 3-4: Moving constant bases.} \label{constants}
\end{figure}

\subsubsection*{Moving free variables to the non-active part: Cases 5 and 6}

\subsubsection*{Case 5: The generalised equation $\Omega _v$ contains a free variable $h_q$ in an  active section.}

Using $\D 2$, we transport the section $[q,q+1]$ to the very end of the interval behind all the items of $\Omega_v$. In the resulting generalised equation $\Omega_{v^\prime}$ the transported section becomes a free section, and the corresponding edge $(v,v^\prime)$ is auxiliary.

\subsubsection*{Case 6: The generalised equation $\Omega _v$ contains a pair of matched bases in an active section.}

In this case, we perform $\ET 3$ and delete it, see Figure \ref{useless}.

\begin{figure}[!h]
  \centering
   \includegraphics[keepaspectratio,width=6in]{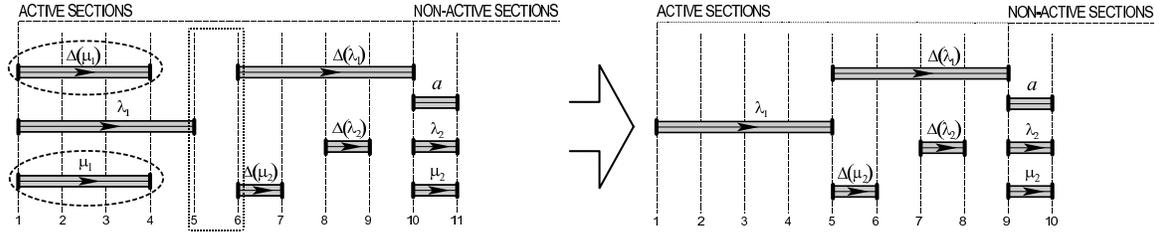}
 \caption{Cases 5-6: Removing a pair of matched bases and free variables.}
 \label{useless}
 \end{figure}

\begin{rem} \label{rem:stform}
If $v$ does not satisfy the conditions of any of the Cases 1-6, then the generalised equation $\Omega_v$ is in the standard form.
\end{rem}

\subsubsection*{Eliminating linear variables: Cases 7-10}

\subsubsection*{Case 7: There exists an item $h_i$ in an active section of $\Omega_v$ such that $\gamma _i=1$ and  such that both boundaries $i$ and $i+1$ are closed.}

Then, we remove the closed section $[i,i+1]$ together with the linear base using $\ET 4$.

\subsubsection*{Case 8: There exists an item $h_i$ in an active section of $\Omega_v$ such that $\gamma _i=1$ and such that  one of the boundaries $i$, $i+1$ is open, say $i+1$, and the other is closed.}

In this case, we first perform $\ET 5$ and $\mu$-tie $i+1$ by the only base $\mu$ it intersects; then using $\ET 1$ we cut $\mu$ in $i+1$; and then we delete the closed section $[i,i+1]$ using $\ET 4$,  see Figure \ref{linear'}.

\begin{figure}[!h]
  \centering
   \includegraphics[keepaspectratio,width=6in]{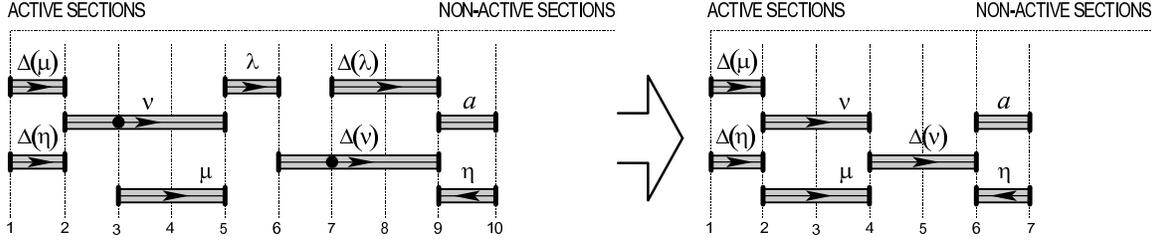}
 \caption{Cases 7-8: Linear variables.} \label{linear'}
\end{figure}

\subsubsection*{Case 9: There exists an item $h_i$ in an active section of $\Omega_v$ such that $\gamma _i=1$ and  such that both boundaries $i$ and $i+1$ are open. In addition, there is a closed section $\sigma$ such that $\sigma$ contains exactly two bases $\mu _1$ and $\mu _2$, $\sigma = \sigma(\mu_1) = \sigma(\mu_2)$ and $\mu_1, \mu_2$ is not a pair of matched bases, i.e. $\mu_1\ne \Delta(\mu_2)$; moreover, in the generalised equation $\widetilde {\Omega}_v= \D 3(\Omega)$ all the bases obtained from $\mu _1,\mu _2$  by $\ET 1$ when constructing $\widetilde {\Omega}_v$ from $\Omega_v$, do not belong to the kernel of $\widetilde{\Omega}_v$.}

In this case, using $\ET 5$ we  $\mu _1$-tie  all the boundaries that intersect $\mu_1$; using $\ET 2$, we transfer $\mu _2$ onto $\Delta (\mu_1)$; and  remove $\mu _1$ together with the closed section $\sigma$  using  $\ET 4$,  see Figure \ref{linear''}.

\subsubsection*{Case 10: There exists an item $h_i$ in an active section of $\Omega_v$ such that $\gamma _i=1$ and  such that both boundaries $i$ and $i+1$ are open.}

In this event we close the section $[i,i+1]$ using $\D 1$ and remove it using $\ET 4$, see Figure \ref{linear''}.

\begin{figure}[!h]
  \centering
   \includegraphics[keepaspectratio,width=6in]{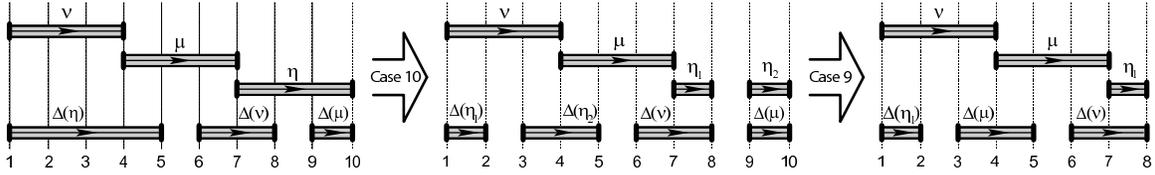}
  \caption{Cases 9-10: Linear case.} \label{linear''}
\end{figure}

\subsubsection*{Tying a free boundary: Case 11}

\subsubsection*{Case 11: Some boundary $i$ in the active part of $\Omega_v$ is free.}

Since the assumptions of Case 5 are not satisfied, the boundary $i$ intersects at least one base, say, $\mu$.

In this case, we $\mu$-tie $i$  using $\ET 5$.

\subsubsection*{Quadratic case: Case 12}

\subsubsection*{Case 12: For every item $h_i$ in the active part of $\Omega_v$ we have $\gamma _i = 2$.}

We apply the entire transformation $\D 5$, see Figure \ref{quadratic}.

\begin{figure}[!h]
  \centering
   \includegraphics[keepaspectratio,width=5in]{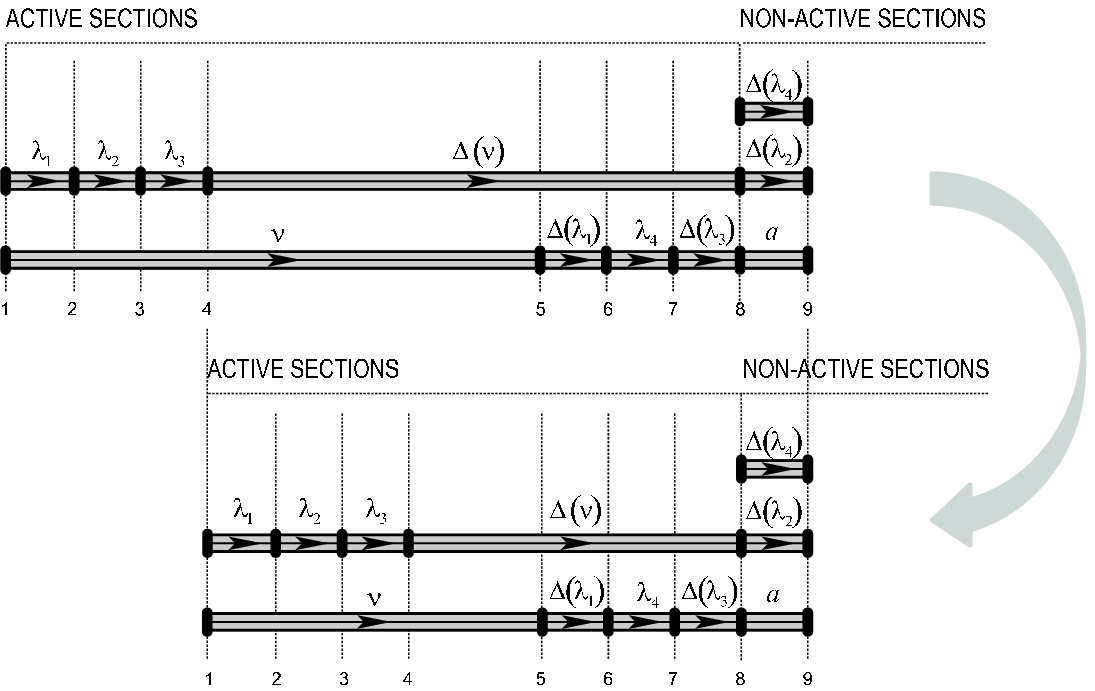}
  \caption{Case 12: Quadratic case, entire transformation.} \label{quadratic}
\end{figure}

\subsubsection*{Removing a closed section: Case 13}

\subsubsection*{Case 13: For every item $h_i$ in the active part of $\Omega_v$ we have $\gamma _i\geq 2$, and $\gamma_{i'}>2$ for at least one item $h_{i'}$ in the active part. Moreover, for some active base $\mu$ the section $\sigma(\mu)$ is closed.}

In this case, using $\D 2$, we transport the section $\sigma(\mu)$ to the beginning of the interval (this makes $\mu$ a carrier base). We then apply the entire transformation $\D 5$ (note that in this case the whole section $\sigma(\mu)$ is removed).

\subsubsection*{Tying a boundary: Case 14}

\subsubsection*{Case 14:  For every item $h_i$ in the active part of $\Omega_v$ we have $\gamma _i\geq 2$, and $\gamma _{i'}>   2$ for at least one item $h_{i'}$ in the active part. Moreover,  some boundary $j$ in the active part touches some base $\lambda$, intersects some base $\mu$,  and $j$ is not $\mu$-tied.}

In this case, using $\ET 5$, we $\mu$-tie $j$.

\subsubsection*{General case: Case 15}

\subsubsection*{Case 15: For every item $h_i$ in the active part of $\Omega_v$ we have $\gamma _i\geq 2$, and $\gamma _{i'}>   2$ for at least one item $h_{i'}$ in the active part. Moreover, every active section $\sigma(\mu)$ is not closed and every boundary $j$ is $\mu$-tied in every base it intersects.}

We first apply the entire transformation $\D 5$. Then, using $\ET 5$, we $\mu$-tie every boundary $j$ in the active part that intersects a base $\mu$ and touches at least one base. This results in finitely many new vertices connected to $v$ by principal edges.

If, in addition,  $\Omega_v$ satisfies the assumptions of Case 15.1 below, then besides the already constructed principal edges, we construct a few more auxiliary edges outgoing from the vertex $v$.

\subsubsection*{Case 15.1: The carrier base $\mu$ of the generalised equation $\Omega _v$ intersects with its dual $\Delta (\mu)$.}

We first construct an auxiliary generalised equation ${\widehat{\Omega}_{ v}}$ (which does \emph{not} appear in the tree $T(\Omega)$) as follows. Firstly, we add a new constant section $[\rho _{\Omega_v}+1,\rho_{\Omega_v}+2]$ to the right of all the sections in $\Omega_v$ (in particular, $h_{\rho _{\Omega_v}+1}$ is a new free variable). Secondly, we introduce a new pair of dual variable bases $\lambda ,\Delta (\lambda)$ so that
$$
\alpha(\lambda)=1, \ \beta(\lambda)=\beta(\Delta (\mu)),\  \alpha(\Delta(\lambda))=\rho_{\Omega_v}+1,\  \beta(\Delta(\lambda))=\rho_{\Omega_v}+2.
$$
Notice that $\Omega _v$ can be obtained from ${\widehat{\Omega}_{v}}$ if we apply $\ET 4$ to ${\widehat{\Omega}_{v}}$ and delete the base $\Delta(\lambda)$ together with the closed section $[\rho _v+1,\rho _v+2]$. Let
$$
{\hat \pi}_{v}: G_{R(\Omega _v^\ast)}\rightarrow G_{R({{{\widehat{\Omega}}_v}}^{\ast})}
$$
be the isomorphism induced by $\ET 4$. The assumptions of Case 15 still hold for $\widehat {\Omega} _{ v}$. Note that the carrier base of  $\widehat {\Omega} _{ v}$ is the base $\lambda$. Applying the transformations described in Case 15 to ${\widehat \Omega _{v}}$ (first $\D 5$ and then $\ET 5$), we obtain a set of new generalised equations $\{\Omega_{v^\prime_i}\mid i=1,\dots, \nn\}$ and the set of corresponding epimorphisms of the form:
$$
{\phi}_{v'_i}:G_{R({\widehat{\Omega}_{ v}}^\ast)} \rightarrow G_{R(\Omega_{v_i'}^\ast)}.
$$

Now for each generalised equation $\Omega_{v'_i}$ we add a vertex $v'_i$ and an auxiliary edge $v\to v'_i$ in the tree $T(\Omega)$. The edge $v\to v'_i$ is labelled by the homomorphism $\pi(v,v'_i)$ which is the composition of homomorphisms $\phi_{v'_i}$ and ${\hat \pi}_{v}$, $\pi(v,v'_i) = {\hat \pi}_{v}\phi_{v'_i}$. We associate the generalised equation $\Omega_{v'_i}$ to the vertex $v'_i$.

\bigskip

The tree $T(\Omega)$ is therefore constructed. Observe that, in general, $T(\Omega)$ is an infinite locally finite tree.

If the generalised equation $\Omega_v$ satisfies the assumptions of Case $i\ (1\leq i\leq 15)$, then we say that the vertex $v$ has \index{type!of a vertex}\index{type!of a generalised equation}\emph{type} $i$ and write \glossary{name={$\tp(v)$}, description={type of the vertex $v$ of the tree $T$, $T_0$, $T_{\dec}$, or $T_{\sol}$}, sort=T}$\tp(v)=i$.

\begin{lem}
If the universal Horn theory of $G$ is decidable, then for any vertex $v$ of the tree $T(\Omega)$ there exists an algorithm to decide whether or not $v$ is of type $k$, $k=1,\dots, 15$.
\end{lem}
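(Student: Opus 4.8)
The plan is to separate the fifteen conditions into two kinds. By the convention governing the construction of $T(\Omega)$, the vertex $v$ has type $k$ if and only if $\Omega_v$ satisfies the assumptions of Case $k$ but of no Case $j$ with $j<k$; so it is enough to give, for each $i$, an algorithm deciding whether $\Omega_v$ satisfies the assumptions of Case $i$, and then to run these in the order $i=1,2,\dots,15$ and output the first success. I would handle $i\ge 2$ first, since these turn out to be unconditionally decidable, and then $i=1$, which is where the hypothesis on $G$ is needed.

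First I would dispatch Cases $2$--$15$. The assumptions of each of these cases are statements about the finite combinatorial data carried by $\Omega_v$: the bases and their types, the functions $\alpha,\beta,\a,\b,\varepsilon$, the boundary connections, and the partition of the closed sections into active and non-active ones. Checking whether a section is closed, whether a boundary is open, free or $\mu$-tied, whether $\gamma_i=1$ or $\gamma_i\ge 2$, whether an item is free, whether two bases are matched, whether a closed section consists of exactly two non-matched bases, and so on, is carried out by direct inspection; so I expect no real difficulty. The one condition that needs a small argument is the reference in Case $9$ to the kernel of $\widetilde\Omega_v=\D 3(\Omega_v)$: there I would note that $\D 3$ is a finite sequence of applications of $\ET 1$ and hence effective, and that the elimination process defining $\Ker(\widetilde\Omega_v)$ terminates after finitely many steps, each removing one base, so $\Ker(\widetilde\Omega_v)$ can be computed explicitly and one reads off whether the bases in question belong to it. Thus Cases $2$--$15$ are decidable with no assumption on $G$.

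The substantive step is Case $1$, which by its statement amounts to deciding whether the homomorphism $\pi(v_1,v)$ labelling the edge $v_1\to v$ entering $v$ is an isomorphism. Here the key points I would use are: this edge was built, during the construction of $T(\Omega)$, from an explicitly recorded finite sequence of elementary and derived transformations, and every derived transformation that labels an edge is either an isomorphism induced by a permutation of the variables (as for $\D 2$) or a finite composition of elementary transformations (as for $\D 1,\D 3,\D 5,\D 6$). Hence $\pi(v_1,v)$ is a composition of finitely many surjective homomorphisms, each induced by an elementary transformation $\ET 1,\dots,\ET 5$, and by Remark \ref{rem:et123} together with the descriptions of $\ET 4$ and $\ET 5$ every one of these factors is a $G$-isomorphism except possibly a factor coming from an application of $\ET 5$ in which no new boundary was introduced. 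Since a composition of surjective group homomorphisms is an isomorphism precisely when each factor is, $\pi(v_1,v)$ is an isomorphism if and only if each of the finitely many such $\ET 5$-factors is; and deciding, for a given such factor, whether it is a proper epimorphism is exactly what Lemma \ref{le:hom-check} provides when the universal Horn theory of $G$ is decidable. Running that procedure on each relevant factor settles Case $1$.

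The main obstacle — really the only non-routine point — is the bookkeeping in the last step: isolating which factors of $\pi(v_1,v)$ can fail to be isomorphisms, and reducing ``$\pi(v_1,v)$ is an isomorphism'' to ``each such factor is an isomorphism''. Once that is in place, Case $1$ reduces to finitely many invocations of Lemma \ref{le:hom-check}, and Cases $2$--$15$ reduce to inspection of the combinatorial data. The decidability hypothesis on $G$ enters at exactly one place, the Case $1$ test, through Lemma \ref{le:hom-check}.
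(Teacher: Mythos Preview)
Your proposal is correct and follows essentially the same approach as the paper: Cases $2$--$15$ are combinatorial inspections of $\Omega_v$, while Case $1$ is reduced to Lemma \ref{le:hom-check}. You are in fact more careful than the paper, which invokes Lemma \ref{le:hom-check} directly on the edge homomorphism $\pi(v_1,v)$ without spelling out that this map is a composition of finitely many elementary-transformation maps and that a composition of surjections is an isomorphism precisely when each factor is; your decomposition argument makes this step explicit and is the right way to bridge the gap between the statement of Lemma \ref{le:hom-check} (which is phrased for a single elementary transformation) and its application to $\pi(v_1,v)$.
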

\begin{proof}
The statement of the lemma is obvious for all cases, except for Case 1.

To decide whether or not the vertex $v$ is of type 1 it suffices to show that there exists an algorithm to check whether or not the canonical epimorphism $\pi(v,u)$ associated with an edge $v \rightarrow u$ in $T(\Omega)$ is a proper epimorphism. This can be done effectively by Lemma \ref{le:hom-check}.
\end{proof}

We now introduce certain characteristics of generalised equations and establish how these characteristics change depending on which of the above 15 cases holds for a given generalised equations.

\begin{defn}
Denote by \glossary{name={$n_A$, $n_A(\Omega)$}, description={the number of bases in the active sections of a generalised equation}, sort=N}$n_A=n_A(\Omega)$ the number of bases in the active sections of $\Omega$ and by \glossary{name={$\xi$}, description={the number of open boundaries in the active sections of a generalised equation}, sort=X}$\xi$ the number of open boundaries in the active sections.

For a closed section $\sigma \in \Sigma(\Omega)$ denote by $n(\sigma)$\glossary{name={$n(\sigma)$}, description={the number of bases in the section $\sigma$}, sort=N} the number of bases in $\sigma$. The \index{complexity of the generalised equation}{\em complexity} of a generalised equation $\Omega$ is defined as follows \glossary{name={$\comp$, $\comp (\Omega)$}, description={complexity of a generalised equation}, sort=C}
$$
\comp = \comp (\Omega) = \sum\limits_{\sigma \in A\Sigma(\Omega)} \max\{0, n(\sigma)-2\}.
$$
\end{defn}

\begin{rem}
We use the following convention. Let $\Omega$ be a generalised equation. By a function of a generalised equation $f(\Omega)$ we mean a function of the parameters $n_A(\Omega)$, $\xi(\Omega)$, $\rho_\Omega$ and $\comp(\Omega)$.
\end{rem}

\begin{lem}[cf. Lemma 3.1, \cite{Razborov3}] \label{3.1}
Let $u\rightarrow v$ be  a principal edge of the tree $T(\Omega)$. Then the following statements hold.
\begin{enumerate}
    \item If $\tp(u)\ne 3,10$, then $n_{A}(\Omega_v) \leq n_{A}(\Omega_u)$, moreover if $\tp(u)=6,7,9,13$, then this inequality is strict;
    \item If $\tp(u)=10$, then $n_{A}(\Omega_v) \leq n_{A}(\Omega_u) + 2$;
    \item If $\tp(u)\leq 13$ and $\tp (u)\ne 3,11$, then $\xi(\Omega_v) \leq \xi(\Omega_u)$;
    \item If $\tp (u) \ne 3$, then $\comp(\Omega_v)  \leq \comp(\Omega_u)$.
\end{enumerate}
\end{lem}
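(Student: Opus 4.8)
## Proof Plan

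The plan is to verify each of the four inequalities by a case-by-case inspection of the transformations applied in each of the fifteen cases, tracking how the relevant parameters $n_A$, $\xi$, and $\comp$ behave. The key observation is that all principal edges are obtained by composing the elementary transformations $\ET 1$--$\ET 5$ and the derived transformations $\D 1$--$\D 6$, so it suffices to understand how these building blocks affect the parameters, and then assemble the bookkeeping for each case. For the inequalities involving the active part, one must be careful that transformations like $\D 2$ and $\D 6$ move or identify sections, which can change the \emph{non-active} part while leaving the active-part parameters untouched (or moving material out of the active part, which only helps).

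First I would record the effect of each transformation on a single closed section and on the active part globally. For $\ET 1$ (cutting a base), one base becomes two and the number of open boundaries in the active part does not increase (in fact a boundary may become closed, or stay the same); this is the only transformation that can increase $n_A$, but only when the cut takes place via $\ET 5$-introduced connections inside a closed active section. For $\ET 3$ (removing a pair of matched bases, Case 6), $n_A$ drops by $2$. For $\ET 4$ (removing a linear base, Cases 7--10): in Case 7 it removes a closed section with exactly one base, so $n_A$ drops by at least $1$; in Case 8 we first cut one base (via $\ET 1$ after $\ET 5$) and then delete, so the net change is that a base may be replaced by one base elsewhere — giving $n_A(\Omega_v)\le n_A(\Omega_u)+2$ as the worst case, which is exactly the content of statement (2) for $\tp(u)=10$ and also why Case 10 is singled out; in Case 9 we transfer $\mu_2$ onto $\Delta(\mu_1)$ and then remove $\mu_1$ with its closed section, so $n_A$ drops by at least $1$ (strict). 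For $\ET 5$ (introducing a boundary/connection), $n_A$ is unchanged and $\xi$ does not increase (introducing a $\mu$-tie closes, or keeps the same, the status of boundaries; introducing a \emph{new} boundary $q'$ adds a boundary but inside a base, and by the standard-form bookkeeping this does not increase $\xi$ in the active part — this point needs care). For $\D 5$ (entire transformation, Cases 12, 13, 15): we transfer all transfer bases onto $\Delta(\mu)$ (no change to $n_A$), cut $\mu$ once (via $\ET 1$, $n_A$ up by $1$ locally but the carrier section is then deleted), and delete the section $[1,k+1]$ by $\ET 4$; the carrier base and all bases contained in the deleted prefix disappear, so $n_A$ does not increase, and in Case 13 the whole section $\sigma(\mu)$ is removed so the drop is strict. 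For $\comp$, note $\comp$ only counts $\max\{0,n(\sigma)-2\}$ over active closed sections; transferring bases between dual bases within a section does not change $n(\sigma)$, cutting a base increases some $n(\sigma)$ by $1$ but deleting that section removes its contribution entirely, and Case 3 (closing a section via $\D 1$) is the only place where a $G_i$-base's section gets split off and $\comp$ might go up — which is precisely the excluded case in statement (4).

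Then I would go through the fifteen cases one at a time. Cases 1, 2 produce leaves, so there is nothing to prove. Cases 3, 4, 5 involve $\D 1$, $\D 2$, $\D 6$ applied to constant/factor bases or free variables; statement (1) excludes $\tp(u)=3$, statement (3) excludes $\tp(u)=3,11$, and statement (4) excludes $\tp(u)=3$, so for these I only need to check the surviving inequalities: in Case 4 the section is transported \emph{out} of the active part, so all of $n_A$, $\xi$, $\comp$ are non-increasing; in Case 5 similarly; in Case 11 ($\ET 5$, $\mu$-tying a free boundary) $n_A$ is unchanged, $\comp$ is unchanged, and although $\xi$-behaviour is the excluded one in (3), statements (1) and (4) hold trivially. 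Cases 6--10 are handled by the $\ET 3$/$\ET 4$ analysis above, giving the strict decreases claimed for types $6,7,9$ and the $+2$ bound for type $10$; for $\xi$ in Cases 7--10 one checks that closing a section via $\D 1$ and then deleting it via $\ET 4$ does not increase the number of open boundaries in what remains of the active part. Case 12 is the entire transformation with every active item doubly covered, so $\comp$ stays $0$ there and $n_A$, $\xi$ are non-increasing. Case 13 is the entire transformation preceded by transporting a closed section to the front; type $13$ gives the strict decrease in $n_A$ since an entire closed section is consumed. Case 14 is a single $\ET 5$, hence trivial for (1), (3), (4). Case 15 is $\D 5$ followed by $\ET 5$'s, so $n_A$ and $\comp$ are non-increasing and $\xi$ is non-increasing.

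The main obstacle I anticipate is the careful verification, in Cases 8 and 10, that $\ET 5$ followed by $\ET 1$ followed by $\ET 4$ cannot increase $n_A$ by more than $2$, and — more delicately — the verification for statement (3) that $\xi$ does not increase when we introduce new boundaries via the second alternative of $\ET 5$ (the one that inserts $q'$). In the latter the total number of boundaries goes up, so one must argue that every newly created boundary is \emph{covered} (lies strictly inside a base), hence does not count towards $\xi$, which counts only \emph{open} boundaries in active sections — and this follows because $\ET 5$ inserts $q'$ strictly between $q$ and $q+1$ inside the base $\Delta(\mu)$. I would also double-check the edge cases where $k=\beta(\mu)-1$ in $\D 5$ (the carrier base is completely eliminated) versus $k<\beta(\mu)-1$ (a residual piece of $\mu$ survives), since these affect whether the decrease in $n_A$ is strict. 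Apart from this bookkeeping, the proof is a routine, if tedious, inspection, exactly as the parenthetical reference to Lemma 3.1 of \cite{Razborov3} suggests.
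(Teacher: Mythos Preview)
Your approach is correct and is exactly what the paper does: the paper's entire proof is the single line ``Straightforward verification'', and you have supplied that verification by tracking $n_A$, $\xi$, $\comp$ through the elementary and derived transformations case by case.

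Two minor cleanups. First, the edges in Cases 4 and 5 (and the extra edges in Case 15.1) are \emph{auxiliary}, not principal, so the lemma does not apply to them and you need not treat them at all. Second, your remark that $\xi$ is non-increasing in Case 15 is both unnecessary (statement (3) restricts to $\tp(u)\le 13$) and not obviously true, since the trailing $\ET 5$'s in Case 15 may genuinely introduce new open boundaries; simply drop that claim. With those excisions, your case analysis---including the careful point that in Case 10 the item $h_i$ lies in a unique base $\mu$, so $\D 1$ cuts $\mu$ twice ($+4$) and $\ET 4$ removes one pair ($-2$), giving the bound $n_A(\Omega_v)\le n_A(\Omega_u)+2$---is the intended argument.
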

\begin{proof}
Straightforward verification.
\end{proof}

The following lemma gives a description of the infinite branches in the tree $T(\Omega)$. It is, basically, a consequence of Lemma \ref{3.1}; we refer to \cite{CK2} for a proof.

\begin{lem}[cf. Lemma 3.2, \cite{Razborov3}, Lemma 4.19, \cite{CK2}] \label{3.2} Let
\begin{equation}\label{eq:path}
v_0\rightarrow v_1\rightarrow\ldots \rightarrow v_r \rightarrow \ldots
\end{equation}
be an infinite path in the tree $T(\Omega )$. Then there exists a natural number $N$ such that all the edges
$v_n \rightarrow v_{n+1}$ of this path with $n \geq N$ are principal edges, and one of the following conditions holds:
$$
\begin{array}{crl}
  (A) & \hbox{linear case: } & 7\leq \tp(v_n)\leq 10 \hbox{ for all } n\ge N;\\
  (B) & \hbox{quadratic case: } & \tp(v_n)=12 \hbox{ for all } n\ge N; \\
  (C) & \hbox{general case: } & \tp(v_n)=15 \hbox{ for all } n\ge N.
\end{array}
$$
\end{lem}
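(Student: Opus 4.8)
The plan is as follows. Since the path \eqref{eq:path} is infinite it never meets a leaf of $T(\Omega)$, so no $v_n$ has type $1$ or $2$. Since the path always continues along the principal edge associated to the pair $(\Omega_{v_n},H^{(n)})$, and since a vertex is forced onto an auxiliary edge only when it has type $4$ or type $5$, the statement about auxiliary edges will follow once we rule out types $4$ and $5$ past some index. It therefore suffices to prove that the ``cleaning'' types $3,4,5,6,11,13,14$ occur only finitely often along the path, and that past that point the surviving types $\{7,8,9,10\}$, $\{12\}$ and $\{15\}$ cannot be mixed, so that on an infinite path the sequence of types eventually settles into one of these three families, oscillating at most among $7,8,9,10$ in the first case. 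The argument is driven throughout by Lemma \ref{3.1}, which makes the parameters $\comp$, $n_A$ and $\xi$ monotone along principal edges up to a short list of exceptional types.

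First I would eliminate types $3$ and $4$. Each time Case $4$ is applied on the path a $G_i$-constant or $G_i$-factor base is transported out of the active part into $G_i\Sigma$, never to return; since constant and factor bases satisfy $\beta=\alpha+1$ they are never split by $\ET1$, so their number is a fixed constant for the whole path, and Cases $3$ and $4$ recur only finitely often. By Lemma \ref{3.1}(4), past the last such vertex $\comp(\Omega_{v_n})$ is non-increasing, hence eventually constant; fix $N_1$ with $\comp(\Omega_{v_n})=c$ for all $n\ge N_1$. Next, Case $5$ transports a free item of the active part into $F\Sigma$, and using the explicit descriptions of the transformations together with Lemma \ref{3.1} one checks that past $N_1$ the number of active free items cannot increase, so Case $5$ also recurs only finitely often. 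Each application of Case $6$ (removing a pair of matched bases) or of Case $13$ (deleting a closed section via the entire transformation) strictly decreases $n_A$ by Lemma \ref{3.1}(1), whereas $n_A$ can be increased only at type $3$, already excluded, or, by at most $2$, at type $10$ (Lemma \ref{3.1}(2)), which belongs to the linear family. Finally Cases $11$ and $14$ merely tie boundaries: once $\comp$, $n_A$ and the set of active free items have stabilised, each such step strictly reduces the number of untied boundary-base incidences in the active part and none of the remaining transformations restores them, so these cases cannot be the only recurrent types and in fact disappear past some index $N$. Assembling these observations, past $N$ only types $7$--$10$, $12$ and $15$ occur on the path.

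It then remains to see that past $N$ the three families $\{7,8,9,10\}$, $\{12\}$ and $\{15\}$ do not alternate, which I would establish by tracking the covering pattern of the active part: these families are distinguished precisely by whether some active item satisfies $\gamma_i=1$, or every active item satisfies $\gamma_i=2$, or every active item satisfies $\gamma_i\ge2$ with $\gamma_{i'}>2$ for some $i'$; one must then show that past $N$ this pattern is eventually of a single shape and is preserved by the transformations of the corresponding case. I expect this to be the main obstacle. Since Lemma \ref{3.1} allows $n_A$ to grow at type $10$ and controls $\xi$ only up to type $13$, there is no single lexicographically decreasing quantity to invoke; instead one has to analyse, transformation by transformation, the action of Cases $10$, $12$ and $15$ on the leftmost items of the active part, so as to preclude a linear item from being repeatedly destroyed and re-created. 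This is exactly the analysis carried out for free groups in Lemma 3.2 of \cite{Razborov3} and adapted to free products in \cite{CK2}, to which we refer for the details.
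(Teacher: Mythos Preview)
Your overall strategy aligns with the paper's: it too says the lemma ``is, basically, a consequence of Lemma~\ref{3.1}'' and defers the details to \cite{CK2}. Your sketch of how Lemma~\ref{3.1} forces the cleaning types $3,4,5,6,11,13,14$ to disappear is in the right spirit and more explicit than what the paper records.

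There is, however, a genuine gap in your handling of auxiliary edges. You write that ``the path always continues along the principal edge associated to the pair $(\Omega_{v_n},H^{(n)})$'' and that ``a vertex is forced onto an auxiliary edge only when it has type $4$ or type $5$''. Both statements are off. First, the path in Lemma~\ref{3.2} is an \emph{arbitrary} infinite path in $T(\Omega)$; there is no solution $H$ in the statement, and the tree $T(\Omega)$ is built without reference to any solution. Second, vertices satisfying Case~15.1 also carry auxiliary outgoing edges (in addition to the principal ones), so even after types $4$ and $5$ have been ruled out an infinite path could, a priori, pass through infinitely many auxiliary edges at type~15 vertices. Your argument does not exclude this.

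The correct way to bound the number of auxiliary edges is the one the paper uses later in Section~\ref{5.5.3}: the quantity $\tau_v=\comp(\Omega_v)+\rho-\rho'_v$ is nonnegative and strictly decreases along every auxiliary edge (including those produced in Case~15.1), so only finitely many auxiliary edges can occur on any path. Once you insert this observation, the rest of your outline (stabilising $\comp$ via Lemma~\ref{3.1}(4), eliminating $6,7,9,13$ via strict decrease of $n_A$, and referring to \cite{Razborov3,CK2} for the non-mixing of $\{7\text{--}10\}$, $\{12\}$, $\{15\}$) matches the intended proof.
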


\begin{rem}\label{rem:leng<}
Let
$$
(\Omega_{v_1},H^{(1)})\to (\Omega_{v_2},H^{(2)})\to \dots \to (\Omega_{v_l},H^{(l)})
$$
be the path defined by the solution $H^{(1)}$. If $ \tp(v_i)\in \{7,8,9,10,12,15\}$, then by {\rm Lemma \ref{lem:solleng}},  $|H^{(i+1)}|<|H^{(i)}|$.
\end{rem}

\section{Periodic structures}

Informally, the aim of this section is to prove the following strong version of the so-called Bulitko's Lemma, \cite{Bul}:
\begin{center}
\parbox{5.5in}{\textit{Applying automorphisms from a finitely generated subgroup $\AA(\Omega)$ of the group of automorphisms of the coordinate group $G_{R(\Omega^\ast)}$ to a periodic solution either one can bound the exponent of periodicity of the solution {\rm(}regular case, see {\rm Lemma \ref{lem:23-2})}, or one can get a solution of a proper equation  {\rm(} singular case, see {\rm Lemma \ref{lem:23-1})}.}}
\end{center}
Above, by a solution of a proper equation we mean a homomorphism from a proper quotient of the coordinate group of $\Omega$ to $G$.

This approach for free groups was introduced by A.~Razborov. In \cite{Razborov1}, he defines a combinatorial object, called a periodic structure on a generalised equation $\Omega$ and constructs a finite set of generators for the group $\AA(\Omega)$.

We fix till the end of this section a generalised equation $\Omega$ in the standard form. Suppose that some boundary $k$ (between $h_{k-1}$ and $h_k$) in the active part of $\Omega$ does not touch bases. Since the generalised equation $\Omega$ is in the standard form, the boundary $k$ intersects at least one base $\mu$. Using $\ET 5$ we $\mu$-tie the boundary $k$. Applying $\D 3$, if necessary, we may assume that the set of boundary connections in $\Omega$ is empty and that each boundary of $\Omega$ touches a base.

A reduced, cyclically reduced word $P$ in  $G$ is called a \index{period}\emph{period} if $P$ is not a proper power. Note that, in particular, one has that $P^2\doteq PP$. The latter condition is equivalent to saying that the length $|P^2|$ of $P^2$ equals $2|P|$ (note that, in particular $|P|\ge 2$). A reduced word $w \in G$ is called \index{periodic@($P$-)periodic word}$P$-\emph{periodic} if $|w| \ge |P|$ and, as written, it is a subword of $P^n$ for some $n$. Every $P$-periodic word $w$ can be presented in the form
\begin{equation}\label{2.50}
w = Q^rQ_1
\end{equation}
where $Q$ is a cyclic permutation of $P^{\pm 1}$,  $r \geq 1$, $Q \doteq Q_1 Q_2$ is a subdivision of $Q$, and $Q_2 \neq 1$. This presentation is unique if $r \ge 2$. The number $r$ is called the \index{exponent!of a word} \emph{exponent}  of $w$. A maximal exponent of the $P$-periodic subword in a word $w$ is called the \index{exponent!of ($P$-)periodicity}\emph{exponent of $P$-periodicity of $w$}. We denote it by \glossary{name={$\exp(w)$}, description={exponent of periodicity of the word $w$}, sort=E}$\exp(w)$.

\begin{defn}\label{11'}
A solution $H=(H_1,\dots, H_{\rho})$ of $\Omega$ is called \index{solution!of a generalised equation!periodic with respect to a period}\emph{periodic with respect to a period $P$}, if for every closed variable section $\sigma$ of $\Omega$ one of the following conditions holds:
\begin{enumerate}
 \item  \label{it:per1} $H(\sigma)$ is $P$-periodic with exponent $r \ge 2$;
 \item  \label{it:per2} $|H(\sigma)| \le |P|$;
 \item  \label{it:per3} $H(\sigma)$ is $A$-periodic and $|A| \le |P|$;
\end{enumerate}
Moreover, condition (\ref{it:per1}) holds for at least one closed variable section $\sigma$ of $\Omega$.
\end{defn}

Let $H$ be a $P$-periodic solution of $\Omega$. Then a section $\sigma$ satisfying condition (\ref{it:per1}) of the above definition is called \index{section!($P$-)periodic}\emph{$P$-periodic} (with respect to $H$).

The following lemma gives an intuition about the kind of generalised equations that have periodic solutions.

\begin{lem} \label{lem:case2}
Let $\Omega$ be a generalised equation such that every closed section $\sigma_i$ of $\Omega$ is either constant or there exists a pair of dual bases $\mu_i$, $\Delta(\mu_i)$ such that $\mu_i$ and $\Delta(\mu_i)$ intersect but do not form a pair of matched bases, and $\sigma_i=[\alpha(\mu_i),\beta(\Delta(\mu_i))]$. Let $H$ be a solution of $\Omega$. Then there exists a period $P$ such that $H$ is $P$-periodic.
\end{lem}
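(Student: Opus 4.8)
The plan is to produce the period section by section from the overlapping bases, and then to take the longest period so obtained. Fix a closed variable section $\sigma$ of $\Omega$. By hypothesis $\sigma$ is not constant, so there is a pair of dual bases $\mu,\Delta(\mu)$ which intersect, do not form a matched pair, and satisfy $\sigma=[\alpha(\mu),\beta(\Delta(\mu))]$. Interchanging $\mu$ and $\Delta(\mu)$ if necessary we may assume $\alpha(\mu)\le\alpha(\Delta(\mu))$; then in fact $\alpha(\mu)<\alpha(\Delta(\mu))$ and $\beta(\mu)\le\beta(\Delta(\mu))$ (the section contains both bases and the pair is not matched, while $\beta(\mu)=\beta(\Delta(\mu))$ together with $\alpha(\mu)<\alpha(\Delta(\mu))$ would force, on a solution, a relation of the form $u p = p$ with $u\ne 1$, which is impossible). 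Moreover formal consistency (Definition \ref{defn:formcons}, item \ref{item:fc1}) forbids intersecting bases of opposite sign, so $\varepsilon(\mu)=\varepsilon(\Delta(\mu))$ and the basic equation of the pair is $h(\mu)=h(\Delta(\mu))$. Put $u=h[\alpha(\mu),\alpha(\Delta(\mu))]$, $p=h[\alpha(\Delta(\mu)),\beta(\mu)]$ (possibly empty) and $q=h[\beta(\mu),\beta(\Delta(\mu))]$, so that $h(\mu)=up$, $h(\Delta(\mu))=pq$ and $h(\sigma)=upq$. Since $H$ is a solution, the words $H(h(\mu))$ and $H(h(\Delta(\mu)))$ are reduced as written in the sense of (\ref{eq:nf}) and equal in $G$; hence, writing again $u,p,q$ for the values $H(u),H(p),H(q)$, the products $up$ and $pq$ are reduced as written and $up=pq$ in $G$.

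The next step is to invoke the standard combinatorial periodicity fact: an equality $up=pq$ between reduced words of $G$ with $up$ and $pq$ reduced as written forces $p$ (and with it $u$, $q$ and $upq$) to be periodic, in the following sense. There is a cyclically reduced word $P_\sigma$, not a proper power, such that $u$ is conjugate to a power of $P_\sigma$ and $H(\sigma)=upq$ is $P_\sigma$-periodic; when $|P_\sigma|\ge 2$ a direct computation with $q=p^{-1}up$ shows that $H(\sigma)$ contains the square of the cyclic representative of $u$, so its exponent of $P_\sigma$-periodicity is at least $2$. Carrying this out for every closed variable section of $\Omega$ yields such words $P_\sigma$. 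If some $P_\sigma$ has length $\ge 2$, let $\sigma_0$ be a closed variable section for which $|P_{\sigma_0}|$ is maximal and set $P:=P_{\sigma_0}$; this is a period in the sense preceding Definition \ref{11'}. Then for $\sigma_0$ clause \ref{it:per1} of Definition \ref{11'} holds; for any other closed variable section $\tau$ with $|P_\tau|\ge 2$ clause \ref{it:per3} holds with $A=P_\tau$ (since $H(\tau)$ is $P_\tau$-periodic and $|P_\tau|\le|P|$); and for $\tau$ with $|P_\tau|=1$ we have $H(\tau)\in G_i$, so $|H(\tau)|=1\le|P|$ and clause \ref{it:per2} holds. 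Thus $H$ is $P$-periodic, as required. (The only configuration not covered by this is the degenerate one in which every $H(\sigma)$ is a single syllable of a factor; this does not occur for the generalised equations to which the lemma is applied, and in any case disposing of it is elementary.)

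The genuinely delicate point is the combinatorial periodicity statement in the free product $G=G_1\ast G_2$, namely that $up=pq$ with $up$ and $pq$ reduced as written makes $p$ periodic with period the primitive root of the cyclic reduction of $u$. Over a free group this is the classical Lyndon--Schützenberger lemma, see \cite{LS}; over a free product one reduces to it by induction on $|u|$, peeling off at each step either a common prefix of $u$ and $p$, or a cancelling/merging end of $u$, and using that $\Cal(G)$ is tree-graded, so that inside a single copy of $\Cal(G_i)$ the equation carries no information and across distinct pieces the free-group combinatorics applies verbatim. Care is needed because $u$ need not be cyclically reduced, so the bookkeeping of syllables and of the conjugating element must be carried through; once this is in place, the verification that the exponent is at least $2$ and that the period extracted from the longest section dominates all the others is routine. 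So the main obstacle is precisely the free-product form of the Lyndon--Schützenberger lemma together with this syllable bookkeeping.
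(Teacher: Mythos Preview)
Your argument is correct and rests on the same combinatorial core as the paper's, but the packaging is different. You extract from the overlapping pair the graphical equation $up\doteq pq$ (with $u=H[\alpha(\mu),\alpha(\Delta(\mu))]$, $p=H[\alpha(\Delta(\mu)),\beta(\mu)]$, $q=H[\beta(\mu),\beta(\Delta(\mu))]$) and then appeal to a free-product version of the Lyndon--Sch\"utzenberger lemma, whose proof you sketch by induction. The paper instead carries out that very induction inside the generalised-equation framework: it $\mu$-ties the boundary $i_1=\alpha(\Delta(\mu))$ to produce $i_2$, then $i_3,\ldots$, obtaining $H[\alpha(\mu),i_1]\doteq H[i_1,i_2]\doteq\cdots$ and hence $H(\sigma)\doteq A^{l(k+1)}A_1$ with $k\ge 1$, so the exponent is automatically $\ge 2$. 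The two arguments are the same iteration (``peel off $u$ from the left of $p$'') seen from two angles; the paper's version is self-contained and sidesteps having to state and prove an auxiliary lemma, while yours isolates the algebraic content cleanly but defers its proof.

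Two small points. First, the hypothesis $\sigma=[\alpha(\mu),\beta(\Delta(\mu))]$ already forces $\alpha(\mu)\le\alpha(\Delta(\mu))$ and $\beta(\mu)\le\beta(\Delta(\mu))$ (both bases lie in $\sigma$), so the preliminary reduction you perform is unnecessary, though harmless. Second, the degenerate case you flag (every primitive root of length $1$) in fact cannot occur: since $H(\sigma)$ is reduced as written, the terminal syllable of $u=H[\alpha(\mu),\alpha(\Delta(\mu))]$ and the initial syllable of $H_{\alpha(\Delta(\mu))}$ lie in different factors, while the graphical equality $u\doteq H[i_1,i_2]$ forces the initial syllable of $H_{\alpha(\Delta(\mu))}$ to coincide with the initial syllable of $u$; hence $\itype(u)\ne\ttype(u)$ and $|u|\ge 2$. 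The paper's boundary-tying makes this implicit; in your write-up it would be worth saying explicitly rather than waving it away.
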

\begin{proof}
Consider a section $\sigma=[\alpha(\mu),\beta(\Delta(\mu))]$. The boundary $i_1=\alpha(\Delta(\mu))$ intersects the base $\mu$, since the bases $\mu$ and $\Delta(\mu)$ overlap. We $\mu$-tie the boundary $i_1$, i.e. we introduce a boundary connection $(i_1,\mu,i_2)$ in such a way that $H$ is a solution of the obtained generalised equation. It follows that $i_1<i_2$.

Repeating this argument, we obtain a finite set of boundaries $i_1<\dots<i_{k+1}$, $k\ge 1$ such that
\begin{itemize}
\item $i_1,\dots,i_{k}$  intersect $\mu$,
\item $i_{k+1}$ does not intersect $\mu$,
\item there is a boundary connection $(i_j, \mu, i_{j+1})$ for all $j=1,\dots, k$ and
\item $H$ induces a solution of the generalised equation obtained.
\end{itemize}
This set of boundaries is finite, since the length of the solution $H$ is finite.

Let $H[\alpha(\mu),i_1]=w$, $w=A^l$, where $l\ge 1$ and $A$ is a period. Then the section $\sigma$ is $A$-periodic. Indeed,
$$
\sigma=[\alpha(\mu),i_1]\cup[i_1,i_2]\cup\dots\cup[i_k,i_{k+1}]\cup[i_{k+1},\beta(\Delta(\mu))].
$$
Using the boundary equations, we get that
$$
h[\alpha(\mu),i_1]=h[i_1,i_2]=\dots=h[i_k,i_{k+1}] \hbox{ and } h[i_{k+1},\beta(\Delta(\mu))]=h[i_k,\beta(\mu)],
$$
thus $H(\sigma)\doteq A^{l\cdot(k+1)}\cdot A_1$, where $A\doteq A_1A_2$.

Set $P=A_j$, where $|A_j|=\max\limits_i\{|A_i|\mid\sigma_i \hbox{ is $A_i$-periodic}\}$. By definition, $H$ is $P$-periodic.
\end{proof}

\subsection{Periodic  structures}

Below we introduce the notion of a periodic structure. The idea of considering periodic structures on $\Omega$ is to subdivide the set of periodic solutions into subsets so that any two solutions from the same subset have the same set of ``long items'', i.e. $P$-periodic solutions that factor through the generalised equation $\Omega$ and a periodic structure $\langle \P, R\rangle$ on $\Omega$ satisfy the following property:
$$
h_i\in \P \hbox{ if and only if } |H_i|\ge 2|P|.
$$
One can regard Lemma \ref{le:PP} below as a motivation for the definition of a periodic structure.

\begin{defn} \label{above}
Let  $\Omega$ be a generalised equation in the standard form without boundary connections. A \index{periodic structure}\emph{periodic structure} on $\Omega$ is a pair \glossary{name={$\langle {\P}, R \rangle$}, description={periodic structure}, sort=P}$\langle {\P}, R \rangle$, where
\begin{enumerate}
\item \label{it:ps1} \glossary{name={$\P$},description={non-empty set of variables, variable bases, and closed sections that belong to the periodic structure $\langle {\P}, R \rangle$}, sort=P}${\P}$ is a non-empty set consisting of some variables $h_i$, some variable bases $\mu$, and some closed sections $\sigma$ from $V\Sigma$ such that the following conditions hold:
\begin{itemize}
    \item[(a)] if $h_i \in {\P}$, $h_i \in \mu$, $\mu\in V\Sigma$ and $\Delta(\mu) \in V\Sigma$, then $\mu \in {\P}$;

    \item[(b)] if $\mu \in {\P}$, then $\Delta(\mu) \in {\P}$;

    \item[(c)] if $\mu \in {\P}$ and $\mu \in \sigma$, then $\sigma \in {\P}$;

    \item[(d)] there exists a function ${\mathcal X}$ mapping the set of closed sections from ${\P}$ into $\{-1,1\}$ such that for every $\mu, \sigma_1, \sigma_2 \in {\P}$, the condition that $\mu \in \sigma_1$ and $\Delta(\mu) \in \sigma_2$ implies $\varepsilon(\mu) \cdot \varepsilon(\Delta(\mu)) = {\mathcal X}(\sigma_1) \cdot {\mathcal X}(\sigma_2)$;
\end{itemize}
\item \label{it:ps2}  $R$ is an equivalence relation on a certain set ${\mathcal B}$ (defined in (e)) such that condition (f) is satisfied.
\begin{itemize}
 \item[(e)] Notice, that for every boundary $l$ belonging to a closed section in $\P$ either there exists a unique closed section $\sigma(l)$ in ${\P}$ containing $l$, or there exist precisely two closed sections $\sigma_{\lef}(l) = [i,l], \sigma_{\rig} =  [l,j]$ in ${\P}$ containing $l$. The set of boundaries of the first type we denote  by ${\B}_1$, and of the second type by  ${\B}_2$. Put
$$
{\B} = {\B}_1  \cup \{l_{\lef}, l_{\rig}  \mid l \in {\B}_2\}
$$
here $l_{\lef}, l_{\rig}$ are two ``formal copies'' of $l$. We  will use the following agreement: for any base $\mu$ if $\alpha(\mu) \in {\B}_2$ then by $\alpha(\mu)$ we mean $\alpha(\mu)_{\rig}$ and, similarly, if $\beta(\mu) \in {\B}_2$ then by $\beta(\mu)$ we mean $\beta(\mu)_{\lef}$.

\item[(f)]  If $\mu \in {\P}$ then
$$
\begin{array}{lll}
\alpha(\mu) \sim_R \alpha(\Delta(\mu)), &  \beta(\mu) \sim_R \beta(\Delta(\mu)), & \hbox{ if } \varepsilon(\mu) =
\varepsilon(\Delta(\mu)); \\
\alpha(\mu) \sim_R \beta(\Delta(\mu)), & \beta(\mu)\sim_R  \alpha(\Delta(\mu)), & \hbox{ if }\varepsilon(\mu) = -
 \varepsilon(\Delta(\mu)).
\end{array}
$$
\end{itemize}
\end{enumerate}
\end{defn}

\begin{rem} \label{rem:defperstr}
For a given generalised equation $\Omega$, there exists only finitely many periodic structures on $\Omega$, and all of them can be constructed effectively. Indeed, every periodic structure $\langle\P,R\rangle$ is uniquely defined by the subset of items of $\Omega$ that belong to $\P$ and the relation $\sim_R$. Therefore, to describe all periodic structures on $\Omega$ it suffices to consider all subsets of the set of items of $\Omega$ and different relations $\sim_R$ on them.
\end{rem}

Now we will show how to a $P$-periodic solution $H$ of  $\Omega$ one can associate a periodic structure \glossary{name={${\P}(H, P)$}, description={periodic structure associated to a $P$-periodic solution $H$}, sort=P}${\P}(H, P) = \langle {\P}, R \rangle$ on $\Omega$. We define ${\P}$ as follows. A closed section $\sigma$ is in ${\P}$ if and only if $\sigma$ is $P$-periodic. A variable $h_i$ is in ${\P}$ if and only if $h_i \in \sigma$ for some $\sigma \in {\P}$ and $|H_i| \geq 2 |P|$. A variable base $\mu$ is in ${\P}$ if and only if either $\mu$ or $\Delta(\mu)$
contains an item $h_i$ from ${\P}$. Notice that for any factor base $\nu$, $|H(\nu)|=1$ thus $\nu$ never belongs to ${\P}(H, P)$.

Put ${\mathcal X}([i,j]) = \pm 1$ depending on whether in (\ref{2.50}) the word $Q$ is conjugate to $P$ or to $P^{-1}$.

Now let $[i,j]\in {\P}$ and $ i \leq l \leq j$. Then one can write $P \doteq P_1P_2$ in such a way that if ${\mathcal X} ([i,j]) =1$, then the word $H[i,l]$ is the terminal subword of the word $(P^\infty)P_1$, where $P^\infty$ is the infinite word obtained by concatenating the powers of $P$, and $H[l,j]$ is the initial subword of the word $P_2(P^\infty)$; and if ${\mathcal X}([i,j])= -1$, then the word $H[i,l]$ is the terminal subword of the word $(P^{-1})^\infty P_2^{-1}$ and $H[l,j]$ is the initial subword of $P_1^{-1}(P^{-1})^\infty$. By Lemma 1.2.9 \cite{1}, the subdivision $P\doteq P_1P_2$ with these properties is unique;  denote this decomposition by \glossary{name={$\delta(l)$}, description={decomposition of the period defined by the boundary $l$}, sort=D}$\delta(l)$. We define the relation $R$ as follows:
$$
l_1 \sim_R l_2 \hbox{ if and only if } \delta(l_1) =  \delta(l_2).
$$

\begin{lem}\label{le:PP}
Let $H$ be a periodic solution of $\Omega$. Then ${\P}(H, P)$ is a periodic structure on $\Omega$.
\end{lem}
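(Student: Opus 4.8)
The plan is to verify, one by one, that the pair $\langle\P,R\rangle=\P(H,P)$ defined above satisfies every requirement of Definition~\ref{above}, each being a consequence of the basic equations of $\Omega$ together with elementary combinatorics of periodic words. The bookkeeping requirements are immediate. The set $\P$ is non-empty because, by Definition~\ref{11'}, condition~(\ref{it:per1}) holds for at least one closed variable section $\sigma$, and such a $\sigma$ was put into $\P$. Requirements (a) and (b) are tautological: a variable base $\mu$ was placed in $\P$ exactly when $\mu$ or $\Delta(\mu)$ contains an item of $\P$, a criterion symmetric in $\mu,\Delta(\mu)$, and (a) is the special case in which $\mu$ itself contains the item. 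Finally $R$ is an equivalence relation on $\B$ since $l_1\sim_R l_2$ was defined as $\delta(l_1)=\delta(l_2)$, and $\delta$ is well defined by the uniqueness statement (Lemma~1.2.9 of \cite{1}) quoted before the lemma.

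The core of the argument is requirement (c): if $\mu\in\P$ then the unique closed section $\sigma$ containing $\mu$ is $P$-periodic, hence lies in $\P$. By definition of $\P$, either $\mu$ or $\Delta(\mu)$ contains an item $h_k$ with $|H_k|\ge 2|P|$ lying in a $P$-periodic closed section. If $\mu$ contains such an $h_k$, then $h_k\in\sigma$ (the closed section containing a given item being unique), so $\sigma$ is already known to be $P$-periodic. Otherwise $\Delta(\mu)$ contains $h_k$, which forces $\Delta(\mu)$ to lie in a $P$-periodic closed section $\sigma'$; then $H(\Delta(\mu))$ is a subword of the $P$-periodic word $H(\sigma')$ of length $\ge|H_k|\ge 2|P|$, so it is periodic with period a cyclic permutation of $P^{\pm1}$, and evaluating the basic equation $h(\mu)^{\varepsilon(\mu)}=h(\Delta(\mu))^{\varepsilon(\Delta(\mu))}$ at $H$ shows $W:=H(\mu)=H(\Delta(\mu))^{\pm1}$ has the same property, with $|W|\ge 2|P|$. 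Suppose, for contradiction, that $\sigma$ is not $P$-periodic. Since $|H(\sigma)|\ge|W|>|P|$, condition~(\ref{it:per2}) fails for $\sigma$, so (\ref{it:per3}) holds: $H(\sigma)$ is $A$-periodic with $|A|\le|P|$. But then $W$, a subword of $H(\sigma)$ of length $\ge|P|+|A|$, is simultaneously $A$-periodic and $P$-periodic, which by the analogue of the Fine--Wilf theorem for free products (cf.\ \cite{1}) and the primitivity of $P$ forces $A$ to be a cyclic permutation of $P^{\pm1}$; but then $H(\sigma)$, being $A$-periodic of length $\ge 2|A|$, satisfies (\ref{it:per1}) --- a contradiction. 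Hence $\sigma\in\P$. In particular, whenever $\mu\in\P$, both closed sections $\sigma_1\ni\mu$ and $\sigma_2\ni\Delta(\mu)$ lie in $\P$, a fact we use below.

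It remains to check requirements (d) and (f), and both follow from a single observation. Let $\mu\in\P$, $\mu\in\sigma_1$, $\Delta(\mu)\in\sigma_2$; by (c), $\sigma_1,\sigma_2\in\P$. As above $W:=H(\mu)$ has length $\ge 2|P|$, and since $W$ is a subword of the $P$-periodic word $H(\sigma_1)$, its representation $W=Q^rQ_1$ of (\ref{2.50}) has $r\ge2$, so $Q$, the ``orientation'' of $W$, and the placement of $W$ into a power of $P$ are uniquely determined, and the orientation of $W$ equals ${\mathcal X}(\sigma_1)$. On the other hand $H(\Delta(\mu))=W^{\varepsilon(\mu)\varepsilon(\Delta(\mu))}$, read as a subword of $H(\sigma_2)$, has orientation ${\mathcal X}(\sigma_2)$, while the orientation of $W^{\varepsilon(\mu)\varepsilon(\Delta(\mu))}$ is $\varepsilon(\mu)\varepsilon(\Delta(\mu))\cdot{\mathcal X}(\sigma_1)$; equating the two gives ${\mathcal X}(\sigma_1){\mathcal X}(\sigma_2)=\varepsilon(\mu)\varepsilon(\Delta(\mu))$, which is (d). For (f), the boundary $\alpha(\mu)$ marks the left end of $W$ inside $H(\sigma_1)$, and since $|W|\ge 2|P|$ the placement of $W$ is unique, so $\delta(\alpha(\mu))$ is determined by this left end. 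If $\varepsilon(\mu)=\varepsilon(\Delta(\mu))$, then $H(\Delta(\mu))=W$ and ${\mathcal X}(\sigma_1)={\mathcal X}(\sigma_2)$ by (d), so inside $H(\sigma_2)$ the same left end of $W$ is marked by $\alpha(\Delta(\mu))$ with the same local decomposition, whence $\delta(\alpha(\mu))=\delta(\alpha(\Delta(\mu)))$; if $\varepsilon(\mu)=-\varepsilon(\Delta(\mu))$, then $H(\Delta(\mu))=W^{-1}$ and ${\mathcal X}(\sigma_2)=-{\mathcal X}(\sigma_1)$, so that left end of $W$ is marked inside $H(\sigma_2)$ by $\beta(\Delta(\mu))$ and the sign case in the definition of $\delta$ yields $\delta(\alpha(\mu))=\delta(\beta(\Delta(\mu)))$. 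The statements for $\beta(\mu)$ are obtained the same way from the right end of $W$. This is exactly (f), completing the verification.

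The step requiring the most care is (c), and inside it the exclusion of case~(\ref{it:per3}) for the section carrying a ``long'' periodic base: this is the only place where one genuinely needs the combinatorial input on simultaneous periodicities over a free product, namely the Fine--Wilf-type statement and the uniqueness of the periodic decomposition of a word of length $\ge 2|P|$ quoted from \cite{1}. Once (c) is in place, requirements (a), (b), (d) and (f) are essentially formal consequences of the basic equations of $\Omega$.
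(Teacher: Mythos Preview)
Your proof is correct and follows essentially the same approach as the paper's: conditions (a) and (b) are immediate from the construction, while (c), (d), and (f) all reduce to the uniqueness of the periodic decomposition $w=Q^rQ_1$ with $r\ge 2$ (Lemma~1.2.9 of \cite{1}). Your framing of the crucial step in (c) via a Fine--Wilf-type statement (two periods of a long enough common subword must agree) is a cosmetic repackaging of the paper's direct comparison of the $B$- and $Q$-decompositions of $H(\mu)$; the underlying combinatorial input is the same lemma from \cite{1}, and your treatment of (d) and (f) is simply a more explicit unpacking of what the paper states in one line.
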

\begin{proof}
Let ${\P}(H, P) = \langle {\P}, R \rangle$.  Obviously,  ${\P}$ satisfies conditions (a) and (b) from Definition \ref{above}.

We now prove that $\P$ satisfies condition (c) from Definition \ref{above}. Let $\mu \in {\P}$ and $\mu \in [i,j]$. There exists an item $h_k \in {\P}$ such that $h_k \in \mu$ or $h_k \in \Delta({\mu})$. If $h_k \in \mu$, then, by construction, $[i,j] \in {\P}$. If $h_k \in \Delta(\mu)$ and $\Delta(\mu) \in [i',j']$, then $[i', j'] \in {\P}$, and hence, the word $H(\Delta(\mu))$ can be written in the form $Q^{r'} Q_1$, where $Q\doteq Q_1 Q_2$ is a cyclic permutation of the word $P^{\pm 1}$ and $r' \geq 2$. Since $|H[i,j]| \ge |H(\mu)| = |H(\Delta(\mu))|\ge 2 |P|$ and from Definition \ref{11'}, it follows that $[i,j]$ is an $A$-periodic section, where $|A|\le |P|$. Then $H(\mu) = B^s B_1$, where $B$ is a cyclic permutation of the word $A^{\pm 1}$, $|B| \leq |P|$, $B \doteq B_1 B_2$, and $s \ge 0$. From the equality ${H(\mu)}^{\varepsilon(\mu)} = {H(\Delta(\mu))}^{\varepsilon(\Delta(\mu))}$ and Lemma 1.2.9 \cite{1} it follows that $B$ is a cyclic permutation of the word $Q^{\pm 1}$. Consequently, $A$ is a cyclic permutation of the word $P^{\pm 1}$. Therefore, $[i,j]$ is a $P$-periodic section of $\Omega$ with respect to $H$, in other words, the length of $H[i,j]$ is greater than or equal to $2|P|$ and so $[i,j]\in \P$.

If $\mu \in [i_1, j_1]$, $\Delta(\mu) \in [i_2, j_2]$ and $ \mu \in {\P}$, then the equality $\varepsilon(\mu) \cdot \varepsilon(\Delta(\mu))$ = ${\mathcal X}([i_1, j_1]) \cdot {\mathcal X}([i_2, j_2])$ follows from the fact that given $A^r A_1 = B^s B_1$ and $r,s \geq 2$, the word $A$ cannot be a cyclic permutation of the word $B^{-1}$, hence condition (d) of Definition \ref{11'} holds.

Since  ${H(\mu)}^{\varepsilon(\mu)}\doteq {H(\Delta(\mu))}^{\varepsilon(\Delta(\mu))}$, from Lemma 1.2.9 in \cite{1} it follows that condition (f) also holds for the relation $R$.
\end{proof}

\begin{rem} \label{rem:subword}
Now let us fix a nonempty periodic structure $\langle {\P}, R \rangle$ on a generalised equation $\Omega$. Item (d) of Definition \ref{above} allows us to assume (after replacing the variables $h_i, \ldots, h_{j-1}$ by $h_{j-1}^{-1}, \ldots, h_i^{-1}$ on those closed sections $[i,j] \in {\P}$ for which ${\mathcal X}([i,j])=-1$) that $\varepsilon(\mu)=1$ for all $\mu \in {\P}$. Therefore, we may assume that for every item $h_i$, the word $H_i$ is a subword of the word $P^\infty$.
\end{rem}

The rest of this section is devoted to defining the group of automorphisms $\AA(\Omega)$. The idea is as follows. We change the set of generators $h$ of the coordinate group $G_{R(\Omega^*)}$ by $\bar x$ and we use the new set of generators to give an explicit description of the generating set of the group of automorphisms $\AA(\Omega)$.

To construct the set of generators $\bar x$ of $G_{R(\Omega^*)}$, the following definitions are in order.

\begin{defn}
We construct the \index{graph!of a periodic structure}\emph{graph \glossary{name=$\Gamma$, description={graph of a periodic structure}, sort=G}$\Gamma=\Gamma(\langle \P,R\rangle)$ of a periodic structure $\langle \P,R\rangle$}. The set of vertices $V(\Gamma)$ of the graph $\Gamma$ is the set of $R$-equivalence classes. For a boundary $k$, denote by $(k)$ the equivalence class of the relation $R$ to which it belongs. For each variable $h_k$ that belongs to a certain closed section from ${\P}$, we introduce an oriented edge $e\in E(\Gamma)$ from $(k)$ to $(k+1)$, $e:(k)\to(k+1)$ and an inverse edge $e^{-1}:(k+1)\to (k)$. We label the edge $e$ by $h(e) = h_k$ (correspondingly, $h(e^{-1}) = h_k^{-1}$). For every path $\p=e_1^{\epsilon_1} \ldots e_j^{\epsilon_j}$ in the graph $\Gamma$, we denote its label by $h(\p)$,  $h(\p)=h(e_1^{\epsilon_1}) \ldots h(e_j^{\epsilon_j})$, $\epsilon_1,\dots,\epsilon_j\in \{1,-1\}$.
\end{defn}

The periodic structure $\langle {\P}, R \rangle$ is called \index{periodic structure!connected}\emph{connected}, if its graph $\Gamma$ is connected.

Let $\langle {\P}, R \rangle$ be an arbitrary periodic structure of a generalised equation $\Omega$. Let $\Gamma_1, \ldots, \Gamma_r$ be the connected components of the graph $\Gamma$. The labels of edges of the component $\Gamma_i$ form (in the generalised equation $\Omega$) a union of labels of closed sections from ${\P}$. Moreover, if a base $\mu \in {\P}$ belongs to a section from $\P$, then its dual $\Delta(\mu)$, by condition (f) of Definition \ref{above}, also belongs to a section from $\P$. Therefore, by taking for ${\P}_i$: the set of labels of edges from $\Gamma_i$ that belong to ${\P}$, closed sections to which these labels belong, and bases $\mu \in {\P}$ that belong to these sections, and restricting the relation $R$ accordingly, we obtain a connected periodic structure $\langle {\P}_i, R_i \rangle$ whose graph is  $\Gamma_i$.

We write $\langle {\P}', R' \rangle\subseteq \langle {\P}, R \rangle$ meaning that ${\P}'
\subseteq {\P}$ and the relation $R'$ is a restriction of the relation $R$. In particular, in the above notation $\langle {\P}_i, R_i \rangle \subseteq \langle {\P}, R \rangle$.

We further assume that the periodic structure $\langle {\P}, R \rangle$ is connected.

Let $\Gamma$ be the graph of a periodic structure $\langle {\P}, R \rangle = {\P}(H, P)$, let $\p=e_1^{\epsilon_1} \ldots e_j^{\epsilon_j}$ be a path in $\Gamma$ and let $h(\p)$ be its label, $h(\p)=h(e_1^{\epsilon_1}) \ldots h(e_j^{\epsilon_j})$, $\epsilon_1,\dots,\epsilon_j\in \{1,-1\}$. To simplify the notation we write $H(\p)$ instead of $H(h(\p))$.

\begin{lem} \label{2.9}
Let $H$ be a $P$-periodic solution of a generalised equation $\Omega$, let $\langle {\P}, R \rangle = {\P}(H, P)$ be a periodic structure on $\Omega$ and let $\cc$ be a cycle in the graph $\Gamma=\Gamma(\langle \P,R\rangle)$ at the vertex $(l)$, $\delta(l)=P_1P_2$. Then there exists $n \in \Z$ such that $H(\cc) = (P_2P_1)^n$.
\end{lem}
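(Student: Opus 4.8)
The plan is to read $H(\cc)$ off the periodic word $P^{\infty}$ by tracking, along the cycle, the phase of the period at each vertex. By Remark \ref{rem:subword} we may assume ${\mathcal X}(\sigma)=1$ for every closed section $\sigma\in\P$, so that for each item $h_k$ lying in a closed section of $\P$ the word $H_k$ is, as written, a subword of $P^{\infty}$ read from left to right. For a boundary $l$ in a closed section of $\P$ write $\delta(l)=(P_1(l),P_2(l))$ with $P\doteq P_1(l)P_2(l)$; since the relation $R$ is by definition $l_1\sim_R l_2\iff\delta(l_1)=\delta(l_2)$, the decomposition $\delta(l)$ depends only on the $R$-class of $l$, i.e. on the vertex $(l)$ of $\Gamma$.

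The first step is a local formula for the label of an edge. Let $e\colon(k)\to(k+1)$ be an edge of $\Gamma$ with $h(e)=h_k$, and let $[i,j]\in\P$ be the $P$-periodic closed section containing $h_k$. By the construction of $\delta$, the word $H[i,k]$ ends at a ``cut'' of phase $|P_1(k)|$ inside a copy of $P$ and $H[i,k+1]=H[i,k]\,H_k$ ends at a cut of phase $|P_1(k+1)|$; hence $P_1(k)\,H_k$ is a prefix of $P^{\infty}$ that begins at a period boundary, and therefore
$$
P_1(k)\,H_k\ \doteq\ P^{\,a(e)}\,P_1(k+1)
$$
for a uniquely determined integer $a(e)\ge 0$. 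Left-multiplying by $P_1(k)^{-1}$ gives, as an identity in $G$, $H(e)=H_k=P_1(k)^{-1}P^{\,a(e)}P_1(k+1)$, and consequently $H(e^{-1})=H_k^{-1}=P_1(k+1)^{-1}P^{\,-a(e)}P_1(k)$. In either case, for a directed edge $f$ of $\Gamma$ with source vertex $v$ and target vertex $w$ we obtain $H(f)=P_1(v)^{-1}P^{\,b}P_1(w)$ for some $b\in\Z$, where $P_1(v),P_1(w)$ denote the corresponding first halves of the period.

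The second step is to telescope along the cycle. If $\p=f_1f_2\cdots f_j$ is any path in $\Gamma$ with $f_i$ running from $v_{i-1}$ to $v_i$, then multiplying the local formulas and cancelling the interior factors $P_1(v_i)P_1(v_i)^{-1}$ gives
$$
H(\p)=\prod_{i=1}^{j}\Bigl(P_1(v_{i-1})^{-1}P^{\,b_i}P_1(v_i)\Bigr)=P_1(v_0)^{-1}\,P^{\,n_{\p}}\,P_1(v_j),\qquad n_{\p}=\sum_{i=1}^{j}b_i\in\Z .
$$
Applying this to the cycle $\cc$ based at $(l)$, so that $v_0=v_j=(l)$ and hence $P_1(v_0)=P_1(v_j)=P_1$, yields $H(\cc)=P_1^{-1}P^{\,n}P_1$ with $n=n_{\cc}\in\Z$. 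Conjugating $(P_1P_2)^{n}$ by $P_1^{-1}$ gives $P_1^{-1}(P_1P_2)^{n}P_1=(P_2P_1)^{n}$ for every $n\in\Z$, which is exactly the asserted equality $H(\cc)=(P_2P_1)^{n}$.

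The delicate point is the local formula: one must check that $P_1(k)\,H_k$ is genuinely a prefix of $P^{\infty}$ starting at a period boundary -- equivalently, that the occurrence of $H_k$ in $P^{\infty}$ ``continues'' the occurrence of $P_1(k)$ inside a single copy of $P$ -- so that the exponent $a(e)$ is a non-negative integer. This is precisely what the definition of the decomposition $\delta$ encodes (using condition (f) of Definition \ref{above} and the sign normalisation ${\mathcal X}\equiv 1$ from Remark \ref{rem:subword}); everything after that is routine bookkeeping with normal forms in the free product $G$. Note that from the local formula onward all equalities are equalities of elements of $G$, which is all the statement requires.
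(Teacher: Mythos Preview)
Your proof is correct and follows essentially the same approach as the paper: derive a local formula for $H(e)$ on each edge in terms of the phase decompositions at its endpoints, then telescope along the cycle. The paper's version is terser (it writes the edge formula as $H(e)=P_2P^{n_k}P_1'$, which is your $P_1(k)^{-1}P^{a(e)}P_1(k+1)$ with $n_k=a(e)-1$, and then simply says ``multiply''), while you spell out the telescoping and the final conjugation $P_1^{-1}P^nP_1=(P_2P_1)^n$ explicitly.
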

\begin{proof}
If $e$ is an edge $v\to v'$ in the graph $\Gamma$, and $P = P_1P_2$,  $P = P_1' P_2'$ are two decompositions corresponding to the boundaries from $(v)$ and $(v')$ respectively. Then, obviously, $H(e) = P_2 P^{n_k}P_1'$, $n_k \in \Z$. The statement follows if we multiply the values $H(e)$ for all the edges $e$ in the cycle $\cc$.
\end{proof}

\begin{defn} \label{2.51}
A generalised equation $\Omega$ is called \index{generalised equation!periodised}\emph{periodised} with respect to a given periodic structure $\langle {\P}, R \rangle$, if for every two cycles $\cc_1$ and $\cc_2$ based at the same vertex in the graph $\Gamma(\langle \P,R\rangle)$, there is a relation $[h(\cc_1), h(\cc_2)]=1$ in $G_{R(\Omega^\ast)}$.
\end{defn}

Let \glossary{name={$\Sh$, $\Sh(\Gamma)$}, description={the set of ``short'' edges of the graph $\Gamma$ of a periodic structure}, sort=S}\glossary{name={$h(\Sh)$}, description={labels of the ``short'' edges of $\Gamma$, $h(\Sh)=\{h(e) \mid e\in \Sh \}$}, sort=H}$\Sh=\Sh(\Gamma)=\{e \in E(\Gamma) \mid h(e)\notin \P\}$ and $h(\Sh)=\{h(e) \mid e\in \Sh \}$.

Let \glossary{name={$\Gamma_0$}, description={the subgraph of $\Gamma$ all of whose edges are ``short'', $\Gamma_0=(V(\Gamma),\Sh(\Gamma))$}, sort=G}$\Gamma_0=(V(\Gamma),\Sh(\Gamma))$ be the subgraph of the graph $\Gamma=\Gamma(\langle \P,R\rangle)$ having the same set of vertices as $\Gamma$ and the set of edges  $E(\Gamma_0)=\Sh$. Choose a maximal subforest $T_0=T_0(\langle \P,R\rangle)$ in the graph $\Gamma_0$ and extend it to a maximal subforest $T=T(\langle \P,R\rangle)$ of the graph $\Gamma$. Since the periodic structure $\langle {\P}, R \rangle$ is connected by assumption, it follows that $T$ is a tree. Fix an arbitrary vertex $v_\Gamma$ of the graph $\Gamma$ and denote by $\p(v_\Gamma, v)$ the (unique) path in $T$ from $v_\Gamma$ to $v$. For every edge $e: v \to v'$ not lying in $T$, we introduce a cycle $\cc_e = \p(v_\Gamma, v) e (\p(v_\Gamma, v'))^{-1}$. Then the fundamental group $\pi_1(\Gamma, v_\Gamma)$ is generated by the cycles $\cc_e$ (see, for example, the proof of Proposition III.2.1, \cite{LS}).

If the universal horn theory of $G$ is decidable, then the property of a generalised equation ``to be periodised with respect to a given periodic structure'' is algorithmically decidable. Indeed, it suffices to check if the following universal formula (quasi-identity) holds in $G$ (for every pair of cycles $\cc_{e_1}$, $\cc_{e_2}$):
$$
\forall H_1, \dots, H_\rho \left(\left(\bigwedge (\Omega^*(H)=1)\right)\to \left([H(\cc_{e_1}),H(\cc_{e_2})]=1\right)\right).
$$

Furthermore, the set of elements
\begin{equation} \label{2.52}
\{h(e) \mid e \in T \} \cup \{h(\cc_e) \mid e \not \in T \}
\end{equation}
forms a basis of the free group generated by
$$
\{h_k \mid h_k\in \sigma, \sigma\in {\P} \}.
$$
If $\mu \in {\P}$, then $(\beta(\mu)) = (\beta(\Delta(\mu)))$, $(\alpha(\mu)) = (\alpha(\Delta(\mu)))$ by property (f) from Definition \ref{above} and, consequently, the word $h(\mu) {h(\Delta(\mu))}^{-1}$ is the label of a cycle $\cc'_\mu$ from $\pi_1 (\Gamma, (\alpha(\mu)))$. Let $\cc_\mu = \p(v_\Gamma, (\alpha(\mu)))\cc'_\mu \p(v_\Gamma, (\alpha(\mu)))^{-1}$. Then
\begin{equation} \label{2.53}
h(\cc_\mu) = uh(\mu) {h(\Delta(\mu))}^{-1} u^{-1},
\end{equation}
where $u$ is the label of the path $\p(v_\Gamma, (\alpha(\mu)))$. Since $\cc_\mu \in \pi_1(\Gamma, v_\Gamma)$, it follows that $\cc_\mu = b_\mu (\{\cc_e \mid e \not \in T \})$, where $b_\mu$ is a certain word in the indicated generators that can be  constructed effectively (see Proposition III.2.1, \cite{LS}).

Let $\tilde{b}_\mu$ denote the image of the word $b_\mu$ in the abelianisation of $\pi_1(\Gamma ,v_\Gamma)$. Denote by $\widetilde{Z}$ the free abelian group consisting of formal linear combinations $\sum\limits_{e \not \in T} n_e \tilde{\cc}_e$, $n_e \in {\Z}$, and by $\widetilde{B}$ its subgroup generated by the elements $\tilde{b}_\mu$, $\mu \in {\P}$ and the elements $\tilde{\cc}_e$, $e \not \in T$, $e \in \Sh$.

By the classification theorem of finitely generated abelian groups, one can effectively construct a basis $\{\widetilde{C}^{(1)}, \widetilde{C}^{(2)}\}$ of $\widetilde{Z}$ such that
\begin{equation} \label{2.54}
\widetilde{Z} = \widetilde{Z}_1 \oplus \widetilde{Z}_2, \ \widetilde{B} \subseteq \widetilde{Z}_1, \ [\widetilde{Z}_1 : \widetilde{B}] < \infty,
\end{equation}
where $\widetilde{C}^{(1)}$ is a basis of $\widetilde{Z}_1$ and $\widetilde{C}^{(2)}$ is a basis of $\widetilde{Z}_2$.

By Proposition I.4.4 in \cite{LS}, one can effectively construct a basis \glossary{name={$C^{(1)}$}, description={``short'' cycles in $\Gamma$}, sort=C}$C^{(1)}=C^{(1)}(\langle \P, R\rangle)$, \glossary{name={$C^{(2)}$}, description={``free'' cycles in $\Gamma$}, sort=C}$C^{(2)}=C^{(2)}(\langle \P, R\rangle)$ of the free (non-abelian) group $\pi_1(\Gamma, v_\Gamma)$ such that $\widetilde{C}^{(1)}$, $\widetilde{C}^{(2)}$ are the natural images of the elements $C^{(1)}$, $C^{(2)}$ in $\widetilde{Z}$.

\begin{rem} \label{rem:hab}
Notice that any equation in $\widetilde{Z}$ of the form
$$
\tilde{ \cc}= \sum\limits_{\tilde \cc_i\in \widetilde{Z}} n_i \tilde {\cc}_i
$$
lifts to an equation in $\pi_1(\Gamma,v_\Gamma)$ of the form
$$
\cc =\prod\limits_{\cc_i\in \pi_1(\Gamma,v_\Gamma)} {\cc}_i^{n_i} V,
$$
where $V$ is an element of the derived subgroup of $\pi_1(\Gamma,v_\Gamma)$. If the generalised equation is periodised, for any two cycles $\cc'_1, \cc'_2 \in \pi_1(\Gamma,v_\Gamma)$, we have that $[h(\cc'_1),h(\cc'_2)]=1$ in $G_{R(\Omega^*)}$. Hence, $h(\cc)=\prod\limits_{\cc_i\in \pi_1(\Gamma,v_\Gamma)} {h(\cc_i)}^{n_i}$ in $G_{R(\Omega^*)}$.
\end{rem}

\begin{lem} \label{cl:propc1}
Let $\Omega$ be a periodised generalised equation. Then the basis $\widetilde{C}^{(1)}$ can be chosen in such a way that for every $\cc\in C^{(1)}$ either $\cc=\cc_e$, where $e\notin T$, $e\in \Sh$, or for any solution $H$ we have $H(\cc)=1$.
\end{lem}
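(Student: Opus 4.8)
The plan is to identify the condition ``$H(\cc)=1$ for every solution $H$'' with membership in an explicit direct summand of $\widetilde{Z}$, and then to produce the decomposition (\ref{2.54}) respecting it.

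Recall from the discussion preceding the lemma that $\widetilde{Z}$ is free abelian with distinguished basis $\{\tilde{\cc}_e\mid e\notin T\}$ and that $\widetilde{B}=L+\langle \tilde{b}_\mu\mid\mu\in\P\rangle$, where $L:=\langle \tilde{\cc}_e\mid e\notin T,\ e\in\Sh\rangle$ is spanned by part of the distinguished basis and hence is a direct summand of $\widetilde{Z}$. Since $\Omega$ is periodised, Remark \ref{rem:hab} shows that for every cycle $\cc$ the image $h(\cc)$ in $G_{R(\Omega^\ast)}$ depends only on the class $\tilde{\cc}\in\widetilde{Z}$; so $\tilde{\cc}_e\mapsto h(\cc_e)$ extends to a homomorphism $\phi\colon\widetilde{Z}\to G_{R(\Omega^\ast)}$ with $\phi(\tilde{\cc})=h(\cc)$ for every cycle. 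Put $K=\{\tilde{\cc}\in\widetilde{Z}\mid H(\cc)=1$ for every solution $H$ of $\Omega\}$, so that $\ker\phi\subseteq K$ and membership in $K$ is exactly the second alternative in the statement; thus it suffices to choose $\widetilde{C}^{(1)}$ from within $\{\tilde{\cc}_e\mid e\notin T,\ e\in\Sh\}\cup K$.

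Next I would record two facts. \emph{(i) $K$ is a direct summand of $\widetilde{Z}$.} By Lemma \ref{2.9}, writing $P_1P_2=\delta(v_\Gamma)$, every cycle $\cc$ and every solution $H$ satisfy $H(\cc)=(P_2P_1)^{n}$ for some $n\in\Z$, and $P_2P_1$ is a period, hence of infinite order in $G$; thus $\cc\mapsto n$ defines a homomorphism $\psi_H\colon\widetilde{Z}\to\Z$ and $K=\bigcap_{H}\ker\psi_H$. Consequently $\widetilde{Z}/K$ embeds into a product of copies of $\Z$, so it is torsion free and $K$ is a pure subgroup, i.e.\ a direct summand. \emph{(ii) $\tilde{b}_\mu\in K$ for every $\mu\in\P$.} By Remark \ref{rem:subword} we may assume $\varepsilon(\mu)=1$ for all $\mu\in\P$, so the basic equation $s_\mu$ gives $h(\mu)=h(\Delta(\mu))$ in $G_{R(\Omega^\ast)}$; by (\ref{2.53}), $h(\cc_\mu)=u\,h(\mu)h(\Delta(\mu))^{-1}u^{-1}=1$ in $G_{R(\Omega^\ast)}$, so $H(\cc_\mu)=1$ for every solution $H$, and since $\tilde{\cc}_\mu=\tilde{b}_\mu$ we conclude $\tilde{b}_\mu\in K$. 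In particular $\langle \tilde{b}_\mu\mid\mu\in\P\rangle\subseteq K$.

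The remaining, and main, step is module-theoretic. From the description of $\cc_\mu$ (the paths $\p(v_\Gamma,\cdot)$ lie in $T$, and $\cc'_\mu$ carries the label $h(\mu)h(\Delta(\mu))^{-1}$) the coordinate of $\tilde{b}_\mu$ on $\tilde{\cc}_e$ is the net number of traversals of $e$ by $\cc_\mu$, which is $+1$ if $e$ is an edge of $\mu$ only, $-1$ if of $\Delta(\mu)$ only, and $0$ otherwise; so every $\tilde{b}_\mu$ has coordinates in $\{-1,0,1\}$ in the distinguished basis. Using this together with (i) and $\langle\tilde{b}_\mu\mid\mu\in\P\rangle\subseteq K$, I would choose a direct summand $\widetilde{Z}_1\supseteq\widetilde{B}$ of $\widetilde{Z}$ with $[\widetilde{Z}_1:\widetilde{B}]<\infty$ that admits a basis $\widetilde{C}^{(1)}$ splitting as $\{\tilde{\cc}_e\mid e\in S\}\cup\mathcal{W}$ with $S\subseteq\{e\notin T\mid e\in\Sh\}$ and $\mathcal{W}\subseteq K$: one keeps the summand $L$ and adjoins the preimages in $K$ of a basis of a suitable finite-index pure overgroup, inside $\widetilde{Z}/L$, of the image of $\langle\tilde{b}_\mu\mid\mu\in\P\rangle$. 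Taking a complementary summand $\widetilde{Z}_2$ yields (\ref{2.54}); lifting $\widetilde{C}^{(1)}$ to $C^{(1)}\subseteq\pi_1(\Gamma,v_\Gamma)$ as in the paragraph following (\ref{2.54}), each basis vector $\tilde{\cc}_e$ lifts to $\cc_e$ itself (first alternative), while each element of $\mathcal{W}$, lying in $K$, lifts to a cycle $\cc$ with $H(\cc)=1$ for every solution $H$ (second alternative). The delicate point is precisely this last bookkeeping: one has to exhibit a \emph{single} summand $\widetilde{Z}_1$ that simultaneously contains $\widetilde{B}$ with finite index and carries a basis of the prescribed shape, and here the interplay of ``$L$ is a direct summand'', ``$K$ is pure'', and ``$\langle\tilde{b}_\mu\rangle\subseteq K$ with entries in $\{-1,0,1\}$'' has to be used with care.
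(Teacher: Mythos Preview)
Your approach is correct and runs parallel to the paper's, but packages the same ideas more abstractly than necessary. The paper never introduces the subgroup $K$ or the maps $\psi_H$. Instead it (i) observes that $\{\tilde{\cc}_e\mid e\notin T,\ e\in\Sh\}$, being part of the standard basis of $\widetilde{Z}$ and lying in $\widetilde{Z}_1$, extends to a basis of $\widetilde{Z}_1$; (ii) uses $[\widetilde{Z}_1:\widetilde{B}]<\infty$ to write each $n_{\cc_i}\tilde{\cc}_i$ as an integer combination of the $\tilde{\cc}_e$'s and the $\tilde{b}_\mu$'s; (iii) asserts that the completing elements $\tilde{\cc}_i$ can be chosen so that only $\tilde{b}_\mu$-terms survive, i.e.\ $n_{\cc_i}\tilde{\cc}_i=\sum_\mu n_\mu\tilde{b}_\mu$; and then (iv) concludes via Remark~\ref{rem:hab} that $H(\cc_i)^{n_{\cc_i}}=\prod_\mu H(b_\mu)^{n_\mu}=1$, and since $H(\cc_i)$ is a power of a period (Lemma~\ref{2.9}) which has infinite order in $G$, this forces $H(\cc_i)=1$. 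Your purity argument for $K$ is precisely this last torsion-freeness step in abstract clothing, and your inclusion $\langle\tilde{b}_\mu\rangle\subseteq K$ is just the observation $H(b_\mu)=1$. So the two arguments match step for step; the paper's is simply more direct.

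Two comments on the differences. First, the ``delicate point'' you flag is exactly the paper's step (iii), which it handles in one sentence; note that the paper aims for the formally \emph{stronger} conclusion that a nonzero multiple of each $\tilde{\cc}_i$ lies in $\langle\tilde{b}_\mu\rangle$ itself, whereas you only ask for $\tilde{\cc}_i\in K$ (automatic once a multiple lies in $\langle\tilde{b}_\mu\rangle\subseteq K$, by purity). Second, your observation that each $\tilde{b}_\mu$ has coordinates in $\{-1,0,1\}$ in the standard basis is correct but is not invoked by the paper; if you mean it to drive the module-theoretic bookkeeping you should make that explicit, otherwise it can be dropped.
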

\begin{proof}
The set $\{\tilde{\cc}_e \mid e\notin T, e\in \Sh\}$ is a subset of the set of generators of $\widetilde{Z}$ contained in $\widetilde{Z}_1$. Thus, this set can be extended to a basis of $\widetilde{Z}_1$. Since, by (\ref{2.54}),  $[\widetilde{Z}_1 : \widetilde{B}] < \infty$, for every $\tilde \cc\in \widetilde{Z}_1$ there exists $n_\cc\in \N$ such that
$$
n_\cc \tilde \cc=\sum\limits_{e \not \in T,  e\in \Sh} n_e\tilde{\cc}_e +\sum\limits_{\mu \in {\P}} n_\mu \tilde {b}_\mu.
$$
It follows that the set $\{\tilde{\cc}_1,\dots, \tilde{\cc}_k\}$ which completes the set $\{\tilde{\cc}_e \mid e\notin T,  e\in \Sh\}$ to a basis of $\widetilde{Z}_1$, can be chosen so that the following equality holds:
$$
n_{\cc_i} \tilde{ \cc}_i= \sum\limits_{\mu \in {\P}} n_\mu \tilde {b}_\mu.
$$

Hence, by Remark \ref{rem:hab}, for any solution $H$ we have ${H(\cc_i)}^{n_{\cc_i}}=\prod\limits_{\mu \in {\P}} {H(b_\mu)}^{n_\mu}=1$.
Since $P$ is cyclically reduced and $P\notin G_1\cup G_2$ and since $n_{\cc_i}\ne 0$, we have that $H(\cc_i)=1$.
\end{proof}

Let $\{e_1, \ldots, e_m\}$ be the set of edges of $T \setminus T_0$. Since $T_0$ is the spanning forest of the graph $\Gamma_0$, it follows that $h(e_1), \ldots, h(e_m) \notin h({\Sh})$; in particular, $h(e_1), \ldots, h(e_m) \in \P$.

Let $F(\Omega)$ be the free group
generated by the variables of $\Omega$. Consider in the group $F(\Omega )$ a new set of generators $\bar x$ defined below (we prove that the set $\bar x$ is in fact a set of generators of $F(\Omega )$ in part (\ref{spl3}) of Lemma \ref{2.10''}).

Let $v_i$ be the origin of the edge $e_i$. We introduce new variables
\begin{equation} \label{eq:uie}
\bar u^{(i)}=\{u_{ie}\mid e\not\in T,\ e\in \Sh\}, \ \bar z^{(i)}=\{z_{ie}\mid e\not\in T, e\in \Sh\}, \hbox{ for } 1\leq i\leq m,
\end{equation}
as follows
\begin{equation}\label{2.59}
u_{ie}={h(\p(v_\Gamma,v_i))}^{-1}h(\cc_e)h(\p(v_\Gamma,v_i)),\ z_{ie}=h(e_i)^{-1}u_{ie}h(e_i).
\end{equation}
We denote by \glossary{name={$\bar u$}, description={a subset of the set $\bar x$ of generators of the coordinate group of a periodised generalised equation}, sort=U}$\bar u$ the union $\bigcup\limits_{i=1}^m \bar u^{(i)}$ and by \glossary{name={$\bar z$}, description={a subset of the set $\bar x$ of generators for the coordinate group of a periodised generalised equation}, sort=Z}$\bar z$ the union $\bigcup\limits_{i=1}^m \bar z^{(i)}$. Denote by $\bar t$ the family of variables that do not belong to closed sections from $\P$. Let
\glossary{name={$\bar x$}, description={a set of generators of the coordinate group of a periodised generalised equation}, sort=X}
$$
\bar x = \bar t \cup \{h(e) \mid e\in T_0\} \cup \bar u \cup \bar z\cup \{h(e_1),\dots,h(e_m)\}\cup h(C^{(1)})\cup h(C^{(2)})
$$

\begin{rem}
Note that without loss of generality we may assume that $v_\Gamma$ corresponds to the beginning of the period $P$. Indeed, it follows from the definition of the periodic structure that for any cyclic permutation $P'=P_2P_1$ of $P$, we have $\P(H,P)=\P(H,P')$.
\end{rem}

\begin{lem} \label{lem:spl1}
Let $\Omega $ be a generalised equation periodised with respect to a periodic structure $\langle {\P},R\rangle $. Then for any cycle $\cc_{e_0}$ such that $h(e_0)\notin \P$ and for any solution $H$ of $\Omega$ periodic with respect to a period $P$ such that ${\P}(H,P)=\langle {\P},R\rangle $, one has $H(\cc_{e_0})=P^n$, where $|n|\le 2\rho$. In particular, one can choose a basis $C^{(1)}$ in such a way that for any $\cc\in C^{(1)}$ one has $H(\cc)=P^n$, where $|n|\le 2\rho$.
\end{lem}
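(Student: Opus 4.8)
The plan is to recognise $H(\cc_{e_0})$ as a power of $P$ via Lemma~\ref{2.9}, and then to bound the exponent by a length estimate, the crucial point being that the fundamental cycle carried by the short edge $e_0$ never leaves the subgraph $\Gamma_0$ of short edges.

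First I would invoke the remark immediately preceding the lemma to assume that $v_\Gamma$ corresponds to the beginning of the period $P$; since $\cc_{e_0}$ is a cycle based at $v_\Gamma$, Lemma~\ref{2.9} then gives $H(\cc_{e_0})=P^{n}$ for some $n\in\Z$, and if $n=0$ there is nothing to prove, so assume $n\neq0$. By Remark~\ref{rem:subword} one may also assume that every $H_i$ is a subword of $P^{\infty}$; in particular, for $e\in\Sh$ the item $h(e)$ lies in a $P$-periodic closed section but not in $\P$, so $|H(e)|<2|P|$.

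Next I would write $e_0\colon v\to v'$ and let $w$ be the vertex at which the two tree paths $\p(v_\Gamma,v)$ and $\p(v_\Gamma,v')$ first diverge, so that $\cc_{e_0}=\p(v_\Gamma,w)\,\cc_{e_0}'\,\p(v_\Gamma,w)^{-1}$ with $\cc_{e_0}':=\p(w,v)\,e_0\,\p(w,v')^{-1}$ the fundamental cycle of $e_0$ with respect to $T$; in particular $H(\cc_{e_0})$ is conjugate in $G$ to $H(\cc_{e_0}')$. The key combinatorial claim, which I expect to be the only non-routine step, is that every edge of $\cc_{e_0}'$ belongs to $\Sh$: since $h(e_0)\notin\P$ we have $e_0\in\Sh=E(\Gamma_0)$ while $e_0\notin T_0$, and $T_0$ being a spanning forest of $\Gamma_0$ the endpoints $v,v'$ are joined by a path lying inside $T_0$; as $T_0\subseteq T$, uniqueness of paths in the tree $T$ identifies this path with the $T$-path from $v$ to $v'$, that is with $\p(w,v)\cup\p(w,v')$, so every edge of $\cc_{e_0}'$ lies in $E(T_0)\cup\{e_0\}\subseteq\Sh$.

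To conclude I would estimate lengths: $\cc_{e_0}'$ is a simple cycle, hence uses pairwise distinct undirected edges of $\Gamma$, each corresponding to a distinct item of $\Omega$, so it has at most $\rho$ edges; all of them being short, $|H(\cc_{e_0}')|\le\sum_e|H(e)|\le2\rho|P|$, the sum running over the edges of $\cc_{e_0}'$. Since $P$ is cyclically reduced and $n\neq0$, $P^{n}$ is cyclically reduced of length $|n|\,|P|$, and, being conjugate to $H(\cc_{e_0}')$, its length equals the conjugacy-minimal length of $H(\cc_{e_0}')$ (by the standard normal form for conjugacy in a free product, see~\cite{LS}); hence $|n|\,|P|\le|H(\cc_{e_0}')|\le2\rho|P|$ and $|n|\le2\rho$. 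For the final assertion I would apply Lemma~\ref{cl:propc1} (available because $\Omega$ is periodised): the basis $C^{(1)}$ can be chosen so that each $\cc\in C^{(1)}$ is either some $\cc_e$ with $e\notin T$, $e\in\Sh$, to which the above applies verbatim, or has $H(\cc)=1=P^{0}$ for every solution; in either case $H(\cc)=P^{n}$ with $|n|\le2\rho$.
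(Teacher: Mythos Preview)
Your proof is correct and follows essentially the same route as the paper: both arguments conjugate $\cc_{e_0}$ by the tree path to a simple cycle $\cc'_{e_0}$ lying entirely in $\Gamma_0$ (using that $T_0$ is a spanning forest and uniqueness of paths in $T$), bound $|H(\cc'_{e_0})|\le 2\rho|P|$ from the shortness of its edges, and then invoke Lemma~\ref{cl:propc1} for the final statement about $C^{(1)}$. The only cosmetic difference is that the paper compares exponents directly via the explicit formula of Lemma~\ref{2.9} (obtaining $n_{e_0}=n'_{e_0}$), whereas you pass through the conjugacy normal form in a free product to get $|n|\,|P|\le |H(\cc'_{e_0})|$; both are equally valid here.
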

\begin{proof}
Let $\cc_{e_0}$ be a cycle such that the edges of $\cc_{e_0}$ are labelled by variables $h_k$, $h_k\notin \P$. Observe that $e_0 = \p_1 \cc_{e_0} \p_2$, where $\p_1$ and $\p_2$ are paths in the tree $T$. Since $e_0 \in {\Gamma_0}$, it follows that the origin and the terminus of the edge $e_0$ lie in the same connected component of the graph $\Gamma_0$ and, consequently, are connected by a path $\s$ in the forest $T_0$. Furthermore, $\p_1$ and $\s\p_2^{-1}$ are paths in the tree $T$ connecting the same vertices; therefore, $\p_1 = \s \p_2^{-1}$. Hence, $\cc_{e_0} = \p_2 \cc'_{e_0} \p_2^{-1}$, where $\cc'_{e_0}$ is a certain cycle based at the vertex $v_\Gamma'$ in the graph $\Gamma_0$.

From the equality $H(\cc_{e_0}) = H(\p_2) H(\cc'_{e_0}) {H(\p_2)}^{-1}$ and by Lemma \ref{2.9}, we get that $P^{n_{e_0}}=P^{n_2}P_1(P_2P_1)^{n_{e_0}'}P_1^{-1}P^{-n_2}$, where $\delta(v_\Gamma')=P_1P_2$, $n_{e_0}=\exp(H(\cc_{e_0}))$, $n_{e_0}'=\exp(H(\cc'_{e_0}))$, $n_2=\exp(H(\p_2))$. Hence $n_{e_0}=n_{e_0}'$ and thus $|H(\cc_{e_0})| = |H(\cc'_{e_0})|$.  From the construction of ${\P}(H, P)$, it follows that the inequality $|H_k| \le 2 |P|$ holds for every item $h_k \not \in {\P}$. Since the cycle $\cc'_{e_0}$ is simple, we have that $|H(\cc_{e_0})| = |H(\cc'_{e_0})|\le 2 \rho |P|$.

In particular, one has that $|\exp(H(\cc_e))| \leq 2 \rho$ for every $e \notin T$, $e \in \Sh$. By Lemma \ref{cl:propc1} one can choose a basis $C^{(1)}$ such that for every $\cc\in C^{(1)}$ either $\cc=\cc_e$ and $|\exp(H(\cc_e))| \leq 2 \rho$, or $H(\cc)=1$.
\end{proof}

\begin{lem} \label{2.10''}
Let $\Omega $ be a  generalised equation periodised with respect to a periodic structure $\langle {\P},R\rangle $. Then the following statements hold.
\begin{enumerate}
    \item \label{spl2} Let $\widetilde{G}$ be a fully residually $G$ quotient of $G_{R(\Omega^\ast)}$, such that $\widetilde{G}$ is discriminated by solutions that satisfy the following condition: for any $\cc\in C^{(1)}$, $H(\cc)=P^n$ and $|n|\le 2\rho$. Then the image of $\langle h(C^{(1)})\rangle$ in $\widetilde{G}$ is either trivial or a cyclic subgroup.

    \item \label{spl3} The system $\Omega ^\ast$ is equivalent to the union of the following two systems of equations:
            $$
            \mathcal{O}=
                \left\{
                \begin{array}{ll}
                u_{ie}^{h(e_i)}=z_{ie},&\hbox{where } e\in T,\, e\in \Sh; 1\leq i\leq m \\
                \left[u_{ie_1},u_{ie_2}\right]=1,&\hbox{where } e_j\in T, \,e_j\in \Sh, \, j=1,2; 1\leq i\leq m \\
                \left[h(\cc_1),h(\cc_2)\right]=1, & \hbox{where } \cc_1,\cc_2\in C^{(1)}\cup C^{(2)}
                \end{array}
                \right.
            $$
        and a system:
            $$
            \Psi \left( \{h(e)\,\mid\, e\in T, e\in \Sh\}, h(C^{(1)}), \bar t,\bar u,\bar z,\cA\right)=1,
            $$
        such that neither $h(e_i)$, $1\le i\le m$, nor $h(C^{(2)})$ occurs in $\Psi$.

    \item \label{spl4} If $\cc$ is a cycle based at the origin of $e_i$, then the transformation $h(e_i)\rightarrow h(\cc)h(e_i)$ which is identical on all the other elements from $\cA\cup\bar x$, extends to a $G$-automorphism of $G_{R(\Omega^\ast)}$.

    \item \label{spl5} If $\cc\in C^{(2)}$ and $\cc'\in C^{(1)} \cup C^{(2)}$, $\cc'\ne \cc$, then the transformation defined by $h(\cc)\rightarrow h(\cc')h(\cc)$, which is identical on all the other elements from $\cA\cup\bar x$, extends to a $G$-automorphism of $G_{R(\Omega^\ast)}$.
\end{enumerate}
\end{lem}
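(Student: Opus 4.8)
The plan is to prove the four statements in the order: first the splitting of $\Omega^\ast$ into the systems $\mathcal O$ and $\Psi$, then the two automorphism statements (which are formal consequences of the splitting), and finally the statement on the image of $\langle h(C^{(1)})\rangle$. For the splitting I would first verify that $\bar x$ generates $F(\Omega)$: the variables lying outside the closed sections of $\P$ are exactly those in $\bar t$, and for the variables lying on closed sections of $\P$ the set $\{h(e)\mid e\in T\}\cup\{h(\cc_e)\mid e\notin T\}$ is a free basis by~\eqref{2.52}; replacing $\{h(\cc_e)\mid e\notin T\}$ by the basis $h(C^{(1)})\cup h(C^{(2)})$ of $\pi_1(\Gamma,v_\Gamma)$ and $\{h(e)\mid e\in T\}$ by $\{h(e)\mid e\in T_0\}\cup\{h(e_1),\dots,h(e_m)\}$ still yields a generating set, and adjoining the redundant families $\bar u,\bar z$ produces $\bar x$, whose only relations linking the new symbols to the old ones are $z_{ie}=h(e_i)^{-1}u_{ie}h(e_i)$, i.e. the relations $u_{ie}^{h(e_i)}=z_{ie}$ of $\mathcal O$. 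Next I would rewrite the defining relations of $\Omega^\ast$ in the generators $\bar x$, using Remark~\ref{rem:subword} to assume $\varepsilon(\mu)=1$ for all $\mu\in\P$. For a base $\mu\in\P$ the basic equation $h(\mu)=h(\Delta(\mu))$ is equivalent, by~\eqref{2.53}, to $h(\cc_\mu)=1$ with $h(\cc_\mu)=b_\mu(\{h(\cc_e)\mid e\notin T\})$; since $\Omega$ is periodised (Definition~\ref{2.51}), all cycle labels based at $v_\Gamma$ commute in $G_{R(\Omega^\ast)}$, so by Remark~\ref{rem:hab}, together with the inclusion $\widetilde B\subseteq\widetilde Z_1$ that makes $\widetilde b_\mu$ a combination of $\widetilde C^{(1)}$, this equation becomes a relation among $h(C^{(1)})$ alone. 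Collecting these, together with $[h(\cc_1),h(\cc_2)]=1$ for $\cc_1,\cc_2\in C^{(1)}\cup C^{(2)}$ (valid by periodisedness), yields $\mathcal O$, and every remaining relation of $\Omega^\ast$ — the other basic equations, the coefficient and factor equations, and the type constraints — is collected into $\Psi$. Using conditions (a)--(f) of Definition~\ref{above} one then checks that each such relation is a word in $\bar t$, $\{h(e)\mid e\in T_0\}$, $\bar u$, $\bar z$ and $h(C^{(1)})$, so that neither $h(e_i)$ nor any $h(\cc)$ with $\cc\in C^{(2)}$ occurs in $\Psi$ (the type constraints on items of $\P$ being consequences of the periodic structure). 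This last bookkeeping — tracing, via the compatibility conditions of $\langle\P,R\rangle$, precisely which generators survive in each rewritten equation — is where the real work lies and is the main obstacle of the lemma.

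Given the splitting, each of the two automorphism statements need only be checked against the relations $\mathcal O\cup\Psi$. For the first, $h(e_i)$ occurs in $\Omega^\ast$ only in the relations $u_{ie}^{h(e_i)}=z_{ie}$, so the transformation $h(e_i)\mapsto h(\cc)h(e_i)$ (identity elsewhere) fixes $\Psi$ and the other blocks of $\mathcal O$, and it preserves $u_{ie}^{h(e_i)}=z_{ie}$ exactly when $[h(\cc),u_{ie}]=1$. Now $u_{ie}$ is the conjugate of the cycle label $h(\cc_e)$ by $h(\p(v_\Gamma,v_i))$, and $\cc$ is a cycle based at the origin $v_i$ of $e_i$, so $h(\p(v_\Gamma,v_i))h(\cc)h(\p(v_\Gamma,v_i))^{-1}$ and $h(\cc_e)$ are labels of cycles based at $v_\Gamma$, which commute in $G_{R(\Omega^\ast)}$ by periodisedness; conjugating back gives $[h(\cc),u_{ie}]=1$. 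Hence the transformation is a $G$-endomorphism of $G_{R(\Omega^\ast)}$; since $h(e_i)\mapsto h(\cc)^{-1}h(e_i)$ is one too and the two are mutually inverse, it is a $G$-automorphism. For the second, a generator $h(\cc)$ with $\cc\in C^{(2)}$ occurs in $\Omega^\ast$ only in relations $[h(\cc_1),h(\cc_2)]=1$, and $h(\cc)\mapsto h(\cc')h(\cc)$ preserves each of these because $[h(\cc'),h(\cc_2)]=1$ and $[h(\cc),h(\cc_2)]=1$ already hold in $G_{R(\Omega^\ast)}$; once more the inverse transformation has the same form, so we obtain a $G$-automorphism.

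For the image of $\langle h(C^{(1)})\rangle$ in $\widetilde G$, recall from Lemma~\ref{cl:propc1} that $C^{(1)}=\{\cc_e\mid e\notin T,\ e\in\Sh\}\cup\{\cc_1,\dots,\cc_k\}$ with $H(\cc_j)=1$ for every solution $H$; since $\widetilde G$ is discriminated by solutions, the images of the $h(\cc_j)$ in $\widetilde G$ are trivial, so the image $\bar L$ of $\langle h(C^{(1)})\rangle$ is generated by the finitely many images of the $h(\cc_e)$, which pairwise commute by periodisedness, hence $\bar L$ is finitely generated abelian. Every discriminating solution $H$ maps $\bar L$ into the infinite cyclic group $\langle P\rangle$ by Lemma~\ref{lem:spl1}, taking each $h(\cc_e)$ to $P^{n}$ with $|n|\le 2\rho$; in particular torsion is killed by every such solution, so $\bar L$ is torsion free, and only finitely many homomorphisms $\bar L\to\Z$ arise this way, each being given by a bounded integer vector on a fixed finite generating set. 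But $\bar L$ is discriminated by these homomorphisms, whereas $\Z^{r}$ with $r\ge 2$ admits no discrimination by a finite family of maps to $\Z$ (pick a nonzero element in the kernel of each); therefore $\bar L$ has rank at most one, i.e. it is trivial or infinite cyclic. Thus the only genuinely laborious part of the argument is the rewriting step; once $\Omega^\ast$ is put in the form $\mathcal O\cup\Psi$, the remaining three statements follow quickly.
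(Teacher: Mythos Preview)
Your proposal is correct and follows essentially the same route as the paper: establish the change of generating set to $\bar x$, rewrite each equation of $\Omega^\ast$ in these generators to obtain the decomposition $\mathcal O\cup\Psi$ (with the equations coming from $\mu\in\P$ reducing, via periodisedness and Remark~\ref{rem:hab}, to words in $h(C^{(1)})$, and the remaining equations rewriting in $\bar t,\{h(e)\mid e\in T_0\},\bar u,\bar z$), and then read off the automorphism statements. Your argument for part~(\ref{spl2}) is packaged a bit differently from the paper's---you pass through ``$\bar L$ is torsion-free abelian and discriminated by finitely many maps to $\Z$, hence of rank $\le 1$'' whereas the paper extracts a nontrivial relation $h(\cc)^n h(\cc')^{-n'}=1$ for each pair of generators---but the two arguments are equivalent and of the same length; one small slip is that type constraints are not part of $\Omega^\ast$, so they need not be discussed at all.
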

\begin{proof}
We first prove (\ref{spl2}). Since the image of the group $\langle h(C^{(1)})\rangle $ in $G$ is cyclic, and since for all $\cc\in C^{(1)}$ the exponent of periodicity $\exp(H(\cc))$ is bounded by Lemma  \ref{lem:spl1}, for any pair of elements $\cc,\cc'\in C^{(1)}$ and any solution $H$ at least one of the elements of the finite set:
$$
\{H(\cc)^nH(\cc')^{-n'} \, \mid\, 0 \le n,n'\le 2\rho\}
$$
is trivial.

Since $\widetilde{G}$ is $G$-discriminated (by $G$) by $P$-periodic solutions, for any pair of elements $\cc,\cc'\in C^{(1)}$ one of the elements of the finite set
$$
\{h(\cc)^n h(\cc')^{-n'} \, \mid\, 0 \le n,n'\le 2\rho\}
$$
is trivial in $\widetilde G$. Therefore, the subgroup generated by $h(C^{(1)})$ is cyclic.

To prove (\ref{spl3}) we will rewrite the system of equations $\Omega^*$ in the new variables.  It is easy to check that (in the above notation):
\begin{gather}\notag
\begin{split}
\{ h_1,\dots, h_\rho\}=& \{h(e)\, \mid \, h(e)\in \sigma, \sigma \notin \P\}\, \cup\\
&\{h(e) \,\mid\, e\in T_0\} \cup  \{h(e) \,\mid\, e\notin T_0, e\in \Sh\}\, \cup \\
&\{h(e) \,\mid\, e\in T\setminus T_0\} \cup \{h(e) \,\mid\, e\notin T, e\notin \Sh\}.
\end{split}
\end{gather}
From Equation (\ref{2.52}) and the discussion above it follows that
\begin{equation} \label{eq:genset}
\langle h_1,\dots, h_\rho\rangle= \left< \bar t \cup \{h(e) \mid e\in T_0\} \cup \bar u\cup \bar z\cup \{h(e_1),\dots,h(e_m)\}\cup h(C^{(1)})\cup h(C^{(2)})\right>=\langle \bar x\rangle,
\end{equation}
i.e. the sets $h$ and $\bar x$ generate the same free group $F(\Omega)$.

We now rewrite the equations from $\Omega$ in terms of the new set of generators $\bar x$.

We first consider the relations induced by bases which do not belong to $\P$, i.e. basic equations of the form $h(\mu)=h(\Delta(\mu))$, where $\mu$ is a variable base, $\mu\notin \P$ and factor equations $\nu_{j,i,1},\nu_{j,i,2},\nu_{j,i,3}=1$. By construction of the generating set $\bar x$, all the items $h_k$ which appear in these relations belong to the set
$$
\bar t \cup \{h(e) \,\mid\, e\in T_0\} \cup  \{h(e) \,\mid\, e\notin T_0, e\in \Sh\}= \bar t \cup M_1\cup M_2.
$$
The elements of the sets $\bar t$ and $M_1$ are generators in both basis. We now study how elements of $M_2$ rewrite in the new basis.
Let $h(e) = h_k$, $e\notin T_0$, $e\in \Sh$. We have $e=\s\p_2^{-1}\cc_e\p_2$ and
\begin{equation} \label{eq:obt}
h_k = h(\s) h(\p_2)^{-1} h(\cc_e)h(\p_2),
\end{equation}
where $\s$ is a path in $T_0$ and $\p_2$ is a path in $T$ (see proof of Lemma \ref{lem:spl1}). The variables $h(e_i)$, $1 \leq i \leq m$ can occur in the right-hand side of Equation (\ref{eq:obt}) (written in the basis $\bar{x}$) only in $h(\p_2)^{\pm 1}$ and at most once. Moreover, the sign of this occurrence (if it exists) depends only on the orientation of the edge $e_i$ with respect to the root $v_\Gamma$ of the tree $T$. If $\p_2 = \p_2' e_i^{\pm 1}\p_2 ''$, then all the occurrences of the variable $h(e_i)$ in the words $h_k$ written in the basis $\bar{x}$, with $h_k \not\in {\P}$, are contained in the subwords of the form $h(e_i)^{\mp 1} h((\p_2')^{-1}\cc_e\p_2')h(e_i)^{\pm 1}$, i.e. in the subwords of the form $h(e_i)^{\mp 1} h(\cc) h(e_i)^{\pm 1}$, where $\cc$ is a certain cycle in the graph $\Gamma$ based at the origin of the edge $e_i^{\pm 1}$. So the variable $h_k$ rewrites as a word in the generators $\bar u$, $\bar z$, $\{h(e) \,\mid\, e\in T_0 \}$.

Summarising, the basic equations corresponding to variable bases which do not belong to $\P$ and the factor equations rewrite in the new basis as words in $\bar t$, $\{ h(e) \,\mid\, e\in T_0\}$,  $\bar u$ and $\bar z$.

In the new basis, the relations of the form $h(\mu)=h(\Delta(\mu))$, where $\mu\in \P$, are, modulo commutators, words in $h(C^{(1)})$, see Remark \ref{rem:hab}.

Since $\Omega$ is periodised with respect to $\langle \P, R\rangle$, we have
\begin{equation}\label{2.61}
[u_{ie_1},u_{ie_2}]=1 \hbox{ and } [h(\cc_1),h(\cc_2)]=1, \ \cc_1,\cc_2\in C^{(1)}\cup C^{(2)}.
\end{equation}

It follows that the system $\Omega ^\ast$ is equivalent to the union of the following two systems of equations in the new variables, a system
$$
\mathcal{O}=\left\{
\begin{array}{ll}
u_{ie}^{h(e_i)}=z_{ie},&\hbox{where } e\in T,\, e\in \Sh; 1\leq i\leq m \\
\left[u_{ie_1},u_{ie_2}\right]=1,&\hbox{where } e_j\in T, \,e_j\in \Sh, \, j=1,2; 1\leq i\leq m  \\
\left[h(\cc_1),h(\cc_2)\right]=1, & \hbox{where } \cc_1,\cc_2\in C^{(1)}\cup C^{(2)}
\end{array}
\right.
$$
and a system (defined by the equations from $\Omega$):
$$
\Psi \left( \{h(e)\,\mid\, e\in T, h(e)\notin {\P}\}, h(C^{(1)}), \bar t,\bar u,\bar z,\cA\right)=1,
$$
such that neither $h(e_i)$, $1\le i\le m$, nor variables from  $h(C^{(2)})$ occur in $\Psi$.

The transformations $h(e_i)\rightarrow h(\cc)h(e_i)$ (and $h(\cc)\rightarrow h(\cc')h(\cc)$) from statement (\ref{spl4}) (statement (\ref{spl5})) of the lemma extend to an automorphism $\varphi$ of $G_{R(\Omega^*)}$. Indeed, by the universal property of the quotient, the following diagram commutes
$$
\xymatrix{
 G[\bar x] \ar[d] \ar[r]^{\varphi} & G[\bar x] \ar[d]  \\
 G_{R(\Psi\cup \mathcal{O})}  \ar@{-->>}[r]^{\tilde\varphi}  & G_{R(\Psi\cup \mathcal{O}\cup \varphi(\mathcal{O}))}
 }
$$
It is easy to check that $\varphi(\Psi)=\Psi$ and that $\varphi(\mathcal{O}) \subseteq R(\Psi\cup \mathcal{O})$. Therefore, since by statement (\ref{spl3}) of the lemma the system $\Psi\cup \mathcal{O}$ is equivalent to $\Omega^*$, we get that $\tilde\varphi$ is an automorphism of $G_{R(\Omega^*)}$.
\end{proof}

We now introduce a reflexive, transitive relation on the set of solutions of a generalised equation. We use this relation to introduce the notion of a minimal solution with respect to a group of automorphisms of the coordinate group of the generalised equation

\begin{defn}\label{defn:sol<}
Let $G[B] = G\ast F(B)$, where $F(B)$ is a free group with basis $B$, let $\Omega$  be a generalised equation with coefficients from $\cA$. Let $\BB(\Omega)$ be an arbitrary group of $G$-automorphisms of $G_{R(\Omega^*)}$. For solutions  $H^{(1)}$ and $H^{(2)}$ of the generalised equation $\Omega$ in $G[B]$ we write \glossary{name={`$<_{\BB(\Omega)}$'}, description={reflexive, transitive relation on the set of solutions of a generalised equation}, sort=Z}$H^{(1)}< _{\BB(\Omega)} H^{(2)}$ if there exists a $G$-endomorphism $\pi$ of the group $G[B]$ and an automorphism $\sigma\in \BB(\Omega)$  such that the following conditions hold:
\begin{enumerate}
\item \label{it:minsol1} $\pi_{ H^{(2)}}=\sigma\pi _{ H^{(1)}}\pi$;
\item\label{it:minsol3} For any $k$ we have $\itype(H_k^{(1)})=\itype(H_k^{(2)})$ and $\ttype(H_k^{(1)})=\ttype(H_k^{(2)})$.
\end{enumerate}
\end{defn}
Obviously, the relation `$<_{\BB(\Omega )}$' is transitive. We would like to draw the reader's attention to the fact that the relation $H<_{\BB(\Omega)}H'$ does not imply relations on the lengths of the solutions $H$ and $H'$.

\begin{lem}
\label{lem:minsol}
Let $G[B] = G\ast F(B)$ and let $\Omega$  be a generalised equation with coefficients from $\cA\cup B$. Let $\BB(\Omega)$ be an arbitrary  group of $G$-automorphisms of $\factor{G[B][h]}{R(\Omega^*)}$ and let  $H^{(1)}$ and $H^{(2)}$ be two solutions of the generalised equation $\Omega$ such that $H^{(1)}<_{\BB(\Omega)} H^{(2)}$. Then for any word $W(x_1,\dots,x_\rho)\in F(x_1,\dots,x_\rho)$ such that the $W(H^{(2)}_1,\dots,H^{(2)}_\rho)$ is reduced as written, the word $W(H^{(1)}_1,\dots,H^{(1)}_\rho)$ is reduced as written.
\end{lem}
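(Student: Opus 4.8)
The plan is to reduce the statement to a purely combinatorial fact about the initial and terminal terms of the elements being concatenated, and then to invoke only condition (\ref{it:minsol3}) of Definition \ref{defn:sol<}.

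First I would fix a freely reduced expression $W = y_1 y_2 \cdots y_l$, where each $y_j$ is one of $x_1^{\pm 1}, \ldots, x_\rho^{\pm 1}$, so that for a solution $H$ of $\Omega$ we have $W(H) = H(y_1) H(y_2) \cdots H(y_l)$, with $H(x_k) = H_k$ and $H(x_k^{-1}) = H_k^{-1}$. Since $H$ is a solution, each $H_k$ is a nontrivial element of $G$ written in the form (\ref{eq:nf}), and hence so is each $H(y_j)$. Consequently the only way the concatenation $H(y_1) \cdots H(y_l)$ can fail to be reduced as written is if, at one of the $l-1$ junctions, the last term of $H(y_j)$ and the first term of $H(y_{j+1})$ lie in the same factor of $G$ (so that they either cancel or amalgamate). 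In other words, $W(H)$ is reduced as written if and only if
$$
\ttype(H(y_j)) \ne \itype(H(y_{j+1})) \qquad \text{for all } j = 1, \ldots, l-1 ,
$$
a condition which is vacuously true when $l \le 1$. The forward implication here is clear; the converse holds because if all these inequalities are satisfied then $H(y_1) \cdots H(y_l)$ is an alternating product of nontrivial terms from $G_1$ and $G_2$, hence itself of the form (\ref{eq:nf}).

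Next I would record the elementary fact that passing to the inverse exchanges the two types, i.e. $\itype(H_k^{-1}) = \ttype(H_k)$ and $\ttype(H_k^{-1}) = \itype(H_k)$, which is immediate from the definitions of $\itype$ and $\ttype$. Therefore, for each $j$, both $\itype(H(y_j))$ and $\ttype(H(y_j))$ equal either $\itype(H_k)$ or $\ttype(H_k)$, where $x_k^{\pm 1} = y_j$, and which of the two occurs is dictated solely by the sign of the exponent of $y_j$, not by the solution $H$. Applying condition (\ref{it:minsol3}) of Definition \ref{defn:sol<}, which gives $\itype(H_k^{(1)}) = \itype(H_k^{(2)})$ and $\ttype(H_k^{(1)}) = \ttype(H_k^{(2)})$ for every $k$, we obtain $\itype(H^{(1)}(y_j)) = \itype(H^{(2)}(y_j))$ and $\ttype(H^{(1)}(y_j)) = \ttype(H^{(2)}(y_j))$ for every $j$. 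By hypothesis $W(H^{(2)})$ is reduced as written, so by the equivalence above $\ttype(H^{(2)}(y_j)) \ne \itype(H^{(2)}(y_{j+1}))$ for all $j$; hence the same inequalities hold with $H^{(1)}$ in place of $H^{(2)}$, and so $W(H^{(1)})$ is reduced as written.

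I do not anticipate a genuine obstacle. Only condition (\ref{it:minsol3}) of the relation $<_{\BB(\Omega)}$ is used: the factorisation $\pi_{H^{(2)}} = \sigma\,\pi_{H^{(1)}}\,\pi$ of condition (\ref{it:minsol1}), and with it the automorphism $\sigma$ and the endomorphism $\pi$, are irrelevant to this particular lemma. The only points that need a little care are the precise meaning of ``reduced as written'' --- namely, that no cancellation and no amalgamation of same-factor terms occurs at any junction of the concatenation --- and the behaviour of $\itype$ and $\ttype$ under inversion; both are elementary consequences of the definitions in the subsection on free products.
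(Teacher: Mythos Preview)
Your proof is correct and takes exactly the same approach as the paper: the paper's proof is the single line ``Follows from condition (\ref{it:minsol3}) in Definition \ref{defn:sol<},'' and your argument is precisely a careful unpacking of why that condition alone suffices. You correctly identify that only the type-matching condition is used, with no appeal to condition (\ref{it:minsol1}).
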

\begin{proof}
Follows from condition (\ref{it:minsol3}) in Definition \ref{defn:sol<}.
\end{proof}

\begin{defn}\label{defn:nfmatrix}
Let $\Omega$ be a generalised equation in $\rho$ variables and let $H$ be a solution of $\Omega$.  Consider a $\rho$-vector $(m_{i})$, with entries in the set $\{1,2\}\times \{1,2\}$ constructed by the solution $H$ in the following way. We set $m_{i}=(j,k)$ if and only if $\itype(H_{i})=j$  and $\ttype(H_{i})=k$.

We call the vector $(m_{i})$ the \index{type vector}($\rho$-)\emph{type vector} of the solution $H$.
\end{defn}

\begin{defn}\label{def:minsol}
A solution $H$ of $\Omega$ is called \index{solution!of a generalised equation!minimal with respect to the group of automorphisms}\emph{minimal with respect to the group of automorphisms $\BB(\Omega)$} if there exist no solutions $H'$ of the generalised equation $\Omega$ so that $H' <_{\BB(\Omega)} H$ and $|H_k'|\leq |H_k|$ for all $k$, $k=1,\dots,\rho$ and $|H_j'|< |H_j'|$ for at least one $j$, $1\le j\le \rho$.

Since the length of a solution $H$ is a positive integer, every strictly descending chain of solutions
$$
H>_{\BB(\Omega)} H^{(1)} >_{\BB(\Omega)}\dots >_{\BB(\Omega)} H^{(k)} >_{\BB(\Omega)}\ldots
$$
is finite. It follows that for every solution $H$ of $\Omega$ there exists a minimal solution $H^+$ such that $H^+ < _{\BB(\Omega)} H$.
\end{defn}

\begin{rem} \label{rem:ms}
Note that given a solution $H$, there exists a minimal solution $H^+$ (perhaps more than one) so that $H^+<_{\BB(\Omega)} H$. Furthermore, there may exist minimal solutions $H^+$ and $H'$ so that $H'\not <_{\BB(\Omega)} H^+$ and $H^+\not <_{\BB(\Omega)} H'$.

Therefore, among all minimal solutions $H^+$ such that $H^+ < _{\BB(\Omega)} H$ one can consider a solution of minimal total length.
\end{rem}

\begin{rem} \label{rem:minsol}
Note that every generalised equation $\Omega$ with coefficients from $\cA$ can be considered as a generalised equation $\Omega'$ with coefficients from $\cA \cup B$ for \emph{any} finite set $B$. Therefore, any solution $H$ of $\Omega$ induces a solution $H'$ of $\Omega'$ such that the following diagram commutes:
$$
\xymatrix@C3em{
 G_{R(\Omega^*)}  \, \ar@{^{(}->}[r] \ar[d]_{\pi_H}  & \,\factor{G[B][h]}{R(\Omega^\ast)} \ar[d]^{\pi_{H'}}
                                                                             \\
                   G \,   \ar@{^{(}->}[r] & \,G[B]
}
$$
A solution $H$ of $\Omega$ is minimal if \emph{any} induced solution $H'$ is.

There is a subtlety with the type constraints: the type constraints are to be satisfied only when they make sense. In the case that a component $H_i$ of the solution $H$ ends (begins) with a letter from $B$ (rather than from $G$), we do not require that $H_i$ satisfy the terminal (initial) type constraint. Alternatively, one can assume that the letters from $B$ have both type $1$ and $2$.
\end{rem}
The reason for extending the generating set from $\cA$ to $\cA\cup B$ in the definition above, becomes clear in the proof of Lemma \ref{lem:23-1.5}
\bigskip

\begin{lem} \label{lem:2.1}
Let the generalised equation $\Omega_1$ be obtained from the generalised equation $\Omega$ by one of the elementary transformation $\ET 1-\ET 5$, i.e. $\Omega_1\in \ET i(\Omega)$ for some $i=1,\dots, 5$ and assume that $G_{R(\Omega_1^*)}$ is isomorphic to $G_{R(\Omega^*)}$. Let $H$ be a solution of $\Omega$ and $H^{(1)}$ be a solution of $\Omega_1$ so that the following diagram commutes
$$
\xymatrix@C3em{
 G_{R(\Omega^\ast)}  \ar[rd]_{\pi_H} \ar[rr]^{\theta}  &   &G_{R(\Omega_1^\ast )} \ar[ld]^{\pi_{H^{(1)}}} \\
                               &  G &}
$$
If $H$ is a minimal solution of $\Omega$ with respect to a group of automorphisms $\BB$ of $G_{R(\Omega^*)}$, then $H^{(1)}$ is a minimal solution of $\Omega_1$ with respect to the group of automorphisms $\theta^{-1}\BB\theta$.
\end{lem}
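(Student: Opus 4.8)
The proof will be by contradiction, and the whole argument rests on transporting the defining data of a minimal solution across the bijection between solutions of $\Omega$ and of $\Omega_1$ that is induced by $\theta$. Under the hypothesis of the lemma (which, by Remark \ref{rem:et123} and the descriptions of $\ET 4$ and $\ET 5$, is automatic unless $\ET = \ET 5$), the homomorphism $\theta\colon G_{R(\Omega^\ast)}\to G_{R(\Omega_1^\ast)}$ is an isomorphism, and the assignment $H\mapsto H^{(1)}$ determined by the commuting triangle $\pi_H=\theta\pi_{H^{(1)}}$ is a bijection between the set of solutions of $\Omega$ and the set of solutions of $\Omega_1$. The plan is first to establish that this bijection and its inverse are \emph{length-monotone} (if $|H'_k|\le|K_k|$ for all $k$, with strict inequality for at least one $k$, then the corresponding solutions of the other equation satisfy the same relation) and \emph{type-preserving} (corresponding coordinates have equal $\itype$ and equal $\ttype$), and then to feed a hypothetical non-minimal witness for $H^{(1)}$ through this bijection to contradict the minimality of $H$. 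By Remark \ref{rem:minsol} it suffices to carry out the argument with the coefficient set $\cA\cup B$ for an arbitrary finite $B$, which we fix.

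To establish the two properties of the bijection one inspects $\ET 1$--$\ET 5$ case by case. For $\ET 1$, $\ET 2$, $\ET 3$ and the first alternative of $\ET 5$, the equations $\Omega$ and $\Omega_1$ have the same set of variables $h$ and literally the same solutions, with $\theta$ induced by the identity of $G[h]$ (Remark \ref{rem:et123}); here $H^{(1)}=H$ and both properties are trivial. For $\ET 4$ and the second alternative of $\ET 5$ the variables change, but the explicit formulas for $\theta_1$ and $\theta_{q'}$ exhibit $\theta$ and $\theta^{-1}$ as \emph{monomial} substitutions in which each generator is sent to a product of consecutive generators of the other equation, each with exponent $+1$. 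Hence each coordinate of the solution of $\Omega$ attached to a solution $K$ of $\Omega_1$ is a product $K_k K_{k+1}\cdots K_{k'}$ of consecutive coordinates of $K$ (or a single such coordinate), and since $K$ is a solution this product is a subword of one of the reduced words $L_i(K)$, $R_i(K)$ or of $K(\Delta(\mu))$, hence reduced as written. Therefore its length is the sum of the lengths of the coordinates in the block, and its initial and terminal letters are those of $K_k$ and of $K_{k'}$; the re-enumeration of boundaries and the adjustment of the initial/terminal term functions in Remark \ref{rem:boundterm} are made precisely so that these extreme letters realise the type constraints. Comparing block by block gives length-monotonicity in both directions, and the identification of extreme letters gives type-preservation; the same computation also shows that the inverse assignment produces genuine \emph{solutions} of $\Omega$ (satisfying the factor equations and the type constraints), not merely $G$-homomorphisms $G_{R(\Omega^\ast)}\to G$.

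Suppose now, for contradiction, that $H$ is minimal with respect to $\BB$ while $H^{(1)}$ is not minimal with respect to $\theta^{-1}\BB\theta$. Then there is a solution $H'$ of $\Omega_1$ with $H'<_{\theta^{-1}\BB\theta}H^{(1)}$, with $|H'_k|\le|H^{(1)}_k|$ for all $k$, and with $|H'_j|<|H^{(1)}_j|$ for at least one $j$. Let $\overline H$ be the solution of $\Omega$ corresponding to $H'$ under the bijection, so $\pi_{\overline H}=\theta\pi_{H'}$. Length-monotonicity of the inverse bijection gives $|\overline H_k|\le|H_k|$ for all $k$ and strict inequality for some $k$. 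Unravelling $H'<_{\theta^{-1}\BB\theta}H^{(1)}$ we obtain a $G$-endomorphism $\pi$ of $G[B]$ and an automorphism $\sigma'\in\theta^{-1}\BB\theta$ with $\pi_{H^{(1)}}=\sigma'\pi_{H'}\pi$; write $\sigma'=\theta^{-1}\sigma\theta$ with $\sigma\in\BB$. Then
$$
\pi_H=\theta\pi_{H^{(1)}}=\theta\,\sigma'\,\pi_{H'}\,\pi=\theta\theta^{-1}\sigma\theta\,\pi_{H'}\,\pi=\sigma\,\theta\pi_{H'}\,\pi=\sigma\pi_{\overline H}\pi,
$$
which is condition (\ref{it:minsol1}) of Definition \ref{defn:sol<} for the pair $(\overline H,H)$; condition (\ref{it:minsol3}) holds because type-preservation of the bijection transports the coordinatewise equalities of $\itype$ and $\ttype$ from $(H',H^{(1)})$ to $(\overline H,H)$. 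Thus $\overline H<_\BB H$ with $|\overline H_k|\le|H_k|$ for all $k$ and strict for at least one $k$, contradicting the minimality of $H$; hence $H^{(1)}$ is minimal.

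The only substantive work is the second paragraph: checking, from the explicit formulas defining $\theta$ in the cases $\ET 4$ and $\ET 5$, that the induced correspondence between solutions is actually a length-monotone, type-preserving bijection — in particular that the inverse carries solutions to solutions, which hinges on the compatibility of the boundary re-enumeration and term-adjustment of Remark \ref{rem:boundterm} with the type constraints. Everything after that is a formal manipulation of the relation $<_{\BB}$ together with a comparison of lengths block by block.
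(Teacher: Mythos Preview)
Your proof is correct and follows essentially the same approach as the paper's: assume $H^{(1)}$ is not minimal, transport the witness $H'$ back through $\theta$ to a solution $\overline H$ of $\Omega$, verify condition~(\ref{it:minsol1}) by conjugating the automorphism, verify condition~(\ref{it:minsol3}) and the length inequalities via the explicit block decomposition $\theta(h_k)=h^{(1)}_{j_1}\cdots h^{(1)}_{j_s}$, and contradict the minimality of $H$. The paper's proof is simply a terser version of yours, going straight to the graphical equalities $H_k\doteq H^{(1)}_{j_1}\cdots H^{(1)}_{j_s}$ and $H^+_k\doteq {H^{(1)}}^+_{j_1}\cdots {H^{(1)}}^+_{j_s}$ without the preliminary discussion of bijections.

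One small remark: your claim that $\theta^{-1}$ is also monomial (in the second alternative of $\ET 5$, for instance, $\Omega_1$ has one more variable than $\Omega$, and there is no evident way to express the two new items as consecutive products of the old ones at the level of free groups) is not needed and not obviously correct as stated. Fortunately you never actually use it: the length comparison you invoke runs only through the forward formula $\overline H_k = H'_{j_1}\cdots H'_{j_s}$, which is exactly what the paper uses. You could safely drop the assertion about $\theta^{-1}$ and the phrase ``in both directions'' without any loss.
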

\begin{proof}
Assume the converse, i.e. $H^{(1)}$ is not minimal with respect to $\theta^{-1}\BB\theta$. Then there exists a solution ${H^{(1)}}^+$ of $\Omega_1$ so that ${H^{(1)}}^+<_{\theta^{-1}\BB\theta} H^{(1)}$,  $|{H^{(1)}}^+_k|\le |H^{(1)}_k|$ for all $k$ and $|{H^{(1)}}^+_j|< |H^{(1)}_j|$ for some $j$.

Let $H^+$ be a solution of the system of equations $\Omega^*$ so that $\pi_{H^+}=\theta\pi_{{H^{(1)}}^+}$. As described in the definition of elementary transformations, $\theta(h_k)=h_{j_1}^{(1)}\cdots h_{j_s}^{(1)}$ and
\begin{equation} \label{2.16}
H_k\doteq H_{j_1}^{(1)}\cdots H_{j_s}^{(1)},
\end{equation}
where $h_{j_i}^{(1)}\in h^{(1)}$ for all $i$, is an item of $\Omega_1$.

Since ${H^{(1)}}^+<_{\theta^{-1}\BB\theta} H^{(1)}$, by condition (\ref{it:minsol3}) from Definition \ref{defn:sol<}, we have
\begin{equation} \label{2.17}
H_k^+\doteq {H_{j_1}^{(1)}}^+\cdots {H_{j_s}^{(1)}}^+
\end{equation}
and thus $H^+$ is a solution of $\Omega$ and condition (\ref{it:minsol3}) from Definition \ref{defn:sol<} holds for the pair $H^+$ and $H$.

Furthermore, by condition  (\ref{it:minsol1}) from Definition \ref{defn:sol<}, we have $\pi_{H^{(1)}}=\theta^{-1}\psi\theta\pi_{{H^{(1)}}^+}$, where $\psi \in \BB$, hence $\pi_H=\psi\pi_{H^+}$, and thus condition (\ref{it:minsol1}) from Definition \ref{defn:sol<} holds for the pair $H^+$ and $H$. We thereby have proven that $H^+<_{\BB}H$.

Finally, from Equations (\ref{2.16}), (\ref{2.17}) and the fact that ${H^{(1)}}^+$ is minimal, we get that $|H_k^+|\le |H_k|$ for all $k$ and $|H_j^+|<|H_j|$ for some $j$, contradicting the minimality of $H$.
\end{proof}

\begin{defn} \label{defn:singreg}
Let $\Omega$ be a generalised equation and let $\langle \P,R\rangle$ be a connected periodic structure on $\Omega$. We say that the generalised equation $\Omega$ is \index{generalised equation!singular with respect to a periodic structure}\emph{singular (of type (a), (b) or (c)) with respect to the periodic structure $\langle \P,R\rangle$} if one of the following conditions holds
\begin{itemize}
\item[(a)] The generalised equation $\Omega$ is not periodised with respect to the periodic structure $\langle \P,R\rangle$.
\item[(b)] The set $C^{(2)}$ has more than one element.
\item[(c)] The set $C^{(2)}$ has exactly one element, and (in the above notation) there exists a cycle $\cc_{e_0}\in \langle C^{(1)}\rangle$, $h(e_0)\notin \P$ such that $h(\cc_{e_0})\ne 1$ in $G_{R(\Omega^\ast)}$.
\end{itemize}

Otherwise, we say that $\Omega$ is \index{generalised equation!regular with respect to a periodic structure}\emph{regular with respect to the periodic structure $\langle \P,R\rangle$}. In particular if $\Omega$ regular with respect to the periodic structure  $\langle \P,R\rangle$ then $\Omega$ is periodised.

When no confusion arises, instead of saying that $\Omega$ is singular (or regular) with respect to the periodic structure $\langle \P,R\rangle$ we say that the periodic structure $\langle \P,R\rangle$ is \index{periodic structure!singular}\emph{singular} (of type (a), (b) or (c)) (or \index{periodic structure!regular}\emph{regular}).
\end{defn}

\begin{defn} \label{defn:AA}
Let $\Omega$ be a generalised equation and let  $\langle \P, R\rangle$ be a periodic structure on $\Omega$. If $\Omega$ is  singular of type (a) with respect to $\langle \P, R\rangle$, then we define the group \glossary{name={$\AA(\Omega)$}, description={finitely generated group of automorphisms of $G_{R(\Omega^\ast)}$ associated with a periodic structure on $\Omega$}, sort=A}$\AA(\Omega)$ of automorphisms of $G_{R(\Omega^\ast)}$ to be trivial.

Otherwise, i.e. if $\langle \P, R\rangle $ is singular of types (b) or (c) or regular, we set  $\AA(\Omega)$ to be the group of automorphisms of $G_{R(\Omega^\ast)}$ generated by the automorphisms described in statements (\ref{spl4}) and (\ref{spl5}) of Lemma \ref{2.10''}. Note that by definition, the group $\AA(\Omega)$ is finitely generated.
\end{defn}

\subsection{Singular Case}

The next lemma states that  if a generalised equation $\Omega$ is singular with respect to a periodic structure, then one can construct finitely many proper quotients of the coordinate group $G_{R(\Omega^*)}$, such that for every periodic solution there exists an $\AA(\Omega)$-automorphic image such that this image is in fact a solution of one of the quotients constructed. In other words, every solution of the generalised equation can be obtained as a composition of an automorphism from $\AA(\Omega)$ and a solution of a proper generalised equation.

\begin{lem} \label{lem:23-1}
Let $\Omega$ be a formally consistent generalised equation without boundary connections, singular with respect to the periodic structure $\langle \P, R\rangle $. Then there exists a finite family of cycles $\cc_1,\ldots,\cc_r$ in the graph $\Gamma$ such that:
\begin{enumerate}
\item \label{it:23-11} $h(\cc_i)\ne 1$, $i=1,\ldots, r$ in $G_{R(\Omega^\ast)}$;
\item \label{it:23-12}for any solution $H$ of $\Omega$ such that $H$ is periodic with respect to a period $P$ and ${\P}(H,P)=\langle{\P},R\rangle$, there exists an $\AA(\Omega)$-automorphic image $H^+$ of $H$ such that $H^+(\cc_i)=1$ for some $1\le i\le r$;
\item\label{it:23-13} for any $h_k \notin \P$, $H_k\doteq H^+_k$; and for any $h_k \in \P$ if $H_k \doteq P_1P^{n_k}P_2$, then $H^+_k\doteq P_1P^{n^+_k}P_2$, where $\delta(k)=P_1P_2$, $n_k, n^+_k \in \Z$.
\end{enumerate}
\end{lem}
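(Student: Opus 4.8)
I would argue by cases according to the type of singularity of $\Omega$ with respect to $\langle\P,R\rangle$ (Definition~\ref{defn:singreg}). In all cases I will use two facts about a $P$-periodic solution $H$ with $\P(H,P)=\langle\P,R\rangle$ (normalising, via Remark~\ref{rem:subword} and the remark before Lemma~\ref{lem:spl1}, so that $v_\Gamma$ is the start of $P$): by Lemma~\ref{2.9}, $H(\cc)=P^{n}$ for every cycle $\cc$ based at $v_\Gamma$, so $\langle H(C^{(1)}\cup C^{(2)})\rangle$ is cyclic, and by Lemma~\ref{lem:spl1} one has $|n|\le 2\rho$ when $\cc=\cc_{e}$ with $h(e)\notin\P$. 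Conversely, inspecting the presentation in Lemma~\ref{2.10''}(\ref{spl3}) shows that the only defining relations of $G_{R(\Omega^\ast)}$ involving $h(C^{(2)})$ are commutators; hence $\langle h(C^{(2)})\rangle$ is free abelian of rank $|C^{(2)}|$ and, when $\Omega$ is periodised (so $\langle h(C^{(1)}\cup C^{(2)})\rangle$ is abelian), it is a direct factor complementing $\langle h(C^{(1)})\rangle$. These remarks are what keep the cycles $\cc_i$ nontrivial in $G_{R(\Omega^\ast)}$.

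If $\Omega$ is singular of type (a), then $\AA(\Omega)$ is trivial and I take $H^{+}=H$. Since $\Omega$ is not periodised, there are cycles at a common vertex with nontrivial commutator in $G_{R(\Omega^\ast)}$; conjugating into $v_\Gamma$ and using that $\pi_1(\Gamma,v_\Gamma)$ is free on $\{\cc_e\mid e\notin T\}$, I obtain two generators $\cc_{e_1},\cc_{e_2}$ with $[h(\cc_{e_1}),h(\cc_{e_2})]\neq1$. Then $\cc:=\cc_{e_1}\cc_{e_2}\cc_{e_1}^{-1}\cc_{e_2}^{-1}$ has $h(\cc)\neq1$, while $H(\cc)=[H(\cc_{e_1}),H(\cc_{e_2})]=1$ because both $H(\cc_{e_i})$ are powers of $P$. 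The finite list of all such commutator cycles for pairs of generators of $\pi_1(\Gamma,v_\Gamma)$ settles this case.

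For type (b) I fix distinct $\cc',\cc''\in C^{(2)}$ and write $H(\cc')=P^{a}$, $H(\cc'')=P^{b}$ for a periodic $H$. The automorphisms $h(\cc')\mapsto h(\cc'')^{\pm1}h(\cc')$, $h(\cc'')\mapsto h(\cc')^{\pm1}h(\cc'')$ from Lemma~\ref{2.10''}(\ref{spl5}) lie in $\AA(\Omega)$, and applying one of them replaces $H$ by $H'$ with $\pi_{H'}=\sigma\pi_{H}$ whose pair of exponents is obtained from $(a,b)$ by one Euclidean step; after finitely many steps one exponent vanishes, yielding $H^{+}<_{\AA(\Omega)}H$ with $H^{+}(\cc')=1$ or $H^{+}(\cc'')=1$. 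Since $h(\cc'),h(\cc'')\neq1$, the pair $\{\cc',\cc''\}$ works. For type (c) write $C^{(2)}=\{\cc_0\}$ and let $\cc_{e_0}$ be as in the hypothesis. If $H(\cc_{e_0})=1$ take $H^{+}=H$, $\cc_1=\cc_{e_0}$ (and note that if $h(\cc_{e_0})$ is torsion then $H(\cc_{e_0})=1$ for every periodic $H$, so this always applies). Otherwise $H(\cc_{e_0})=P^{n}$ with $0<|n|\le2\rho$; writing $H(\cc_0)=P^{m}$ and using $\cc_{e_0}\in\langle C^{(1)}\rangle$, a suitable composition of the automorphisms $h(\cc_0)\mapsto h(\cc')^{\pm1}h(\cc_0)$ ($\cc'\in C^{(1)}$) acts as $h(\cc_0)\mapsto h(\cc_{e_0})^{j}h(\cc_0)$, and choosing $j$ with $|m+jn|<|n|$ gives $H^{+}<_{\AA(\Omega)}H$ with $H^{+}(\cc_0)=P^{m'}$, $|m'|<2\rho$, and $H^{+}(\cc_{e_0})=P^{n}$. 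Then $\cc:=\cc_0^{\,n}\cc_{e_0}^{-m'}$ satisfies $H^{+}(\cc)=1$, and $h(\cc)=h(\cc_0)^{n}h(\cc_{e_0})^{-m'}\neq1$ since $h(\cc_0)^{n}=h(\cc_{e_0})^{m'}$ would force $n=0$ by the direct-factor remark; as $|n|,|m'|\le2\rho$, only finitely many such $\cc_0^{a}\cc_{e_0}^{b}$ occur, and together with $\cc_{e_0}$ they are the required family.

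Finally, in each case assertion (\ref{it:23-13}) and the type-preservation condition (\ref{it:minsol3}) of Definition~\ref{defn:sol<} need to be checked from the explicit form of the generators of $\AA(\Omega)$ in Lemma~\ref{2.10''}(\ref{spl4}),(\ref{spl5}): rewritten in the variables $h$ via the change of generators in Lemma~\ref{2.10''}(\ref{spl3}), such an automorphism multiplies a single item by a conjugate of a power of $P$ inserted at the position prescribed by $\delta(k)$, so items $h_k\notin\P$ are untouched and an item $h_k\in\P$ with $H_k\doteq P_1P^{n_k}P_2$, $\delta(k)=P_1P_2$, goes to $P_1P^{n_k^+}P_2$. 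I expect this bookkeeping to be the main obstacle: one must arrange the reductions in cases (b) and (c) so that $n_k^{+}$ keeps the sign of $n_k$ (otherwise $H^{+}$ would fail to be reduced as written, and $\itype$ or $\ttype$ of some $H_k$ could change when $P_1=1$), which forces care in the order and signs of the applied automorphisms and a careful translation of the change-of-generators identity \eqref{eq:genset} at the level of individual items rather than of the cycle labels $h(\cc)$.
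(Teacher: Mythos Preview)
Your proof is correct and follows the same three-case argument as the paper: a commutator cycle for type~(a), a Euclidean reduction on two elements of $C^{(2)}$ for type~(b), and for type~(c) a single reduction of $H(\cc^{(2)})$ modulo $H(\cc_{e_0})$ followed by the finite family of cycles $\cc_{e_0}^{\,i}(\cc^{(2)})^{j}$ with bounded $|i|,|j|$ (the paper's nontriviality argument for these is slightly different from your direct-factor remark: it applies the automorphism $\sigma_0\colon h(\cc^{(2)})\mapsto h(\cc_{e_0})h(\cc^{(2)})$ to the relation $h(\cc_{e_0})^{i}h(\cc^{(2)})^{j}=1$ to force $h(\cc_{e_0})^{j}=1$, hence $j=0$). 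For part~(\ref{it:23-13}) the paper avoids the item-by-item sign bookkeeping you anticipate: in cases~(b) and~(c) only automorphisms of the form in Lemma~\ref{2.10''}(\ref{spl5}) are applied, and these fix every generator of $\bar x$ except those in $h(C^{(2)})$; since no $h_k\notin\P$ involves $h(C^{(2)})$ when rewritten in $\bar x$ (by the proof of Lemma~\ref{2.10''}(\ref{spl3})), and for $h_k\in\P$ with $h_k=h(e)$, $e\notin T$, one has $h_k=h(\p_1)h(\cc_e)h(\p_2)$ with $\p_1,\p_2$ paths in $T$, only the exponent of $P$ in $H^+(\cc_e)$ changes while $H^+(\p_i)=H(\p_i)$.
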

\begin{proof}
Suppose that $\Omega$ is not periodised with respect to the periodic structure $\langle \P, R\rangle$, i.e. there exist two cycles $\cc,\cc'$ so that $[h(\cc),h(\cc')]\ne 1$ in $G_{R(\Omega^*)}$. Since, by Lemma \ref{2.9}, for every $P$-periodic solution $H$, one has that $H([\cc,\cc'])=1$, it suffices to take as the set of cycles $\{\cc_1,\ldots,\cc_r\}$ the set consisting of the commutator $[\cc,\cc']$.

We further assume that $\Omega$ is periodised with respect to the periodic structure  $\langle \P, R\rangle$.

Suppose that $|C^{(2)}|\ge 2$. Let $H$ be a $P$-periodic solution of a generalised equation $\Omega $, such that ${\P} (H, P)=\langle {\P}, R\rangle$. Let $\cc_1,\cc_2\in C^{(2)}$. By Lemma \ref{2.9}, $H(\cc_1)=P^{n_1}$, $H(\cc_2)=P^{n_2}$.

Without loss of generality we may assume that $n_1\le n_2$. Write $n_2=tn_1+r$, where either $r=0$ or $|r|<|n_1|$. Applying the canonical automorphism $\varphi:h(\cc_2)\mapsto {h({\cc_1})}^{-t}h(\cc_2)$, we get $H(\varphi(h(\cc_2)))=P^r$, hence the exponent of periodicity $\exp(H(\varphi(h(\cc_2))))$ is reduced.

Applying the Euclidean algorithm, we get that there exists an automorphism $\psi$ from $\AA(\Omega)$ such that $H(\psi(h(\cc_2)))=1$ or $H(\psi(h(\cc_1)))=1$. Set $H^+=\psi(H)$. Then the set $\{\cc_1,\cc_2\}$ satisfies conditions (\ref{it:23-11}) and (\ref{it:23-12}) of the lemma.

In the notation of Lemma \ref{2.10''}, the automorphism $\psi$ is identical on all the elements of $\bar x$ except for $h(C^{(2)})$, hence, in particular, it is identical on $h(e)$ such that $e \in T$ or $e\notin\P$, and on $h(C^{(1)})$, i.e. $H^+(e)=H(e)$, for any $e\in T$ or $e\notin \P$ and $H^+(\cc^{(1)})=H(\cc^{(1)})$ for any $\cc^{(1)}\in C^{(1)}$.

If $h_k=h(e)$, $e \notin T$, $h(e) \in \P$, then $h_k=h(\p_1)h(\cc_e)h(\p_2)$, where $\p_1,\p_2$ are paths in $T$. Therefore, $H_k^+=H^+(e)=H^+(\p_1)H^+(\cc_e)H^+(\p_2)=H(\p_1)H^+(\cc_e)H(\p_2)$. Since $h(\cc_e)$ lies in the subgroup generated by $h(C^{(1)})$ and $h(C^{(2)})$, then $H^+(\cc_e)$ and $H(\cc_e)$ lie in the cyclic group generated by $P$. This proves statement (\ref{it:23-13}) of the lemma.

Suppose that $C^{(2)}=\{\cc^{(2)}\}$, since the periodic structure is singular, there exists a cycle $\cc\in \langle C^{(1)}\rangle$ such that $h(\cc)\ne 1$ in $G_{R(\Omega^\ast)}$  and such that the edges of $\cc$ are labelled by variables $h_k$, $h_k\notin \P$. Let $n$ be the number of edges in $\cc$.

We define the set of cycles $\{\cc_1,\ldots, \cc_r\}$  to be
$$
\left\{(\cc)^i(\cc^{(2)})^j\,\mid\,  \hbox{$i$ and $j$ are not simultaneously zero}, \, |i|,|j|\leq 2n\right\}.
$$

We now show that if ${h(\cc)}^i {h(\cc^{(2)})}^j=1$ in $G_{R(\Omega ^\ast)}$, then $i=j=0$. Suppose ${h(\cc)}^i {h(\cc^{(2)})}^j=1$. Let $\sigma _0$ be a generator of the group of automorphisms $\AA(\Omega)$ such that $\sigma _0(h(\cc))=h(\cc)$ and $\sigma_0(h(\cc^{(2)}))=h(\cc)h(\cc^{(2)})$. Hence ${h(\cc)}^{i+j}{h(\cc^{(2)})}^j=1$ in $G_{R(\Omega ^\ast)}$ and ${h(\cc)}^{j}=1$. This implies that either $h(\cc)=1$ (see proof of part (\ref{spl3}) of Lemma \ref{2.10''}) or $j=0$. Since $h(\cc)\ne 1$, we have $j=0$. Similarly, we get $i=0$.

Let $H$ be a $P$-periodic solution of the generalised equation $\Omega$, such that ${\P}(H, P)=\langle {\P}, R \rangle$. Since all the edges in the cycle $\cc$ are labelled by items $h_k$, $h_k\notin \P$, by Lemma \ref{lem:spl1} we have $H(\cc)=P^{n_0}$, $|n_0| \leq 2n$. Let $H(\cc^{(2)})= P^m$.

If $n_0 = 0$, we can take $\sigma = 1$, $H^+ = H$ and  the set of cycles $\{\cc\}$.

Let $n_0 \neq 0$, $m = tn_0 + m'$, $t\in \Z$ and $|m'| \leq 2 n$. Let $\sigma= \sigma _0^{-t}$ and define $H^+$ to be the image of $H$ under $\sigma$. Set $\cc=(\cc)^{-m'} (\cc^{(2)})^{n_0}$, then $H^+ (\cc) = P^{n_0}$, $H^+(\cc^{(2)}) = P^{m'}$ and $H^+(\cc)=1$.

An analogous argument to the one given in the case $|C^{(2)}| \geq 2$ shows that $H^+_k=H_k$ if $h_k \notin \P$, and $H^+_k=P_1P^{n^+_k}P_2$ if $H_k=P_1P^{n_k}P_2$ and $h_k \in \P$.

\end{proof}

\subsection{Regular Case}

The aim of this section is to prove that if a generalised equation $\Omega$ is regular with respect to a periodic structure, then periodic solutions of $\Omega$ minimal with respect to the group of automorphisms $\AA(\Omega)$ have bounded exponent of periodicity. In other words, the length of such a solution is bounded above by a function of $\Omega$ and the length $|P|$ of its period $P$.

\begin{lem}\label{lem:23-1.5}
Let $\Omega $ be a generalised equation with no boundary connections, periodised with respect to a connected periodic structure $\langle \P, R\rangle$. Let $H$ be a periodic solution of $\Omega$ such that $\P(H,P)=\langle \P, R\rangle$ and minimal with respect to the trivial group of automorphisms. Then, either for all $k$, $1\le k\le \rho$ we have
\begin{equation} \label{2.62}
|H_k| \le 2 \rho |P|,
\end{equation}
or there exists a cycle $\cc\in \pi_1(\Gamma,v_\Gamma)$  so that $H(\cc)=P^n$, where $1\le n\le 2\rho$.
\end{lem}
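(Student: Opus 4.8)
The plan is to prove the dichotomy by contradiction: assuming $|H_{k_0}|>2\rho|P|$ for some $k_0$ while no cycle $\cc\in\pi_1(\Gamma,v_\Gamma)$ has $H(\cc)=P^n$ with $1\le n\le 2\rho$, I would produce a strictly shorter solution $H'$ with $H'<_{\{1\}}H$, contradicting the minimality of $H$. First the reductions. By Definition \ref{11'} (and minimality), items $h_k$ not lying on a closed $P$-periodic section of $\Omega$ already satisfy the bound $|H_k|\le 2\rho|P|$; and among the items carried by an edge of $\Gamma$, those with $h_k\notin\P$ have $|H_k|<2|P|$ by the construction of $\P(H,P)=\langle\P,R\rangle$. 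So we may assume $h_{k_0}\in\P$. Taking $v_\Gamma$ to be the origin of the period $P$ (as in the remark preceding Lemma \ref{lem:spl1}), Lemma \ref{2.9} gives $H(\cc)=P^{N(\cc)}$ for every cycle $\cc\in\pi_1(\Gamma,v_\Gamma)$, and $N$ is a homomorphism $\pi_1(\Gamma,v_\Gamma)\to\Z$ since powers of $P$ commute. If $0<|N(\cc)|\le 2\rho$ for some $\cc$ we are done (replace $\cc$ by $\cc^{-1}$ if needed), so we may assume $N(\cc)=0$ or $|N(\cc)|>2\rho$ for all $\cc$; in particular, by Lemma \ref{lem:spl1} every short cycle $\cc_{e_0}$ (with $h(e_0)\notin\P$, $e_0\notin T$) has $H(\cc_{e_0})=1$, and by Lemma \ref{cl:propc1} we may arrange that $H(\cc)=1$ for all $\cc\in C^{(1)}$.

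Now to the contradiction. With all short cycles trivial we get $H(u_{ie})=H(\p(v_\Gamma,v_i))^{-1}H(\cc_e)H(\p(v_\Gamma,v_i))=1$ and $H(z_{ie})=H(e_i)^{-1}H(u_{ie})H(e_i)=1$ for the auxiliary variables $\bar u,\bar z$, together with $H(\cc)=1$ for $\cc\in C^{(1)}$. Rewriting $\Omega^\ast$ in the generators $\bar x$ as $\mathcal{O}\cup\Psi$ (part (\ref{spl3}) of Lemma \ref{2.10''}), the variables of $\Psi$ are $\bar t$, the short tree edges, $h(C^{(1)})$ and $\bar u,\bar z$ --- all of whose $H$-values are unchanged bounded items or equal to $1$ --- while $\mathcal{O}$ constrains only $\bar u,\bar z$, the long tree edges $h(e_1),\dots,h(e_m)$ and $h(C^{(1)})\cup h(C^{(2)})$, and is satisfied as soon as $\bar u=\bar z=1$, $h(C^{(1)})=1$, and $h(C^{(2)})$ is a commuting family of powers of $P$. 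Working over $G[B]$ with one fresh letter $b_i$ per long tree edge $e_i$, I would define $H'$ by leaving $\bar t$, the short tree edges and $h(C^{(2)})$ unchanged, setting $\bar u=\bar z=1$ and $h(C^{(1)})=1$, and replacing inside each $H(e_i)=P_1^{(i)}P^{N_i}P_2^{(i)}$ (where $P_1^{(i)},P_2^{(i)}$ come from the decompositions $\delta$ at the endpoints of $e_i$ and $N_i\ge 0$) the block $P^{N_i}$ by $P^{N_i'}b_i$, where $N_i'\ge 1$ is chosen as small as the linear relations among the cycle exponents allow while keeping the decompositions $\delta$ fixed; because no cycle has exponent in $\{1,\dots,2\rho\}$, such a choice can be made with $N_i'\le 2\rho$, so $|H'(e_i)|\le 2\rho|P|$, and at least one $N_i'$ (the one governing $h_{k_0}$) is strictly smaller than $N_i$. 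Then $H'$ solves $\Omega$ over $\cA\cup B$: $\Psi(H')=\Psi(H)=1$ since its variables did not move; $\mathcal{O}(H')$ holds for the same formal reasons as $\mathcal{O}(H)$; the factor equations are untouched (factor items have length $1$, hence lie outside $\P$); and the type constraints survive since $\delta$, hence the initial and terminal terms of every item, agree for $H$ and $H'$. Finally $H_k=\pi(H'_k)$ for the $G$-endomorphism $\pi$ of $G[B]$ fixing $\cA$ with $b_i\mapsto P^{N_i-N_i'}$, which together with the equality of types gives $H'<_{\{1\}}H$ in the sense of Definition \ref{defn:sol<}; since $|H'_k|\le|H_k|$ for all $k$ and $|H'_{k_0}|<|H_{k_0}|$ this contradicts minimality.

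The main obstacle is twofold. First, the exponent bookkeeping: one must show precisely that the obstruction to shrinking $H_{k_0}$ is a cycle in $\pi_1(\Gamma,v_\Gamma)$ whose $H$-exponent lies in $\{1,\dots,2\rho\}$ --- not merely a cycle of nonzero exponent --- which is exactly the content of the quantitative bound in the statement and forces one to track how the lengths of the long items (including non-tree $\P$-edges, once rewritten in the basis $\bar x$) are controlled by the cycles. Second, one must verify carefully that the modified tuple $H'$ is a genuine solution of $\Omega$ --- including the type constraints and the basic equations of bases in $\P$, which become, modulo commutators, relations among $h(C^{(1)})$ that are all trivial for $H$ --- and that the re-inflation map extends to a well-defined $G$-endomorphism of $G_{R(\Omega^\ast)}$, so that $H'<_{\{1\}}H$ really holds. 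The extension of the coefficient set from $\cA$ to $\cA\cup B$ (Remark \ref{rem:minsol}) is what makes the latter possible: over $G$ alone, $H_k=\pi(H'_k)$ would force $H=H'$, leaving no room for a strictly shorter competitor. Everything else rests on Lemmas \ref{2.9}, \ref{lem:spl1}, \ref{cl:propc1} and \ref{2.10''}.
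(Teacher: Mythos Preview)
Your approach diverges substantially from the paper's, and the central step does not go through. The crux is your claim that the modified tuple $H'$ satisfies $|H'_k|\le|H_k|$ for \emph{all} $k$. You only modify the long tree edges $h(e_i)$, leaving $h(C^{(2)})$ unchanged. But an item $h_{k_0}\in\P$ may well label a \emph{non-tree} edge $e_0$; then $h_{k_0}=h(\p(v_\Gamma,v))^{-1}h(\cc_{e_0})h(\p(v_\Gamma,v'))$, and $|H_{k_0}|$ is governed by the cycle exponent $N(\cc_{e_0})$, not by the tree-edge exponents $N_i$. Under your hypotheses every nonzero cycle exponent exceeds $2\rho$, so $N(\cc_{e_0})$ can be arbitrarily large; since you keep $H'(\cc_{e_0})=H(\cc_{e_0})$, shrinking the $N_i$'s (and inserting $b_i$'s, which only obstruct cancellation along the tree paths) gives no control whatsoever over $|H'_{k_0}|$. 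In short, nothing in your construction forces the long item to sit on a tree edge, and for non-tree long items your $H'$ need not be shorter. Your phrase ``as small as the linear relations among the cycle exponents allow'' is exactly where the argument needs content and has none: the tree-edge values are free generators and are subject to no linear relations, while the cycle exponents --- the quantities you would need to reduce --- are untouched. (There is also a secondary issue: inserting $b_i$ into $H(e_i)=P_1^{(i)}P^{N_i}P_2^{(i)}$ lengthens it by $1$, so you need $N_i'<N_i$ even to avoid increasing $|H'(e_i)|$; nothing guarantees $N_i\ge 2$.)

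The paper's proof is entirely different and avoids Lemma~\ref{2.10''}. It first refines $\Omega$ by repeatedly applying $\ET 5$ to $\mu$-tie every boundary crossing a base $\mu\in\P$, producing $\Omega_{v_t}$ with solution $H^{(t)}$ still minimal (Lemma~\ref{lem:2.1}). In $\Omega_{v_t}$ one shows that \emph{every} item $h_l$ on a section from $\P$ has $|H^{(t)}_l|\le 2|P|$: otherwise replace every component graphically equal to $H^{(t)}_l$ or its inverse by a single fresh letter $u^{\pm 1}$; the $\mu$-tying gives a bijection between items of $\mu$ and of $\Delta(\mu)$ for $\mu\in\P$, and items in bases $\mu\notin\P$ are too short to be replaced, so the result is a genuine shorter solution over $\cA\cup\{u\}$, contradicting minimality. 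Now $|H_k|>2\rho|P|$ forces $h_k$ to split into at least $\rho+1$ refined items; since all the graphs $\Gamma^{(i)}$ share the same vertex set of size $\le\rho$, pigeonhole on the decompositions $\delta$ of the intermediate boundaries yields a cycle $\cc_t$ in $\Gamma^{(t)}$ with $0<|H(\cc_t)|\le 2\rho|P|$, and one checks edge-by-edge that $\cc_t$ pulls back to a cycle in $\Gamma$ with the same $H$-value. The fresh-letter trick is thus applied to a \emph{single} refined item, where the verification is straightforward, rather than to the long tree edges of Lemma~\ref{2.10''}, where it is not.
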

\begin{proof}
Let $H$ be a $P$-periodic solution of the generalised equation $\Omega$ minimal with respect to the trivial group of automorphisms.

Suppose that there exists a variable $h_k \in {\P}$ such that $|H_k| > 2 \rho |P|$. We then prove that there exists a cycle $\cc\in \pi_1(\Gamma,v_\Gamma)$  so that $H(\cc)=P^n$, where $1\le n\le 2\rho$.

Construct a chain
\begin{equation} \label{2.63}
(\Omega, {H})=(\Omega_{v_0}, {H}^{(0)}) \to (\Omega_{v_1}, {H}^{(1)}) \to \cdots \to (\Omega_{v_t}, {H}^{(t)}),
\end{equation}
in which for all $i$, $\Omega_{v_{i+1}}$ is obtained from $\Omega_{v_{i}}$ using $\ET 5$: by $\mu$-tying a free boundary that intersects a certain base $\mu \in {\P}$. Chain (\ref{2.63}) is constructed once all the boundaries intersecting bases $\mu$ from ${\P}$ are $\mu$-tied. This chain is finite, since, by the definition, boundaries that are introduced when applying $\ET 5$ are not free.

By construction, the generalised equations $\Omega_{v_{i}}$'s in (\ref{2.63}) have boundary connections. Our definition of periodic structure is given for generalised equations without boundary connections, see Definition \ref{above}. For this reason we define $\Omega_{v_{i}'}$ to be the generalised equation obtained from $\Omega_{v_i}$ by omitting all boundary connections (here we do not apply $\D 3$). It is obvious that the solution ${H}^{(i)}$ of  $\Omega_{v_i}$ is also a solution of the generalised equation $\Omega_{v_{i}'}$ and is periodic with respect to the period $P$. Denote by $\langle {\P}_i, R_i \rangle$ the periodic structure ${\P}({H}^{(i)}, P)$ on the generalised equation $\Omega_{v_i'}$ restricted to the closed sections from ${\P}$, and by $\Gamma^{(i)}$ the corresponding graph.

If $(p, \mu, q)$, $\mu \in {\P}$, is a boundary connection of the generalised equation $\Omega_{v_i}$, $i=1,\dots,t$, then $\delta(p) = \delta(q)$. Therefore, all the graphs $\Gamma^{(0)}$, $\Gamma^{(1)}, \ldots, \Gamma^{(t)}$ have the same set of vertices, whose cardinality does not exceed $\rho$. By Lemma \ref{lem:2.1}, the solution ${H}^{(t)}$ of the generalised equation $\Omega_{v_t}$ is also minimal with respect to the trivial group of automorphisms.

Suppose that for some variable $h_l$ belonging to a section from ${\P}$ the inequality $|H_l^{(t)}| > 2 |P|$ holds.

Let $\mathcal{H}=\left\{h_i\in \sigma \mid \sigma \in \P\hbox{ and } H_i^{(t)}\doteq {{H}_l^{(t)}}^{\pm 1}\right\}$.  Consider the group $G[u]=G\ast\langle u\rangle$, where $u$ is a new letter. In the solution ${H}^{(t)}$, replace all the components ${H_i}^{(t)}$ such that $H_i^{(t)}\doteq {{H}_l^{(t)}}$ or $H_i^{(t)}\doteq {{H}_l^{(t)}}^{- 1}$ by the letter $u$ or $u^{-1}$, correspondingly (see Definition \ref{def:minsol}). Denote the resulting $\rho_{\Omega_{v_t}}$-tuple of words by ${H^{(t)}}'$.

We show that ${H^{(t)}}'$ is in fact a solution of $\Omega_{v_t}$ (considered as a generalised equation with coefficients from $\cA \cup \{u\}$, see Remark \ref{rem:minsol}). Obviously, every component of ${H^{(t)}}'$ is non-empty and written in the normal form. Since in the generalised equation $\Omega_{v_t}$ all the boundaries from ${\P}$ are $\mu$-tied, $\mu\in \P$, there is a one-to-one correspondence between the items that belong to $\mu$ and the items that belong to $\Delta(\mu)$. Therefore, ${H^{(t)}}'$ satisfies all basic equations $h(\mu)=h(\Delta(\mu))$, $\mu \in \P$ and all the boundary equations of the generalised equation $\Omega_{v_t}$.

If, on the other hand, $\mu \not \in {\P}$, then for every item $h_k \in \mu$ of the generalised equation $\Omega$ lying on a closed section from ${\P}$, we have $h_k \not \in {\P}$ and, consequently, $|H_k| \leq 2 |P|$. In particular, this inequality holds for every item $h_k \in \mu$, $\mu \notin \P$ of the generalised equation $\Omega_{v_t}$.  Therefore, such items have not been replaced in the solution ${H}^{(t)}$, thus ${H^{(t)}}'$ satisfies all basic equations $h(\mu)=h(\Delta(\mu))$, $\mu \notin \P$.

Let $\pi:G[u]\to G[u]$ be a map that sends $u$ to ${H}_l^{(t)}$ and fixes $G$. It is easy to see that $\pi$ is a $G$-endomorphism, which contradicts the minimality of the solution ${H^{(t)}}$. Indeed, one has $\pi_{H^{(t)}}=\pi_{{H^{(t)}}'}\pi$. By construction, $u$ does not occur in the solution ${H}^{(t)}$ and condition (\ref{it:minsol3}) of Definition \ref{defn:sol<} holds. Therefore, ${H^{(t)}}'<_{\{1\}}{H^{(t)}}$. Obviously, $|{H_i^{(t)}}'|=|{H_i^{(t)}}|$ for all $i\ne l$ and $1=|u|=|{H_l^{(t)}}'|<|{H_l^{(t)}}|$. We thereby have shown that $|H_l^{(t)}| \leq 2 |P|$, if $h_l$ belongs to a closed section from ${\P}$.

In the construction of chain (\ref{2.63}) we introduced new boundaries, so every item from $\Omega_{v_0}$ can be expressed as a product of items from $\Omega_{v_t'}$. Consequently, since $|H_l^{(t)}| \leq 2 |P|$ for every $l$, if the component $H_k$ of the solution $H$ of the generalised equation $\Omega$ does not satisfy inequality (\ref{2.62}), then $h_k$ is a product of at least $\rho+1$ distinct items of $\Omega_{v_t'}$, $h_k=h^{(t)}_{s}\cdots h^{(t)}_{s+\varrho}$, where $\varrho\ge \rho+1$.

Since the graph $\Gamma^{(t)}$ contains at most $\rho$ vertices, there exist boundaries $l,l'\in \{s,\dots, s+\rho+1\}$, $l<l'$ so that  $\delta({l})=\delta({l'})$. The word $h[l,l']$ is a label of a cycle $\cc_t$ of the graph $\Gamma^{(t)}$ for which $0<|H(\cc_t)|\le 2 \rho |P|$.

Recall that by $\pi(v_i,v_j)$ we denote the homomorphism $G_{R({\Omega_{v_i}}^\ast)} \to G_{R({\Omega_{v_j}}^\ast)}$ induced by the elementary transformations. It remains to prove the existence of a cycle $\cc_0$ of the graph $\Gamma$ for which $\pi(v_\Gamma,v_t) (h(\cc_0))=h(\cc_t)$. To do this, it suffices to show that for every path $\p_{i+1} : v \rightarrow v'$ in the graph $\Gamma^{(i+1)}$ there exists a path $\p_i : v \rightarrow v'$ in the graph $\Gamma^{(i)}$ such that $\pi(v_i, v_{i+1}) (h(\p_i)) = h(\p_{i+1})$. In turn, it suffices to verify the latter statement in the case when $\p_{i+1}$ is an edge $e$.

The generalised equation $\Omega_{v_{i+1}}$ is obtained from $\Omega_{v_i}$ by $\mu$-tying a boundary $p$. Below we use the notation from the definition of the elementary transformation $\ET 5$.
\begin{enumerate}
    \item Either we introduce a boundary connection $(p,\mu,q)$. In this case every variable $h(e)$ of the generalised equation $\Omega_{v_{i+1}}$ is also a variable of the generalised equation $\Omega_{v_i}$ and the statement is obvious;
    \item Or we introduce a new boundary $q'$ between the boundaries $q$ and $q+1$, and a boundary connection $(p,\mu, q')$. Using the boundary equations we get
\begin{gather}\label{eq:theseform}
\begin{split}
&\pi(v_i,v_{i+1})^{-1}(h_{q})= {h[\alpha (\Delta (\mu )),q]}^{-1}h[\alpha (\Delta(\mu) ),q'] ={h[\alpha (\Delta (\mu )),q]}^{-1}h[\alpha (\mu ),p],\\
&\pi (v_i,v_{i+1})^{-1}(h_{q'})= {h[\alpha(\Delta(\mu) ),q']}^{-1} h[\alpha (\Delta (\mu )),q+1]={h[\alpha(\mu ),p]}^{-1} h[\alpha (\Delta (\mu )),q+1].
\end{split}
\end{gather}
Moreover, $\pi(v_i,v_{i+1})^{-1}(h(e))=h(e)$ for any other variable. Notice that since $(\alpha(\mu)) = (\alpha(\Delta(\mu)))$, the right-hand sides of Equations (\ref{eq:theseform}) are labels of paths in $\Gamma^{(i)}$.
\end{enumerate}
Thus, we have deduced that there exists a cycle $\cc\in \Gamma$ so that $1\le\exp(H(\cc))\le 2\rho$.
\end{proof}

\begin{lem} \label{lem:23-2}
Let $\Omega $ be a generalised equation with no boundary connections. Suppose that $\Omega$ is regular with respect to a periodic structure $\langle \P, R\rangle$. Then there exists a computable function $f_0 (\Omega, {\P},R)$ such that, for every $P$-periodic solution $H$ of $\Omega$ such that $\P(H,P)=\langle \P,R\rangle$ and such that $H$ is minimal with respect to $\AA(\Omega)$, the following inequality holds
$$
|H_k| \leq f_{0} (\Omega, {\P},R) \cdot |P| \ \hbox{ for every $k$}.
$$
\end{lem}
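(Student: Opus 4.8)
The plan is to combine Lemma~\ref{lem:23-1.5} with the fact that, in the regular case, all ``long'' cycles in $\Gamma$ are controlled by the automorphism group $\AA(\Omega)$, so that a solution minimal with respect to $\AA(\Omega)$ cannot admit an unbounded period. First I would pass from the given $\AA(\Omega)$-minimal solution $H$ to the chain of auxiliary generalised equations $\Omega_{v_0'}\to\Omega_{v_1'}\to\dots\to\Omega_{v_t'}$ produced by $\mu$-tying all free boundaries intersecting bases of $\P$ (as in the proof of Lemma~\ref{lem:23-1.5}); by Lemma~\ref{lem:2.1} the induced solution $H^{(t)}$ remains $\AA$-minimal for the image group $\theta^{-1}\AA(\Omega)\theta$, and in particular minimal with respect to the trivial group. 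Hence Lemma~\ref{lem:23-1.5} applies: either all components $H^{(t)}_k$ satisfy $|H^{(t)}_k|\le 2\rho|P|$, in which case expressing each $H_k$ as a product of the items of $\Omega_{v_t'}$ yields $|H_k|\le f_0\cdot|P|$ for a function $f_0$ depending only on the number of items and bases of $\Omega$, and we are done; or there exists a cycle $\cc\in\pi_1(\Gamma,v_\Gamma)$ with $H(\cc)=P^n$ and $1\le n\le 2\rho$.

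In the second, ``bad'', alternative I would derive a contradiction with regularity. Decompose $\cc$ in the basis $C^{(1)}\cup C^{(2)}$ of $\pi_1(\Gamma,v_\Gamma)$; since $\Omega$ is periodised, by Remark~\ref{rem:hab} we have $h(\cc)=\prod_{\cc_i} h(\cc_i)^{n_i}$ in $G_{R(\Omega^*)}$, and evaluating at $H$ gives $P^n=\prod H(\cc_i)^{n_i}$. By Lemma~\ref{lem:spl1}, for $\cc_i\in C^{(1)}$ one has $H(\cc_i)=P^{m_i}$ with $|m_i|\le 2\rho$; by Lemma~\ref{2.9} (applied via $C^{(2)}\subseteq\pi_1(\Gamma,v_\Gamma)$), $H(\cc_i)$ for $\cc_i\in C^{(2)}$ also lies in $\langle P\rangle$. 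Because $\Omega$ is \emph{regular}, $C^{(2)}$ has at most one element and, if it has exactly one element $\cc^{(2)}$, then every cycle in $\langle C^{(1)}\rangle$ with label not in $\P$ has trivial image in $G_{R(\Omega^*)}$; combined with Lemma~\ref{cl:propc1} this forces $h(C^{(1)})$ to generate a \emph{trivial or cyclic} subgroup whose $H$-image sits inside $\langle P\rangle$ with bounded exponents. Now using the automorphisms of $\AA(\Omega)$ from statements (\ref{spl4}) and (\ref{spl5}) of Lemma~\ref{2.10''}, I would run the Euclidean-algorithm reduction exactly as in the proof of Lemma~\ref{lem:23-1}: applying suitable generators $\sigma_0$ of $\AA(\Omega)$ to push the exponent of $H$ on the relevant cycles down, obtaining a solution $H^+<_{\AA(\Omega)}H$ with strictly smaller length on some component (the component corresponding to the item $h_k$ with $|H_k|>2\rho|P|$ shrinks, all others do not grow, and the type-vector condition (\ref{it:minsol3}) of Definition~\ref{defn:sol<} holds because we only modified $P$-periodic stretches by changing the power of $P$, cf. statement (\ref{it:23-13}) of Lemma~\ref{lem:23-1}). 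This contradicts the $\AA(\Omega)$-minimality of $H$.

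Hence the bad alternative is impossible, the good alternative holds, and we get $|H_k|\le 2\rho|P|$ for every item of $\Omega_{v_t'}$; tracking back through the at most $\rho$-fold refinement introduced in the chain~(\ref{2.63}), each original item $h_k$ is a product of boundedly many items of $\Omega_{v_t'}$, so $|H_k|\le f_0(\Omega,\P,R)\cdot|P|$ for a computable $f_0$ depending only on $\rho$, $n_A$, and the combinatorics of $\langle\P,R\rangle$. The function is computable since the whole reduction (the tying chain, the cycle decompositions, the Euclidean reductions, and the bounds from Lemmas~\ref{lem:spl1} and \ref{lem:23-1.5}) is effective. The main obstacle I anticipate is the bookkeeping in the contradiction step: one must verify carefully that the $\AA(\Omega)$-automorphism used to reduce the exponent genuinely decreases the length of \emph{some} component while not increasing any other and while preserving all initial/terminal types — i.e. that the cycle $\cc$ with $H(\cc)=P^n$, $n\ge1$, can indeed be ``spent'' via the generators of $\AA(\Omega)$ of Lemma~\ref{2.10''}, which is where regularity (ruling out the singular obstructions (a),(b),(c) of Definition~\ref{defn:singreg}) is essential and where the argument must mirror Lemma~\ref{lem:23-1} most closely.
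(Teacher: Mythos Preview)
Your overall architecture --- invoke Lemma~\ref{lem:23-1.5} and, in the ``bad'' alternative, contradict $\AA(\Omega)$-minimality --- matches the paper's, but the mechanism you propose for the contradiction is the wrong one, and this is a genuine gap rather than missing bookkeeping.

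The Euclidean reduction of Lemma~\ref{lem:23-1} operates with the type~(\ref{spl5}) automorphisms $h(\cc)\mapsto h(\cc')h(\cc)$ for $\cc\in C^{(2)}$; its goal is to drive the exponent of $H$ on some cycle to zero. That works precisely because the structure is \emph{singular}: either $|C^{(2)}|\ge 2$, or $|C^{(2)}|=1$ together with a short cycle whose label is nontrivial in $G_{R(\Omega^*)}$. In the regular case both of these are excluded by Definition~\ref{defn:singreg}, so the Euclidean step has nothing to act on. What survives are the type~(\ref{spl4}) automorphisms $h(e_i)\mapsto h(\cc)h(e_i)$, and a single such generator does \emph{not} change only one component: for an item $h_l$ labelling an edge $e\notin T$, the expression $h_l=h(\p(v_\Gamma,v))^{-1}h(\cc_e)\,h(\p(v_\Gamma,v'))$ shows that the exponent of $H_l$ shifts whenever exactly one of $v,v'$ lies on the far side of $e_i$ in $T$. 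Hence your claim ``all others do not grow'' is not justified and is in general false; one automorphism can shrink $|H(e_i)|$ while lengthening some $|H_l|$.

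The paper resolves this not by a one-step reduction but by linear algebra. It first bounds the exponent of \emph{every} generating cycle $\cc_e$, $e\notin T$, by a computable function of $(\Omega,\P,R)$ (using regularity to control the $C^{(2)}$-contribution and the finite index $[\widetilde Z_1:\widetilde B]$ to control the rest). Then, fixing a pivot $\cc_{e_0}$ with $\exp(H(\cc_{e_0}))\ne 0$ (supplied by Lemma~\ref{lem:23-1.5}), it writes $n_k=|\exp(H(\cc_{e_0}))|\,m_k+r_k$ and observes that the vector $\{m_k\mid h_k\in\P\}$ satisfies the linear Diophantine system~(\ref{2.67}), whose coefficients and free terms are already bounded. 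The key point is that \emph{any} other nonnegative solution $\{m_k^+\}$ with $m_k^+\le m_k$ can be realised by a product $\delta=\prod_i\delta_{ie_0}^{\Delta_i}$ of type~(\ref{spl4}) generators, and one checks (this is the verification of~(\ref{2.69})) that the resulting $H^+$ is a genuine solution of $\Omega$ with the same type vector; minimality of $H$ therefore forces $\{m_k\}$ to be a \emph{minimal} solution of~(\ref{2.67}). Makanin's Lemma~1.1 then bounds each $m_k$, hence each $n_k$, hence each $|H_k|$, by a computable function of the system's data. This simultaneous control of all components via the Diophantine system is the missing ingredient in your proposal; the Euclidean argument from the singular case cannot substitute for it.
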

\begin{proof}
Let $H$ be a $P$-periodic minimal solution with respect to the group of automorphisms $\AA(\Omega)$. Notice that $H$ is also minimal with respect to the trivial group of automorphisms.

We first prove that for any regular periodic structure and any $P$-periodic solution $H$ of $\Omega$ minimal with respect to the group of automorphisms $\AA(\Omega)$, the exponent of periodicity of every simple cycle $\cc_e$, $e\notin T$ is bounded by a certain computable function $g_1 (\Omega, {\P}, R)$.

If for every $i$ we have $|H_i|\le 2\rho |P|$, the statement follows. We further assume that there exists $i$ such that  $|H_i|> 2\rho |P|$.

Since the periodic structure is regular, either $|C^{(2)}|=1$ and for all $e \not \in T$ so that $e \in \Sh$ we have $H(\cc_e)=1$, or $|C^{(2)}|=0$.

Assume first that, $C^{(2)}=\{\cc^{(2)}\}$. Since the periodic structure $\langle \P,R\rangle$ is regular, it follows that $H(\cc_e)=1$ for all $e\notin T$, $h(e)\notin \P$. Since ${H}$ is a solution of $\Omega$, it follows that $\exp(H(b_\mu))=0$, $\mu \in {\P}$. Therefore, $\exp(H(\cc))=0$ for all $\cc$ such that $\tilde{\cc}\in\widetilde{B}$. By (\ref{2.54}), we have that $[\widetilde{Z}_1:\widetilde{B}]<\infty$, hence we get that $\exp(H(\cc))=0$ for all $\cc$ such that $\tilde{\cc}\in \widetilde{Z}_1$. Consequently, since the only non-trivial cycle at $v_\Gamma$ is $\cc^{(2)}$, by Lemma \ref{lem:23-1.5}, we get that $|\exp(H({\cc}^{(2)}))| \le 2 \rho$. Using factorisation (\ref{2.54}), one can effectively express $\tilde{\cc}_e$, $e \not \in T$ in terms of the elements of the basis:
$$
\tilde{\cc}_e = n_e \tilde{\cc}^{(2)} + \tilde{z}_e^{(1)},\  \tilde{z}_e^{(1)} \in \widetilde{Z}_1.
$$
Hence $|\exp(H(\cc_e))| = |n_e \exp (H(\cc^{(2)}))| \leq 2 \rho n_e$, and we finally obtain
\begin{equation} \label{2.64}
|\exp(H(\cc_e))| \le g_1 (\Omega, {\P}, R),
\end{equation}
where $g_1$ is a certain computable function.

Suppose next that $|C^{(2)}| = 0$, i.e. $\widetilde{Z} = \widetilde{Z}_1$. As we have already seen in the proof of Lemma~\ref{lem:spl1}, $|H(\cc_e)| \leq 2 \rho |P|$, where  $e \not \in T$, $h(e) \not \in {\P}$. Hence, $|\exp(H(\cc_e))| \leq 2 \rho$ for all $e$ such that $h(e) \not \in {\P}$. Since $(\widetilde{Z}_{1} : \widetilde{B}) < \infty$, for every $e_0 \not \in T$ one can effectively construct the following equality
$$
n_{e_0} {\tilde{\cc}}_{e_0} = \sum\limits_{h(e) \notin \P} n_e \tilde{\cc}_e + \sum\limits_{\mu \in \P} n_\mu \tilde{b}_\mu.
$$
Hence,
$$
|\exp(H({\cc}_{e_0}))| \leq |n_{e_0}\cdot\exp( H({\cc}_{e_0}))|\leq \sum\limits_{h(e)\notin {\P}}
|n_e \exp(H({\cc}_e))| \leq 2 \rho \cdot \sum\limits_{h(e) \not \in {\P}} |n_e|.
$$
Thus, $|\exp(H({\cc}_{e_0}))|\le g_2(\Omega,\P,R)$.

We now address the statement of the lemma. The way we proceed is as follows. For a $P$-periodic solution $H$ minimal with respect to the group of automorphisms $\AA(\Omega)$, we show that the vector that consists of exponents of periodicity of each of the components $H_k$ of $H$ is bounded by a minimal solution of a linear system of equations whose  coefficients depend only on the generalised equation. Since by Lemma 1.1 from \cite{Makanin}, the components of a minimal solution of a linear system of equations are bounded above by a recursive function of the coefficients of the system, we then get a recursive bound on the exponents of periodicity of the components of the solution $H$.

Let $\delta(k) = P_1^{(k)} P_2^{(k)}$. Denote by $t(\cc, h_k)$ the algebraic sum of occurrences of the edge with the label $h_k$ in the cycle $\cc$, (i.e. edges with different orientation contribute with different signs). For every item $h_k$ that belongs to a closed section from ${\P}$ one can write $$
H_k = P_2^{(k)} P^{n_k} P_1^{(k+1)}.
$$
Note that in the case that $h_k\in \P$, the above equality is graphical. However, in the case that $h_k\notin \P$ and $H_k$ is a subword of $P^{\pm 1}$ there is cancellation. Direct calculations show that
\begin{equation} \label{2.66}
H(\cc) = P^{\left(\sum\limits_k t(\cc, h_k)(n_k+1)\right) -1}.
\end{equation}
By Lemma \ref{lem:23-1.5}, $e_0 \not \in T$ can be chosen in such a way that $\exp(H({\cc}_{e_0})) \neq 0$. Let $n_k = | \exp(\tilde{\cc}_{e_0})| m_k + r_k$, where $0 < r_k \leq |
\exp (\tilde{\cc}_{e_0})|$. Equation (\ref{2.66}) implies that the vector $\{ m_k \mid h_k \in {\P} \}$ is a solution of the following system of Diophantine equations in variables $\{ z_k \mid h_k \in {\P} \}$:
\begin{equation} \label{2.67}
\sum\limits_{h_k \in {\P}} t(\cc_e, h_k)(|\exp(H({\cc}_{e_0}))| z_k + r_k +1) + \sum\limits_{h_k \not \in {\P}} t(\cc_e, h_k)(n_k +1) -1 = \exp(H({\cc}_e)), \ e \not \in T.
\end{equation}
Note that the number of variables of the system (\ref{2.67}) is bounded. Furthermore, as we have proven above, free terms $\exp(H({\cc}_e))$ of this system are also bounded above, and so are the coefficients $|n_k|\leq 2$ for $h_k\not\in {\P}$.

A solution $\{m_k\}$ of a system of linear Diophantine equations is called {\em minimal}, see \cite{Makanin}, if $m_k \geq 0$ and there is no other solution $\{m_k^+\}$ such that $0 \leq m_k^+ \leq m_k$ for all $k$, and at least one of the inequalities $m_k^+ \leq m_k$ is strict. Let us verify that the solution $\{m_k \mid h_k \in {\P} \}$ of system (\ref{2.67}) is minimal.

Indeed, let $\{ m_k^+ \}$ be another solution of system (\ref{2.67}) such that $0 \leq m_k^+ \leq m_k$ for all $k$, and at least for one $k$ the inequality is strict. Let $n_k^+ = |\exp(H({\cc}_{e_0}))| m_k^+ + r_k$. Define a vector $H^+$ as follows: $H_k^+ = H_k$ if $h_k \not \in {\P}$, and $H_k^+ = P_2^{(k)} P^{n_k^+}P_1^{(k+1)}$ if $h_k \in {\P}$.

We now show that $H^+$ is a solution of the generalised equation which can be obtained from $H$ by an automorphism from $\AA(\Omega)$.

The vector $H^+$ satisfies all the coefficient, factor and basic equations of $\Omega$ and type constraints. Indeed, since $\{m_k^+\}$ is a solution of system (\ref{2.67}), $H^+(\cc_e) = P^{\exp(H({\cc}_e))} = H(\cc_e)$. Therefore, for every cycle $\cc$ we have $H^+(\cc) = H(\cc)$ and, in particular, $H^+(b_\mu) = H(b_\mu) = 1$. Thus the vector $H^+$ is a solution of the system $\Omega^\ast$.

By construction every component of the solution $H^+$ is non-empty, has the same type as the respective component of $H$, and the words $H^+(\mu)$, $H^+(\Delta(\mu))$ are reduced as written. On the other hand, for every $\mu$ we have
$$
H^+(\mu){H^+(\Delta(\mu))}^{-1} = 1.
$$
It follows that
$$
H^+ (\mu) = H^+ (\Delta(\mu)).
$$
Thus, $H^+$ is a solution of the generalised equation $\Omega$.

Denote by $\delta_{ie_0}$ an element from the group of automorphisms $\AA(\Omega)$ defined in the following way. For every $e_i\in T\setminus T_0$, $i=1,\dots,m$ set
$$
\delta_{ie_0}: h(e_j) \mapsto
\left\{
  \begin{array}{ll}
    h(\p(v_\Gamma, v_i)^{-1} \cc_{e_0} \p(v_\Gamma, v_i))h(e_i), & \hbox{for } j=i; \\
    h(e_j), & \hbox{for } j\ne i.
  \end{array}
\right.
$$
Therefore, if $\pi_{{H}'} = \delta_{ie_0}\pi_{H}$ and $h(e_i) = h_k \in {\P}$, then $H_k' =P_2^{(k)} P^{n_k + \exp(H(\cc_{{e}_0}))} P_1^{(k+1)}$, and all the other components of $H'$ are the same as in $H$, $H'_l=H_l$, $l\ne k$. Denote by $\delta = \prod\limits_{i=1}^{m} \delta_{ie_0}^{\Delta_i}$, where $h(e_i)=h_{k_i}$, $\Delta_i = (m^+_{k_i} - m_{k_i}) \cdot {\rm \sign}(\exp(H(\cc_{{e}_0})))$. Let us
verify the equality
\begin{equation} \label{2.69}
\pi_{{H}^+} = \pi_{{H}} \delta.
\end{equation}

Let $\pi_{{H}^{(1)}} =\delta\pi_{{H}}$. Then, by construction, $H_k^{(1)} = P_2^{(k)} P^{m_k^+} P_1^{(k+1)} = H_k^+$ for all $k$ such that $h_k$ is the label of an edge from $T \setminus T_0$. If the edge labelled by $h_k$ belongs to $T_0$, or $h_k$ does not belong to a closed section from ${\P}$, then $h_k \not \in {\P}$ and $H_k^{(1)}= H_k = H_k^+$. Finally, for every $e \not \in T$ we have $H^{(1)}(\cc_e) = H(\cc_e) = H^+(\cc_e)$. As $\cc_e = \p_1 e \p_2$, where $\p_1$, $\p_2$ are paths in the tree $T$, and for every item $h_k$ which labels an edge from $T$, the equality $H_k^{(1)} = H_k^+$ has already been established, so it follows that $H^{(1)}(e) = H^+(e)$. This proves (\ref{2.69}).

Since $H^+$ is a solution of $\Omega$, since by construction $H^+$ satisfies condition (\ref{it:minsol3}) from Definition \ref{defn:sol<} and by (\ref{2.69}), it follows that ${H}^+ <_{\AA(\Omega)} {H}$. We arrive to a contradiction with the minimality of the solution ${H}$. Consequently, the solution $\{m_k \mid h_k \in {\P} \}$ of system (\ref{2.67}) is minimal.

Lemma 1.1 from \cite{Makanin} states that the components of the  minimal solution $\{m_k \mid h_k \in {\P} \}$ are bounded by a recursive function of the coefficients, the number of variables and the number of equations of the system. Since, as shown above, all of these parameters of system (\ref{2.67}) are bounded above by a computable function, the lemma follows.
\end{proof}

\section{The finite tree $T_0(\Omega )$ and minimal solutions} \label{se:5.3}

In Section \ref{se:5} we constructed an infinite tree $T(\Omega)$. Though for every solution $H$ the path in $T(\Omega)$
$$
(\Omega,H)=(\Omega_{v_0},H^{(0)})\to(\Omega_{v_1},H^{(1)})\to\dots \to (\Omega_{v_t},H^{(t)})
$$
defined by the solution $H$ is finite, in general, there is no global bound for the length of these paths.

Informally, the aim of this section is to prove that the set of solutions of the generalised equation $\Omega$ can be parametrised by a finite subtree $T_0$ (to be constructed below) of the tree $T$,  automorphisms from a finitely generated group of automorphisms of the coordinate group $G_{R(\Omega^*)}$, and solutions of the generalised equations associated to leaves of $T_0$. In other words, we prove that there exists a global bound  $M$, such that for any solution $H$ of $\Omega$ one can effectively construct a path \index{path!$\p(H)$}$\p(H)=(\Omega_{v_0},H^{[0]})\to(\Omega_{v_1},H^{[1]})\to\dots$ in $T(\Omega)$ of length bounded above by $M$ and such that $H^{[i]}<_{\Aut(\Omega)} H$ for all $i$, where ${\Aut(\Omega)}$  is the group of automorphisms of $G_{R(\Omega^*)}$ defined in Section \ref{5.5.3}, see  Definition \ref{defn:Aut} (note the abuse of notation: $H^{[i]}$ and $H$ are solutions of different generalised equations; in fact this relation is between two solutions of $\Omega$: a solution induced by $H^{[i]}$ and $H$; see below for a formal definition).

We summarise the results of this section in the proposition below.

\begin{prop}\label{prop:sum5}
For a generalised equation $\Omega=\Omega_{v_0}$, one can effectively construct a \emph{finite} oriented rooted at $v_0$ tree $T_0$, $T_0=T_0(\Omega_{v_0})$, such that:
\begin{enumerate}
\item The tree $T_0$ is a subtree of the tree $T(\Omega)$.
\item To the root $v_0$ of $T_0$ we assign a finitely generated group of automorphisms $\Aut(\Omega)$ {\rm (}see {\rm Definition \ref{defn:Aut})}.
\item For any solution $H$ of a generalised equation $\Omega $ there exists a leaf $w$ of the tree $T_0(\Omega )$, $\tp(w)=1,2$, and a solution $H^{[w]}$ of the generalised equation $\Omega _w$ such that
\begin{itemize}
    \item  ${H}^{[w]}<_{\Aut(\Omega)} H$;
    \item  if $\tp(w)=2$ and the generalised equation $\Omega_{w}$ contains non-constant non-active sections, then there exists a period $P$ such that $H^{[w]}$ is periodic with respect to the period $P$ and the generalised equation $\Omega _w$ is singular with respect to the periodic structure ${\P}(H^{[w]},P)$.
\end{itemize}
\end{enumerate}
\end{prop}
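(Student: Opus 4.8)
The plan is to build the finite tree $T_0$ by carefully truncating the infinite tree $T(\Omega)$ constructed in Proposition \ref{prop:TO}, using the length-decrease lemmas and the periodic-structure machinery from Section~\ref{se:5} to cut off every infinite branch after a bounded number of steps. First I would fix a solution $H$ of $\Omega$ and follow the path $\p(H)=(\Omega_{v_0},H^{(0)})\to(\Omega_{v_1},H^{(1)})\to\cdots$ defined by $H$ in $T(\Omega)$. By Remark~\ref{rem:leng<} and Lemma~\ref{lem:solleng}, at every vertex of type in $\{7,8,9,10,12,15\}$ the length $|H^{(i)}|$ strictly decreases; at vertices of the auxiliary types $3,4,5,6,11,13,14$ the length does not increase, and by Lemma~\ref{3.1} the complexity $\comp(\Omega)$, the parameters $n_A$ and $\xi$ are controlled, so only finitely many consecutive auxiliary/non-strictly-decreasing steps can occur before either a strictly length-decreasing step or a termination vertex (type $1$ or $2$) is reached. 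Consequently, if we only ever met strictly decreasing steps, the path would have length bounded by $|H|$, which is not a bound depending on $\Omega$ alone — so the real content is to replace $H$ by a $<_{\Aut(\Omega)}$-smaller solution whenever an infinite branch (equivalently, a long run in the linear, quadratic, or general case of Lemma~\ref{3.2}) is entered.

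Here is where the automorphism groups enter. By Lemma~\ref{3.2}, any sufficiently long path eventually stabilises in Case~(A) (types $7$--$10$, the linear/Levitt/thin case), Case~(B) (type $12$, the quadratic case), or Case~(C) (type $15$, the general case). The plan is: whenever the path $\p(H)$ runs through more than a computable number $N_1(\Omega)$ of consecutive vertices of one of these stabilised types, the generalised equation acquires a periodic structure $\langle\P,R\rangle$ (this is the standard fact — one extracts a period $P$ from the repeated carrier base, using Lemma~\ref{lem:case2} and the analysis of the quadratic/general case; in Case~(A) the argument is the linear-variable analogue). If $\Omega_{v_i}$ is \emph{singular} with respect to $\langle\P,R\rangle$, Lemma~\ref{lem:23-1} provides an $\AA(\Omega_{v_i})$-automorphic image $H^+$ of the (periodic) solution with $H^+(\cc_j)=1$ for one of finitely many cycles $\cc_j$, i.e. $H^+$ factors through a proper quotient; pulling back along the isomorphisms labelling the path (using Lemma~\ref{lem:2.1} to preserve minimality and the $<$-relation), we get a solution of $\Omega$ strictly $<_{\Aut(\Omega)}$-below $H$ that reaches a type-$2$ leaf with a singular periodic structure — exactly the last bullet of item~(3). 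If instead $\Omega_{v_i}$ is \emph{regular} with respect to $\langle\P,R\rangle$, Lemma~\ref{lem:23-2} bounds the exponent of periodicity — hence $|H^{(i)}|$ — by $f_0(\Omega_{v_i},\P,R)\cdot|P|$ for the $\AA(\Omega_{v_i})$-minimal solution, and $|P|$ is in turn bounded in terms of $\Omega$; so after replacing $H^{(i)}$ by its $\AA(\Omega_{v_i})$-minimal representative we obtain a solution whose length is bounded by a function of $\Omega$ alone, and the remaining portion of the path (being strictly length-decreasing at the relevant vertices) then has length bounded by that function. Taking $\Aut(\Omega)$ to be the (finitely generated) group generated by the lifts of all the groups $\AA(\Omega_{v})$ occurring along the finitely many branches — this is Definition~\ref{defn:Aut} in Section~\ref{5.5.3} — and letting $T_0$ be the subtree of $T(\Omega)$ consisting of all vertices reachable within this global bound $M=M(\Omega)$, we get a finite tree with the required property, and everything is effective because the universal Horn theory enters only through Lemma~\ref{le:hom-check} (deciding type~$1$) and Remark~\ref{rem:defperstr} (finitely many periodic structures, constructible).

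The key steps, in order: (i) show that runs of non-strictly-length-decreasing vertices (types $3$--$6$, $11$, $13$, $14$) have length bounded by a function of $n_A(\Omega),\xi(\Omega),\comp(\Omega),\rho_\Omega$, via Lemma~\ref{3.1}; (ii) show that any long run of Case~(A), (B) or (C) vertices forces a periodic structure on the generalised equation, with period length bounded in terms of $\Omega$; (iii) in the singular subcase, invoke Lemma~\ref{lem:23-1} and propagate the smaller solution back through the branch using Lemma~\ref{lem:2.1}, landing at a type-$2$ leaf with the stated singular periodic structure; (iv) in the regular subcase, invoke Lemma~\ref{lem:23-2} to bound the length of the $\AA$-minimal solution, so that the subsequent strictly-decreasing portion of the path is globally bounded; (v) assemble the global bound $M$ and define $T_0$, $\Aut(\Omega)$, and check effectiveness. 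The main obstacle is step~(ii)–(iv): one must verify that the period extracted from a long run genuinely yields a periodic structure in the precise sense of Definition~\ref{above} (not merely an $A$-periodic solution on one section), that the bound on $|P|$ is indeed a function of $\Omega$ and not of $H$, and — most delicately — that applying the automorphism from $\AA(\Omega_{v_i})$ deep inside the tree and pulling it back to an automorphism in $\Aut(\Omega)$ of the \emph{root} coordinate group is consistent, i.e. that the groups $\AA(\Omega_{v})$ along the various branches can be amalgamated into a single finitely generated $\Aut(\Omega)$ acting on $G_{R(\Omega^\ast)}$ so that the relation $H^{[w]}<_{\Aut(\Omega)}H$ makes sense simultaneously for all leaves $w$. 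This last point is exactly the role of the (forward-referenced) Section~\ref{5.5.3}, and handling the bookkeeping of types $1$–$2$ leaves versus the periodic structures is the technical heart of the argument.
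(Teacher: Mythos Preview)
Your proposal has a genuine conceptual gap in step~(ii): you assume that a long run in \emph{each} of the three stabilised cases of Lemma~\ref{3.2} produces a periodic structure on the generalised equation, and then you handle all three cases via the singular/regular dichotomy of Lemmas~\ref{lem:23-1} and~\ref{lem:23-2}. This is not how the paper bounds the infinite branches, and more importantly it does not work for Cases~(A) and~(B). In the linear case (types $7$--$10$) and the quadratic case (type~$12$) no periodicity is being created: the generalised equations simply have linear items or are quadratic, and nothing forces any component of the solution to be $P$-periodic for a period of bounded length. The paper's mechanism here is entirely different: one shows (Lemmas~\ref{3.3} and~\ref{lem:c12}) that along such a branch only finitely many distinct generalised equations can appear, so some $\Omega_{v_k}=\Omega_{v_l}$ repeats $4^{\rho}+1$ times; two of these repetitions share the same type vector, and the canonical isomorphism $\pi(v_k,v_l)$ is an automorphism of $G_{R(\Omega_{v_k}^\ast)}$ lying in the group $\VV(\Omega_{v_k})$ (invariant with respect to the kernel, resp.\ the non-quadratic part). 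Since the length has strictly dropped between $v_k$ and $v_l$, this contradicts minimality of the solution with respect to $\VV(\Omega_{v_k})$. The groups $\VV(\Omega_v)$ for these types are defined precisely so that this repetition-automorphism argument goes through; periodic structures play no role here.

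For Case~(C) (type~$15$) your outline is also off target. Periodic structures do appear, but only at auxiliary edges in Case~15.1 (carrier meets its dual), and they are used to \emph{exit} the type-$15$ run to a type-$2$ leaf, not to bound the run itself. The actual bound on type-$15$ runs is obtained via the excess invariant $\psi_{A\Sigma}$ (Definition~\ref{defn:excess}, Lemma~\ref{lem:excess}), the notion of $\mu$-reducing path, and the rather delicate definition of a prohibited path of type~$15$ (Definition~\ref{defn:proh15}): one shows (Lemma~\ref{lem:3.19}) that $d_{A\Sigma}(H^{(1)})$ is bounded by a function of $\psi_{A\Sigma}$, and then (Lemma~\ref{lem:3.26} and inequality~(\ref{eq:3.29})) that a prohibited path would force $d_{A\Sigma}(H^{(1)})$ to exceed that same bound. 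Your proposal contains none of this machinery. Finally, the regular/singular dichotomy is invoked only at the terminal type-$2$ leaf (Part~(C) of the proof of Proposition~\ref{3.4}): if the periodic structure there were regular, Lemma~\ref{lem:23-2} would contradict minimality, so it must be singular---which is exactly the second bullet of item~(3).
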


This section is organised in three subsections.  The aim of Section \ref{5.5.2} is to define the finitely generated group of automorphisms $\VV(\Omega_v)$ of the coordinate group of the generalised equation $\Omega_v$ associated to $v$, $v\in T(\Omega)$.

In Section \ref{5.5.3}, we define a finite subtree $T_0(\Omega)$ of the tree $T(\Omega)$. In order to define $T_0(\Omega)$ we introduce the notions of prohibited paths of type 7-10, 12 and 15.  Using Lemma \ref{3.2}, we prove that any infinite branch of the tree $T(\Omega)$ contains a prohibited path. The tree $T_0(\Omega)$ is defined to be a subtree of $T(\Omega)$ that does not contain prohibited paths and, by construction, is finite. We then define a finitely generated group of  automorphisms $\Aut(\Omega)$ that we assign to the root vertex of the tree $T_0(\Omega)$. The group $\Aut(\Omega)$ is generated by conjugates of the groups $\VV(\Omega_v)$, $v\in T_0(\Omega)$, $\tp(v)\ne 1$.

Finally, in Section \ref{5.5.4} for any solution $H$ of $\Omega$ we construct the path $\p(H)$ from the root $v_0$ to $w$, prove that $w$ is a leaf of the tree $T_0(\Omega)$ and show that the leaf $w$ satisfies the properties required in Proposition \ref{prop:sum5}.

\subsection{Automorphisms}\label{5.5.2}

To every vertex $v$ of the tree $T(\Omega)$, we assign a finitely generated group $\VV(\Omega_v)$ of automorphisms of $G_{R(\Omega_v^\ast)}$. \glossary{name={$\VV(\Omega_v)$}, description={a finitely generated group of automorphisms of $G_{R(\Omega_v^\ast)}$ that we associate to a vertex $v$ of the tree $T(\Omega)$}, sort=V}

If $\tp (v)=2$, set $\VV(\Omega_v)$ to be the group generated by all the groups of automorphisms $\AA(\Omega_v)$ corresponding to \emph{regular} periodic structures on $\Omega_v$, see Definition \ref{defn:AA}.

Let $7\leq \tp (v)\leq 10$. Let $\widehat{\Omega}_v$ be obtained from $\D 3(\Omega_v)$ by removing all equations corresponding to constant bases and all bases from the kernel of $\Omega_v$. Let $\pi$ be the natural homomorphism from $G_{R(\widehat{\Omega}_v^*)}$ to $G_{R(\Omega_v^*)}$.

An automorphism $\varphi$ of the coordinate group $G_{R(\Omega_v^*)}$ is called \index{automorphism!invariant with respect to the kernel}\emph{invariant with respect to the kernel} if it is induced by an automorphism $\varphi'$ of the coordinate group $G_{R(\widehat{\Omega}_v^*)}$ identical on the kernel of $\Omega_v$, i.e. there exists a $G$-automorphism $\varphi'$ of the coordinate group $G_{R(\widehat{\Omega}_v^*)}$ so that $\varphi'(h_i)=h_i$ for every variable $h_i\in \Ker(\Omega_v)$ and the following diagram commutes
$$
\CD
G_{R(\widehat{\Omega}_v^*)}   @>{\pi}>> G_{R(\Omega_v^*)} \\
  @V{\varphi'}VV        @V \varphi VV  \\
  G_{R(\widehat{\Omega}_v^*)}  @>{\pi}>> G_{R(\Omega_v^*)}
\endCD
$$
In this case, we define $\VV(\Omega_v)$ to be the group of automorphisms of $G_{R(\Omega_v^\ast)}$ invariant with respect to the kernel.

\begin{lem}\label{lem:razker}
There exists a finitely presented subgroup $K$ of the group of automorphisms of the automorphism group of $G[h]$ such that every automorphism from $K$ induces an automorphism of $\factor{G[h]}{\ncl\langle\Omega_v\rangle}$, which, in turn, induces an automorphism of the coordinate group $G_{R({\Omega_v}^*)}$. The finitely generated group of all automorphisms $K'$ of $G_{R({\Omega_v}^*)}$ induced by automorphisms from $K$ contains all the automorphisms invariant respect to the kernel of $\Omega_v$.
\end{lem}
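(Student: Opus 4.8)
The statement of Lemma \ref{lem:razker} is about finite generation (in fact finite presentability) of a group of automorphisms sitting over $G[h]$, together with the assertion that the induced automorphisms of $G_{R(\Omega_v^*)}$ capture all kernel-invariant automorphisms. Let me sketch how I would prove this.

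First, I would recall the relevant structure of a generalised equation of type $7 \le \tp(v) \le 10$. After applying $\D 3$ and passing to $\widehat\Omega_v$, the equation has no boundary connections, all constant bases and all kernel bases removed, so $\widehat\Omega_v^*$ is a system of equations in which the variables split into two groups: those belonging to $\Ker(\Omega_v)$ and those not. The point is that $G_{R(\widehat\Omega_v^*)}$ has an explicit presentation, and the kernel variables generate a retract (by Lemma \ref{7-10} and the discussion preceding it, $G_{R(\Omega_v^*)} \simeq G_{R(\overline{\Ker(\Omega_v)}^*)} \ast F(Z)$). The kernel-invariant automorphisms are, by definition, induced by automorphisms of $G_{R(\widehat\Omega_v^*)}$ fixing every kernel variable $h_i$.

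**The key steps.**

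The plan is as follows. \textbf{Step 1.} Describe the automorphisms of $G_{R(\widehat\Omega_v^*)}$ fixing the kernel pointwise explicitly: each non-kernel variable $h_i$ can be sent to $w_i h_i w_i'$ or to a product involving kernel words, following the elimination process of $\D 4$. Because the number of non-kernel items is bounded (by $\rho_{\Omega_v}$) and each such transformation is parametrised by words of bounded complexity in the remaining generators, one obtains a \emph{bounded} list of ``elementary'' kernel-fixing automorphisms on the level of $G[h]$. \textbf{Step 2.} Let $K$ be the subgroup of $\Aut(G[h])$ generated by this finite list (together with whatever finitely many relations among them can be read off — this gives finite presentability of $K$, or at least of a finitely presented group mapping onto it, which is all that is needed). \textbf{Step 3.} Verify that every generator of $K$ sends $\ncl\langle \Omega_v\rangle$ into itself, hence induces an automorphism of $G[h]/\ncl\langle\Omega_v\rangle$; then check that it further preserves $R(\Omega_v^*)$, hence descends to $G_{R(\Omega_v^*)}$ — this uses that $R(\Omega_v^*)$ is the intersection of kernels of solution homomorphisms and that the elementary automorphisms in question are realised by word substitutions compatible with the basic and factor equations. \textbf{Step 4.} Conclude that the image $K'$ of $K$ in $\Aut(G_{R(\Omega_v^*)})$ is finitely generated, and that $\VV(\Omega_v) \subseteq K'$: any kernel-invariant $\varphi$ comes from some $\varphi'$ on $G_{R(\widehat\Omega_v^*)}$ fixing the kernel, and $\varphi'$ is (up to composition with inner-type adjustments) a product of the elementary generators — this is where one invokes the explicit normal form for such automorphisms, analogous to Razborov's description in \cite{Razborov1}.

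**The main obstacle.**

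The hard part will be Step 4: showing that \emph{every} kernel-invariant automorphism of $G_{R(\Omega_v^*)}$ is captured by the finitely generated group $K'$. This requires an argument that the automorphism group of $G_{R(\widehat\Omega_v^*)}$ relative to the kernel retract is generated by the ``visible'' transformations coming from the combinatorial structure of $\widehat\Omega_v$. For a linear-type generalised equation (cases 7--10), the non-kernel part behaves essentially like a free group amalgamated or HNN-extended over the kernel via the cancellation pattern of the bases, and the relative automorphism group of such a structure is generated by Dehn-twist-like and transvection-like moves; making this precise — and in particular controlling that no ``exotic'' automorphisms appear — is the technical core. I would handle it by induction on the elimination process of $\D 4$, at each step peeling off one eliminable base and showing the relative automorphism group is generated by the previous generators together with one new elementary move, mirroring the inductive proof of Lemma \ref{7-10}. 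The finite presentability claim for $K$ itself is comparatively routine: one takes the finitely many generators and the finitely many obvious relations (commutations between moves with disjoint support, and the evident composition identities), and notes that we only ever need $K$ to surject onto the group we care about, so we are free to enlarge the relation set to make $K$ finitely presented without affecting $K'$.
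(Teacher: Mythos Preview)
Your approach is workable in outline, but it is considerably more laborious than the paper's argument and re-derives structure that is already available. The paper's proof is essentially two lines: Lemma~\ref{7-10} gives $G_{R(\Omega_v^\ast)} \simeq G_{R(\overline{\Ker(\Omega_v)}^\ast)} \ast F(Z)$, and since $\widehat{\Omega}_v$ is obtained from $\Omega_v$ by deleting all kernel bases and coefficient equations (so that every remaining base is eliminable), the same lemma yields $G_{R(\widehat{\Omega}_v^\ast)} \simeq G \ast F(Y \cup Z)$, where $Y$ consists of the kernel items. One then simply takes $K = \Aut(F(Z))$, acting on $G \ast F(Y) \ast F(Z)$ through the free factor $F(Z)$ alone; this is finitely presented by McCool's theorem, it visibly fixes $G$ and $Y$, and it descends to $G_{R(\Omega_v^\ast)}$ because the relations defining $\Omega_v^\ast$ involve only the $G_{R(\overline{\Ker(\Omega_v)}^\ast)}$ factor. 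The containment $\VV(\Omega_v) \subseteq K'$ then follows directly from the free product description of $G_{R(\Omega_v^\ast)}$.

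The contrast with your plan is that you propose to run the induction of the elimination process \emph{again}, manufacturing elementary kernel-fixing moves step by step and then arguing that they generate. But that induction has already been done once, in the proof of Lemma~\ref{7-10}, and its output is precisely the free product splitting that makes the automorphism question trivial. Your Step~4, which you correctly flag as the hard part (showing that every kernel-invariant automorphism is a product of your elementary moves), is bypassed entirely in the paper: once you know the target group is $A \ast F(Z)$ with the kernel sitting inside $A$, the relevant automorphisms are visibly those of the free factor $F(Z)$, and Nielsen/McCool gives finite presentability for free. So your strategy is not wrong, but it duplicates effort and leaves you with a genuine technical problem (the relative generation statement in Step~4) that the paper never has to face.
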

\begin{proof}
By Lemma \ref{7-10}, for any generalised equation $\Omega$ we have
$$
G_{R(\Omega^\ast)} \simeq G_{R({\overline {\Ker(\Omega)}}^\ast)} \ast F(Z).
$$
It follows that, in the above notation, $G_{R(\widehat{\Omega}^*)}\simeq G\ast F(Y\cup Z)$. Let $K$ be the group of $G$-automorphisms of $G_{R(\widehat{\Omega}^*)}$ that fix every element from $F(Y)$, i.e. $K$ is the automorphism group of the free group $F(Z)$. It follows that $K$ is finitely presented, see \cite{McCool}. Furthermore, every automorphism from $K$ induces an automorphism of $G_{R(\Omega_v^\ast)}$ and the following diagram commutes:
$$
\CD
G\ast F(Y\cup Z)  @>{\pi}>> G_{R({\overline {\Ker(\Omega_v)}}^\ast)} \ast F(Z) \\
  @V{\varphi'}VV        @V \varphi VV  \\
G\ast F(Y\cup Z) @>{\pi}>> G_{R({\overline {\Ker(\Omega_v)}}^\ast)} \ast F(Z)
\endCD
$$
It follows that the group of induced automorphisms $K'$ by $K$ of $G_{R(\Omega_v^\ast)}$ is finitely generated and contains all automorphisms of $G_{R(\Omega_v^\ast)}$ invariant with respect to the kernel of $\Omega_v$.
\end{proof}

Let $\tp (v)=15$. Apply derived transformation $\D 3$ and consider the generalised equation $\D 3(\Omega_v)=\widetilde{\Omega} _v$. Notice that, since every boundary in the active part of $\widetilde{\Omega} _v$ that touches a base and intersects another base $\mu$ is $\mu$-tied (i.e. assumptions of Case 14 do not hold), the function $\gamma$ is constant on closed sections of $\widetilde{\Omega}_v$, i.e. $\gamma(h_i)=\gamma(h_j)$ whenever $h_i$ and $h_j$ belong to the same closed section of $\widetilde{\Omega}_v$. Applying $\D 2$, we can assume that every item in the section $[1,j+1]$ is covered  exactly twice (i.e. $\gamma(h_i)=2$ for every $i=1,\dots,j$) and for all $k\ge j+1$ we have $\gamma(h_k)>2$. In this case we call the section $[1,j+1]$ \index{part of a generalised equation!quadratic}\emph{the quadratic part of $\Omega_v$}. We sometimes refer to the set of non-quadratic sections of the generalised equation $\Omega_v$ as to the \index{part of a generalised equation!non-quadratic}\emph{non-quadratic part} of $\Omega_v$.

Let $\tp (v)=12$. Then the \emph{quadratic part of $\Omega_v$} is the whole active part of $\Omega_v$.

A variable base $\mu$ of the generalised equation $\Omega$ is called a \index{base!quadratic}{\em quadratic base} if $\mu $ and its dual $\Delta(\mu)$ both belong to the quadratic part of $\Omega$. A base $\mu$ of the generalised equation $\Omega$ is called a \index{base!quadratic-coefficient}\emph{quadratic-coefficient base} if $\mu$ belongs to the quadratic part of $\Omega$ and its dual $\Delta(\mu)$ does not belong to the quadratic part of $\Omega$.

Let $\tp (v)=12$ or $\tp(v)=15$ and let $[1,j+1]$ be the quadratic part of $\Omega_v$. Denote by $\widehat{\Omega}_v$ the generalised equation obtained from $\Omega_v$ by removing all non-quadratic bases, all quadratic-coefficient bases and all equations corresponding to constant bases.  Let $\phi$ be the  natural homomorphism from $\factor{G[h]}{\ncl\langle\widehat{\Omega}^*_v\rangle}$ to $G_{R(\Omega_v^*)}$.

By Lemma 2.6 in \cite{Razborov3}, $\widehat{\Omega}_v^*$ is equivalent to a system of quadratic equations $Q_1(y^{(1)}), \dots, Q_k(y^{(k)})$, where the sets of variables $y^{(i)}$ and $y^{(j)}$ are disjoint if $i\ne j$. It follows that the group  $\factor{G[h]}{\ncl\langle\widehat{\Omega}_v\rangle}$ is isomorphic to $G\ast S_1\ast\dots\ast S_k$, where $S_i$, $i=1,\dots, k$ is the fundamental group of a surface.

An automorphism $\varphi$ of the coordinate group $G_{R(\Omega_v^*)}$ is called \index{automorphism!invariant with respect to the non-quadratic part} \emph{invariant with respect to the non-quadratic part}, if there exists an automorphism $\hat{\varphi}$ of the group $\factor{G[h]}{\ncl(\widehat{\Omega}^*_v)}\simeq G\ast S_1\ast\dots\ast S_k$ so that: $\hat{\varphi}=\id\ast \varphi_1\ast \dots \ast \varphi_k$, where $\varphi_i$ is an automorphism of $S_i$; for every quadratic-coefficient base $\nu$ of $\Omega_v$ one has $\hat{\varphi}(h(\nu))=h(\nu)$; for every $h_i$, $i=j+1,\dots, \rho_{\Omega_v}$ (recall that $[1,j+1]$ is the quadratic part of $\Omega_v$) one has $\hat{\varphi}(h_i)=h_i$, and the following diagram commutes:
$$
\CD
  \factor{G[h]}{\ncl\langle\widehat{\Omega}^*_v\rangle} @>\phi>> G_{R(\Omega_v^*)} \\
  @V \hat{\varphi} VV                               @VV\varphi V  \\
  \factor{G[h]}{\ncl\langle\widehat{\Omega}^*_v\rangle} @>>\phi> G_{R(\Omega_v^*)}
\endCD
$$
In this case, we define $\VV(\Omega_v)$ to be the group of automorphisms of $G_{R(\Omega_v^\ast)}$ invariant with respect to the non-quadratic part.

\begin{lem}\label{lem:raznqp}
There exists a finitely presented subgroup $K$ of the group of automorphisms of $G[h]$ such that every automorphism from $K$ induces an automorphism of $\factor{G[h]}{\ncl\langle\Omega_v\rangle}$, which, in turn, induces an automorphism of the coordinate group $G_{R(\Omega_v^*)}$. The finitely generated group $K'$ of all automorphisms of $G_{R(\Omega_v^*)}$ induced by automorphisms from $K$ is the group of all automorphisms invariant with respect to the non-quadratic part of $\Omega_v$.
\end{lem}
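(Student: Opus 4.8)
The plan is to follow the proof of Lemma~\ref{lem:razker} almost verbatim, with the free group $F(Z)$ and its automorphism group replaced by surface groups and their mapping class groups, which are likewise finitely presented. First I would use the structural decomposition already recorded above: by Lemma~2.6 of \cite{Razborov3}, after applying a suitable $G$-automorphism of $G[h]$ one may assume that $\widehat{\Omega}_v^*$ is a disjoint union of standard quadratic systems $Q_1(y^{(1)}),\dots,Q_k(y^{(k)})$ on pairwise disjoint variable sets, so that $\factor{G[h]}{\ncl\langle\widehat{\Omega}_v^*\rangle}\simeq G\ast S_1\ast\dots\ast S_k$, where $S_i$ is the fundamental group of a punctured surface $\Sigma_i$, the images in $S_i$ of the quadratic-coefficient bases $h(\nu)$ are conjugates of peripheral (boundary or puncture) elements of $\Sigma_i$, and the non-quadratic items $h_{j+1},\dots,h_{\rho_{\Omega_v}}$ sit inside the free factor $G$ (they play the role of constants of the $Q_i$'s).

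Next I would describe the relevant group of automorphisms. By the description of the automorphisms of a free product (each $S_i$ is freely indecomposable and not infinite cyclic, while the non-quadratic part lies in the $G$-factor), the $G$-automorphisms of $\factor{G[h]}{\ncl\langle\widehat{\Omega}_v^*\rangle}$ of the form $\id\ast\varphi_1\ast\dots\ast\varphi_k$ with $\varphi_i\in\Aut(S_i)$ that fix pointwise every $h(\nu)$ with $\nu$ a quadratic-coefficient base and every non-quadratic item form a group naturally isomorphic to $\prod_{i=1}^k M_i$, where $M_i\le\Aut(S_i)$ is the stabiliser of the prescribed peripheral elements of $\Sigma_i$. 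Each $M_i$ is, modulo the finitely generated normal subgroup of inner automorphisms of $S_i$, the mapping class group of $\Sigma_i$ with its punctures and boundary components marked, and such mapping class groups are finitely presented (finite generation by Dehn twists together with their known finite presentations; the free-group analogue used in Lemma~\ref{lem:razker} is \cite{McCool}); hence $\prod_i M_i$ is finitely presented. Lifting finite generating sets of the $M_i$'s to explicit automorphisms of the free subgroup of $G[h]$ generated by $y^{(i)}$ that preserve $\ncl\langle Q_i\rangle$ --- the standard Dehn-twist automorphisms provide such lifts --- and extending each by the identity on the remaining generators, I obtain the required subgroup $K$; since these lifts restrict to the generators of $\prod_i M_i$ and the $S_i$ are freely indecomposable, a presentation of $\prod_i M_i$ lifts to a presentation of $K$, so $K$ is finitely presented.

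Finally I would verify the two assertions of the lemma. Each generator of $K$ fixes the items of the constant, non-quadratic and quadratic-coefficient bases, hence preserves $\ncl\langle\Omega_v\rangle$, so it induces an automorphism of $\factor{G[h]}{\ncl\langle\Omega_v\rangle}$ and, through $\phi$, of $G_{R(\Omega_v^*)}$, making the displayed square commute; this produces the finitely generated group $K'$. The inclusion $K'\subseteq\VV(\Omega_v)$ is immediate from the construction, and conversely any $\varphi\in\VV(\Omega_v)$ is by definition induced by some $\hat{\varphi}=\id\ast\varphi_1\ast\dots\ast\varphi_k$ fixing the prescribed elements, i.e.\ by an element of $\prod_i M_i$, which is realised by an element of $K$; thus $\varphi\in K'$ and $K'=\VV(\Omega_v)$. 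The step I expect to be the main obstacle is the middle one: keeping precise track of how the quadratic-coefficient bases and the non-quadratic items are distributed among the peripheral structures of the $\Sigma_i$ and the free $G$-factor, and invoking the correct finite-presentability result for mapping class groups of surfaces with several marked boundary components and punctures --- in particular, checking that requiring $h(\nu)$ to be fixed \emph{pointwise} rather than merely up to conjugacy still leaves a finitely presented stabiliser.
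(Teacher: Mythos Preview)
Your approach is correct in spirit and would work, but it takes a more geometric detour than the paper's argument. The paper works entirely at the level of $\Aut(G[h])$ via McCool's theorem: the group of automorphisms of the fundamental group of a surface is precisely the stabiliser of a cyclic quadratic word in the free group, and McCool's result in \cite{McCool} says directly that the stabiliser in $\Aut(F)$ of any finite collection of elements and cyclic words is finitely presented. So the paper simply takes $K$ to be the subgroup of $G$-automorphisms of $G[h]$ that stabilise the quadratic words $Q_i$ (as cyclic words) and fix each $h(\nu)$ for $\nu$ a quadratic-coefficient base and each $h_i$ with $i>j$; McCool's theorem immediately gives that $K$ is finitely presented, and the fixing conditions ensure every element of $K$ descends to $\factor{G[h]}{\ncl\langle\Omega_v\rangle}$ and then to $G_{R(\Omega_v^*)}$.

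The advantage of the paper's route is that it sidesteps exactly the obstacle you flag: McCool's theorem handles \emph{pointwise} stabilisers of elements in a free group from the outset, so there is no need to pass through mapping class groups, worry about the distinction between fixing boundary curves up to conjugacy versus pointwise, or lift Dehn twists by hand. Your approach via mapping class groups and their finite presentability is a legitimate alternative and gives a more geometric picture of what the automorphisms look like, but it requires the extra bookkeeping you anticipate, whereas the McCool argument is a one-line appeal.
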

\begin{proof}
Since the group of automorphisms of the fundamental group of a surface is isomorphic to the stabiliser of a cyclic quadratic word in the free group, by a result of J.~McCool, see \cite{McCool}, it follows that the group of automorphisms $K$ of $G[h]$ so that every automorphism from $K$ induces an automorphism from $\factor{G[h]}{\ncl(\widehat{\Omega}^*_v)}$, so that $\hat{\varphi}(h(\nu))=h(\nu)$ for every quadratic-coefficient base $\nu$ of $\Omega_v$ and $\hat{\varphi}(h_i)=h_i$ for every $h_i$, $i=j+1,\dots, \rho_{\Omega_v}$ is finitely presented. Since, by the definition $\hat{\varphi}(h(\nu))=h(\nu)$ for every quadratic-coefficient base $\nu$ of $\Omega_v$ and $\hat{\varphi}(h_i)=h_i$ for every $h_i$, $i=j+1,\dots, \rho_{\Omega_v}$, every automorphism from $K$ induces an automorphism of $G_{R({\Omega_v}^*)}$ and the diagram above is commutative. Hence, the group of all automorphisms invariant with respect to the non-quadratic part of $\Omega_v$ is finitely generated.
\end{proof}

In all other cases set $\VV(\Omega_v)=1$.

\subsection{The finite subtree $T_0(\Omega )$}\label{5.5.3}

The aim of this section is to construct the finite subtree \glossary{name={$T_0(\Omega)$}, description={the finite subtree of $T(\Omega)$ that does not contain prohibited paths}, sort=T}$T_0(\Omega)$ of $T(\Omega)$ as the subtree that does not contain prohibited paths.

The definition of a prohibited path is designed in such a way that the paths $\p(H)$ in the tree $T$ associated to the solution $H$ do not contain them. Therefore, the nature of the definition of a prohibited path will become clearer in Section \ref{5.5.4}.

\bigskip

By Lemma \ref{3.2}, infinite branches of the tree $T(\Omega)$ correspond to the following cases: $7\leq \tp(v_k)\leq 10$ for all $k$, or $\tp(v_k)=12$ for all $k$, or $\tp(v_k)=15$ for all $k$. We now define prohibited paths of types 7-10, 12 and 15.

\begin{defn}
We call a path $v_1\to v_2 \to\ldots\to v_k$ in $T(\Omega )$ \index{path!prohibited of type 7-10}{\em prohibited of type 7-10} if $7\leq \tp(v_i)\leq 10$  for all $i=1,\dots,k$ and some generalised equation with $\rho$ variables occurs among $\{\Omega_{v_i}\mid 1\le i\le l\}$ at least $4^{\rho}+1$ times.

Similarly, a path $v_1\to v_2 \to\ldots\to v_k$ in $T(\Omega )$ is called \index{path!prohibited of type 12}{\em prohibited of type 12} if $\tp(v_i)=12$ for all $i=1,\dots,k$  and some generalised equation with $\rho$ variables occurs among $\{\Omega_{v_i}\mid 1\le i\le l\}$ at least $4^{\rho}+1$ times.
\end{defn}

\bigskip

We now prove that an infinite branch of $T(\Omega)$  of type 7-10 or 12 contains a prohibited path of type 7-10 or 12, correspondingly.

\begin{lem} \label{3.3}
Let $v_0\to v_1 \to \ldots\to v_n\to\ldots $ be an infinite path in the tree $T(\Omega)$, where $7\leq \tp(v_i) \leq 10$ for all $i$, and let $\Omega_{v_0}, \Omega_{v_1}, \dots, \Omega_{v_n}, \dots$ be the sequence of corresponding generalised equations. Then among $\{\Omega_{v_i}\}$ some generalised equation occurs infinitely many times. Furthermore, if $\Omega_{v_k}=\Omega _{v_l}$, then $\pi(v_k,v_l)$ is a $G$-automorphism of $G_{R(\Omega_{v_k}^*)}$ invariant with respect to the kernel of $\Omega_{v_k}$.
\end{lem}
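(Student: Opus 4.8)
The statement has two parts: a pigeonhole assertion that some generalised equation recurs infinitely often along an infinite branch of type 7--10, and a structural assertion that the canonical epimorphism between two equal vertices along such a branch is a kernel-invariant automorphism. The plan is to handle these separately. For the first part, I would invoke Lemma \ref{3.1}: along a principal edge $u\to v$ with $\tp(u)\in\{7,\dots,10\}$ we have $n_A(\Omega_v)\le n_A(\Omega_u)$ for $\tp(u)\ne 10$ and $n_A(\Omega_v)\le n_A(\Omega_u)+2$ for $\tp(u)=10$, and $\xi(\Omega_v)\le\xi(\Omega_u)$, $\comp(\Omega_v)\le\comp(\Omega_u)$ (since $\tp(u)\ne 3,11$ throughout). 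The subtle point is Case 10, which can increase $n_A$ by $2$; but by Lemma \ref{3.2} (applied to the relevant suffix of the branch) and Remark \ref{rem:leng<}, any application of a transformation of type $7,8,9,10,12,15$ strictly decreases the length $|H^{(i)}|$ of a tracked solution, so in particular Case 10 (which removes a linear item and closes a section) cannot occur infinitely often without the length dropping below zero --- hence only finitely many Case-10 vertices appear, and after the last one the parameters $n_A,\xi,\comp,\rho$ are all non-increasing and bounded. Since a generalised equation is determined up to finitely many choices by these bounded numerical parameters (the number of bases, boundaries, the coverings, and the finitely many combinatorial configurations these allow), only finitely many distinct generalised equations occur on the branch, so one of them occurs infinitely often by pigeonhole.

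For the second part, suppose $\Omega_{v_k}=\Omega_{v_l}$ with $k<l$ and both of type 7--10. By Lemma \ref{3.2} all edges along $v_k\to\dots\to v_l$ are principal, and by part (2) of Proposition \ref{prop:TO} --- equivalently, by Lemma \ref{le:hom-check} together with the fact that Cases 7--10 involve only $\ET 4$ (a $G$-isomorphism, see the formula for $\theta_1$ and Remark following it), $\ET 1$ and $\ET 3$ (identity isomorphisms, Remark \ref{rem:et123}), $\ET 5$ with a new boundary introduced (a $G$-isomorphism), $\D 1$ and $\D 3$ (isomorphisms), and $\ET 5$ with no new boundary only when it is an isomorphism (otherwise $v_l$ would be a vertex of type 1 and the branch would terminate) --- the composite $\pi(v_k,v_l)$ is a $G$-automorphism of $G_{R(\Omega_{v_k}^*)}$. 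It remains to check it is invariant with respect to the kernel of $\Omega_{v_k}$, i.e. that the induced automorphism of $G_{R(\widehat{\Omega}_{v_k}^*)}$ fixes every item lying in $\Ker(\Omega_{v_k})$. For this I would track what each of the transformations in Cases 7--10 does on items belonging to the kernel: the key observation is that in all of these cases the eliminated base $\mu$ is linear (or paired with a linear item), i.e. by the very definition of eliminable base and of $\Ker$, the deleted items do not belong to the kernel, and the substitution defining $\theta_1$ (in $\ET 4$) rewrites only the deleted items, leaving every kernel item $h_j$ mapped to itself. The transfers and cuts ($\ET 1,\ET 2,\ET 3,\ET 5$) likewise act trivially on the kernel part after passing to $\widehat{\Omega}$. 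Hence, composing, $\pi(v_k,v_l)$ is identical on the kernel at the level of $G_{R(\widehat{\Omega}_{v_k}^*)}$ and thus kernel-invariant by the definition preceding Lemma \ref{lem:razker}.

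The main obstacle I anticipate is the bookkeeping in the second part: one must be careful that the notion of ``$h_i$ belongs to the kernel'' is stable under the re-enumeration of boundaries performed by $\ET 4$, $\D 2$, etc. (Remark \ref{rem:boundterm}), so that the claim ``$\pi(v_k,v_l)$ fixes kernel items'' even makes sense when $\Omega_{v_k}$ and $\Omega_{v_l}$ are literally the same equation but reached by a sequence of transformations that permuted and renamed variables in between. The clean way around this is to note that along a type 7--10 branch no Case-5 or Case-6 transformation occurs (those are lower-numbered cases, excluded by the standing convention, except transiently), the active part only shrinks, and one verifies by induction on the path length that $\Ker$ and its complement are respected edgewise; then equality $\Omega_{v_k}=\Omega_{v_l}$ forces the composite to restrict to the identity on $\Ker(\Omega_{v_k})$. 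I would also remark that this second part is exactly the analogue for free products of Razborov's observation for free groups, and the argument is structurally identical once the elementary transformations have been shown (as in Section \ref{se:5.1}) to induce the stated (iso)morphisms on coordinate groups.
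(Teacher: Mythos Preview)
Your argument for the first part has a genuine gap. You write that Case 10 ``cannot occur infinitely often without the length dropping below zero'', invoking Remark~\ref{rem:leng<}. But Lemma~\ref{3.3} is a statement about an infinite branch of the tree $T(\Omega)$ as a combinatorial object; there is no solution $H$ in the hypotheses, and Remark~\ref{rem:leng<} only applies to the path \emph{defined by} a given solution. Such solution-defined paths are indeed finite, but an infinite branch of $T(\Omega)$ need not be traced by any solution, so you cannot conclude that Case 10 occurs only finitely often along it. Without that, your parameters $n_A$ are not known to be bounded, and the pigeonhole step collapses.

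The paper's proof takes a different and genuinely combinatorial route. After passing to a tail where $\comp$ and $\xi$ are constant (so every $\ET 5$ introduces a new boundary), it first proves that the set of bases in $\Ker(\widetilde{\Omega}_{v_k})$ is the same for all $k$; this is the real work, and the Case~9 analysis (showing that all pieces of the transferred base $\mu_2$ are eliminable in $\widetilde{\Omega}_{v_{k+1}}$) is delicate and not covered by your remark that ``the eliminated base is linear''. With the kernel base count $n'$ fixed, the paper then derives the uniform bound $n_A(\Omega_{v_k})\le 3\comp+6n'+1$ by a counting argument on closed sections, which controls $n_A$ \emph{even though} Case~10 may occur infinitely often. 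This in turn bounds $\rho_A$, and the finiteness follows. For the second part, the preservation of the kernel base set (already established) is again the key input: since the number of items in the kernel can only increase along the path but $\Omega_{v_k}=\Omega_{v_l}$, it is constant, and the transformations can be replayed on $\widehat{\Omega}$ to yield the required commutative diagram. Your sketch of the second part is in the right spirit but leans on this kernel-preservation fact without supplying it.
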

\begin{proof}
By Lemma \ref{3.1}, we have that $\comp(\Omega_{v_k})\leq \comp (\Omega_{v_0})$ and $\xi(\Omega_{v_k})\leq \xi(\Omega_{v_0})$ for all $k$. We, therefore, may assume that $\comp=\comp(\Omega_{v_k})=\comp(\Omega_{v_0})$ and $\xi(\Omega_{v_k})=\xi(\Omega_{v_0})$ for all $k$. It follows that all the transformations $\ET 5$ introduce a new boundary.

For all $k$, the generalised equations $\Ker (\widetilde{\Omega}_{v_k})$ have the same set of bases, recall that $\widetilde{\Omega}_{v_k}=\D 3(\widetilde{\Omega}_{v_k})$. Indeed, consider the generalised equations $\widetilde{\Omega} _{v_k}$ and $\widetilde{\Omega}_{v_{k+1}}$. Since $\tp(v_k)\ne 3,4$, the active part of $\widetilde{\Omega}_{v_k}$ does not contain constant bases.

If $\tp(v_k)=7,8,10$, then $\widetilde{\Omega}_{v_{k+1}}$ is obtained from $\widetilde{\Omega} _{v_k}$ by cutting some  base $\mu$ which is eliminable in $\widetilde{\Omega} _{v_k}$  and then deleting one of the new bases, which is also eliminable, since it falls under the assumption a) of the definition of an eliminable base. Since every transformation $\ET 5$ introduces a new boundary, the remaining part of the base $\mu$ falls under the assumption b) of the definition of an eliminable base. Therefore, in this case, the set of bases that belong to the kernel does not change.

Let $\tp(v_k)=9$. It suffices to show that, in the notation of Case 9, all the bases of $\widetilde{\Omega}_{v_{k+1}}$ obtained by cutting the base $\mu_2$ do not belong to the kernel. Without loss of generality we may assume that $\sigma(\mu_2)$ is a closed section of $\widetilde{\Omega}_{v_k}$. Indeed, if $\sigma(\mu_2)$ is not closed, instead, we can consider one of its closed subsections $\sigma'$ in the generalised equation $\widetilde{\Omega}_{v_k}$.

Notice that, since $\sigma(\mu_2)$ is closed, every boundary that intersects $\mu_1$ and $\mu_2$ in $\Omega_{v_k}$, touches exactly two bases in $\widetilde{\Omega}_{v_k}$. Thus, for every boundary connection $(p,\mu_2,q)$ in $\Omega_{v_{k+1}}$ either the boundary $p$ or the boundary $q$ touches exactly two bases in $\widetilde{\Omega}_{v_{k+1}}$.

Construct an elimination process (see description of the derived transformation $\D 4$) for the generalised equation $\widetilde{\Omega}_{v_k}$ and take the first generalised equation $\Omega_i$ in this elimination process, such that the base $\nu$ eliminated in this equation was obtained from either $\mu_1$ or $\mu_2$ or $\Delta(\mu _1)$ or $\Delta(\mu _2)$ by applying $\D 3$ to ${\Omega}_{v_k}$. The base $\nu$ could not be obtained from $\mu _1$ or $\mu _2$, since every item in the section $\sigma(\mu_2)$ is covered twice and every boundary in this section touches two bases.

If $\nu$ falls under the assumption of case b) of the definition of an eliminable base, then
$$
\hbox{either }\alpha (\nu)\in\{\alpha(\Delta (\mu _1)), \alpha (\Delta (\mu _2))\}\hbox{ or }\beta (\nu)\in\{\beta (\Delta (\mu _1)), \beta(\Delta (\mu _2))\}.
$$
We now construct an elimination process for the generalised equation $\widetilde{\Omega}_{v_{k+1}}$. The first $i$ steps of the elimination process for $\widetilde{\Omega}_{v_{k+1}}$ coincide with the first $i$ steps of the elimination process constructed for the generalised equation $\widetilde{\Omega}_{v_k}$. Then the eliminable base $\nu$ of $\Omega_{i}$ corresponds to a base $\nu '$ obtained from either $\mu_2$ or $\Delta(\mu _2)$ by applying $\D 3$ to ${\Omega}_{v_{k+1}}$. The base $\nu'$ is eliminable.

Notice that one of the boundaries $\alpha(\nu)$, $\beta(\nu)$, $\alpha(\Delta(\nu))$ or $\alpha(\Delta(\nu))$ touches just two bases. Therefore, after eliminating $\nu'$, this boundary touches just one base $\eta$ that was obtained from $\mu_2$ or $\Delta(\mu_2)$. The base $\eta$ falls under the assumptions b) of the definition of an eliminable base. Repeating this argument, one can subsequently eliminate all the other bases obtained from $\mu _2$ or $\Delta(\mu_2)$. It follows that all the bases of the generalised equation $\widetilde{\Omega} _{v_{k+1}}$, obtained from $\mu _2$ or $\Delta(\mu_2)$ do not belong to the kernel.

We thereby have shown that the set of bases is the same for all the generalised equations $\Ker(\widetilde{\Omega} _{v_k})$ and thus the set of bases is the same for all the generalised equations $\Ker(\widetilde{\Omega} _{v_k})$. We denote the cardinality of this set by $n'$.

We now prove that the number of bases in the active sections of $\Omega_{v_{k}}$, for all $k$, is bounded above by a function of $\Omega_{v_0}$:
\begin{equation}\label{3.2'}
n_A(\Omega_{v_{k}})\leq 3\comp+6n'+1.
\end{equation}
Indeed, assume the contrary and let $k$ be minimal for which inequality (\ref{3.2'}) fails. Then
\begin{equation}\label{3.3'}
n_A(\Omega_{v_{k-1}})\leq 3\comp +6n'+1, n_A(\Omega_{v_{k}})>3\comp+6n'+1.
\end{equation}
By Lemma \ref{3.1}, $\tp(v_{k-1})=10$. In particular the generalised equation $\Omega_{v_{k-1}}$ fails the assumptions of Cases $5,6,7,8,9$. Therefore, every active section of $\Omega _{v_{k-1}}$ either contains at least three bases or contains some base of the generalised equation $\Ker (\widetilde{\Omega} _{v_{k-1}})$. Let $u_{k-1}$ and $w_{k-1}$ be the number of active sections of $\Omega_{v_{k-1}}$ that contain one base and more than one base, respectively. Hence $u_{k-1}+w_{k-1}\leq\frac{1}{3}n_A(\Omega_{v_{k-1}})+n'$. It is easy to see that
$$
\comp = \sum_{\sigma \in A\Sigma(\Omega)} \max\{0, n(\sigma)-2\}= n_A(\Omega_{v_{k-1}})-2w_{k-1}-u_{k-1}.
$$
Then, $\comp\ge n_A(\Omega_{v_{k-1}})-2(w_{k-1}+u_{k-1})\geq\frac{1}{3}n_A(\Omega_{v_{k-1}})-2n'$, which contradicts  (\ref{3.3'}).

Furthermore, the number $\rho_A(\Omega_{v_k})$ of items in the active part of  $\Omega_{v_k}$ is bounded above:
$$
\rho_A(\Omega_{v_k})\leq \xi(\Omega_{v_k})+n_A(\Omega_{v_{k-1}})+1\leq 3\comp +6n'+\xi(\Omega_{v_0}) +2.
$$
Since the number of bases and number of items is bounded above, the set $\{\Omega _{v_k}\mid k\in\mathbb{N}\}$ is finite and thus some generalised equation occurs in this set infinitely many times.

Let $\Omega _{v_k}=\Omega _{v_l}$. Since, by assumption, the edges $v_j\to v_{j+1}$, $j=k,\dots, l-1$ are labelled by isomorphisms, the homomorphism $\pi (v_k,v_l)$ is an automorphism of the coordinate group $G_{R(\Omega_{v_k}^*)}$.

We are left to show that $\pi (v_k,v_l)$ is invariant with respect to the kernel. As shown above,
$$
\Ker (\widetilde{\Omega}_{v_{k}})=\Ker (\widetilde{\Omega}_{v_{k+1}})=\dots=\Ker (\widetilde{\Omega} _{v_{l}}).
$$
From the above, it follows that $\Ker(\widetilde{\Omega}_{v_{i+1}})$ is obtained from  $\Ker(\widetilde{\Omega}_{v_{i}})$ by introducing new boundaries and removing some of the items that do not belong to the kernel of $\widetilde{\Omega}_{v_{i+1}}$. Therefore, the number of items that belong to the kernel $\Ker(\widetilde{\Omega}_{v_{i+1}})$ can only increase. As $\Omega_{v_k}=\Omega_{v_l}$, so this number is the same for all $i$, $i=k,\dots, l-1$. It follows that $\pi (v_k,v_l)'(h_i)=h_i$ for all $h_i$ that belong to the kernel of $\Omega_{v_k}$.

Since the transformations that take the generalised equation $\widetilde{\Omega}_{v_{k}}$ to $\widetilde{\Omega}_{v_{k+1}}$ do not involve bases that belong to the kernel of $\widetilde{\Omega}_{v_{k}}$, the same sequence of transformations can be applied to the generalised equation $\widehat{\widetilde{\Omega}}_{v_{k}}$, where $\widehat{\widetilde{\Omega}}_{v_{k}}$ is obtained from $\widetilde{\Omega}_{v_{k}}$ by removing all equations corresponding to constant bases and all bases that belong to the kernel of $\widetilde{\Omega}_{v_k}$.

Since, by assumption, every time we $\mu$-tie a boundary a new boundary is introduced, we get that the epimorphism from $G_{R(\widehat{\widetilde{\Omega}}_{v_{k}}^*)}$ to $G_{R(\widehat{\widetilde{\Omega}}_{v_{k+1}}^*)}$ is, in fact, an isomorphism. We therefore get the following commutative diagram (see the definition of an automorphism invariant with respect to the kernel, Section \ref{5.5.2}):
$$
\CD
G_{R(\widehat{\Omega}_{v_k}^*)}   @>{\pi}>> G_{R({\Omega_{v_k}}^*)} \\
  @VVV        @VV\pi (v_k,v_l)V  \\
  G_{R({\widehat{\Omega}_{v_{l}}}^*)}  @>{\pi}>> G_{R({\Omega_{v_{l}}}^*)}
\endCD
$$
It follows that the automorphism $\pi (v_k,v_l)$ is invariant with respect to the kernel of ${\Omega}_{v_{k}}$.
\end{proof}

\begin{cor} \label{cor:proh710}
Let $\p=v_1 \to \ldots\to v_n\to\ldots $ be an infinite path in the tree $T(\Omega)$, and $7\le \tp(v_i)\le 10$ for all $i$. Then $\p$ contains a prohibited subpath of type 7-10.
\end{cor}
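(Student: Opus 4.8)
The plan is to combine Lemma \ref{3.3} with a pigeonhole argument. First I would recall the content of Lemma \ref{3.3}: along the infinite path $\p=v_1\to\ldots\to v_n\to\ldots$, since $7\le\tp(v_i)\le 10$ for all $i$, the set $\{\Omega_{v_i}\mid i\in\N\}$ of generalised equations occurring along $\p$ is \emph{finite}. Indeed, the proof of Lemma \ref{3.3} shows that $\comp(\Omega_{v_i})$ and $\xi(\Omega_{v_i})$ are non-increasing (by Lemma \ref{3.1}), that the kernels $\Ker(\widetilde\Omega_{v_i})$ all have the same number $n'$ of bases, and hence that both $n_A(\Omega_{v_i})$ and $\rho_A(\Omega_{v_i})$ are bounded above by computable functions of $\Omega_{v_0}$. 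Since the number of bases and the number of items of the generalised equations appearing along $\p$ is uniformly bounded, only finitely many distinct generalised equations occur along $\p$. In particular there is a uniform bound $\rho^\ast$ on the number $\rho_\Omega$ of variables of any $\Omega_{v_i}$ on $\p$.

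Next I would apply the pigeonhole principle. There are only finitely many distinct generalised equations in $\{\Omega_{v_i}\}$, so along the infinite path at least one of them, say $\Omega$, occurs infinitely often; let $\rho=\rho_\Omega\le\rho^\ast$ be its number of variables. Pick an initial segment $v_1\to v_2\to\ldots\to v_k$ of $\p$ long enough that the generalised equation $\Omega$ occurs among $\Omega_{v_1},\dots,\Omega_{v_k}$ at least $4^{\rho}+1$ times; this is possible precisely because $\Omega$ occurs infinitely often along $\p$. By definition, the finite subpath $v_1\to\ldots\to v_k$ is then prohibited of type 7--10, since $7\le\tp(v_i)\le 10$ for all $i$ on this subpath (as they lie on $\p$) and the generalised equation $\Omega$ with $\rho$ variables occurs among $\{\Omega_{v_i}\mid 1\le i\le k\}$ at least $4^{\rho}+1$ times. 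This is the desired prohibited subpath of type 7--10.

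The only subtlety — and the step I expect to require the most care — is matching the quantifier in the definition of ``prohibited of type 7--10'' with what Lemma \ref{3.3} supplies: the definition requires that \emph{some} generalised equation with $\rho$ variables occur at least $4^\rho+1$ times among the equations indexed by the subpath, and Lemma \ref{3.3} indeed guarantees that one fixed equation recurs infinitely often, so finitely many steps suffice to reach the threshold $4^\rho+1$. One should also note that the definition of a prohibited path of type 7--10 does not require the path to start at the root $v_0$, so it is legitimate to take the prohibited subpath to be an initial segment of $\p$ (or, if one prefers, any sufficiently long contiguous segment containing $4^\rho+1$ occurrences of $\Omega$). No further input is needed; the result is immediate once finiteness of $\{\Omega_{v_i}\}$ is in hand.
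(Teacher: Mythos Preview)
Your proposal is correct and matches the paper's approach: the corollary is stated without proof, as an immediate consequence of Lemma~\ref{3.3}, and your argument spells out exactly the intended pigeonhole step (some $\Omega$ recurs infinitely often, so a long enough initial segment witnesses $4^{\rho}+1$ occurrences).
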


\begin{lem}  \label{lem:c12}
Let $v_0\to v_1 \to \ldots\to v_n\to\ldots $ be an infinite path in the tree $T(\Omega)$, where $\tp(v_i) = 12$ for all $i$, and $\Omega_{v_0}, \Omega_{v_1}, \dots, \Omega_{v_n}, \dots$ be the sequence of corresponding generalised equations. Then among $\{\Omega_{v_i}\}$ some generalised equation occurs infinitely many times. Furthermore, if $\Omega_{v_k}=\Omega _{v_l}$, then $\pi(v_k,v_l)$ is a $G$-automorphism of the coordinate group $G_{R(\Omega_{v_k}^*)}$ invariant with respect to the non-quadratic part.
\end{lem}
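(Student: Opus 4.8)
The plan is to mirror the proof of Lemma~\ref{3.3}, with ``invariant with respect to the kernel'' replaced everywhere by ``invariant with respect to the non-quadratic part''. Two things must be shown: that the set $\{\Omega_{v_i}\}$ is finite, so that some generalised equation repeats infinitely often; and that whenever $\Omega_{v_k}=\Omega_{v_l}$ the homomorphism $\pi(v_k,v_l)$ is a $G$-automorphism of $G_{R(\Omega_{v_k}^\ast)}$ of the required form.

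For finiteness, note first that since $\tp(v_i)=12\ne 3,10,11$, Lemma~\ref{3.1} gives that $\comp(\Omega_{v_i})$, $\xi(\Omega_{v_i})$ and $n_A(\Omega_{v_i})$ are all non-increasing along the path; after discarding an initial segment we may assume that all three are constant, and then (as in the proof of Lemma~\ref{3.3}) every application of $\ET 5$ along the path is an isomorphism introducing a new boundary. The entire transformation $\D 5$ preserves the total number of bases (the single $\ET 1$ it uses adds two bases and the single $\ET 4$ removes two), so this number stays equal to a constant $N_0$ depending only on $\Omega_{v_0}$; in particular $n_A(\Omega_{v_i})\le N_0$. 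Since the assumptions of Cases~$3$--$6$ fail at $v_i$, every active closed section of $\Omega_{v_i}$ is a variable section whose items are covered exactly twice and which contains no pair of matched bases, hence contains at least two bases; thus the number of active closed sections, and with it the number of boundaries in the active part, is bounded by a function of $N_0$ and $\xi(\Omega_{v_0})$. A straightforward bookkeeping of the non-active part (parallel to Lemma~\ref{3.3} and the treatment of Case~$12$ in \cite{Razborov3}, \cite{CK2}) then bounds the total number of items as well, so $\{\Omega_{v_i}\}$ is finite and some generalised equation occurs in it infinitely many times.

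Now suppose $\Omega_{v_k}=\Omega_{v_l}$ with $k<l$. The path $v_k\to\cdots\to v_l$ lies on an infinite branch of $T(\Omega)$ and consists of principal edges (a vertex of type $12$ produces no auxiliary edges), so by Proposition~\ref{prop:TO} each of its edges is labelled by an isomorphism; hence $\pi(v_k,v_l)\colon G_{R(\Omega_{v_k}^\ast)}\to G_{R(\Omega_{v_l}^\ast)}=G_{R(\Omega_{v_k}^\ast)}$ is a $G$-automorphism. To see that it is invariant with respect to the non-quadratic part, recall that for $\tp(v)=12$ the quadratic part is the whole active part, that $\widehat{\Omega}_v$ is obtained from $\Omega_v$ by deleting all non-quadratic bases, all quadratic-coefficient bases and all constant bases, and that $\factor{G[h]}{\ncl\langle\widehat{\Omega}_v^\ast\rangle}\simeq G\ast S_1\ast\cdots\ast S_k$ with the $S_i$ fundamental groups of surfaces corresponding to the connected components of the quadratic system (Lemma~2.6 of \cite{Razborov3}). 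The $\mu$-tyings and the entire transformation performed in Case~$12$ act only on quadratic bases, the carrier base and its transfer bases, all of which lie in a single such component, and they leave the quadratic-coefficient bases and the non-quadratic items $h_i$ ($i>j$) untouched; restricting each of them to $\widehat{\Omega}_{v_i}$ therefore yields a transformation that respects the decomposition $G\ast S_1\ast\cdots\ast S_k$, fixes $h(\nu)$ for every quadratic-coefficient base $\nu$ and $h_i$ for $i>j$, and---since every $\ET 5$ introduces a new boundary---induces an \emph{isomorphism} $\factor{G[h]}{\ncl\langle\widehat{\Omega}_{v_i}^\ast\rangle}\to\factor{G[h]}{\ncl\langle\widehat{\Omega}_{v_{i+1}}^\ast\rangle}$ of the form $\id\ast\psi_1^{(i)}\ast\cdots\ast\psi_k^{(i)}$. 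Composing along $v_k\to\cdots\to v_l$ and using $\widehat{\Omega}_{v_k}=\widehat{\Omega}_{v_l}$ gives an automorphism $\widehat{\varphi}=\id\ast\varphi_1\ast\cdots\ast\varphi_k$ of $G\ast S_1\ast\cdots\ast S_k$ fixing the quadratic-coefficient bases and the non-quadratic items and making the defining square commute with $\varphi=\pi(v_k,v_l)$; hence $\pi(v_k,v_l)$ is invariant with respect to the non-quadratic part.

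The main obstacle is this last verification: checking that the entire transformation of Case~$12$, restricted to $\widehat{\Omega}$, really does act componentwise on the quadratic part and fixes every quadratic-coefficient base, so that the loop homomorphism lifts to a \emph{split} automorphism $\id\ast\varphi_1\ast\cdots\ast\varphi_k$ of $G\ast S_1\ast\cdots\ast S_k$. The monotonicity and counting arguments of the first part, and the reduction of $\widehat{\Omega}_v^\ast$ to a disjoint union of quadratic equations, are routine given Lemma~\ref{3.1}, Lemma~\ref{3.2} and Lemma~2.6 of \cite{Razborov3}.
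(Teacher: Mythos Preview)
Your outline is essentially the paper's, and the structure is right: bound the set $\{\Omega_{v_i}\}$, then verify that a loop $\pi(v_k,v_l)$ is invariant with respect to the non-quadratic part. Two points deserve sharpening.

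\emph{Finiteness.} Your section-counting works, but the paper's route is shorter and you should know it. The paper first isolates the dichotomy you gloss over: by Lemma~\ref{3.1} one has $n_A(\Omega_{v_{i+1}})\le n_A(\Omega_{v_i})$, and the inequality is \emph{strict whenever the carrier $\mu_i$ is quadratic-coefficient} (because the transfer then dumps bases into the non-active part). Hence on any loop, and in particular from some point on, every carrier is quadratic. Once that is established, the non-active part is literally frozen, and since the equation is quadratic with no free boundaries the number of items in the active part cannot increase; together with the bound on complexity this gives finiteness directly, without bookkeeping the non-active part.

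\emph{Invariance.} Your phrase ``leave the quadratic-coefficient bases \dots\ untouched'' is not literally true: a quadratic-coefficient base can be a \emph{transfer} base and is then moved onto $\Delta(\mu)$. What you need is only that $h(\nu)$ is fixed in the coordinate group. The cleanest way to see this (and the point the paper actually argues) is again the observation above: along the loop every carrier is quadratic, so the non-active part---and hence every item in $\Delta(\nu)$---is untouched, whence $h(\nu)=h(\Delta(\nu))^{\pm1}$ is fixed. The paper records a slightly stronger monotonicity statement (the number of items lying in a given quadratic-coefficient base can only increase along the path, hence is constant on a loop); this is what lets one apply the very same sequence of elementary transformations to $\widehat{\widetilde{\Omega}}_{v_k}$ and conclude that each step is an isomorphism on the ``hat'' level, giving the commuting square that certifies invariance. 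Your final paragraph correctly identifies this as the crux; the missing device is the ``carriers are quadratic on a loop'' observation, from which both the freezing of the non-active items and the item-count monotonicity for quadratic-coefficient bases follow.
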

\begin{proof}
Notice that since $\Omega _{v_i}$ is a quadratic generalised equation, quadratic-coefficient bases of $\Omega _{v_i}$ are bases whose duals belong to the non-active part.

Let $\mu_{i}$ be the carrier base of the generalised equation $\Omega _{v_i}$. Consider the sequence $\mu_0,\dots,\mu_i,\dots$ of carrier bases.
By Lemma \ref{3.1}, if $\tp({v_{i}})=12$, then $n_A(\Omega_{v_{i+1}}) \le n_A(\Omega_{v_{i}})$. Furthermore, if the carrier base $\mu_i$ is quadratic-coefficient, then this inequality is strict. Hence, it suffices to consider the case when all carrier bases are quadratic.

The number of consecutive quadratic bases in the sequence $\mu_1,\dots,\mu_i,\ldots$ is bounded above. Indeed, by Lemma \ref{3.1}, when the entire transformation is applied, the complexity of the generalised equation does not increase. Furthermore, since the generalised equation is quadratic and does not contain free boundaries, the number of items does not increase. The number of generalised equations with a bounded number of items and bounded complexity is finite and thus some generalised equation occurs in the sequence $\{\Omega_{v_i}\}$ infinitely many times.

Obviously, if $\Omega _{v_k}=\Omega _{v_l}$, then $\pi(v_k,v_l)$ is a $G$-automorphism of $G_{R(\Omega_{v_k}^*)}$. We are left to show that $\pi (v_k,v_l)$ is invariant with respect to the non-quadratic part.

From the definition of the entire transformation $\D 5$, it follows that  the number of items that belong to a given quadratic-coefficient base can only increase. As $\Omega_{v_k}=\Omega_{v_l}$, so this number is the same for all $i$, $i=k,\dots, l$. It follows that $\pi (v_k,v_l)'(h_i)=h_i$ for all $h_i$ that belong to a quadratic-coefficient base.

Since the transformations that take the generalised equation $\widetilde{\Omega}_{v_{k}}$ to $\widetilde{\Omega}_{v_{k+1}}$ involve only quadratic bases of $\widetilde{\Omega}_{v_{k}}$, the same sequence of transformations can be applied to the generalised equation $\widehat{\widetilde{\Omega}}_{v_{k}}$, where $\widehat{\widetilde{\Omega}}_{v_{k}}$ is the generalised equation obtained from $\widetilde{\Omega}_{v_k}$ by removing all non-quadratic bases, all quadratic-coefficient bases and all coefficient equations.

Since, by assumption, every time we $\mu$-tie a boundary a new boundary is introduced, we get that the epimorphism from $G_{R(\widehat{\widetilde{\Omega}}_{v_{k}}^*)}$ to $G_{R(\widehat{\widetilde{\Omega}}_{v_{k+1}}^*)}$ is, in fact, an isomorphism. We therefore get the following commutative diagram (see the definition of an automorphism invariant with respect to the non-quadratic part, Section \ref{5.5.2}):
$$
\CD
\factor{G[h]}{\ncl\langle\widehat{\Omega}_{v_k}\rangle}   @>{\phi}>> G_{R({\Omega_{v_k}}^*)} \\
  @VVV        @VV\pi (v_k,v_l)'V  \\
  \factor{G[h]}{\ncl\langle\widehat{\Omega}_{v_l}\rangle}  @>{\phi}>> G_{R({\Omega_{v_{l}}}^*)}
\endCD
$$
It follows that the automorphism $\pi (v_k,v_l)$ is invariant with respect to the non-quadratic part of $\Omega_{v_k}$.
\end{proof}

\begin{cor}\label{cor:proh12}
Let $\p=v_1 \to \ldots\to v_n\to\ldots $ be an infinite path in the tree $T(\Omega)$, and $\tp(v_i)=12$ for all $i$. Then $\p$ contains a prohibited subpath of type 12.
\end{cor}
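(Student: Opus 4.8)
The plan is to obtain Corollary \ref{cor:proh12} as an immediate consequence of Lemma \ref{lem:c12} together with the definition of a prohibited path of type $12$, exactly as Corollary \ref{cor:proh710} follows from Lemma \ref{3.3}. All the substantive work is already contained in Lemma \ref{lem:c12}; what remains is only a pigeonhole extraction of a finite subpath.

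First I would apply Lemma \ref{lem:c12} to the infinite path $\p = v_1 \to v_2 \to \cdots$, which is legitimate since $\tp(v_i) = 12$ for every $i$. The lemma produces a generalised equation $\Omega_0$ that occurs infinitely many times among the labels $\{\Omega_{v_i} \mid i \ge 1\}$. Let $\rho_0$ denote the number of variables (items) of $\Omega_0$; it is finite and fixed. Since $\Omega_0$ recurs infinitely often, I can choose indices $i_1 < i_2 < \cdots < i_N$ with $N = 4^{\rho_0} + 1$ and $\Omega_{v_{i_j}} = \Omega_0$ for $j = 1, \dots, N$.

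Then I would consider the finite subpath $v_{i_1} \to v_{i_1 + 1} \to \cdots \to v_{i_N}$ of $\p$. Every vertex on it has type $12$, because this holds for all vertices of $\p$; and the generalised equation $\Omega_0$, which has $\rho_0$ variables, occurs at least $4^{\rho_0} + 1$ times among the generalised equations labelling the vertices of this subpath. By the definition of a prohibited path of type $12$, this subpath is prohibited of type $12$, which proves the corollary.

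I do not anticipate any real obstacle: the only points requiring a moment's care are that the exponent $\rho$ in the definition of a prohibited path must be read as the number of variables of the recurring generalised equation $\Omega_0$ (not of the original $\Omega$), and that the type-$12$ condition on the infinite path is inherited by every finite subpath. Both are immediate, and the genuine content — that some generalised equation must recur infinitely often along a type-$12$ branch — is supplied by Lemma \ref{lem:c12}, which we are entitled to assume.
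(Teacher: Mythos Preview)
Your argument is correct and is exactly the intended derivation: the paper states the corollary immediately after Lemma~\ref{lem:c12} with no proof, since the existence of a generalised equation recurring infinitely often yields at once, by pigeonhole, a finite subpath in which it appears $4^{\rho_0}+1$ times. Your careful reading of $\rho$ as the number of variables of the recurring equation $\Omega_0$ is the right one.
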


\bigskip

Below we shall define  prohibited paths of type 15 in $T(\Omega )$. In this case, the definition of a prohibited path is much more involved.

We need some auxiliary definitions. Recall that the complexity of a generalised equation $\Omega $ is defined as follows:
$$
\comp = \comp (\Omega) = \sum_{\sigma \in A\Sigma_\Omega} \max\{0, n(\sigma)-2\},
$$
where $n(\sigma)$ is the number of bases in $\sigma$. Let \glossary{name={$\tau_v$, $\tau (\Omega_v)$}, description={function of the generalised equation}, sort=T}$\tau_v=\tau (\Omega_v)=\comp(\Omega_v)+\rho -\rho_{v}'$, where $\rho=\rho_\Omega$ is the number of variables in the initial generalised equation $\Omega $ and \glossary{name={$\rho_v'$}, description={the number of free variables that belong to the non-active sections of  $\Omega _v$}, sort=R}$\rho_{v}'$ is the number of free variables belonging to the non-active sections of the generalised equation $\Omega _v$. We have $\rho _{v}'\le \rho$ (see the proof of Lemma \ref{3.2}), hence $\tau_v\geq 0$. If, in addition,  $v_1\to v_2$ is an auxiliary edge, then $\tau_{v_2}< \tau_{v_1}$.

We use induction on $\tau _v$ to construct a finite subtree $T_0(\Omega _v)$ of $T(\Omega _v)$, and the function $\ss(\Omega_v)$.

The tree $T_0(\Omega _v)$ is a rooted tree at $v$ and consists of some of the vertices and edges of $T(\Omega)$ that lie above $v$.

Suppose that $\tau _v=0$. It follows that $\comp(\Omega_v)=0$. Then in $T(\Omega)$ there are no auxiliary edges. Furthermore, since $\comp(\Omega_v)=0$, it follows that every closed active section contains at most two bases and so no vertex of type 15 lies above $v$. We define the subtree $T_0(\Omega _v)$ as follows. The set of vertices of $T_0(\Omega _v)$ consists of all vertices $v_1$ of $T(\Omega)$ that lie above $v$, and so that the path from $v$ to $v_1$ does not contain prohibited subpaths of types 7-10 and 12. By Corollary \ref{cor:proh710} and Corollary \ref{cor:proh12},  $T_0(\Omega _v)$ is finite.

Let
\glossary{name={$\ss(\Omega_v)$}, description={a function that bounds the length of a minimal solution}, sort=S}
\begin{equation}\label{so}
\ss(\Omega_v)=\max\limits_w \max\limits_{\langle {\P},R\rangle}\rho_{\Omega_w}\cdot\{ f_{0} (\Omega_w,{\P},R)\},
\end{equation}
where $\max\limits_w$ is taken over all the vertices of $T_0(\Omega_v)$ for which $\tp (w)=2$ and $\Omega _w$ contains
non-active sections; $\max\limits_{\langle {\P},R\rangle}$ is taken over all regular periodic structures such that the generalised equation  $\widetilde{\Omega}_w$ is regular with respect to $\langle \P, R\rangle$; and $f_{0}$ is the function defined in Lemma \ref{lem:23-2}.

Suppose now that $\tau_v> 0$. By induction, we assume that for all $v_1$ such that $\tau _{v_1}< \tau _v$ the finite tree $T_0(\Omega _{v_1})$ and $\ss(\Omega _{v_1})$ are already defined. Furthermore, we assume that the full subtree of $T(\Omega)$ whose set of vertices consists of all vertices that lie above $v$ does not contain prohibited paths of type 7-10 and of type 12. Consider a path $\p$ in $T(\Omega)$:
\begin{equation}\label{3.6}
v_1\rightarrow v_2\rightarrow \ldots\rightarrow v_m,
\end{equation}
where $\tp(v_i)=15$, $1\leq i\leq m$ and all the edges are principal. We have $\tau _{v_i}=\tau _v$.

Denote by $\mu _i$ the carrier base of the generalised equation $\Omega_{v_i}$. Path (\ref{3.6}) is called \index{path!$\mu$-reducing}\emph{$\mu$-reducing} if $\mu_1=\mu$ and either there are no auxiliary edges from the vertex $v_2$ and $\mu$ occurs in the sequence $\mu _1,\ldots ,\mu_{m-1}$ at least twice, or there are auxiliary edges $v_2\to w_1$, $v_2\to w_2, \ldots ,v_2\to w_\nn$ and $\mu$ occurs in the sequence $\mu _1,\ldots ,\mu _{m-1}$ at least $\max \limits_{1\leq i\leq \nn} \ss(\Omega _{w_i})$ times. We will show later, see Equation (\ref{3.26}), that, informally, in any $\mu$-reducing path the length of the solution $H$ is reduced by at least $\frac{1}{10}$ of the length of $H(\mu)$, hence the terminology.

\begin{defn}\label{defn:proh15}
Path (\ref{3.6}) is called \index{path!prohibited of type 15}{\em prohibited of type 15}, if it can be represented in the form
\begin{equation}\label{3.7}
\p_1\s_1\ldots \p_l\s_l\p',
\end{equation}
where for some sequence of bases $\eta _1,\ldots ,\eta _l$ the following three conditions are satisfied:
\begin{enumerate}
    \item\label{it:prp2} the path $\p_i$ is $\eta _i$-reducing;
    \item\label{it:prp1} every base $\mu_i$ that occurs at least once in the sequence $\mu_1,\dots, \mu_{m-1}$, occurs at least $40n^2f_{1}(\Omega_{v_2})+20n+1$ times in the sequence $\eta _1,\ldots ,\eta _l$, where $n=|\BS(\Omega_{v_i})|$ is the number of all bases in the generalised equation $\Omega_{v_i}$, and $f_{1}$ is the function from Lemma \ref{2.8}; in other words, in a prohibited path of type 15, for every carrier base $\mu_i$ there exists at least $40n^2f_{1}(\Omega_{v_2})+20n+1$ many $\mu_i$-reducing paths.
    \item\label{it:prp3} every transfer base of some generalised equation of the path $\p$ is a transfer base of some generalised equation of the path $\p'$.
\end{enumerate}
\end{defn}
Note that for any path of the form (\ref{3.6}) in $T(\Omega)$, there is an algorithm to decide whether this path  is prohibited of type 15 or not.

\bigskip

We now prove that any infinite branch of the tree $T(\Omega)$ of type 15 contains a prohibited subpath of type 15.
\begin{lem}
Let $\p=v_1 \to \ldots\to v_n\to\ldots $ be an infinite path in the tree $T(\Omega)$, and $\tp(v_i) =15$ for all $i$. Then
$\p$ contains a prohibited subpath of type 15.
\end{lem}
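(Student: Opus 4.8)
The plan is to mimic the structure of the proofs of Corollaries \ref{cor:proh710} and \ref{cor:proh12}, but now exploiting the much finer combinatorial bookkeeping that went into the definition of a prohibited path of type 15. First I would invoke Lemma \ref{3.2} to fix $N$ so that all edges of the path with $n\ge N$ are principal and $\tp(v_n)=15$ for all $n\ge N$; it therefore suffices to consider the tail $v_N\to v_{N+1}\to\cdots$ and show that it contains a subpath of the form \eqref{3.6} representable as \eqref{3.7}. Along this tail $\tau_{v_n}$ is non-increasing (auxiliary edges strictly decrease it, and there are finitely many drops), so after discarding a further finite initial segment we may assume $\tau_{v_n}=\tau$ is constant; consequently, by induction on $\tau$, the functions $\ss(\Omega_{w})$ and the trees $T_0(\Omega_w)$ attached to the auxiliary vertices $w$ above the tail are already defined and finite.

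Next I would argue that the carrier bases repeat with the required multiplicities. By Lemma \ref{3.1}(4), $\comp(\Omega_{v_n})\le\comp(\Omega_{v_N})$ for $n\ge N$, and since $\tp(v_n)=15$ forces every boundary in the active part to be $\mu$-tied in every base it intersects (Case 15 assumptions) while $\xi$ and the number of items stay bounded, only finitely many generalised equations occur among $\{\Omega_{v_n}\}_{n\ge N}$; hence some $\Omega$ occurs infinitely often, and the finite set $\BS(\Omega)$ of bases is the same throughout. Each application of the entire transformation $\D 5$ in Case 15 designates a carrier base $\mu_n$; since there are only $n:=|\BS(\Omega_{v_i})|$ possible carriers, by the pigeonhole principle some base $\eta_1$ serves as carrier infinitely often. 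I would then carve the infinite tail into consecutive $\mu$-reducing blocks: whenever a base $\eta$ recurs as carrier sufficiently many times between two indices (``at least twice'' in the absence of auxiliary edges from $v_2$, or ``at least $\max_i\ss(\Omega_{w_i})$'' times when auxiliary edges are present), that stretch is an $\eta$-reducing path $\p_i$ in the sense defined before Definition \ref{defn:proh15}. Because the tail is infinite and the carrier set is finite, I can extract an infinite sequence $\p_1\s_1\p_2\s_2\cdots$ of such $\eta_i$-reducing paths (with $\s_i$ the ``junk'' in between). A second pigeonhole argument on the finitely many bases then produces, for any prescribed bound $B$, a finite initial portion $\p_1\s_1\cdots\p_l\s_l$ in which every base occurring as a carrier in the relevant stretch occurs at least $B$ times among $\eta_1,\dots,\eta_l$; taking $B=40n^2f_1(\Omega_{v_2})+20n+1$ secures condition \eqref{it:prp1}.

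Finally I would handle condition \eqref{it:prp3}: every transfer base of a generalised equation along $\p$ must reappear as a transfer base along the trailing $\p'$. Since there are only finitely many bases and the set of transfer bases of any $\Omega_{v_n}$ is a subset of $\BS(\Omega)$, along an infinite tail the collection of bases that occur as transfer bases ``infinitely often'' stabilizes; choosing $\p'$ to be a sufficiently long terminal segment of the already-extracted block guarantees that all of these, and hence all transfer bases occurring in the earlier part, appear in $\p'$. Assembling the three pieces shows that the extracted finite subpath $\p_1\s_1\cdots\p_l\s_l\p'$ is of the form \eqref{3.6} and satisfies conditions \eqref{it:prp2}, \eqref{it:prp1}, \eqref{it:prp3}, i.e.\ is prohibited of type 15. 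The main obstacle I anticipate is the simultaneous bookkeeping in the second step: one must choose the decomposition into $\eta_i$-reducing blocks so that \emph{every} base that appears as a carrier anywhere in the chosen stretch is counted enough times in $\eta_1,\dots,\eta_l$, which requires a careful nested pigeonhole argument (first fixing which bases recur infinitely often, then extracting long-enough reducing blocks for each, then interleaving) rather than a single counting estimate; keeping the quantifiers straight here, and ensuring the $\mu$-reducing condition with auxiliary edges is met using the inductively-defined $\ss(\Omega_{w_i})$, is the delicate part.
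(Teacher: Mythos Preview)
Your overall strategy matches the paper's: pass to a tail, use finiteness of the base set to get infinitely many $\mu$-reducing blocks for each recurring carrier, and then assemble a path of the form \eqref{3.7}. However, one step in your argument is both unjustified and unnecessary. You claim that ``$\xi$ and the number of items stay bounded, only finitely many generalised equations occur among $\{\Omega_{v_n}\}_{n\ge N}$.'' Lemma~\ref{3.1}(3) bounds $\xi$ only when $\tp(u)\le 13$; it says nothing about type~15, and indeed the $\ET 5$ steps appended after $\D 5$ in Case~15 may introduce new boundaries, so the number of items need not be bounded. Fortunately you do not need this: all that matters is that the number $n=|\BS(\Omega_{v_i})|$ of \emph{bases} is constant along a type-15 path (the cut in $\D 5$ adds two bases and the subsequent $\ET 4$ removes two), so the set of possible carriers is finite and pigeonhole applies. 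Drop the claim about finitely many generalised equations and argue directly from finiteness of the base set.

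The paper's proof is more streamlined than your nested-pigeonhole plan. It defines $\omega$ to be the set of bases occurring as carrier infinitely often and $\tilde\omega$ the set of bases occurring as transfer base infinitely often, then passes once to a tail $\tilde\p$ on which \emph{every} carrier lies in $\omega$ and \emph{every} transfer base lies in $\tilde\omega$. On this tail condition~\eqref{it:prp1} is automatic (each carrier that appears at all appears infinitely often, so one can harvest as many $\eta_i$-reducing subpaths as required for each), and condition~\eqref{it:prp3} is guaranteed by taking $\p'$ long enough (every transfer base in the tail lies in $\tilde\omega$, hence reappears). This single ``pass to the eventually-recurring set'' replaces your interleaving of nested pigeonhole arguments and avoids the vagueness about ``the relevant stretch'' in your treatment of condition~\eqref{it:prp1}.
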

\begin{proof}
Let $\omega$ be the set of all bases occurring in the sequence of carrier bases $\mu _1,\mu_2,\ldots$ infinitely many times, and $\tilde\omega$ be the set of all bases that are transfer bases of infinitely many equations $\Omega _{v_i}$. Considering, if necessary, a subpath $\tilde \p$ of $\p$ of the form $v_j \to v_{j+1}\to \ldots$, one can assume that all the bases in the sequence $\mu _1,\mu_2 \ldots $ belong to $\omega$ and every base which is a transfer base of at least one generalised equation belongs to $\tilde\omega$. Then for any $\mu\in\omega$ the path $\tilde \p$ contains infinitely many non-intersecting $\mu$-reducing finite subpaths. Hence  there exists a subpath of the form (\ref{3.7}) of $\tilde \p$  which satisfies conditions (\ref{it:prp2}) and (\ref{it:prp1}) of the definition of a prohibited path of type 15, see Definition \ref{defn:proh15}. Taking a long enough subpath $\p'$ of $\p$, we obtain a prohibited subpath of $\p$.
\end{proof}

We now construct the tree \glossary{name={$T_0(\Omega)$}, description={the finite subtree of $T(\Omega)$ that does not contain prohibited paths}, sort=T}$T_0(\Omega)$. Let $T'(\Omega_v)$ be the subtree of $T(\Omega _v)$ consisting of the vertices $v_1$ such that the path from $v$ to $v_1$ in $T(\Omega)$ does not contain prohibited subpaths and does not contain vertices $v_2\ne v_1$ such that $\tau_{v_2}< \tau _v$. Thus, the leaves of $T'(\Omega_v)$ are either vertices $v_1$ such that $\tau_{v_1}< \tau_v$ or leaves of $T(\Omega _v)$.

The subtree $T'(\Omega_v)$ can be effectively constructed. The tree $T_0(\Omega _v)$ is obtained from $T'(\Omega_v)$ by attaching (gluing) $T_0(\Omega _{v_1})$ (which is already constructed by the induction hypothesis) to those leaves $v_1$ of $T'(\Omega _v)$ for which $\tau _{v_1}< \tau _v$. The function $\ss(\Omega _v)$ is defined by (\ref{so}). Set $T_0(\Omega)=T_0(\Omega _{v_0})$, which is finite by construction.

\bigskip

\begin{defn}\label{defn:Aut}
Denote by \glossary{name={$\Aut (\Omega)$}, description={finitely generated group of automorphisms associated to the root of $T_0(\Omega)$}, sort=A}$\Aut (\Omega)$, $\Omega=\Omega_{v_0}$, the group of  automorphisms of $G_{R(\Omega^\ast)}$, generated by all the groups $\pi(v_0,v)\VV(\Omega _v)\pi (v_0,v)^{-1}$, $v\in T_0(\Omega)$, $\tp(v)\ne 1$ (thus $\pi (v_0,v)$ is an isomorphism). Note that by construction the group $\Aut (\Omega )$ is finitely generated.
\end{defn}

We adopt the following convention. Given two solutions $H^{(i)}$ of $\Omega_{v_i}$ and ${H^{(i')}}$ of $\Omega_{v_i'}$, by \glossary{name={`$H^{(i)}<_{\Aut(\Omega)}{H^{(i')}}$'}, description={if $H^{(i)}$ is a solution of $\Omega_{v_i}$ and ${H^{(i')}}$ is a solution of $\Omega_{v_{i'}}$, $v_i, v_{i'}\in T_0(\Omega)$, then $H^{(i)}<_{\Aut(\Omega)}{H^{(i')}}$ if and only if $H^{(i)}<_{\pi (v_0,v_{i'})^{-1}\Aut(\Omega)\pi(v_0,v_{i})} {H^{(i')}}$}, sort=Z}
$H^{(i)}<_{\Aut(\Omega)}{H^{(i')}}$ we mean $H^{(i)}<_{\pi (v_0,v_{i'})^{-1}\Aut(\Omega)\pi(v_0,v_{i})} {H^{(i')}}$.

\begin{lem} \label{cor:2.1}
Let
$$
(\Omega,H)=(\Omega _{v_0},H^{(0)})\to (\Omega _{v_1}, H^{(1)})\to\ldots\to (\Omega _{v_l}, H^{(l)})
$$
be the path defined by the solution $H$. If $H$ is a minimal solution with respect to the group of automorphisms $\Aut(\Omega)$, then $H^{(i)}$ is a minimal solution of $\Omega _{v_i}$ with respect to the group $\VV(\Omega_{v_i})$ for all $i$.
\end{lem}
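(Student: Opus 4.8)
The plan is to obtain this as an induction on $i$ built on Lemma~\ref{lem:2.1}, using two elementary observations: first, that an elementary transformation inducing an isomorphism of coordinate groups carries a solution minimal with respect to a group $\BB$ of automorphisms to a solution minimal with respect to the conjugate group, which is exactly Lemma~\ref{lem:2.1}; and second, that minimality with respect to a group of automorphisms is inherited by every subgroup, since $H' <_{\BB'} H$ implies $H' <_{\BB} H$ whenever $\BB' \subseteq \BB$, so the absence of a shorter $H'$ with $H' <_{\BB} H$ a fortiori forbids one with $H' <_{\BB'} H$. For the base case $i = 0$ we have $H^{(0)} = H$, and $\VV(\Omega_{v_0}) \subseteq \Aut(\Omega)$ directly from Definition~\ref{defn:Aut} (taking $v = v_0$, where $\pi(v_0, v_0) = \id$; the inclusion is trivial if $\tp(v_0) = 1$, as then $\VV(\Omega_{v_0}) = 1$), so $H$ is minimal with respect to $\VV(\Omega_{v_0})$.

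For the inductive step I would strengthen the claim and prove that $H^{(i)}$ is minimal with respect to the full group $\BB_i := \pi(v_0, v_i)^{-1}\Aut(\Omega)\,\pi(v_0, v_i)$; the statement of the lemma then follows from the inclusion $\VV(\Omega_{v_i}) \subseteq \BB_i$ together with the subgroup observation above. Each edge $v_i \to v_{i+1}$ of the canonical path — a principal edge, or one of the auxiliary edges produced in Cases~4 and~5 — is a finite composition of elementary transformations $\ET 1$--$\ET 5$ together with permutations of the variables $h$ coming from $\D 2$, since the derived transformations $\D 1, \D 2, \D 3, \D 5, \D 6$ are, up to such permutations, built out of elementary ones. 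For every edge other than possibly the last, $\pi(v_0, v_{i+1})$ is an isomorphism, because a non-isomorphism would place $v_{i+1}$ under the assumptions of Case~1 and hence make it a leaf; therefore every elementary step on such an edge induces an isomorphism of coordinate groups, and applying Lemma~\ref{lem:2.1} to each step in turn — and checking directly that a permutation of variables preserves minimality, one merely reindexing the endomorphism and the type conditions of Definition~\ref{defn:sol<} — yields that $H^{(i+1)}$ is minimal with respect to $\pi(v_i, v_{i+1})^{-1}\BB_i\,\pi(v_i, v_{i+1}) = \BB_{i+1}$.

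It remains to treat the last edge $v_{l-1} \to v_l$ when it is labelled by a proper epimorphism, so that $\tp(v_l) = 1$ and $\VV(\Omega_{v_l}) = 1$: here the only non-isomorphism among the elementary steps is an application of $\ET 5$ of the first kind, which adjoins one boundary equation without altering the variable tuple, and since the relation `$<_{\{1\}}$' depends on two tuples only through the requirement that both be solutions, and passing to an equation with more equations only shrinks the solution set, minimality with respect to the trivial group is preserved across such a step; combined with the isomorphism steps this gives $H^{(l)}$ minimal with respect to $\{1\} = \VV(\Omega_{v_l})$. Finally, the inclusion $\VV(\Omega_{v_i}) \subseteq \BB_i$ is immediate from Definition~\ref{defn:Aut} when $v_i \in T_0(\Omega)$; in general, when the canonical path of $H$ runs outside the finite tree $T_0(\Omega)$, one uses the recurrence Lemmas~\ref{3.3} and~\ref{lem:c12} (and the analogous statement for paths of type~$15$) to factor the part of $\pi(v_0, v_i)$ lying above $T_0(\Omega)$ through automorphisms invariant with respect to the kernel or to the non-quadratic part, that is, through elements of the relevant groups $\VV(\Omega_w)$, which are conjugated into $\Aut(\Omega)$ by construction. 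This last point — tracking how the $\VV(\Omega_{v_i})$ sit inside $\BB_i$ once the path has left $T_0(\Omega)$ — together with the routine decomposition of the derived transformations used on the edges is the only genuinely delicate part; the conceptual content is entirely carried by Lemma~\ref{lem:2.1} and Definition~\ref{defn:Aut}.
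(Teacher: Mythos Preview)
Your approach matches the paper's: its entire proof is the one line ``Follows from Lemma~\ref{lem:2.1}'', and your two opening observations --- that Lemma~\ref{lem:2.1} carries minimality along an isomorphism-labelled elementary step to minimality with respect to the conjugate group, and that minimality with respect to a group is inherited by every subgroup --- are precisely what that line unpacks to, with your strengthened inductive hypothesis ($H^{(i)}$ minimal with respect to $\BB_i = \pi(v_0,v_i)^{-1}\Aut(\Omega)\,\pi(v_0,v_i)$) being the natural way to iterate.

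One caution about your final paragraph: you are addressing a case the paper neither needs nor intends. The lemma is only ever invoked in Section~\ref{5.5.4}, at vertices along the path $\p(H)$ that are simultaneously being shown to lie in $T_0(\Omega)$; for those vertices the inclusion $\VV(\Omega_{v_i}) \subseteq \BB_i$ is immediate from Definition~\ref{defn:Aut}, and nothing further is required. Your attempt to extend the argument to vertices outside $T_0(\Omega)$ via Lemmas~\ref{3.3} and~\ref{lem:c12} is therefore unnecessary, and as written it is also not quite sufficient: those lemmas identify the composition $\pi(v_k,v_l)$ as an element of $\VV(\Omega_{v_k})$ when $\Omega_{v_k} = \Omega_{v_l}$, which (via inner conjugation) would handle the repeated vertices, but does not by itself place $\VV(\Omega_{v_i})$ inside $\BB_i$ at an arbitrary intermediate vertex whose generalised equation need not coincide with any in $T_0(\Omega)$.
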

\begin{proof}
Follows from Lemma \ref{lem:2.1}
\end{proof}

\subsection{Paths $\p(H)$ are in $T_0(\Omega )$} \label{5.5.4}

The goal of this section is to give a proof of the proposition below.
\begin{prop} \label{3.4}
For any solution $H$ of a generalised equation $\Omega $ there exists a leaf $w$ of the tree $T_0(\Omega )$, $\tp(w)=1,2$, and a solution $H^{[w]}$ of the generalised equation $\Omega _w$ such that
\begin{enumerate}
    \item \label{it:prop1} ${H}^{[w]}<_{\Aut(\Omega)} H$;
    \item \label{it:prop2} if $\tp(w)=2$ and the generalised equation $\Omega_{w}$ contains non-constant non-active sections, then there exists a period $P$ such that $H^{[w]}$ is periodic with respect to the period $P$, and the generalised equation $\Omega _w$ is singular with respect to the periodic structure ${\P}(H^{[w]},P)$.
\end{enumerate}
\end{prop}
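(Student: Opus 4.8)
The plan is to replace $H$ by an $\Aut(\Omega)$-minimal solution of least total length, follow the path it determines in the infinite tree $T(\Omega)$, and show that minimality forces this path to avoid every prohibited subpath, so that it already lies in the finite tree $T_0(\Omega)$. Concretely, using Definition \ref{def:minsol} and Remark \ref{rem:ms}, I would pick a solution $H^{+}$ with $H^{+}<_{\Aut(\Omega)}H$ that is minimal with respect to $\Aut(\Omega)$ and, among all such minimal solutions, of least total length $|H^{+}|$. Let
$$
\p(H^{+}):\quad (\Omega,H^{+})=(\Omega_{v_0},H^{+[0]})\to(\Omega_{v_1},H^{+[1]})\to\cdots\to(\Omega_{v_l},H^{+[l]})
$$
be the finite path in $T(\Omega)$ determined by $H^{+}$; since every vertex of type $3,\dots,15$ has outgoing edges, its last vertex satisfies $\tp(v_l)\in\{1,2\}$. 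By Corollary \ref{cor:2.1} each $H^{+[i]}$ is minimal with respect to $\VV(\Omega_{v_i})$, and by Proposition \ref{prop:TO}(3) one has $\pi_{H^{+}}=\pi(v_0,v_i)\,\pi_{H^{+[i]}}$ with $\pi(v_0,v_i)$ an isomorphism (a composition of isomorphisms, the only proper epimorphisms of $T(\Omega)$ being those into type-$1$ leaves). Put $w:=v_l$ and $H^{[w]}:=H^{+[l]}$; then item (\ref{it:prop1}) reduces, by transitivity of $<_{\Aut(\Omega)}$, to showing that $w$ is a leaf of $T_0(\Omega)$, that is, that $\p(H^{+})$ contains no prohibited subpath.

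For the prohibited subpaths of type 7-10 and of type 12 I would argue by contradiction. Suppose $\p(H^{+})$ contained such a subpath; then some generalised equation occurs on it at least $4^{\rho}+1$ times, while there are at most $4^{\rho}$ distinct $\rho$-type vectors (Definition \ref{defn:nfmatrix}), so the pigeonhole principle gives indices $k<m$ with $\Omega_{v_k}=\Omega_{v_m}$ and with the type vectors of $H^{+[k]}$ and $H^{+[m]}$ equal. By Lemma \ref{3.3} (resp. Lemma \ref{lem:c12}) the map $\sigma:=\pi(v_k,v_m)$ is then a $G$-automorphism of $G_{R(\Omega_{v_k}^{\ast})}$ invariant with respect to the kernel (resp. the non-quadratic part), hence $\sigma\in\VV(\Omega_{v_k})$. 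Now $\sigma$ together with the identity endomorphism witnesses $H^{+[m]}<_{\VV(\Omega_{v_k})}H^{+[k]}$: condition (\ref{it:minsol1}) of Definition \ref{defn:sol<} is $\pi_{H^{+[k]}}=\sigma\,\pi_{H^{+[m]}}$, which holds by Proposition \ref{prop:TO}(3), and condition (\ref{it:minsol3}) is the equality of type vectors. Along edges of type 7-10 and 12 the total length strictly decreases (Remark \ref{rem:leng<}), while a direct inspection of the elementary and derived transformations shows that none of them increases any single component of a solution; hence $|H^{+[m]}_j|\le|H^{+[k]}_j|$ for all $j$ with strict inequality for at least one $j$. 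This contradicts the minimality of $H^{+[k]}$ with respect to $\VV(\Omega_{v_k})$, so $\p(H^{+})$ has no prohibited subpath of type 7-10 or 12.

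The prohibited subpaths of type 15 are what I expect to be the main obstacle. The strategy is the same, but the combinatorics is heavier: one uses that along a $\tp=15$ branch the excess $\psi_{A\Sigma}$ is invariant under the entire transformation (Lemma \ref{lem:excess}), so that $n_A$, $\comp$ and the number of items stay bounded and some generalised equation recurs, the intermediate automorphisms $\pi(v_k,v_m)$ again lying in $\VV(\Omega_{v_m})$; that in a $\mu$-reducing subpath the total length of the running solution drops by at least $\frac{1}{10}|H(\mu)|$ (the estimate around Equation (\ref{3.26})); and that, by Definition \ref{defn:proh15} with $f_1$ from Lemma \ref{2.8}, a prohibited path of type 15 contains, for every carrier base, more $\mu$-reducing subpaths than the finitely many generators of $\Aut(\Omega)$ produced along that branch can compensate. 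Combining these, a prohibited subpath of type 15 again yields an $\Aut(\Omega)$-smaller solution of strictly smaller total length than $H^{+}$, contradicting the choice of $H^{+}$. Since by Lemma \ref{3.2} every infinite branch of $T(\Omega)$ eventually consists solely of edges of a single type in $\{7,8,9,10\}$, $\{12\}$ or $\{15\}$, and therefore contains a prohibited subpath of the corresponding type (Corollaries \ref{cor:proh710} and \ref{cor:proh12}, and the analogous lemma for type 15), while $\p(H^{+})$ contains none, the path $\p(H^{+})$ is finite and is a path of $T_0(\Omega)$. Hence $w$ is a leaf of $T_0(\Omega)$ with $\tp(w)\in\{1,2\}$, which proves (\ref{it:prop1}).

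It remains to establish (\ref{it:prop2}). Assume $\tp(w)=2$ and $\Omega_w$ has a non-constant non-active section. By the termination condition of Case 2 the active part of $\Omega_w$ is empty, and by tracing how variable non-active sections arise in the process (they are created in Case 15.1 and then transported to the right by $\D 2$) each such section has the form $[\alpha(\mu),\beta(\Delta(\mu))]$ with $\mu$ and $\Delta(\mu)$ overlapping but not matched, so Lemma \ref{lem:case2} applies and produces a period $P$ for which $H^{[w]}$ is periodic. Finally, were $\Omega_w$ regular with respect to $\langle\P,R\rangle:=\P(H^{[w]},P)$, then, since $\AA(\Omega_w)\subseteq\VV(\Omega_w)$ and $H^{[w]}$ is minimal with respect to $\VV(\Omega_w)$ (Corollary \ref{cor:2.1}), Lemma \ref{lem:23-2} would bound $|H^{[w]}_k|$ by $f_0(\Omega_w,\P,R)\,|P|$ for every $k$; but this is incompatible with $w$ being reached from $v_0$ by a path free of prohibited subpaths of type 15, because the function $\ss$ which governs the length of $\mu$-reducing (hence of prohibited type-15) paths is defined in (\ref{so}) precisely through the bounds $\rho_{\Omega_w}f_0(\Omega_w,\P,R)$. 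Therefore $\langle\P,R\rangle$ is singular, giving (\ref{it:prop2}) and completing the proof.
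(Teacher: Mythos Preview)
Your overall architecture (minimise, follow a path, rule out prohibited subpaths) matches the paper's, but the path you follow is not the one the paper uses, and this difference is where the argument breaks. You take the \emph{naive} path in $T(\Omega)$ defined by the minimal solution $H^{+}$, which only traverses principal edges. The paper instead builds a bespoke path $\p(H)$ (part (A) of the proof) via two rules you omit: at vertices of type $7$--$10$ or $12$ it may \emph{stay} at the same vertex and replace the current solution by a minimal one of strictly smaller \emph{total} length; and at a type-$15$ vertex where the periodicity condition (\ref{3.9}) holds, it takes the \emph{auxiliary} edge (the map $\edge'$), declaring the highly periodic section non-active. Both devices are load-bearing.

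The decisive gap is the type-$15$ case. Your sketch asserts that $\pi(v_k,v_m)$ lies in $\VV(\Omega_{v_m})$ and proceeds by analogy with types $7$--$10$ and $12$; the paper does nothing of the sort. Its argument is a length estimate: Lemma~\ref{lem:3.19} bounds $d_{A\Sigma}(H^{(1)})$ from above by a function of the excess, while a prohibited path forces the opposite inequality via the $\frac{1}{10}$-reduction estimate (\ref{3.26}). That estimate, however, hinges on (\ref{3.25}), namely $r<\max_j \ss(\Omega_{w_j})$, which holds \emph{only because} the construction of $\p(H)$ would otherwise have taken the auxiliary edge at $v_2$. On your naive path condition (\ref{3.9}) may well hold, i.e.\ the solution may be arbitrarily $P$-periodic with $r\gg\max_j\ss(\Omega_{w_j})$; then in a $\mu$-reducing subpath the total drop is of order $(\max_j\ss(\Omega_{w_j}))\,|P|$ while $|H(\mu)|$ is of order $r\,|P|$, so the ratio can be made as small as one likes and (\ref{3.26}) fails. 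There is then no contradiction, and a minimal solution can genuinely trace a prohibited type-$15$ path in $T(\Omega)$.

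The same omission undermines part (\ref{it:prop2}). Non-constant non-active sections at a type-$2$ leaf are produced precisely by the auxiliary-edge step, and the singularity conclusion is drawn from (\ref{3.29}), which records that the exponent of periodicity on that section is at least $\rho_{\Omega_w}\max f_0$---a bound inherited directly from (\ref{3.9}) at the moment the auxiliary edge was taken. Your appeal to Lemma~\ref{lem:case2} gives periodicity but no such lower bound on the exponent, so Lemma~\ref{lem:23-2} cannot be contradicted, and your final sentence (``incompatible with $w$ being reached $\ldots$'') is not an argument. For types $7$--$10$ and $12$ there is also a smaller issue: your claim that each elementary or derived transformation is componentwise non-increasing is not substantiated (variables are split, deleted and re-indexed along the way), and the paper avoids this entirely by comparing total lengths and using the stay-and-shrink rule to derive the contradiction.
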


The proof of this proposition is rather long and technical. We now outline the organisation of the proof.

In part (A), for any solution $H$ of $\Omega$ we describe a path $\p(H):(\Omega_{v_0},H^{[0]})\to \dots \to (\Omega_{v_l}, H^{[l]})$, where $H^{[i]}<_{\Aut(\Omega)} H$.

In part (B) we prove that all vertices of the paths $\p(H)$ belong to the tree $T_0$. In order to do so we show that they do not contain prohibited subpaths. In steps (I) and (II) we prove that the paths $\p(H)$ do not contain prohibited paths of type 7-10 and 12, correspondingly. The proof is by contradiction: if $\p(H)$ is not in $T_0$, the fact that a generalised equation repeats enough times, allows us to construct an automorphism that makes the solution shorter, contradicting its minimality.

To prove that the paths $\p(H)$  do not contain prohibited paths of type 15 (step (III)) we show, by contradiction, that on one hand, the length of minimal solutions is bounded above by a function of the excess (see Definition \ref{defn:excess} for definition of excess), see Equation (\ref{3.18}) and, on the other hand the length of a minimal solution $H$ such that $\p(H)$ contains a prohibited path of type 15 fails inequality (\ref{3.18}).

Finally, in part (C) we prove that the pair $(\Omega_w,H^{[w]})$, where $w$ is a leaf of type 2, satisfies the properties required in Proposition \ref{3.4}.

\subsubsection*{\textbf{{\rm(A):} Constructing the paths $\p(H)$}}

To define the path \index{path!$\p(H)$}$\p(H)$ we shall make use of two functions \glossary{name={$\edge$}, description={a function that assigns a pair $(\Omega_{v'},H^{(v')})$ to the pair $(\Omega_v,H^{(v)})$, $\tp(v)\ne 1,2$}, sort=E}$\edge$ and \glossary{name={$\edge'$}, description={a function that assigns a pair $(\Omega_{v'},H^{(v')})$ to the pair $(\Omega_v,H^{(v)})$, $\tp(v)=15$}, sort=E}$\edge'$ that assign to the pair $(\Omega_v,H^{(v)})$ a pair $(\Omega_{v'},H^{(v')})$, where either $v'=v$ or there is an edge $v\to v'$ in $T(\Omega)$. The function $\edge$ can be applied to any pair $(\Omega_v,H^{(v)})$, where $\tp(v)\ne 1,2$ and the function $\edge'$ can only be applied to a pair $(\Omega_v,H^{(v)})$, where $\tp(v)=15$ and there are auxiliary edges outgoing from the vertex $v$.

\medskip

We now define the functions $\edge$ and $\edge'$.

Let $\tp(v)=3$ or $\tp (v)\geq 6$ and let $v\to w_1,\ldots,v\to w_m$ be the list of all \emph{principal} outgoing edges from $v$, then the generalised equations $\Omega _{w_1},\dots ,\Omega_{w_m}$ are obtained from $\Omega _v$ by a sequence of elementary transformations. For every solution $H$ the path defined by $H$ is unique, i.e. for the pair $(\Omega,H)$ there exists a unique pair $(\Omega_{w_i},H^{(i)})$ such that the following diagram commutes:
$$
\xymatrix@C3em{
 G_{R(\Omega^\ast)}  \ar[rd]_{\pi_H} \ar[rr]^{\theta_i}  &  &G_{R(\Omega_{w_i}^\ast )} \ar[ld]^{\pi_{H^{(i)}}}
                                                                             \\
                               &  G &
}
$$
Define a function $\edge$ that assigns the pair $(\Omega _{w_i},H ^{(i)})$ to the pair $(\Omega _v,H)$, $\edge(\Omega_v,H)=(\Omega _{w_i},H ^{(i)})$.

Let $\tp(v)=4$ or $\tp(v)=5$. In these cases there is a single edge $v\to w_1$ outgoing  from $v$ and this edge is auxiliary. We set $\edge(\Omega_v,H)=(\Omega_{w_1},H^{(1)})$.

If $\tp (v)=15$ and there are auxiliary outgoing edges from the vertex $v$, then the carrier base $\mu$ of the generalised equation $\Omega _v$ intersects with $\Delta (\mu)$. Below we use the notation from the description of Case 15.1. For any solution $H$ of the generalised equation $\Omega_v$ one can construct a solution $H'$ of the generalised equation $\Omega_{v'}$ as follows: $H'_{\rho_v+1}=H[1,\beta(\Delta(\mu))]$. We define the function $\edge'$ as follows  $\edge'(\Omega_v,H)=(\Omega_{v'},H')$.

\medskip

To construct the path $\p(H)$
\begin{equation}\label{3.8}
(\Omega,H)\to (\Omega _{v_0},H^{[0]})\to (\Omega _{v_1}, H^{[1]})\to\ldots
\end{equation}
we use induction on its length $i$.

Let $i=0$, we define $H^{[0]}$ to be a solution of the generalised equation $\Omega$ minimal with respect to the group of automorphisms $\Aut(\Omega)$, such that $H^{[0]} <_{\Aut(\Omega)} H$. Let $i \ge 1$ and suppose that the term $(\Omega_{v_i},H^{[i]})$ of the sequence (\ref{3.8}) is already constructed. We construct $(\Omega_{v_{i+1}},H^{[i+1]})$

If $3\le \tp(v_i)\le 6$, $\tp(v_i)=11,13,14$, we set $(\Omega _{v_{i+1}},H^{[i+1]})=\edge(\Omega _{v_i},{H^{[i]}})$.

If $7\leq \tp(v_i)\leq 10$ or $\tp(v_i)=12$ and there exists a minimal solution $H^+$ of $\Omega _{v_i}$ such that $H^+<_{\Aut(\Omega)} H^{[i]}$ and $|H^+|<|H^{[i]}|$, then we set $(\Omega_{v_{i+1}},H^{[i+1]})=(\Omega_{v_i}, H^+)$. Note that, since  $H^{[0]}$ is a minimal solution of $\Omega$ with respect to the group of automorphisms $\Aut(\Omega)$, by construction and by Lemma \ref{cor:2.1}, we have that the solution $H^{[i]}$ is minimal with respect to the group of automorphism $\VV(\Omega_{v_i})$ for all $i$. Although $H^{[i]}$ is a minimal solution, in this step we take a  minimal solution of minimal total length, see Remark \ref{rem:ms}.

Let $\tp(v_i)=15$, $v_i\neq v_{i-1}$ and $v_i\to w_1,\ldots ,v_i\to w_{\nn}$ be the auxiliary edges outgoing from $v_i$ (the carrier base $\mu$ intersects with its dual $\Delta(\mu)$). If there exists a period $P$ such that
\begin{equation}\label{3.9}
{H^{[i]}}[1,\beta (\Delta (\mu))]\doteq P^rP_1,\ P\doteq P_1P_2, \ r\ge \max \limits_{1\leq i\leq {\nn}} \ss(\Omega _{w_i}),
\end{equation}
then we set $(\Omega_{v_{i+1}},H^{[i+1]}) = \edge'(\Omega _{v_i},H^{[i]})$ and declare the section $[1,\beta (\Delta (\mu))]$ non-active.

In all the other cases (when $\tp(v_i)=15$) we set $(\Omega _{v_{i+1}},H^{[i+1]})=\edge(\Omega _{v_i},{H^{[i]}})$.

The path (\ref{3.8}) ends if $\tp (v_i)\leq 2$.

\medskip

A leaf $w$ of the tree $T(\Omega)$ is called \index{leaf!final of the tree $T$}\emph{final} if there exists a solution $H$ of $\Omega_{v_0}$ and a path $\p(H)$  such that $\p(H)$ ends in $w$.

\subsubsection*{\textbf{{\rm (B):} Paths $\p(H)$ belong to $T_0$}}

We use induction on $\tau$ to show that every vertex $v_i$ of the path $\p(H)$ (see Equation (\ref{3.8})) belongs to $T_0(\Omega)$, i.e.  $v_i\in T_0(\Omega)$.  Suppose that $v_i\not\in T_0(\Omega )$ and let $i_0$ be the least among such numbers. It follows from the construction of $T_0(\Omega )$ that there exists $i_1< i_0$ such that the path from $v_{i_1}$ to $v_{i_0}$ contains a prohibited subpath $\s$. From the minimality of $i_0$ it follows that the prohibited path $\s$ goes from $v_{i_2}$, $i_1\le i_2\le i_0$ to $v_{i_0}$.

\subsubsection*{{\rm (I):} Paths $\p(H)$ do not contain prohibited subpaths of type 7-10}

Suppose first that the prohibited path $\s$ is of type 7-10, i.e. $7\leq \tp(v_i)\leq 10$. By definition, there exists a generalised equation $\Omega_{v_{k_1}}$ that repeats $r=4^{\rho_{\Omega_{v_{k_1}}}}+1$ times, i.e.
$$
\Omega_{v_{k_1}}=\dots=\Omega_{v_{k_r}}.
$$
Since the path $\s$ is prohibited, we may assume that $v_{k_i}\ne v_{k_{i+1}}$ for all $i$ and $v_{k_i}\ne v_{k_i+1}$ for all $i$, i.e.
$(\Omega_{v_{k_i+1}},H^{[{k_i+1}]})=\edge(\Omega_{v_{k_i}},H^{[{k_i}]})$.

We now prove that there exist $k_j$ and $k_{j'}$, $k_{j}<k_{j'}$ such that $H^{[k_{j'}]}<_{\Aut(\Omega)}H^{[k_j]}$.

Since, the number of different $\rho_{\Omega_{v_{k_1}}}$-type vectors (see Definition \ref{defn:nfmatrix}) is bounded above by $r$, if the generalised equation $\Omega_{v_{k_1}}$ repeats $r$ times, there exist $k_j$ and $k_{j'}$ such that $H^{[k_j]}$ and $H^{[k_{j'}]}$ have the type vectors, i.e. satisfy condition (\ref{it:minsol3}) from Definition \ref{defn:sol<}. Moreover, by Lemma \ref{3.3}, $\pi(v_{k_j},v_{k_{j'}})$ is an automorphism of $G_{R(\Omega_{v_{k_j}}^\ast)}$ invariant with respect to the kernel of ${\Omega}_{v_{k_j}}$.

By Remark \ref{rem:leng<}, we have $|H^{[k_j]}| >|{H^{[k_{j'}]}}|$. This derives a contradiction, since, by construction of the sequence (\ref{3.8}) one has $v_{k_j+1}=v_{k_j}$.

\subsubsection*{{\rm (II):} Paths $\p(H)$ do not contain prohibited subpaths of type 12.}
Suppose next that the path $\s$ is prohibited  of type 12, i.e. $\tp(v_i)=12$. An analogous argument to the one for prohibited paths of type 7-10, but using Lemma \ref{lem:c12} instead of Lemma \ref{3.3}, leads to a contradiction. Hence, we conclude that $v_i\in T_0(\Omega)$, where $\tp(v_i)=12$.

\subsubsection*{{\rm (III):} Paths $\p(H)$ do not contain prohibited subpaths of type 15.}
Finally, suppose that the path $\s$ is prohibited of type 15, i.e. $\tp(v_i)=15$. Abusing the notation, we consider a subpath of (\ref{3.8})
$$
(\Omega _{v_1},H^{[1]})\to (\Omega_{v_2},H^{[2]})\to\ldots \to(\Omega_{v_m},H^{[m]})\to\ldots,
$$
where $v_1,v_2,\ldots$ are vertices of the tree $T_0(\Omega)$, $\tp(v_i)=15$ and the edges $v_i\to v_{i+1}$ are principal for all $i$. Notice, that by construction the above path is the path defined by the solution $H^{(1)}=H^{[1]}$:
\begin{equation}\label{3.11}
(\Omega _{v_1},H^{(1)})\to (\Omega_{v_2},H^{(2)})\to\ldots \to(\Omega_{v_m},H^{(m)})\to\ldots,
\end{equation}

To simplify the notation, below we write $\rho_i$ for $\rho_{\Omega_{v_i}}$.

Let $\omega =\{\mu _1,\ldots ,\mu_{m}, \ldots\}$ be the set of carrier bases $\mu _i$ of the generalised equations $\Omega_{v_i}$'s and let $\tilde\omega$ denote the set of bases which are transfer bases for at least one generalised equation in (\ref{3.11}). By $\omega_2$ we denote the set of all bases $\nu$ of $\Omega_{v_i}$, $i=1,\dots,m,\dots$ so that $\nu, \Delta(\nu)\notin \omega\cup\tilde \omega$.  Let
$$
\alpha (\omega)=\min \left\{\min\limits_{\mu\in\omega _2}\{\alpha(\mu)\},\rho_A\right\},
$$
where $\rho_A$ is the boundary between the active part and the non-active part.

For every element $(\Omega _{v_i},H^{(i)})$ of the sequence (\ref{3.11}), using $\D 3$, if necessary, we make the section $[1,\alpha (\omega)]$ of the generalised equation $\Omega _{v_i}$ closed and set $[\alpha(\omega),\rho_i]$ to be the non-active part of the generalised equation $\Omega_{v_i}$ for all $i$.

Recall that by $\omega _1$ we denote the set of all variable bases $\nu $ for which either $\nu$ or $\Delta (\nu)$ belongs to the active part $[1,\alpha(\omega)]$ of the  generalised equation $\Omega_{v_1}$, see Definition \ref{defn:excess}.

\subsubsection*{{\rm(III.1):} Lengths of minimal solutions are bounded by a function of the excess}

Let $H$ be a solution of the generalised equation $\Omega$ and let $[1,j+1]$ be the quadratic part of $\Omega$. Set
\glossary{name={$d_1(H)$}, description={length of the ``quadratic part of the solution'' $H$}, sort=D}\glossary{name={$d_2(H)$}, description={length of the ``quadratic-coefficient part of the solution'' $H$}, sort=D}
$$
d _1(H)=\sum \limits_{i=1}^{j}|H_i|, \ d_2(H)=\sum \limits_{\nu}|H(\nu)|,
$$
where $\nu$ is a quadratic-coefficient base.

\begin{lem} \label{2.8}
Let $v$ be a vertex of $T(\Omega)$,  $\tp(v)=15$. There exists a recursive function $f_{1}(\Omega _v)$ such that for any solution $H$ minimal with respect to $\VV(\Omega_v)$ one has
$$
d_1(H)\leq f_{1}(\Omega_v)\max \left\{d_2(H),1\right\}.
$$
\end{lem}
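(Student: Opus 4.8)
### Proof proposal for Lemma \ref{2.8}

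The plan is to reduce the statement to a purely combinatorial estimate on the quadratic part of $\Omega_v$, and then to invoke the classical bound for quadratic equations. Recall that at a vertex $v$ with $\tp(v)=15$, the quadratic part $[1,j+1]$ of $\Omega_v$ (as defined in Section \ref{5.5.2}) consists of items covered exactly twice, and that by Lemma 2.6 of \cite{Razborov3} the generalised equation $\widehat{\Omega}_v$ (obtained by removing all non-quadratic bases, quadratic-coefficient bases, and constant bases) is equivalent to a disjoint union of standard quadratic equations $Q_1(y^{(1)}),\dots,Q_k(y^{(k)})$ over $G$. The quadratic-coefficient bases play the role of the coefficients $c_i$ of these quadratic equations, and their total length $d_2(H)=\sum_\nu |H(\nu)|$ is precisely the total length of the coefficients under the specialisation induced by $H$. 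Thus $d_1(H)$ is the length of a solution of a system of standard quadratic equations over $G$ whose coefficients have total length $d_2(H)$, and the solution is minimal with respect to the group $\VV(\Omega_v)$ of automorphisms invariant with respect to the non-quadratic part — which, by construction, acts on each surface group factor $S_i$ by the full mapping class group (the stabiliser of the quadratic word, via McCool's theorem, as in Lemma \ref{lem:raznqp}).

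The key step is then the following: for a standard quadratic equation over a group $G$ of the form (\ref{eq:st2}) or (\ref{eq:st4}), a solution that is minimal with respect to the group of automorphisms fixing the coefficients $c_i$ has length bounded by a recursive function of the equation and of $\max\{\sum_i|c_i|,1\}$. This is a standard fact in the theory of equations over free (and more general) groups, and its proof goes through the following device. One performs the entire transformation (Case 12) on the quadratic part; by Lemma \ref{lem:excess} the excess is preserved, and since the quadratic part has every item covered exactly twice, the excess is zero, so the carrier base at each step is a quadratic base (its dual also lies in the quadratic part) unless it is a quadratic-coefficient base. Each time a quadratic-coefficient base is the carrier, the number of bases in the active part strictly drops (Lemma \ref{3.1}), so this happens boundedly often. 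Between consecutive occurrences of a quadratic-coefficient carrier, the process runs through quadratic carriers only; one argues, exactly as in the free group case, that a minimal solution cannot admit a long run of the same quadratic carrier base, because a repetition of the same generalised equation along such a run produces (via Lemma \ref{lem:c12}, i.e. Corollary \ref{cor:proh12}) an automorphism invariant with respect to the non-quadratic part that shortens the solution — contradicting minimality with respect to $\VV(\Omega_v)$. Hence the total number of entire transformations, and thus the total decrease in $d_1$, is bounded by a recursive function of $\Omega_v$ times $d_2(H)$; adding back the bounded ``tail'' (a quadratic equation of bounded complexity with coefficients of total length at most a multiple of $\max\{d_2(H),1\}$, whose minimal solutions are bounded by Lemma 1.1 of \cite{Makanin} applied to the associated linear system, exactly as in the proof of Lemma \ref{lem:23-2}) yields the desired bound $d_1(H)\le f_1(\Omega_v)\max\{d_2(H),1\}$.

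The main obstacle I anticipate is making the last estimate genuinely uniform and recursive: one must check that the number of distinct generalised equations occurring along a run of quadratic carriers (with the non-quadratic part frozen) is bounded in terms of $n_A$, $\xi$, $\rho$ and $\comp$ of $\Omega_v$ only, so that the ``repetition forces an automorphism'' argument kicks in after a computable number of steps, and that the total length swept out by the carrier bases before reaching the frozen tail is controlled by $d_2(H)$ — i.e. that every item of the quadratic part either gets transported onto a quadratic-coefficient base or is consumed within a bounded number of entire transformations. This is the quadratic analogue of the length estimate (\ref{3.26}) alluded to in the definition of a $\mu$-reducing path, and its verification is the technical heart of the lemma; the rest is bookkeeping plus the cited results of Makanin and Razborov. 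The case $d_2(H)=0$ (no quadratic-coefficient bases, so $\widehat\Omega_v$ is a union of closed-surface relations) must be handled separately but is immediate: minimality with respect to the mapping class group forces each $H(Q_i)$ to be short, whence $d_1(H)\le f_1(\Omega_v)\cdot 1$, which is why the $\max\{d_2(H),1\}$ appears.
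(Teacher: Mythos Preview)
Your approach matches the paper's: apply the entire transformation repeatedly in the quadratic part, bound the length of any run of quadratic carriers by the repetition-plus-minimality argument (the same generalised equation recurring $4^{\rho}+1$ times forces, via type vectors and Lemma~\ref{lem:c12}, an automorphism in $\VV(\Omega_v)$ that shortens $H$), and note that each time the carrier is a quadratic-coefficient base the number of bases in the quadratic part strictly drops, so the whole process terminates after a computable number $N$ of steps. The only deviation is in the final accounting: the paper does not invoke Makanin's Lemma~1.1 or any Diophantine/periodic-structure estimate here, but runs a direct backward induction from step $N{-}1$ (where the quadratic part is one carrier away from empty, so $d_1(H^{(N-1)}) \le n_{N-1}\, d_2(H^{(N-1)})$) down to step $0$, tracking explicitly how $d_1^{(i)}$ and $d_2^{(i)}$ change according to whether the carrier is quadratic or quadratic-coefficient; this yields the recursive $f_1$ without any appeal to linear-system minimality, so your reference to Lemma~1.1 of \cite{Makanin} is superfluous.
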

\begin{proof}
Since $\tp(v)=15$, every boundary that touches a base is $\eta$-tied in every base $\eta$ which it intersects.  Instead of $\Omega _v$ below we consider the generalised equation $\widetilde{\Omega}_v=\D 3(\Omega_v)$ which does not have any boundary connections. Then $G_{R(\Omega_v^\ast)}$ is isomorphic to $G_{R({\widetilde{\Omega}}_v^\ast)}$. We abuse the notation and denote $\widetilde{\Omega}_v$ by $\Omega_v$.

Consider the sequence
$$
(\Omega_v ,H)=(\Omega _{v_0}, H^{(0)})\to (\Omega _{v_1}, H^{(1)})\to\ldots  \to (\Omega _{v_i}, H^{(i)})\to \ldots,
$$
where $(\Omega _{v_{j+1}}, H^{(j+1)})$ is obtained from $(\Omega _{v_j}, H^{(j )})$ by applying the entire transformation $\D 5$ in the quadratic part of $\Omega$. Denote by $\mu_{i}$ the carrier base of the generalised equation $\Omega _{v_i}$ and consider the sequence $\mu_1,\dots,\mu_i,\ldots$

We use an argument analogous to the one given in the proof of Lemma \ref{lem:c12} to show that the number of consecutive quadratic bases in the sequence $\mu_1,\dots,\mu_i,\ldots$ is bounded above. The entire transformation applied in the quadratic part of a generalised equation, does not increase the complexity and the number of items. The number of generalised equations with a bounded number of items and bounded complexity is finite.

We now prove that if a generalised equation $\Omega_{v_{k_1}}$ repeats $r=4^{\rho_{k_1}}+1$ times, then there exist $k_j$ and $k_{j'}$ such that  $H^{(k_{j'})}<_{\VV(\Omega_{v_{k_j}})}H^{(k_j)}$.

Since, the number of different $\rho_{k_1}$-type vectors (see Definition \ref{defn:nfmatrix}) is bounded above by $r$, if the generalised equation $\Omega_{v_{k_1}}$ repeats $r$ times, there exist $k_j$ and $k_{j'}$ such that $H^{(k_j)}$ and $H^{(k_{j'})}$ have the same type vectors, i.e. satisfy condition (\ref{it:minsol3})  from Definition \ref{defn:sol<}.

Moreover, by Lemma \ref{lem:c12}, the automorphism $\pi(v_{k_j},v_{k_{j'}})$ of $G_{R(\Omega_{v_{k_j}}^\ast)}$ is invariant with respect to the non-quadratic part of $\Omega_{v_{k_j}}$.

By Remark \ref{rem:leng<}, we have $|{H^{(k_{j'})}}|< |H^{(k_j)}|$. We, therefore, have that ${H^{(k_{j'})}} <_{\VV(\Omega_{v_{k_j}})} H^{(k_j)}$, contradicting the minimality of $H^{(k_j)}$. Thus, we have proven that the number of consecutive quadratic carrier bases is bounded.

Furthermore, since whenever the carrier base is a quadratic-coefficient base the number of bases in the quadratic part decreases, there exists an integer $N$ bounded above by a computable function of the generalised equation, such that the quadratic part of $\Omega_{v_N}$ is empty.

We prove the statement of the lemma by induction on the length $i$ of the sequence. If $i=N-1$, since the application of the entire transformation to $\Omega_{v_{N-1}}$ results in a generalised equation with the empty quadratic part, the carrier $\mu_{N-1}$ is a quadratic-coefficient base and all the other bases $\nu_{N-1,1},\dots,\nu_{N-1,n_{N-1}}$ are transfer bases and are transferred from the carrier to its dual. Since the length $|H^{(N-1)}(\nu_{N-1,i})|$ of every transferred base is less than the length of the carrier, we get that
$$
d_1(H^{(N-1)})\le\sum\limits_{n=1}^{n_{N-1}}|H^{(N-1)}(\nu_{N-1,n})|\le n_{N-1}|H^{(N-1)}(\mu_{N-1})| = n_{N-1}d_2(H^{(N-1)}).
$$

By induction, we may assume that $d_1(H^{(i+1)})\leq g_{i+1}(\Omega_{v_{i+1}})\max \left\{d_2( H^{(i+1)}),1\right\}$, where $g_{i+1}$ is a certain computable function. We prove that the statement holds for $H^{(i)}$.

Suppose that the carrier base $\mu_i$ is quadratic and let $\nu_{i,1},\dots,\nu_{i,n_{i}}$ be the transfer bases of $\Omega_{v_i}$. Then
$$
|H^{(i)}(\mu_i)|-|H^{(i+1)}(\mu_i)|\le \sum\limits_{n=1}^{n_{i}}|H^{(i+1)}(\nu_{i,n})|,
$$
where $\nu_{i,n}$, $n=1,\dots, n_i$ are bases of $\Omega_{v_{i+1}}$. Thus, by induction hypothesis, we get
$$
\sum\limits_{n=1}^{n_{i}}|H^{(i+1)}(\nu_{i,n})|\le n_i g_{i+1}(\Omega_{v_{i+1}})\max \left\{d_2( H^{(i+1)}),1\right\}.
$$
Notice that the sets of quadratic-coefficient bases of $\Omega_{v_i}$ and $\Omega_{v_{i+1}}$ coincide, thus $d_2(H^{(i+1)})=d_2(H^{(i)})$. Therefore,
\begin{gather}\notag
\begin{split}
d_1(H^{(i)})=\left(|H^{(i)}(\mu_i)|\right.&\left.-|H^{(i+1)}(\mu_i)|\right)+ d_1(H^{(i+1)})\le \\
\le \left(|H^{(i)}(\mu_i)|\right.&\left.-|H^{(i+1)}(\mu_i)|\right)+ g_{i+1}(\Omega_{v_{i+1}}) \cdot\max \left\{d_2( H^{(i+1)}),1\right\}\le\\
&\le (n_i+1) \cdot g_{i+1}(\Omega_{v_{i+1}}) \cdot\max \left\{d_2( H^{(i+1)}),1\right\}=g_{i}(\Omega_{v_{i}})\cdot\max \left\{d_2( H^{(i)}),1\right\}.
\end{split}
\end{gather}

Suppose now that the carrier base $\mu_i$ is a quadratic-coefficient base and let $\nu_{i,1},\dots,\nu_{i,n_{i}}$ be the transfer bases of the generalised equation $\Omega_{v_i}$. Since the duals of the transferred quadratic bases become quadratic-coefficient and since $|H^{(i)}(\nu_{i,n})|\le |H^{(i)}(\mu_i)|$, $n=1,\dots, n_i$, we get that
\begin{gather} \notag
\begin{split}
d_2(H^{(i+1)})\le& L+|H^{(i+1)}(\mu_i)+\sum \limits_{n=1}^{n_{i}}|H^{(i)}(\nu_{n_i,n})|\le\\
\le& L+ (n_i+1) |H^{(i)}(\mu_{i})|\le (n_{i}+1)\left(L+|H^{(i)}(\mu_i)|\right)=(n_i+1)\cdot d_2(H^{(i)}),
\end{split}
\end{gather}
where $L=\sum\limits_\lambda|H^{(i)}(\lambda)|$ and the sum is taken over all bases that are quadratic-coefficient in both $\Omega_{v_i}$ and $\Omega_{v_{i+1}}$. Therefore,
\begin{gather} \notag
\begin{split}
  d_1(H^{(i)})=\left(|H^{(i)}(\mu_i)|\right.&- \left.|H^{(i+1)}(\mu_i)|\right)+ d_1(H^{(i+1)})\le \\
            \le |H^{(i)}(\mu_i)|&+g_{i+1}(\Omega_{v_{i+1}}) \cdot\max \left\{d_2( H^{(i+1)}),1\right\}\le\\
                    \le |H^{(i)}&(\mu_i)|+(n_i+1)\cdot g_{i+1}(\Omega_{v_{i+1}})\cdot\max \left\{d_2( H^{(i)}),1\right\}\le \\
&\le(n_{i}+2)\cdot g_{i+1}(\Omega_{v_{i+1}}) \cdot \max  \left\{d_2( H^{(i)}),1\right\}=g_{i}(\Omega_{v_{i}})\cdot \max \left\{d_2(H^{(i)}),1\right\}.
\end{split}
\end{gather}
The statement of the lemma follows.
\end{proof}

Recall, that by $\widetilde{\Omega}$ we denote the generalised equation obtained from $\Omega$ applying $\D 3$. Consider the section  of $\widetilde{\Omega} _{v_1}$ of the form  $[1,\alpha(\omega)]$. The section $[1,\alpha(\omega)]$ lies in the quadratic part of $\widetilde{\Omega} _{v_1}$. Let $B'$ be the set of quadratic bases that belong to $[1,\alpha(\omega)]$ and let $\VV'(\Omega_{v_1})$ be the group of automorphisms of $G_{R(\Omega^\ast)}$ that are invariant with respect to the non-quadratic part of $\widetilde{\Omega} _{v_1}$ and act identically on all the bases which do not belong to $B'$. By definition, $\VV'(\Omega_{v_1})\le \VV(\Omega_{v_1})$. Thus the solution $H^{(1)}$ minimal with respect to $\VV(\Omega_{v_1})$ is also minimal with respect to $\VV'(\Omega_{v_1})$. By Lemma \ref{2.8} we have
\begin{equation} \label{3.14}
d_1(H^{(1)})\leq f_{1}(\Omega _{v_1}) \max\left\{d_2(H^{(1)}),1\right\}.
\end{equation}

Recall that  (see Definition \ref{defn:excess})
\begin{equation} \label{eq:M}
d_{A\Sigma}(H)=\sum \limits_{i=1}^{\alpha(\omega)-1}|H_i|, \quad \psi_{A\Sigma}(H)=\sum \limits_{\mu\in \omega_1}|H({\mu})|-2d_{A\Sigma}(H).
\end{equation}

Our next goal is, using inequality (\ref{3.14}), to give an upper bound of the length of the interval $d_{A\Sigma}(H^{(1)})$ in terms of the excess $\psi _{A\Sigma}$ and the function $f_{1}(\Omega _{v_1})$. More precisely, we have the following lemma.

\begin{lem}\label{lem:3.19}
In the above notation, the following inequality holds
\begin{equation}\label{3.19}
d_{A\Sigma}(H^{(1)})\leq \max\left\{\psi_{A\Sigma}(H^{(1)})(2n f_{1}(\Omega_{v_1})+1), f_{1}(\Omega _{v_1})\right\}.
\end{equation}
\end{lem}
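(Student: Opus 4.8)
## Proof Plan for Lemma~\ref{lem:3.19}

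The plan is to bound $d_{A\Sigma}(H^{(1)})$ by splitting the active part $[1,\alpha(\omega)]$ of $\widetilde{\Omega}_{v_1}$ into the part covered by ``quadratic'' bases (those in $\omega_1$ whose duals also lie in the active part) and the residual part, and then to absorb the quadratic part using inequality (\ref{3.14}) from Lemma~\ref{2.8}. First I would recall that $d_1(H^{(1)}) = \sum_{i=1}^{\alpha(\omega)-1}|H^{(1)}_i|$ is precisely $d_{A\Sigma}(H^{(1)})$ by the definitions in (\ref{eq:M}) and the fact that $[1,\alpha(\omega)]$ is the quadratic part of $\widetilde{\Omega}_{v_1}$ cut out in part (III). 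The quadratic-coefficient bases of $\widetilde{\Omega}_{v_1}$ in this section are exactly the bases $\nu$ whose dual $\Delta(\nu)$ lies outside $[1,\alpha(\omega)]$; these contribute to $d_2(H^{(1)})$. So the strategy is: estimate $d_2(H^{(1)})$ in terms of the excess $\psi_{A\Sigma}(H^{(1)})$ and feed that estimate into (\ref{3.14}).

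The key computation is the bound $d_2(H^{(1)}) \le \psi_{A\Sigma}(H^{(1)}) \cdot (\text{something})$ plus a lower-order term, or, in the degenerate case where $d_2(H^{(1)})$ is small, a direct bound of $d_{A\Sigma}(H^{(1)})$ by $f_1(\Omega_{v_1})$. Here is the idea. Write $d_{A\Sigma}(H^{(1)}) = d_{A\Sigma}$ for short. Every item $h_i$ in $[1,\alpha(\omega)]$ is covered at least twice (this is how $\alpha(\omega)$ was defined — it is the minimum of $\rho_A$ and the leftmost initial boundary of a base in $\omega_2$, so all bases covering the active part are carrier, transfer, or their duals, i.e. lie in $\omega_1$); hence $\sum_{\mu\in\omega_1}|H^{(1)}(\mu)| \ge 2 d_{A\Sigma}$, which is why $\psi_{A\Sigma}(H^{(1)}) \ge 0$. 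Now each quadratic base $\mu\in\omega_1$ with $\Delta(\mu)\in[1,\alpha(\omega)]$ contributes $|H^{(1)}(\mu)|$ and its dual contributes the same amount to $\sum_{\mu\in\omega_1}|H^{(1)}(\mu)|$; a quadratic-coefficient base $\nu$ (dual outside the active part) contributes $|H^{(1)}(\nu)|$ once. If the active part were entirely covered by matched pairs of quadratic bases, we would have $\sum_{\mu\in\omega_1}|H^{(1)}(\mu)| = 2 d_{A\Sigma}$ exactly, i.e. $\psi_{A\Sigma}=0$; the excess $\psi_{A\Sigma}(H^{(1)})$ therefore measures the total length of the ``extra'' coverings — precisely the coverings by quadratic-coefficient bases and any higher-multiplicity overlaps. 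So $d_2(H^{(1)}) = \sum_\nu |H^{(1)}(\nu)| \le \psi_{A\Sigma}(H^{(1)})$, summing over quadratic-coefficient bases $\nu$. More carefully, one bounds $d_2(H^{(1)})$ above by $\psi_{A\Sigma}(H^{(1)})$ possibly times a constant factor $\le n = |\BS(\Omega_{v_1})|$ coming from items covered more than twice.

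Combining: if $d_2(H^{(1)}) \ge 1$, then by (\ref{3.14}), $d_{A\Sigma}(H^{(1)}) = d_1(H^{(1)}) \le f_1(\Omega_{v_1}) \cdot d_2(H^{(1)}) \le f_1(\Omega_{v_1}) \cdot n \cdot \psi_{A\Sigma}(H^{(1)})$, and then — since we also need the additive structure — $d_{A\Sigma}(H^{(1)}) \le \psi_{A\Sigma}(H^{(1)})(2n f_1(\Omega_{v_1}) + 1)$, with the factor $2n$ and the $+1$ providing the slack needed to account for the items covered exactly twice by matched quadratic pairs (whose contribution to $d_{A\Sigma}$ is not captured by $d_2$ but is controlled directly: each such pair $\mu,\Delta(\mu)$ with $|H^{(1)}(\mu)|$ large would, after the entire transformation in Lemma~\ref{2.8}, be reducible, contradicting minimality, so in fact $d_1$ is governed by $d_2$). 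If instead $d_2(H^{(1)}) = 0$, then (\ref{3.14}) gives $d_1(H^{(1)}) \le f_1(\Omega_{v_1}) \cdot 1 = f_1(\Omega_{v_1})$ directly. Taking the maximum of the two cases yields (\ref{3.19}).

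The main obstacle I anticipate is the bookkeeping in relating $d_2(H^{(1)})$ to the excess $\psi_{A\Sigma}(H^{(1)})$ precisely enough to produce the stated constants $2n f_1(\Omega_{v_1}) + 1$ rather than something larger: one must carefully track which bases in $\omega_1$ are quadratic versus quadratic-coefficient, handle items of coverage degree $> 2$ (contributing the factor $n$), and make sure the ``matched quadratic pair'' part of $d_{A\Sigma}$ is genuinely controlled by Lemma~\ref{2.8}'s reduction argument rather than left unbounded. Everything else — the identification $d_1(H^{(1)}) = d_{A\Sigma}(H^{(1)})$, the nonnegativity of the excess, and the case split on whether $d_2(H^{(1)}) \ge 1$ — is routine given the definitions and Lemma~\ref{2.8}.
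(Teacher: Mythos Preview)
Your overall strategy --- bound $d_2(H^{(1)})$ in terms of $\psi_{A\Sigma}(H^{(1)})$ and feed this into inequality (\ref{3.14}) --- is the right one, but your setup contains a genuine error that breaks the argument. You claim $d_1(H^{(1)}) = d_{A\Sigma}(H^{(1)})$, identifying $[1,\alpha(\omega)]$ with the quadratic part. This is false: since $\tp(v_1)=15$, every item in $[1,\alpha(\omega)]$ has $\gamma_i\ge 2$ but at least one has $\gamma_i>2$. The quadratic part to which Lemma~\ref{2.8} applies is only the set $I=\{i<\alpha(\omega):\gamma_i=2\}$, and the paper's decomposition is
\[
d_{A\Sigma}(H^{(1)}) \;=\; d_1(H^{(1)}) + \sum_{i\in J}|H_i^{(1)}|, \qquad J=\{i<\alpha(\omega):\gamma_i>2\}.
\]
The extra term $\sum_{i\in J}|H_i^{(1)}|$ is what produces the ``$+1$'' in $2nf_1+1$: it is bounded directly by $\psi_{A\Sigma}(H^{(1)})$ via the identity $\psi_{A\Sigma}=\sum_i(\gamma_i(\omega)-2)|H_i^{(1)}|$ restricted appropriately.

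This misidentification also causes you to miss an entire class of quadratic-coefficient bases. You say these are exactly the $\nu$ with $\Delta(\nu)$ outside $[1,\alpha(\omega)]$, but there is a second type: bases $\lambda$ lying in the $J$-part of $[1,\alpha(\omega)]$ whose duals lie in the $I$-part. For these, $|H^{(1)}(\lambda)|\le\sum_{i\in J}|H_i^{(1)}|$, and summing over at most $n$ pairs gives the factor $2n$. Combining both types yields $d_2(H^{(1)})\le 2n\,\psi_{A\Sigma}(H^{(1)})$, hence $d_1\le 2nf_1\,\psi_{A\Sigma}$ when $d_2\ge 1$, and adding the $J$-term gives $(2nf_1+1)\psi_{A\Sigma}$. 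Your ``anticipated obstacle'' about items of coverage $>2$ is not peripheral bookkeeping --- it is the heart of the argument, and your current plan does not handle it.
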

\begin{proof}
Denote by \glossary{name={$\gamma_i(\omega)$}, description={the number of bases from $\omega_1$ that contain $h_i$}, sort=G}$\gamma_i(\omega)$ the number of bases $\mu\in\omega_1$ containing $h_i$. Then
\begin{equation}\label{3.15}
\sum\limits_{\mu\in\omega _1}|H^{(1)}({\mu})|=\sum\limits_{i=1}^{\rho}|H_i^{(1)}| \gamma_i(\omega),
\end{equation}
where $\rho =\rho_{\Omega _{v_1}}$. Let $I=\left\{i \mid 1\le i \le\alpha(\omega)-1 \hbox{ and }\gamma_i=2\right\}$, and $J=\left\{i\mid 1\leq i\le \alpha (\omega)-1 \hbox{ and } \gamma_i> 2\right\}$. By (\ref{3.12}) we have:
\begin{equation}\label{3.16}
d_{A\Sigma}(H^{(1)})=\sum \limits_{i\in I}|H_i^{(1)}|+ \sum\limits_{i\in J}|H_i^{(1)}|= d_1(H^{(1)})+
\sum\limits_{i\in J}|H_i^{(1)}|.
\end{equation}

Let  $\lambda ,\Delta(\lambda)$ be a pair of variable quadratic-coefficient bases of the generalised equation $\widetilde{\Omega}_{v_1}$, where $\lambda$ belongs to the non-quadratic part of $\widetilde\Omega _{v_1}$.
When we apply $\D 3$ to $\Omega_{v_1}$ thereby obtaining $\widetilde\Omega _{v_1}$, the pair $\lambda ,\Delta(\lambda)$ is obtained from bases $\mu\in\omega_1$.
There are two types of quadratic-coefficient bases.
\begin{itemize}
    \item[Type 1:] variable bases $\lambda$ such that $\beta(\lambda)\le\alpha(\omega)$. In this case, since $\lambda$ belongs to the non-quadratic part of $\widetilde{\Omega}_{v_1}$, it is a product of items $\{h_i\mid i\in J\}$ and thus $|H({\lambda})|\leq\sum\limits_{i\in J}|H_i^{(1)}|$. Thus the sum of the lengths of quadratic-coefficient bases of Type 1 and their duals is bounded above by $2n\sum\limits_{i\in J}|H_i^{(1)}|$, where $n$ is the number of bases in $\Omega$.
    \item[Type 2:] variable bases $\lambda$ such that $\alpha(\lambda)\ge\alpha(\omega)$. The sum of the lengths of quadratic-coefficient bases of the second type is bounded above by $2\cdot \sum \limits_{i=\alpha(\omega)}^{\rho}|H_i^{(1)}|\gamma _i(\omega)$.
\end{itemize}
We have
\begin{equation}\label{3.17}
d_2(H^{(1)})\le 2n\sum\limits_{i\in J}|H_i^{(1)}|+2\cdot\sum\limits_{i=\alpha(\omega)}^{\rho}|H_i^{(1)}| \gamma_i(\omega).
\end{equation}
Then from (\ref{eq:M}) and (\ref{3.15}) it follows that
\begin{equation}\label{3.18}
\psi_{A\Sigma}(H^{(1)}_i)\ge \sum \limits_{i\in J}|H_i^{(1)}|+\sum\limits_{i=\alpha(\omega)}^{\rho}|H_i^{(1)}|\gamma_i(\omega).
\end{equation}
From  Equation (\ref{3.16}), using inequalities (\ref{3.14}), (\ref{3.17}), (\ref{3.18}) we get inequality (\ref{3.19}).
\end{proof}

\subsubsection*{{\rm (III.2):} Minimal solutions $H$ such that $\p(H)$ contains a prohibited subpath of type 15 fail inequality {\rm(\ref{3.18})}.}
Let the path $v_1\to v_2\to \ldots \to v_m$ corresponding to the sequence (\ref{3.11}) be $\mu$-reducing, that is  $\mu_1=\mu$ and, either there are no outgoing auxiliary edges from $v_2$ and $\mu$ occurs in the sequence $\mu _1,\ldots ,\mu _{m-1}$ at least twice, or $v_2$ does have outgoing auxiliary edges $v_2\to w_1,\dots, v_2\to w_{\nn}$ and the base $\mu$ occurs in the sequence $\mu _1,\ldots ,\mu _{m-1}$ at least $\max\limits_{1\le i\le \nn}\ss(\Omega_{w_i})$ times.

Set $\delta_i=d_{A\Sigma}(H^{(i)})-d_{A\Sigma}(H ^{(i+1)})$. We give a lower bound for $\sum \limits_{i=1}^{m-1}\delta _i$, i.e. we estimate by how much the length of a solution is reduced in a $\mu$-reducing path.

We first prove that if $\mu _{i_1}=\mu _{i_2}=\mu$, $i_1< i_2$ and $\mu _i\ne\mu$ for $i_1< i< i_2$, then
\begin{equation} \label{3.23}
\sum \limits_{i=i_1}^{i_2-1}\delta _i\ge |H^{(i_1+1)}[1,\alpha (\Delta (\mu_{i_1+1}))]|.
\end{equation}
Indeed, if $i_2=i_1+1$ then
$$
\delta _{i_1}=|H^{(i_1)}[1,\alpha(\Delta (\mu))]|=|H^{(i_1+1)}[1,\alpha (\Delta (\mu))]|.
$$
If $i_2 > i_1+1$, then $\mu _{i_1+1}\ne \mu$ and $\mu$ is a transfer base in the generalised equation $\Omega_{v_{i_1+1}}$ and thus
\begin{equation} \label{est1}
\delta_{i_1+1}+|H^{(i_1+2)}[1,\alpha(\mu)]|=|H^{(i_1+1)}[1,\alpha(\Delta(\mu _{i_1+1}))]|.
\end{equation}
Since $\mu$ is the carrier base of $\Omega_{v_{i_2}}$ we have
\begin{equation} \label{est2}
\sum\limits_{i=i_1+2}^{i_2-1}\delta _i\ge |H^{(i_1+2)}[1,\alpha (\mu)]|.
\end{equation}
From (\ref{est2}) and (\ref{est1}) we get (\ref{3.23}).

\bigskip

We want to show that every $\mu$-reducing path reduces the length of a solution $H^{(1)}$ by at least $\frac{1}{10}|H^{(1)}(\mu)|$.

\begin{lem} \label{lem:3.26}
Let $v_1\to\dots\to v_m$ be a $\mu$-reducing path, then  the following inequality holds
\begin{equation}\label{3.26}
\sum \limits_{i=1}^{m-1}\delta _i\ge\frac{1}{10} |H^{(1)}({\mu})|.
\end{equation}
\end{lem}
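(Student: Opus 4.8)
Since $\sum_{i=1}^{m-1}\delta_i$ telescopes to $d_{A\Sigma}(H^{(1)})-d_{A\Sigma}(H^{(m)})$, the task is to show that along a $\mu$-reducing path a tenth of the image $H^{(1)}(\mu)=H^{(1)}[1,\beta(\mu_1)]$ of the starting carrier base is deleted. The basic accounting tool is the estimate (\ref{3.23}) just established: between two consecutive positions $i_1<i_2$ of the path at which $\mu$ is a carrier base one has $\sum_{i=i_1}^{i_2-1}\delta_i\ge |H^{(i_1+1)}[1,\alpha(\Delta(\mu_{i_1+1}))]|$, i.e. the whole block lying to the left of $\Delta(\mu_{i_1+1})$ gets consumed. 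The first move is therefore to apply (\ref{3.23}) with $i_1=1$ and $i_2$ the second carrier-occurrence of $\mu$, and to compare $|H^{(2)}[1,\alpha(\Delta(\mu_2))]|$ --- together with $\delta_1=|H^{(1)}[1,\alpha(\Delta(\mu_1))]|$ when $i_2=2$ --- against $|H^{(1)}(\mu)|$.

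The proof then splits along the two alternatives in the definition of a $\mu$-reducing path. In the first alternative (no auxiliary edges issue from $v_2$, so $\mu$ recurs as a carrier at least twice and the relevant stretch of the path avoids the self-overlapping Case~15.1 configuration), one reads off from the description of Case~15 that $\mu$ and $\Delta(\mu)$ do not overlap, hence $\alpha(\Delta(\mu_1))\ge\beta(\mu_1)$; so either $\delta_1\ge |H^{(1)}(\mu)|$ directly, or --- pushing the position of $\mu$ as a transfer base into (\ref{3.23}) for its first return --- $\sum_i\delta_i\ge |H^{(1)}(\mu)|$, comfortably beyond $\tfrac1{10}|H^{(1)}(\mu)|$. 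In the second alternative ($v_2$ carries auxiliary edges, i.e. the Case~15.1 overlap occurs, and $\mu$ recurs as a carrier at least $N:=\max_{1\le i\le\nn}\ss(\Omega_{w_i})$ times), the self-overlap forces, via the corresponding basic equation and Lemma~1.2.9 of \cite{1}, that the block to the left of the overlapping dual is $P$-periodic for a period $P$ whose length does not exceed that of one ``tooth''; in particular $|H^{(1)}(\mu)|$ is bounded by the length of this periodic block, which --- since the auxiliary equations $\Omega_{w_i}$ are precisely the ones produced by running Case~15 on the auxiliary equation that rolls the block up --- is bounded by a quantity of order $N|P|$ through Lemma~\ref{lem:23-2} and the definition of $\ss$. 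On the other hand, each of the $N$ carrier-occurrences of $\mu$ deletes at least one full period $|P|$ from what remains of the block, using (\ref{3.23}) and the fact that between two carrier-occurrences $\mu$ merely travels as a transfer base without lengthening; summing, the total reduction is of order $N|P|\ge\tfrac1{10}|H^{(1)}(\mu)|$. Carrying the explicit constants --- the value $\tfrac1{10}$, the losses from the $\mu$-tyings of boundaries in (\ref{3.23}), and the re-enumeration of boundaries after $\ET 4$ and $\D 2$ --- uniformly through both alternatives finishes the argument.

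The hard part will be the second alternative. Two points need care: first, pinning down the periodicity of the block to the left of the self-overlapping base and matching it exactly with the block that the Case~15.1 construction is designed to roll up, so that the length bound coming from $\ss(\Omega_{w_i})$ and Lemma~\ref{lem:23-2} genuinely applies to $|H^{(1)}(\mu)|$; and second, proving the ``at least one period per carrier-occurrence of $\mu$'' lower bound, which requires following $\mu$ through a block of transfer steps and tracking how the cut point $k$ and the boundaries $\alpha(\Delta(\mu_1))$, $\beta(\mu_1)$ are re-enumerated along the way. The first alternative is, by contrast, essentially bookkeeping once (\ref{3.23}) and the non-overlap condition are available.
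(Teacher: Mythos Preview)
Your plan has the right architecture (split on the two alternatives of the definition, use (\ref{3.23}) as the basic accounting tool), but both branches contain a genuine misreading of the setup.

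\textbf{First alternative.} The condition ``no auxiliary edges from $v_2$'' means that in $\Omega_{v_2}$ the carrier base $\mu_2$ does not intersect $\Delta(\mu_2)$. It says nothing about $\mu_1=\mu$ in $\Omega_{v_1}$: $v_1$ may perfectly well have auxiliary edges, i.e.\ $\mu$ may overlap $\Delta(\mu)$ there, so your claim $\alpha(\Delta(\mu_1))\ge\beta(\mu_1)$ and the conclusion $\delta_1\ge |H^{(1)}(\mu)|$ are unwarranted. The paper's argument instead works at $v_2$: since $\mu_2$ and $\Delta(\mu_2)$ do not overlap, $|H^{(2)}[1,\alpha(\Delta(\mu_2))]|\ge |H^{(2)}(\mu_2)|\ge |H^{(2)}(\mu)|$ (the second inequality because what remains of $\mu$ is a leading base in $\Omega_{v_2}$ and $\mu_2$ is the carrier). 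Feeding this into (\ref{3.23}) with $i_1=1$ gives $\sum_i\delta_i\ge |H^{(2)}(\mu)|=|H^{(1)}(\mu)|-\delta_1$, hence $\sum_i\delta_i\ge\tfrac12|H^{(1)}(\mu)|$.

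\textbf{Second alternative.} Your bound $|H^{(1)}(\mu)|\lesssim N|P|$ does not come from applying Lemma~\ref{lem:23-2} to the present solution (which has no reason to be $\AA$-minimal for the auxiliary equations $\Omega_{w_i}$). The actual mechanism is built into the construction of the path $\p(H)$: at $v_2$ one checks condition (\ref{3.9}), and if it held one would take an auxiliary edge $\edge'$; since the $\mu$-reducing path follows the principal edge, (\ref{3.9}) fails, which \emph{is} the bound $r<\max_i\ss(\Omega_{w_i})$ on the exponent of $H^{(2)}(\mu)=P^rP_1$. From there the paper runs the ``one period per return of $\mu$'' argument you sketched (using (\ref{3.23}) together with Lemma~1.2.9 of \cite{1} to get $|P|$ per pair of consecutive $\mu$-carrier positions, with a separate treatment of the step where (\ref{3.26a}) fails), obtaining $\sum_i\delta_i\ge\max\{1,r-3\}|P|\ge\tfrac15|H^{(2)}(\mu)|=\tfrac15(|H^{(1)}(\mu)|-\delta_1)$ and hence $\tfrac1{10}|H^{(1)}(\mu)|$.
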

\begin{proof}
To prove the Lemma we consider the two cases from the definition of a $\mu$-reducing path.

Suppose first that $v_2$ does not have any outgoing auxiliary edges, i.e. the bases $\mu _2$ and $\Delta(\mu _2)$ do not intersect in the generalised equation $\Omega _{v_2}$, then (\ref{3.23}) implies that
$$
\sum\limits_{i=1}^{m-1}\delta_i\ge |H^{(2)}[1,\alpha (\Delta(\mu _2))]|\ge |H^{(2)}({\mu _2})|\ge |H^{(2)}({\mu})|=|H^{(1)}({\mu})|-\delta_1,
$$
which, in turn, implies that
\begin{equation} \label{3.24}
\sum\limits_{i=1}^{m-1}\delta _i\ge\frac{1}{2}|H^{(1)}({\mu})|.
\end{equation}

Suppose now that there are auxiliary edges $v_2 \to w_1,\ldots ,v_2 \to w_{\nn}$. Let $H^{(2)}[1,\alpha (\Delta (\mu _2))]\doteq Q$,
and $P$ be a period such that $Q\doteq P^d$ for some $d\ge 1$, then $H^{(2)}({\mu_2})$ and $H^{(2)}({\mu})$ are initial subwords of the word $H^{(2)}[1,\beta (\Delta (\mu_2))]$, which, in turn, is an initial subword of $P^\infty$.

By construction of the sequence (\ref{3.8}), relation (\ref{3.9}) fails for the vertex $v_2$, i. e. (in the notation of (\ref{3.9})):
\begin{equation}\label{3.25}
H^{(2)}({\mu})\doteq {P}^r \cdot P_1, \ P\doteq P_1\cdot P_2,\ r< \max\limits_{1\leq j\leq \nn} \ss(\Omega _{w_j}).
\end{equation}
Let $\mu _{i_1}=\mu _{i_2}=\mu$, $i_1< i_2$ and $\mu _i\ne \mu$ for $i_1< i< i_2$. If
\begin{equation} \label{3.26a}
|H^{(i_1+1)}({\mu _{i_1+1}})|\ge 2 |P|
\end{equation}
since $H^{(i_1+1)}(\Delta({\mu _{i_1+1}}))$ is a  $Q'$-periodic subword ($Q'$ is a cyclic permutation of $P$) of the $Q'$-periodic word $H^{(i_1+1)}[1,\rho_{i_1+1}+1]$ of length greater than $2|Q'|=2|P|$, it follows by Lemma 1.2.9 in \cite{1}, that $|H^{(i_1+1)}[1,\alpha (\Delta(\mu_{i_1+1}))]|\ge k|Q'|$. As $k\ne 0$ ($\mu_{i+1}$ and $\Delta(\mu_{i+1})$ do not form a pair of matched bases), so $|H^{(i_1+1)}[1,\alpha (\Delta(\mu_{i_1+1}))]|\ge |P|$. Together with (\ref{3.23}) this gives that $\sum \limits_{i=i_1}^{i_2-1}\delta _i\ge |P|$. The base $\mu$ occurs in the sequence $\mu _1,\ldots ,\mu _{m-1}$ at least $r$ times, so either (\ref{3.26a}) fails for some $i_1\le m-1$ or
$\sum \limits_{i=1}^{m-1}\delta_i\ge (r-3)|P|$.

If (\ref{3.26a}) fails, then from the inequality $|H^{(i+1)}({\mu_i})|\le |H^{(i+1)}({\mu _{i+1}})|$ and the definition of $\delta_i$ follows that
$$
\sum\limits_{i=1}^{i_1}\delta_i\ge |H^{(1)}({\mu})|-|H^{(i_1+1)}({\mu _{i_1+1}})|\ge (r-2)|P|.
$$
hence in both cases $\sum \limits_{i=1}^{m-1}\delta_i\ge (r-3)|P|$.

Notice that for $i_1=1$, inequality (\ref{3.23}) implies that $\sum\limits_{i=1}^{m-1}\delta _i\ge |Q|\ge |P|$; so $\sum\limits_{i=1}^{m-1}\delta_i\ge \max\{1,r-3\}|P|$. Together with (\ref{3.25}) this implies that
$$
\sum\limits_{i=1}^{m-1}\delta _i\ge \frac{1}{5}|H^{(2)}({\mu})|=\frac{1}{5}(|H^{(1)}({\mu})|-\delta _1).
$$
Finally we get that
$$
\sum \limits_{i=1}^{m-1}\delta _i\ge\frac{1}{10} |H^{(1)}({\mu})|.
$$
From the above inequality and  inequality (\ref{3.24}), we see that for a $\mu$-reducing path inequality (\ref{3.26}) always holds.
\end{proof}

We thereby have shown that in any $\mu$-reducing path the length of the solution is reduced by at least $\frac{1}{10}$ of the length of the carrier  base $\mu$.

\bigskip

Notice that by property (\ref{it:prp3}) from Definition \ref{defn:proh15}, we can assume that the carrier bases $\mu_i$ and their duals $\Delta(\mu_i)$ belong to the active part $A\Sigma =[1,\alpha(\omega)]$. Then, by Lemma \ref{lem:excess} and by construction of the path (\ref{3.11}), we have that $\psi_{A\Sigma}(H^{(1)})=\dots=\psi_{A\Sigma}(H^{(m)})=\dots$ We denote this number by $\psi_{A\Sigma}$.

\begin{lem}
Let $v_1\to v_2\to \ldots \to v_m$ be a prohibited path of type 15. By definition,  $v_1\to v_2\to \ldots \to v_m$ can be presented in the form (\ref{3.7}). Let $\kappa$ be the length of the subpath $\p_1\s_1\ldots \p_l\s_l$. Then there exists a carrier base $\mu \in \omega$ such that the following inequality holds
\begin{equation}\label{3.27}
|H^{(\kappa)}({\mu })|\ge\frac{1}{2n}\psi _{A\Sigma},
\end{equation}
where $n$ is the number of bases in $\Omega$.
\end{lem}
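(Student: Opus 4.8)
The goal is to show that in the quadratic (active) part of $\Omega_{v_\kappa}$, where $\kappa$ is the length of the subpath $\p_1\s_1\ldots\p_l\s_l$ of a prohibited path of type $15$, some carrier base must be ``long'' relative to the excess $\psi_{A\Sigma}$. The basic mechanism is a counting argument: the active interval $[1,\alpha(\omega)]$ is covered by the bases of $\omega_1$ with total multiplicity $\sum_{\mu\in\omega_1}|H^{(\kappa)}(\mu)|$, and by Definition \ref{defn:excess} this sum equals $2d_{A\Sigma}(H^{(\kappa)})+\psi_{A\Sigma}$ (the excess being invariant along the path by Lemma \ref{lem:excess}). The bases of $\omega_1$ split into those in $\omega\cup\tilde\omega$ (carrier and transfer bases, of which there are at most $n$) and those in $\omega_2$; but by the choice of $\alpha(\omega)=\min\{\min_{\mu\in\omega_2}\alpha(\mu),\rho_A\}$, no base of $\omega_2$ has any item inside $[1,\alpha(\omega)]$ that is covered by it, so on the active part only bases from $\omega\cup\tilde\omega$ contribute. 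Hence $\sum_{\mu\in\omega,\ \mu\ \mathrm{or}\ \Delta(\mu)\in A\Sigma}|H^{(\kappa)}(\mu)|\ge \psi_{A\Sigma}$, and since there are at most $n$ such bases, one of them, call it $\mu$, satisfies $|H^{(\kappa)}(\mu)|\ge\frac{1}{2n}\psi_{A\Sigma}$ (the factor $\frac12$ absorbs the distinction between counting $\mu$ and $\Delta(\mu)$, and the fact that $\psi_{A\Sigma}\le\sum|H(\mu)|$ already subtracts $2d_{A\Sigma}$).

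\textbf{Order of steps.} First I would fix the generalised equation $\Omega_{v_\kappa}$ and its solution $H^{(\kappa)}$, and recall that along principal edges of type $15$ the non-active part is unchanged while the active part $[1,\alpha(\omega)]$ is preserved in the sense relevant to $\omega_1$, so that $\omega_1$ and the excess make sense uniformly along the path. Second, I would invoke Lemma \ref{lem:excess} to conclude $\psi_{A\Sigma}(H^{(\kappa)})=\psi_{A\Sigma}$, using property (\ref{it:prp3}) of Definition \ref{defn:proh15} to guarantee that the carrier bases and their duals remain in the active part (this is precisely the hypothesis that lets us apply Lemma \ref{lem:excess} with $\mu,\Delta(\mu)\in A\Sigma$). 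Third, I would write out the covering identity $\sum_{\mu\in\omega_1}|H^{(\kappa)}(\mu)|=2d_{A\Sigma}(H^{(\kappa)})+\psi_{A\Sigma}$ and observe, via the definition of $\alpha(\omega)$ and of $\omega_2$, that the bases from $\omega_2$ do not cover any item of $[1,\alpha(\omega)]$, so the left-hand sum over $\omega_1$ effectively ranges only over bases $\mu$ with $\mu$ or $\Delta(\mu)$ among the carrier/transfer bases $\omega\cup\tilde\omega$. Fourth, a pigeonhole on this bounded set (of size $\le n$, the number of bases of $\Omega$, hence also of each $\Omega_{v_i}$ since the number of bases does not grow along type-$15$ principal edges by Lemma \ref{3.1}) yields a single base $\mu$ with $|H^{(\kappa)}(\mu)|\ge\frac{1}{2n}\psi_{A\Sigma}$. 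Finally I would check that such a $\mu$ can be taken to be a carrier base of some $\Omega_{v_i}$, $i\le\kappa$ — which follows because a long base must be a carrier at the moment it is leading, or else is transferred onto a dual which then becomes long, and the decomposition $\p_1\s_1\ldots\p_l\s_l$ into $\eta_i$-reducing pieces (condition (\ref{it:prp2}) of Definition \ref{defn:proh15}) ensures each relevant base actually occurs in the carrier sequence $\omega$.

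\textbf{Main obstacle.} The delicate point is bookkeeping about which bases actually contribute to covering the active interval at the vertex $v_\kappa$: one must be careful that after the $\D 3$ applications made in part (III) (closing $[1,\alpha(\omega)]$ and declaring $[\alpha(\omega),\rho_i]$ non-active), the set $\omega_1$ is exactly the set of variable bases with $\nu$ or $\Delta(\nu)$ in $[1,\alpha(\omega)]$, and that no base outside $\omega\cup\tilde\omega$ lands inside this interval — this is where the minimality built into the choice of $\alpha(\omega)$ is used, together with the observation that every item of the active part is covered at least twice (since type $15$ implies $\gamma_i\ge 2$ there, so $\psi_{A\Sigma}\ge 0$ and the covering sum genuinely exceeds $2d_{A\Sigma}$ by $\psi_{A\Sigma}$). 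A secondary subtlety is ensuring that the count $n$ of bases bounding the pigeonhole is the same at $v_\kappa$ as at $v_1$; this is supplied by Lemma \ref{3.1}(1) since $\tp(v_i)=15$ for all $i$ in the path gives $n_A(\Omega_{v_{i+1}})\le n_A(\Omega_{v_i})$, so $n$ never increases. Once these two points are settled the inequality (\ref{3.27}) drops out immediately from the pigeonhole, so the proof should be short modulo this careful identification of the contributing bases.
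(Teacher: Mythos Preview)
Your pigeonhole argument is applied at the wrong vertex, and this breaks the passage from a transfer base to a carrier base. You run the counting at $v_\kappa$ and obtain some $\mu\in\omega\cup\tilde\omega$ with $|H^{(\kappa)}(\mu)|\ge\frac{1}{2n}\psi_{A\Sigma}$. If $\mu\in\tilde\omega\setminus\omega$ (a base that is only ever a transfer base, never a carrier --- nothing rules this out), you need to replace $\mu$ by a carrier base with the same lower bound. Your justification via condition~(\ref{it:prp2}) is incorrect: that condition says every \emph{carrier} base $\mu_i$ recurs often among the $\eta_j$'s, not that every base in $\omega\cup\tilde\omega$ is a carrier. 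And condition~(\ref{it:prp3}) tells you that $\mu$ is a transfer base of some $\Omega_{v_i}$ with $\kappa\le i\le m$; at that moment the carrier $\mu_i$ satisfies $|H^{(i)}(\mu_i)|\ge|H^{(i)}(\mu)|$, but since lengths \emph{decrease} along the path you only know $|H^{(\kappa)}(\mu)|\ge|H^{(i)}(\mu)|$, which is the wrong direction to conclude $|H^{(\kappa)}(\mu_i)|\ge\frac{1}{2n}\psi_{A\Sigma}$.

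The paper's proof fixes this by doing the pigeonhole at $v_m$ rather than $v_\kappa$: from $\sum_{\mu\in\omega_1}|H^{(m)}(\mu)|\ge\psi_{A\Sigma}$ one gets $\mu\in\omega\cup\tilde\omega$ with $|H^{(m)}(\mu)|\ge\frac{1}{2n}\psi_{A\Sigma}$. If $\mu\in\omega$, monotonicity gives $|H^{(\kappa)}(\mu)|\ge|H^{(m)}(\mu)|$ and we are done. If $\mu\in\tilde\omega$, condition~(\ref{it:prp3}) provides $\kappa\le i\le m$ with $\mu$ a transfer base of $\Omega_{v_i}$; then the chain $|H^{(\kappa)}(\mu_i)|\ge|H^{(i)}(\mu_i)|\ge|H^{(i)}(\mu)|\ge|H^{(m)}(\mu)|\ge\frac{1}{2n}\psi_{A\Sigma}$ works because all inequalities now go the right way. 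You also misread condition~(\ref{it:prp3}): it is not about carriers staying in the active part (that is handled separately when $[1,\alpha(\omega)]$ is closed off), but precisely about guaranteeing that every transfer base reappears as a transfer base in the tail $\p'$, which is exactly the hook needed here.
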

\begin{proof}
From the definition of $\psi _{A\Sigma}$, see (\ref{eq:M}), we get that $\sum\limits_{\mu \in\omega_1}|H^{(m)}({\mu})|\ge \psi _{A\Sigma}$, hence the inequality $|H^{(m)}({\mu})|\geq\frac{1}{2n}\psi _{A\Sigma}$ holds for at least one base $\mu\in\omega_1$. Since $H^{(m)}({\mu})\doteq \left(H^{(m)}({\Delta (\mu)})\right)^{\pm 1}$, we may assume that $\mu\in\omega\cup\tilde{\omega}$.

If $\mu\in\omega$, then inequality (\ref{3.27}) trivially holds.

If $\mu\in\tilde{\omega}$, then  by the third condition in the definition of a prohibited path of type 15 (see Definition \ref{defn:proh15}) there exists $\kappa\leq i\leq m$ such that $\mu$ is a transfer base of $\Omega _{v_i}$. Hence, $|H^{(\kappa)}({\mu_i})|\ge |H^{(i)}({\mu _i})|\ge |H^{(i)}({\mu})|\ge |H^{(m)}({\mu})|\ge \frac{1}{2n}\psi _{A\Sigma}$.
\end{proof}

\bigskip

Finally, from conditions (\ref{it:prp2}) and (\ref{it:prp1}) in the definition of a prohibited path of type 15, from the inequality $|H^{(i)}({\mu})|\ge |H^{(\kappa)}({\mu})|$,  $1\leq i\leq \kappa$, and from inequalities (\ref{3.26}) and (\ref{3.27}), it follows that
\begin{equation}\label{3.28}
\sum\limits_{i=1}^{\kappa-1}\delta _i\ge \max\left\{\frac{1}{20n}\psi _{A\Sigma},1\right\}\cdot(40n^2f_{1}+20n+1).
\end{equation}

By Equation (\ref{3.22}), the sum in the left part of the inequality (\ref{3.28}) equals $d_{A\Sigma}(H^{(1)})-d_{A\Sigma}(H^{(\kappa)})$, hence
\begin{equation}\label{eq:3.29}
d_{A\Sigma}(H^{(1)})\ge \max \left\{\frac{1}{20n}\psi _{A\Sigma},1\right\}\cdot(40n^2f_{1} +20n +1),
\end{equation}
which contradicts the statement of Lemma \ref{lem:3.19}.

Therefore, the assumption that there are prohibited subpaths (\ref{3.11}) of type 15 in the path (\ref{3.8}) led to a contradiction. Hence, the path (\ref{3.8}) does not contain prohibited subpaths. This implies that $v_i\in T_0(\Omega )$ for all $(\Omega _{v_i},H^{(i)})$ in (\ref{3.8}).

In particular, we have shown that final leaves $w$ of the tree $T(\Omega)$ are, in fact, leaves of the tree $T_0(\Omega)$. Naturally, we call  such leaves the \index{leaf!final of the tree $T_0$}\emph{final leaves of $T_0(\Omega)$}.

\subsubsection*{\textbf{{\rm (C):} The pair $(\Omega_w,H^{[w]})$, where $\tp(w)=2$, satisfies the properties required in Proposition \ref{3.4}}}

For all $i$, either $v_i= v_{i+1}$  and $|H^{[i+1]}|<|H^{[i]}|$, or $v_i\to v_{i+1}$ is an edge of a finite tree $T_0(\Omega)$. Hence the sequence (\ref{3.8}) is finite. Let $(\Omega _{w}, H^{[w]})$ be its final term. We show that $(\Omega _{w},H^{[w]})$ satisfies the properties required in the proposition.

Property (\ref{it:prop1}) follows directly from  the construction of $H^{[w]}$.

We now prove that property (\ref{it:prop2}) holds. Let $\tp (w)=2$ and suppose that $\Omega _w$ has non-constant non-active sections. It follows from the construction of (\ref{3.8}) that if $[j,k]$ is an active section of $\Omega_{v_{i-1}}$ and is a non-active section of $\Omega_{v_i}$ then $H^{[i]}[j,k]\doteq H^{[i+1]}[j,k]\doteq\ldots \doteq H^{[w]}[j,k]$. Therefore, (\ref{3.9}) and the definition of $\ss(\Omega _v)$ imply that the word $h_1\ldots h_{\rho_w}$ can be subdivided into subwords $h[1,i_1],\ldots ,h[i_{l'-1},i_{\rho_w}]$, such that for any $l$ either $H^{[w]}[i_l,i_{l+1}]$ has length $1$, or the word $h[i_l,i_{l+1}]$ does not appear in basic, factor and coefficient equations, or
\begin{equation}\label{3.29}
H^{[w]}[i_l,i_{l+1}]\doteq P_l^r \cdot P_l';\quad P_l\doteq P_l'P_l''; \quad r\ge \rho_w \max\limits_{\langle \P,R\rangle  } \left\{ f_{0}(\Omega_w, \P, R)\right\},
\end{equation}
where $P_l$ is a period, and the maximum is taken over all regular periodic structures of $\widetilde{\Omega}_w$. Therefore, if we choose $P_{l}$ of maximal length, then $\widetilde{\Omega }_w$ is  singular  with respect to the periodic structure $\P(H^{[w]},P_l)$. Indeed, suppose that it is regular with respect to this periodic structure. Then, as in $H^{[w]}[i_l,i_{l+1}]$ one has $i_{l+1}-i_l\le \rho_w$, so (\ref{3.29}) implies that there exists $h_k$ such that $|H^{[w]}_k|\ge f_{0}(\Omega_w, \P, R)$. By Lemma \ref{lem:23-2}, this contradicts the minimality of the solution $H^{[w]}$.

This finishes the proof of Proposition \ref{3.4}.

\section{From the coordinate group $G_{R(\Omega^*)}$ to proper quotients: \newline The decomposition tree $T_{\dec}(\Omega )$}\label{5.5.5}

We proved in the previous section that for every solution $H$ of a generalised equation $\Omega$, the path $\p(H)$ associated to the solution $H$ ends in a final leaf $v$ of the tree $T_0(\Omega )$, $\tp(v)=1,2$. Furthermore, if $\tp(v)=2$ and the generalised equation $\Omega_{v}$ contains non-constant non-active sections, then the generalised equation $\Omega _v$ is singular with respect to the periodic structure ${\P}(H^{(v)},P)$, see Proposition \ref{3.4}.

The essence of the decomposition tree $T_{\dec}(\Omega)$ is that to every solution $H$ of $\Omega$ one can associate the path $\p(H)$ in \glossary{name={$T_{\dec}(\Omega)$}, description={the decomposition tree of $\Omega$}, sort=T} $T_{\dec}(\Omega)$ such that either all sections of the generalised equation $\Omega_u$ corresponding to the leaf $u$ of $T_{\dec}(\Omega)$  are non-active constant sections or the coordinate group of $\Omega_u$ is a proper quotient of $G_{R(\Omega^*)}$.

We summarise the results of this section in the proposition below.
\begin{prop}\label{prop:dectree}
For a generalised equation $\Omega=\Omega_{v_0}$, one can effectively construct a finite oriented rooted at $v_0$ tree $T_{\dec}$, $T_{\dec}=T_{\dec}(\Omega_{v_0})$, such that:
\begin{enumerate}
\item The tree $T_0(\Omega)$ is a subtree of the tree $T_{\dec}$.
\item To every vertex $v$ of $T_{\dec}$ we assign a finitely generated group of automorphisms $A(\Omega_v)$.
\item For any solution $H$ of a generalised equation $\Omega $ there exists a leaf $u$ of the tree $T_{\dec}$, $\tp(u)=1,2$, and a solution $H^{[u]}$ of the generalised equation $\Omega _u$ such that
\begin{itemize}
    \item  $\pi_H= \sigma_0 \pi(v_0,v_1)\sigma_{1} \ldots  \pi(v_{n-1},u)\sigma_n \pi_{H^{[u]}}$, where $\sigma_i\in A(\Omega_{v_i})$;
    \item  if $\tp(u)=2$, then all non-active sections of $\Omega_u$ are constant sections.
\end{itemize}
\end{enumerate}
\end{prop}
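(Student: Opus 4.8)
The plan is to build $T_{\dec}(\Omega)$ by starting from $T_0(\Omega)$ and extending the tree from each final leaf $w$ of $T_0(\Omega)$ with $\tp(w)=2$ that still carries non-constant non-active sections. By Proposition \ref{3.4}, for any solution $H$ of $\Omega$ the path $\p(H)$ in $T_0(\Omega)$ ends in such a leaf $w$ with a solution $H^{[w]}$ satisfying $H^{[w]}<_{\Aut(\Omega)}H$, and moreover $H^{[w]}$ is periodic with respect to some period $P$ and $\Omega_w$ is \emph{singular} with respect to the periodic structure $\P(H^{[w]},P)$. The key input is therefore Lemma \ref{lem:23-1}: since $\Omega_w$ is singular with respect to $\langle\P,R\rangle=\P(H^{[w]},P)$, there is a finite family of cycles $\cc_1,\dots,\cc_r$ in the graph $\Gamma$ with $h(\cc_i)\ne 1$ in $G_{R(\Omega_w^*)}$, such that some $\AA(\Omega_w)$-automorphic image $H^+$ of $H^{[w]}$ satisfies $H^+(\cc_i)=1$ for some $i$. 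Adding the relation $h(\cc_i)=1$ to $\Omega_w^*$ yields a \emph{proper} quotient of $G_{R(\Omega_w^*)}$ (proper because $h(\cc_i)\ne 1$ there), and $H^+$ factors through it. First I would, for each final leaf $w$ of $T_0(\Omega)$ of type $2$ with non-constant non-active sections, range over all periodic structures $\langle\P,R\rangle$ on $\Omega_w$ with respect to which $\Omega_w$ is singular (finitely many, by Remark \ref{rem:defperstr}), and for each, using Lemma \ref{lem:23-1}, form the finitely many generalised equations $\Omega_{w,i}$ whose coordinate groups are $G_{R(\Omega_w^*\cup\{h(\cc_i)\})}$; attach each $\Omega_{w,i}$ as a child of $w$, the edge labelled by the canonical epimorphism, which is proper.

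The subtlety is that the child equations $\Omega_{w,i}$ are again generalised equations, so one must \emph{iterate}: re-run the construction of $T_0$ on each $\Omega_{w,i}$, obtaining new final leaves of type $1$ or $2$, and repeat the singular-case surgery at the type-$2$ leaves that still have non-constant non-active sections. To see that this recursion terminates, I would argue that at each such surgery step the coordinate group passes to a proper quotient of the one at the previous stage, and invoke the equational Noetherianity of $G$ (the standing hypothesis of Theorems \ref{ge} and \ref{Ase}) to rule out an infinite strictly descending chain of coordinate groups $G_{R(\cdot)}$ — equivalently, the Zariski topology on $G^n$ is Noetherian, so there is no infinite properly descending chain of algebraic sets. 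Thus after finitely many iterations every type-$2$ leaf reached has only constant non-active sections (and every type-$1$ leaf is, as always, a terminal leaf). This gives the finite tree $T_{\dec}=T_{\dec}(\Omega)$ with $T_0(\Omega)$ as a subtree, establishing (1).

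For (2), to each vertex $v$ of $T_{\dec}$ I would assign the finitely generated automorphism group $A(\Omega_v)$ defined as follows: if $v$ lies inside a copy of some $T_0(\Omega')$ used in the construction, set $A(\Omega_v)$ to be the image under the appropriate canonical isomorphism of the group $\Aut(\Omega')$ of Definition \ref{defn:Aut} (when $v$ is the root of that copy) and to be $\VV(\Omega_v)$ at interior vertices, exactly as in Corollary \ref{cor:2.1}; at a type-$2$ leaf $w$ of a $T_0$-block where the singular surgery is performed, set $A(\Omega_w)=\AA(\Omega_w)$ (or the group generated by the $\AA(\Omega_w)$ over the relevant singular periodic structures). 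Each of these is finitely generated by Definitions \ref{defn:AA}, \ref{defn:Aut} and Lemmas \ref{lem:razker}, \ref{lem:raznqp}. For (3): given a solution $H$ of $\Omega$, apply Proposition \ref{3.4} inside the first $T_0$-block to reach $(\Omega_{v_1},H^{[v_1]})$ with $H^{[v_1]}<_{\Aut(\Omega)}H$, i.e. $\pi_H=\sigma_0\,\pi(v_0,v_1)\,\pi_{H^{[v_1]}}$ for some $\sigma_0\in A(\Omega_{v_0})$ (after composing with an endomorphism absorbed, as in Definition \ref{defn:sol<}, into $\pi_{H^{[v_1]}}$); then apply Lemma \ref{lem:23-1} at the type-$2$ leaf $v_1$ to obtain $\sigma_1\in A(\Omega_{v_1})=\AA(\Omega_{v_1})$ and a solution $H^{+}$ of a child $\Omega_{v_2}$ with $\pi_{H^{[v_1]}}=\sigma_1\,\pi(v_1,v_2)\,\pi_{H^{+}}$; iterate, noting that each application strictly reduces the depth of the coordinate group in the (finite) quotient hierarchy, until the process halts at a leaf $u$ with $\tp(u)=1,2$, where in the type-$2$ case all non-active sections of $\Omega_u$ are constant by construction. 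Concatenating gives
$$
\pi_H=\sigma_0\,\pi(v_0,v_1)\,\sigma_1\cdots\pi(v_{n-1},u)\,\sigma_n\,\pi_{H^{[u]}},\qquad \sigma_i\in A(\Omega_{v_i}),
$$
as required. The main obstacle I anticipate is the termination argument: making precise that the equations $\Omega_{w,i}$ produced by Lemma \ref{lem:23-1}, when fed back through the construction of $T_0$, cannot generate coordinate groups that repeat or fail to properly descend — this is where equational Noetherianity of $G$ is essential and where one must be careful that the isomorphisms labelling edges within each $T_0$-block do not interfere with the proper epimorphisms at the singular-surgery edges.
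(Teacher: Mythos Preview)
Your proposal conflates $T_{\dec}(\Omega)$ with $T_{\sol}(\Omega)$. In the paper, $T_{\dec}$ is obtained from $T_0(\Omega)$ by adding \emph{a single layer} of edges to those final leaves $v$ of $T_0$ with $\tp(v)=2$ that still carry non-constant non-active sections. Each such edge $v\to u$ is labelled by a \emph{proper} epimorphism; consequently $\pi(v_0,u)$ is not an isomorphism and the new leaf $u$ is of type~$1$. The second bullet of part~(3) is then vacuous at these new leaves, and the only type-$2$ leaves of $T_{\dec}$ are those type-$2$ leaves of $T_0$ that already had only constant non-active sections. No iteration is required, and in particular equational Noetherianity is \emph{not} used here: the paper stresses (see the Nota Bene in Section~\ref{se:5.5}) that Noetherianity enters only in the construction of $T_{\sol}$, where one glues copies of $T_{\dec}$ at type-$1$ leaves and must argue that the resulting chain of proper epimorphisms terminates. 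Your recursive scheme is essentially a sketch of $T_{\sol}$, not of $T_{\dec}$.

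There is also a genuine technical gap in your ``child'' construction. You write that one should ``form the finitely many generalised equations $\Omega_{w,i}$ whose coordinate groups are $G_{R(\Omega_w^\ast\cup\{h(\cc_i)\})}$'', but adding the relation $h(\cc_i)=1$ gives a system of equations, not a generalised equation, and passing to generalised equations via partition tables does not in general yield an \emph{epimorphism} of coordinate groups (cf.\ the discussion around Equation~(\ref{eq:hompt})). The paper handles this carefully: it builds $\Omega_v(\P,R,\cc,\T)$ from $\check{\Omega}_v$ (the part of $\widetilde{\Omega}_v$ outside $\P$) together with $\Omega_\T$ for a partition table $\T$ of the auxiliary system $\mathbf{S}=\{h(\cc_\mu)=1,\ h(\cc)=1\mid \mu\in\P\}$, and restricts to those $\T$ satisfying condition~(\ref{3.31}), which is precisely what forces $\pi_{\Omega_\T}$, and hence $\pi_{v,\cc,\T}$, to be surjective. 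That the solution $H^+$ produced by Lemma~\ref{lem:23-1} actually lands in one of these restricted partition tables is a consequence of part~(\ref{it:23-13}) of that lemma (the components $H^+_k$ differ from $H_k$ only in the exponent of the period, so the cancellation in $\mathbf{S}(H^+)$ is purely free-group cancellation with one word always cancelling completely into the next). This is the substantive content of the section, and your plan skips it.

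Finally, your assignment of automorphism groups is more elaborate than necessary. The paper sets $A(\Omega_{v_0})=\Aut(\Omega)$, assigns to each type-$2$ leaf $v$ of $T_0$ that acquires children the group generated by the $\AA(\Omega_v)$ over all singular periodic structures on $\Omega_v$, and assigns the trivial group to every other vertex of $T_{\dec}$; the $\VV(\Omega_v)$ at interior vertices are already absorbed into $\Aut(\Omega)$ by Definition~\ref{defn:Aut}.
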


To obtain $T_{\dec}(\Omega )$ we add some edges labelled by proper epimorphisms (to be described below) to the final leaves of type $2$ of $T_0(\Omega)$ so that the corresponding generalised equations contain non-constant non-active sections.

The idea behind the construction of this tree is the following. Lemma \ref{lem:23-1} states that given a generalised equation $\Omega$ singular with respect to a periodic structure $\langle\P, R\rangle$, there exist finitely many proper quotients of the coordinate group $G_{R(\Omega^*)}$ such that for every $P$-periodic solution $H$ of the generalised equation $\Omega$ such that $\P(H,P)=\langle\P, R\rangle$, an $\AA(\Omega)$-automorphic image $H^+$ of $H$ is a solution of a proper equation. In other words, the $G$-homomorphism $\pi_{H^+}: G_{R(\Omega^*)} \to G$ factors through one of the finitely many proper quotients of $G_{R(\Omega^*)}$.

Recall that given a coordinate group of a system of equations, the homomorphisms $\pi$ from this coordinate group to the coordinate groups of generalised equations determined by partition tables (see Equation (\ref{eq:hompt}) and discussion in Section \ref{sec:relsol2ge}), are, in general, just homomorphisms (neither injective nor surjective). Our goal here is to prove that in fact, the solution $H^+$ is a solution of one of the finitely many proper generalised equations such that the homomorphism from the coordinate group of the system of equations to the coordinate group of the generalised equation is an epimorphism.

\medskip

Let $v$ be a final leaf of type $2$ of $T_0(\Omega)$ such that $\Omega_v$ contains non-constant non-active sections. Consider a triple $(\langle \P,R\rangle,\cc,\T)$, where
\begin{itemize}
    \item $\Omega_v$ is singular with respect to the periodic structure $\langle{\P}, R\rangle$,
    \item  $\cc$ is a cycle in $\Gamma$, $\cc\in\{\cc_1,\dots,\cc_r\}$, where $\{\cc_1,\dots,\cc_r\}$ is the set of cycles from Lemma \ref{lem:23-1},
    \item and $\T$ is a partition table from the set $\PT'$ defined below.
\end{itemize}
For every such triple, we construct a generalised equation ${\Omega_v(\P, R,\cc,\T)}$, a homomorphism  $\pi_{v,\cc,\T}:G_{R(\Omega^*_v)}\to G_{R({\Omega_v(\P, R,\cc,\T)}^\ast)}$ and put an edge $v\to u$, $\Omega_u=\Omega_v(\P, R,\cc,\T)$. We then show that the homomorphism  $\pi_{v,\cc,\T}$ is a proper epimorphism and that every solution $H^+$ of $\Omega_v$ is a solution of one of the generalised equations $\Omega_v(\P, R,\cc,\T)$ for some $\cc$ and $\T$. The vertex $u$ is a leaf of $T_{\dec}(\Omega )$.

The way we proceed is the following. By part (\ref{it:23-13}) of Lemma \ref{lem:23-1}, we know that $H^+$ is a solution of the generalised equation $\{h(\mu)=h(\Delta(\mu))\mid \mu \notin \P\}$. This fact pilots the construction of the generalised equation $\check{\Omega}_v$ below. On the other hand, we consider the system of equations $\bS$ over $G$ corresponding to the bases from $\P$ and the cycle $\cc$ (see below). Since the solution $H^+$ satisfies statement (\ref{it:23-13}) of Lemma \ref{lem:23-1}, we can prove that the $G$-partition table $\T$ associated to $H^+$ satisfies property (\ref{3.31}). For the partition tables $\T$ that satisfy condition (\ref{3.31}) and the corresponding generalised equations $\Omega_\T$, the homomorphism $\pi_{\Omega_\T}:G_{R(\bS)} \to G_{R(\Omega^*_\T)}$ is surjective. We construct the generalised equation ${\Omega_v(\P, R,\cc,\T)}$ from $\Omega_\T$ and $\check{\Omega}_v$ by identifying the items they have in common.

We now formalise the construction of $G_{R({\Omega_v(\P, R,\cc,\T)}^\ast)}$ and $\pi_{v,\cc,\T}$.

Let $\cc$ be as above and suppose that $h(\cc)=h_{i_1}\cdots h_{i_k}$. Consider the system of equations $\bS \subset G[h]$ over $G$:
$$
\bS = \left\{ h(\cc_\mu)=1, h(\cc)=h_{i_1}\cdots h_{i_k}=1 \mid \mu \in \P \right\}.
$$

Below we use the notation of Section \ref{sec:consgeneq}. For the system of equations $\bS$ consider the subset $\PT'$ of the set $\PT(\bS)$ of all partition tables of $\bS$ that satisfy condition (\ref{3.31}). Consider the set of generalised equations $\{ \Omega_\T \mid \T \in \PT' \}$. For any generalised equation $\Omega_\T$, $\T\in \PT'$, since the partition table $\T$ satisfies condition (\ref{3.31}), it follows that the homomorphism $\pi_{\Omega_\T}:G_{R(\bS)} \to G_{R(\Omega^*_\T)}$ induced by the map $h_i\mapsto P_{h_i}(h',\cA)$ (see Section \ref{sec:relsol2ge} for definition) is surjective.

Let the generalised equation $\check{\Omega}_v$ be obtained from $\widetilde{\Omega}_v$ by removing all bases and items that belong to $\P$.

Construct the generalised equation $\Omega_v(\P,R,\cc,\T)$ as follows. The set of items $\tilde h$ of $\Omega_{v}(\P,R,\cc,\T)$ is the disjoint union of the items of $\check{\Omega}_v$ and $\Omega_\T$; the set of coefficient and factor equations of $\Omega_v(\P,R,\cc,\T)$ is the disjoint union of the coefficient and factor equations of $\check{\Omega}_v$ and $\Omega_\T$. The set of basic equations of $\Omega_v(\P,R,\cc,\T)$ consists of: the basic equations of $\check{\Omega}_v$, the basic equation of $\Omega_\T$, and basic equations of the form $h_k= P_{h_k}(h',\cA)$, $h_k \notin \P$, where in the left hand side of this equation $h_k$ is treated as a variable of $\check{\Omega}_v$ and $P_{h_k}(h',\cA)$, $h_k \notin \P$ is a label of a section of $\Omega_\T$. The initial and terminal terms of boundaries in $\Omega_v(\P,R,\cc,\T)$ are naturally induced from $\check{\Omega}_v$ and $\Omega_\T$.

The natural homomorphism $\pi_{v,\cc,\T}:G_{R(\Omega^*_v)}\to G_{R({\Omega_v(\P, R,\cc,\T)}^\ast)}$, induced by a map $\varpi$:
$$
h_i\mapsto \left\{
\begin{array}{ll}
P_{h_i}(h',\cA),&\hbox{ when $h_i\in \P$,} \\
h_i, & \hbox{ otherwise};
\end{array}\right.
$$
is surjective, since so is $\pi_{\Omega_\T}$. Moreover, by construction, $\pi_{v,\cc,\T}(h(\cc))=1$, and therefore $\pi_{v,\cc,\T}$ is a proper epimorphism.

We now show that for every $P$-periodic solution $H$ of $\Omega_v$ such that $\Omega_v$ is singular with respect to the periodic structure $\P(H,P)$, there exists an $\AA(\Omega_v)$-automorphic image $H^+$ of $H$ such that $H^+$ factors through  one of the solutions of the generalised equations $\Omega_v(\P,R,\cc,\T)$, for some choice of $\cc$ and $\T$. Indeed, let $H^+$  be the solution constructed in Lemma \ref{lem:23-1} and $\cc$ be one of the cycles from Lemma \ref{lem:23-1} for which $H^+(\cc)=1$. The solution $H^+$ is a solution of the system $\bS$. All equations of $\bS$ have the form $h(\cc')=1$, where $\cc'$ is a cycle in the graph of the periodic structure $\P(H,P)$. From condition (\ref{it:23-13}) of Lemma \ref{lem:23-1}, it follows that, on one hand, the word $H_k^+$, $1\le k\le \rho$, is in the normal form, and, on the other hand, that if $r_1r_2\cdots r_l=1$ is an equation  of the system $\bS$ and $H_{j_1}^+,\dots, H_{j_l}^+$ are the respective components of $H^+$, then the word $H_{j_1}^+\cdots H_{j_l}^+$ is trivial in the free group $F(\cA)$. Furthermore, in the product of two consecutive subwords $H_{j_r}^+$ and $H_{j_{r+1}}^+$ either there is no cancellation, or one of these words cancels completely, that is either $H_{j_r}^+\doteq W\cdot {\left(H_{j_{r+1}}^+\right)}^{-1}$ or $H_{j_{r+1}}\doteq {\left(H_{j_{r}}\right)}^{-1}\cdot W$. Let $\T$ be the partition table corresponding to the cancellation scheme of the system $\bS$. The argument above shows that $\T$ satisfies condition (\ref{3.31}).

Let $v_0=v$,  $\Omega_{v_1}=\Omega_v(\P, R,\cc,\T)$.

The solution $H^+$ induces a solution $H^+_\bS$ of the system $\bS$. By Lemma \ref{le:R1}, there exists a solution $H^+_{\Omega_\T}$ of the generalised equation $\Omega_\T$ such that the following diagram
$$
\xymatrix@C3em{
G_{R(\bS)} \ar[rd]_{\pi_{H^+_{\bS}}}  \ar[rr]  &     &  G_{R(\Omega_\T^\ast)}  \ar[ld]^{\pi_{H^+_{\Omega_\T}}} \\
                                                &  G &
}
$$
is commutative.

As $H^+$ satisfies condition (\ref{it:23-13}) of Lemma \ref{lem:23-1}, and the $H_i$'s are in the normal form, so the $H^+_i$'s are in the normal form. In particular, $H^+_{\Omega_\T}$ is a solution of the generalised equation $\Omega_\T$.

On the other hand, using again part (\ref{it:23-13}) of Lemma \ref{lem:23-1}, if we remove the components $\{H^+_k\mid h_k\in \P\}$ of the solution $H^+$, we get a solution $\check{H}^+$ of the generalised equation $\check{\Omega}_v$.

Combining the solutions $\check{H}^+$ and $H^+_{\Omega_\T}$ we get a solution $H^{(v_1)}$ of the generalised equation $\Omega_v(\P, R,\cc,\T)$.  By part (\ref{it:23-13}) of Lemma \ref{lem:23-1}, the solution $H^+$ satisfies the type constraints. Therefore, from the construction it follows that $H^{(v_1)}$ is a solution of $\Omega_v(\P, R,\cc,\T)$ and
$$
\pi_{H^{+}}=\pi(v_0,v_1) \pi_{H^{(v_1)}}.
$$
We thereby have shown that for every $P$-periodic solution $H$ of $\Omega_v$ such that $\Omega_v$ is singular with respect to the periodic structure $\P(H,P)$, there exists an $\AA(\Omega_v)$-automorphic image $H^+$ of $H$ such that $H^+$ factors through one of the solutions of the generalised equations $\Omega_v(\P,R,\cc,\T)$.

To the root vertex $v_0$ of the decomposition tree  $T_{\dec(\Omega)}$ we associate the group of automorphisms $\Aut(\Omega)$ of the coordinate group $G_{R(\Omega_{v_0})}$, see Definition \ref{defn:Aut}. To the vertices $v$ such that $v$ is a leaf of $T_0(\Omega)$ but not of $T_{\dec}(\Omega)$ (those vertices to which we added edges), we associate the group of automorphisms generated by the groups $\AA(\Omega_v)$, see Definition \ref{defn:AA}, corresponding to all periodic structures on $\Omega_v$ with respect to which $\Omega_v$ is singular. To all the other vertices of $T_{\dec}(\Omega)$ we associate the trivial group of automorphisms. We denote the automorphism group associated to a vertex $v$ of the tree $T_{\dec}(\Omega)$ by \glossary{name={$A(\Omega_v)$}, description={the automorphism group associated to a vertex $v$ of the trees $T_{\dec}(\Omega)$ and $T_{\sol}(\Omega)$}, sort=A}$A(\Omega_v)$.

Naturally, we call leaves $u$ such that the paths $\p(H)$ associated to solutions $H$ of $\Omega$ end in $u$, \index{leaf!final of the tree $T_{\dec}$}\emph{final leaves of $T_{\dec}(\Omega)$}.

\section{The solution tree $T_{\sol}(\Omega)$ and main results}\label{se:5.5}

Recall that the coordinate group $G_{R(\Omega_v^\ast)}$ associated to  a final leaf $v$  of $T_{\dec}(\Omega)$ is either a proper epimorphic image of $G_{R(\Omega^\ast)}$ or all the sections of the corresponding generalised equation $\Omega_v$ are non-active constant sections. We now use the assumption that the group $G$ is equationally Noetherian to get that any sequence of proper epimorphisms of coordinate groups is finite. An inductive argument for those leaves of $T_{\dec}(\Omega)$ that are proper epimorphic image of $G_{R(\Omega^\ast)}$ shows that we can construct a tree \glossary{name={$T_{\sol}(\Omega)$}, description={the extension tree of $\Omega$}, sort=T}$T_{\sol}$ with the property that for every leaf $v$ of $T_{\sol}$ all the sections of the generalised equation $\Omega_v$ are non-active constant sections.

We define a new transformation $L_v$, which we call a \emph{leaf-extension of the tree $T_{\dec}(\Omega)$ at the leaf $v$} in the following way. If there are active sections in the generalised equation $\Omega_v$, we take the union of two trees $T_{\dec}(\Omega)$ and $T_{\dec}(\Omega_v)$ and identify the leaf $v$ of $T_{\dec}(\Omega)$ with the root $v$ of the tree $T_{\dec}(\Omega_v)$, i.e. we extend the tree $T_{\dec}(\Omega)$ by gluing the tree $T_{\dec}(\Omega_v)$ to the vertex $v$.  If all the sections of the generalised equation $\Omega_v$ are non-active, then the vertex $v$ is a leaf and $T_{\dec}(\Omega_v)$ consists of a single vertex, namely $v$. We call such a vertex $v$ \index{terminal vertex}\emph{terminal}.

We use induction to construct the solution tree $T_{\sol}(\Omega)$. Let $v$ be a non-terminal leaf of $T^{(0)} = T_{\dec}(\Omega)$. Apply the transformation $L_v$ to obtain a new tree $T^{(1)} = L_v(T_{\dec}(\Omega))$. If in $T^{(1)}$ there exists a non-terminal leaf $v_1$, we apply the transformation $L_{v_1}$, and so on. By induction we construct a strictly increasing sequence of finite trees
\begin{equation} \label{eq:5.3.1}
T^{(0)} \subset  T^{(1)} \subset \ldots \subset T^{(i)}  \subset \ldots
\end{equation}
Sequence (\ref{eq:5.3.1}) is finite. Indeed, assume the contrary, i.e. the sequence is infinite and hence the union $T^{(\infty)}$ of this sequence is an infinite tree locally finite tree. By  K\"{o}nig's lemma, $T^{(\infty)}$ has an infinite branch. Observe that along any infinite branch in $T^{(\infty)}$ one has to encounter infinitely many proper epimorphisms. This derives a contradiction with the fact that $G$ is equationally Noetherian.

\begin{NB}
Note that this is the only place in the paper where we use the assumption that $G$ is equationally Noetherian.
\end{NB}

Denote by $T_{\sol}(\Omega)$ the last term of the sequence (\ref{eq:5.3.1}).

The groups of automorphisms \glossary{name={$A(\Omega_v)$}, description={the automorphism group associated to a vertex $v$ of the trees $T_{\dec}(\Omega)$, and $T_{\sol}(\Omega)$}, sort=A}$A(\Omega_v)$ associated to vertices of $T_{\sol}(\Omega)$ are induced, in a natural way, by the groups of automorphisms associated to vertices of the decomposition trees.

\begin{thm}\label{th:5.3.1}
Let $\Omega$ be a generalised equation in variables $h$ and let $T_{\sol}(\Omega,)$ be the solution tree of $\Omega$. Then the following statements hold.
\begin{enumerate}
    \item \label{it:th1}  For any solution $H$ of the generalised equation $\Omega$ there exist: a path $v_0\to v_1\to \ldots\to v_n = v$  in $T_{\sol}(\Omega)$ from the root vertex $v_0$ to a leaf $v$, a sequence of automorphisms $\sigma = (\sigma_0, \ldots, \sigma_n)$, where $\sigma_i \in  A(\Omega _{v_i})$ and a solution $H^{(v)}$ associated to the vertex $v$, such that
\begin{equation} \label{eq:5.3.3}
\pi_H=\Phi_{\sigma,H^{(v)}} = \sigma_0 \pi(v_0,v_1)\sigma_{1} \ldots  \pi(v_{n-1},v_n)\sigma_n \pi_{H^{(v)}}.
\end{equation}
    \item \label{it:th2} For any path $v_0\to v_1\to \ldots\to v_n = v$  in $T_{\sol}(\Omega)$ from the root vertex $v_0$ to a leaf $v$, any sequence of automorphisms $\sigma =(\sigma_0, \ldots, \sigma_n )$, $\sigma_i \in  A(\Omega _{v_i})$, and any solution $H^{(v)}$ associated to the vertex $v$, the homomorphism $\Phi_{\sigma,H^{(v)}}$ is a solution of $\Omega^\ast$. Moreover, every solution of $\Omega^\ast$ can be obtained in this way.
\end{enumerate}
\end{thm}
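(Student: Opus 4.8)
\textbf{Proof proposal for Theorem \ref{th:5.3.1}.}

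The plan is to prove the two statements by induction on the construction of $T_{\sol}(\Omega)$, using the results already accumulated about the trees $T(\Omega)$, $T_0(\Omega)$ and $T_{\dec}(\Omega)$. The key structural fact is that $T_{\sol}(\Omega)$ is obtained by iterated leaf-extensions $L_v$ of decomposition trees; each such tree $T_{\dec}(\Omega_v)$ contains $T_0(\Omega_v)$, whose leaves are of type $1$ or $2$, and the additional edges added to form $T_{\dec}(\Omega_v)$ are labelled by proper epimorphisms $\pi_{v,\cc,\T}$. For statement (\ref{it:th2}), the easy direction, I would argue by downward induction along a fixed path: each edge label $\pi(v_i,v_{i+1})$ is by construction an epimorphism of coordinate groups $G_{R(\Omega_{v_i}^\ast)}\to G_{R(\Omega_{v_{i+1}}^\ast)}$ (a composition of the homomorphisms attached to elementary and derived transformations, or one of the $\pi_{v,\cc,\T}$, or a homomorphism from the partition-table reduction), and each $\sigma_i\in A(\Omega_{v_i})$ is a $G$-automorphism of $G_{R(\Omega_{v_i}^\ast)}$; at the leaf $v$, all sections of $\Omega_v$ are non-active constant sections, so $G_{R(\Omega_v^\ast)}$ is (up to the splitting off of free factors from free sections, handled by Lemma \ref{7-10}) a group of the required form and $H^{(v)}$ induces a $G$-homomorphism $\pi_{H^{(v)}}:G_{R(\Omega_v^\ast)}\to G$. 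Composing, $\Phi_{\sigma,H^{(v)}}$ is a well-defined $G$-homomorphism $G_{R(\Omega^\ast)}\to G$, hence a solution of $\Omega^\ast$; conversely its restriction to the variables $h$ gives a solution of $\Omega^\ast=1$ in $G$.

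For statement (\ref{it:th1}) I would proceed by induction on the total number of leaf-extensions used to build $T_{\sol}(\Omega)$, equivalently on the length of the longest chain of proper epimorphisms (finite since $G$ is equationally Noetherian, as noted in the \textbf{Nota Bene}). Given a solution $H$ of $\Omega$, apply Proposition \ref{prop:dectree}: there is a leaf $u$ of $T_{\dec}(\Omega)$ with $\tp(u)\in\{1,2\}$ and a solution $H^{[u]}$ of $\Omega_u$ with $\pi_H=\sigma_0\pi(v_0,v_1)\sigma_1\cdots\pi(v_{n-1},u)\sigma_n\pi_{H^{[u]}}$, where $\sigma_i\in A(\Omega_{v_i})$, and moreover if $\tp(u)=2$ then all non-active sections of $\Omega_u$ are constant. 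If $\Omega_u$ has no active sections (in particular if $\tp(u)=2$), then $u$ is terminal in $T_{\sol}(\Omega)$ and we are done with $H^{(v)}=H^{[u]}$ (after pushing $H^{[u]}$ through the isomorphism $G_{R(\Omega_u^\ast)}\simeq G_{R(\overline{\Ker(\Omega_u)}^\ast)}\ast F(Z)$ of Lemma \ref{7-10}, which is transparent for constant generalised equations). If $\tp(u)=1$, then $\Omega_u$ still has active sections, but $G_{R(\Omega_u^\ast)}$ is a \emph{proper} quotient of $G_{R(\Omega^\ast)}$; in the leaf-extension $T_{\sol}(\Omega)$ the vertex $u$ is the root of the glued subtree $T_{\dec}(\Omega_u)$, whose own solution tree $T_{\sol}(\Omega_u)$ is a proper subtree having strictly shorter maximal epimorphism-chains, so the induction hypothesis applies to the solution $H^{[u]}$ of $\Omega_u$ and yields a path and data inside $T_{\sol}(\Omega_u)$; prepending the path $v_0\to\cdots\to u$ and the automorphisms $\sigma_0,\dots,\sigma_n$ and concatenating the homomorphisms gives the desired decomposition (\ref{eq:5.3.3}) for $H$.

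The main obstacle I anticipate is the bookkeeping at the ``gluing'' vertices and the verification that the automorphism groups behave correctly under the leaf-extension: one must check that $A(\Omega_u)$, as associated to $u$ viewed as a leaf of $T_{\dec}(\Omega)$, is compatible with $A(\Omega_u)$ as associated to $u$ viewed as the root of $T_{\dec}(\Omega_u)$, and that the convention for the relation $<_{\Aut(\Omega)}$ (relating solutions of different generalised equations along a path, via the isomorphisms $\pi(v_0,v)$) is propagated correctly so that the composed automorphism sequence lands in the right conjugates of the $\VV(\Omega_{v_i})$ and $\AA(\Omega_{v_i})$. A second, more technical point is ensuring that the free factors $F(Z)$ split off at each constant leaf by Lemma \ref{7-10} are absorbed consistently, so that the leaf coordinate group genuinely has the form $F(X)\ast {G_1}_{R(S_1)}\ast {G_2}_{R(S_2)}$; this is routine but needs to be stated. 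Everything else is a direct assembly of Propositions \ref{prop:TO}, \ref{prop:sum5}, \ref{prop:dectree} and the finiteness argument for (\ref{eq:5.3.1}).
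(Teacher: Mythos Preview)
Your proposal is correct and follows the same approach as the paper. The paper's own proof is a single sentence (``The statement follows from the construction of the tree $T_{\sol}(\Omega)$''), and what you have written is precisely the unpacking of that sentence: iterate Proposition~\ref{prop:dectree} along the leaf-extensions, using the equationally Noetherian hypothesis to guarantee termination, and compose the resulting epimorphisms and automorphisms. Your anticipated obstacles (automorphism-group compatibility at gluing vertices, the form of the leaf coordinate groups) are real but routine, and the second one is actually only needed for Theorem~\ref{ge}, not for Theorem~\ref{th:5.3.1} itself; one small inaccuracy is that a leaf of type~$1$ need not have active sections (type is determined by the first case that applies), so the case split in your induction should be ``terminal vs.\ non-terminal'' rather than ``$\tp(u)=2$ vs.\ $\tp(u)=1$'', but this does not affect the argument.
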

\begin{proof}
The statement follows from the construction of the tree $T_{\sol}(\Omega)$.
\end{proof}

\begin{thm}\label{ge}
Let $G=G_1\ast G_2$ be an equationally Noetherian free product of groups and let $\widehat{G}$ be a finitely generated {\rm(}$G$-{\rm)}group. Then the set of all {\rm(}$G$-{\rm)}homomorphisms  $\Hom(\widehat{G},G)$ {\rm(}$\Hom_G(\widehat{G},G)$, correspondingly{\rm)} from $\widehat{G}$ to $G$ can be described by a finite rooted tree. This tree is oriented from the root, all its vertices except for the root vertex are labelled by coordinate groups of generalised equations. To each vertex group we assign the group of automorphisms $A(\Omega_v)$. The leaves of this tree are labelled by groups of the form $F(X)\ast {G_1}_{R(S_1)}\ast {G_2}_{R(S_2)}$ {\rm(}$G\ast F(X)\ast {G_1}_{R(S_1)}\ast {G_2}_{R(S_2)}$, correspondingly{\rm)}, where ${G_i}_{R(S_i)}$ is a coordinate group over $G_i$.

Each edge of this tree is labelled by a {\rm(}$G$-{\rm)}homomorphism. Furthermore, every edge except for the edges from the root is labelled by a proper epimorphism. Each edge from the root vertex corresponds to a {\rm(}$G$-{\rm)}homomorphism from $\widehat{G}$ to the coordinate group of a generalised equation.

Every {\rm(}$G$-{\rm)}homomorphism from $\widehat{G}$ to $G$ can be written as a composition of the {\rm(}$G$-{\rm)}homomorphisms corresponding to the edges, automorphisms of the groups assigned to the vertices, and a specialisation of the variables of the generalised equation corresponding to a leaf of the tree.

This description is effective, provided that the universal Horn theory (the theory of quasi-identities) of $G$ is decidable.\end{thm}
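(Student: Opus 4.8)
The plan is to reduce Theorem \ref{ge} to the machinery built over the previous three sections, applied to the generalised equations produced from a presentation of $\widehat{G}$, and then to glue the solution trees $T_{\sol}$ together. First I would fix a finite presentation-type description $\widehat{G} = \langle x_1, \ldots, x_n \mid S \rangle$ (as a $G$-group in the relative case, so $\widehat{G} = G[X]/\ncl\langle S\rangle$), and recall that $\Hom(\widehat{G}, G)$ (respectively $\Hom_G(\widehat{G},G)$) is in bijection with $V_G(S)$. Since $G$ is equationally Noetherian, we may replace $S$ by a finite subsystem with the same solution set, and after triangulating (see the Nota Bene of Section \ref{se:4-1}) we may assume $S$ is a finite triangular system. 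By Lemma \ref{le:R1} we construct the finite set $\GE(S)$ of generalised equations together with the $G$-homomorphisms $\pi_\Omega\colon G_{R(S)} \to G_{R(\Omega^\ast)}$, and every solution of $S$ factors (graphically) through a solution of some $\Omega \in \GE(S)$. These homomorphisms $\pi_\Omega$ label the edges emanating from the root vertex of the tree we are constructing; the root is labelled by $\widehat{G} = G_{R(S)}$ itself, and the edges from the root are the only edges not required to be proper epimorphisms, exactly as in the statement.

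Next I would attach, below each $\Omega \in \GE(S)$, the solution tree $T_{\sol}(\Omega)$ constructed in Section \ref{se:5.5}. By Theorem \ref{th:5.3.1}, every solution $H$ of $\Omega$ decomposes as in Equation (\ref{eq:5.3.3}) through a path in $T_{\sol}(\Omega)$, a tuple of automorphisms $\sigma_i \in A(\Omega_{v_i})$, and a solution $H^{(v)}$ associated to a leaf $v$; conversely every such datum yields a solution of $\Omega^\ast$. Composing with $\pi_\Omega$ and the identification $\Hom(\widehat{G},G) \leftrightarrow V_G(S)$ gives the required factorisation of an arbitrary $(G$-$)$homomorphism $\widehat{G} \to G$. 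The edges internal to each $T_{\sol}(\Omega)$ are labelled by the epimorphisms $\pi(v_{i-1},v_i)$, which by construction of $T(\Omega)$, $T_0(\Omega)$, $T_{\dec}(\Omega)$ and the leaf-extension procedure are proper epimorphisms except where they are isomorphisms inside a single $T_0$-block; but for the purposes of the theorem statement one records the composite edge from each block-root to the next, which is a proper epimorphism precisely because the leaf-extension $L_v$ is only applied at leaves of $T_{\dec}$ whose coordinate group is a proper quotient (Proposition \ref{prop:dectree}), and the whole sequence (\ref{eq:5.3.1}) terminates by the equationally Noetherian hypothesis.

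The final point is to identify the coordinate groups labelling the leaves of $T_{\sol}(\Omega)$. A terminal vertex $v$ of $T_{\sol}(\Omega)$ is, by definition, one for which all sections of $\Omega_v$ are non-active constant sections; such a generalised equation is simply a disjoint union of a free part and of systems of equations over $G_1$ and over $G_2$ (the $G_i$-factor equations on the $G_i$-sections), so its coordinate group is $F(X) \ast {G_1}_{R(S_1)} \ast {G_2}_{R(S_2)}$ in the non-relative case, and $G \ast F(X) \ast {G_1}_{R(S_1)} \ast {G_2}_{R(S_2)}$ in the relative case, where ${G_i}_{R(S_i)}$ is the coordinate group over $G_i$ of the system recorded on the $G_i$-sections and $F(X)$ is generated by the free items. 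A solution "associated to $v$" is then nothing but a specialisation of the variables of $\Omega_v$, i.e. an arbitrary tuple of elements of $G$ respecting the constant and factor equations — equivalently a $G$-homomorphism from the leaf group to $G$. To each vertex we attach the finitely generated automorphism group $A(\Omega_v)$ already defined in Sections \ref{5.5.2}–\ref{5.5.5}. Putting all of this together yields the tree with the stated properties.

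The main obstacle, and the place requiring the most care, is the effectiveness claim. Here one must check that every construction invoked is algorithmic \emph{provided the universal Horn theory of $G$ is decidable}: the triangulation and the finite subsystem come from an effective version of equational Noetherianity (which does not itself need decidability of quasi-identities, but the finite subsystem must be produced — in practice one enumerates and uses the decidability of the word problem, which follows from decidability of quasi-identities); $\GE(S)$ and the $\pi_\Omega$ are effective by Lemma \ref{le:R1} and Lemma \ref{le:4.2}; the branching in $T(\Omega)$ is effective, and crucially the type-$1$ termination test, i.e. deciding whether a canonical epimorphism $\pi(v,u)$ is proper, is decidable by Lemma \ref{le:hom-check} using exactly the decidability of quasi-identities; the regularity/singularity of periodic structures and the bounds $f_0, f_1$ are computable by Remark \ref{rem:defperstr} and Lemmas \ref{lem:23-2}, \ref{2.8}; the finiteness of $T_0(\Omega)$, $T_{\dec}(\Omega)$ is guaranteed by the prohibited-path analysis (Corollaries \ref{cor:proh710}, \ref{cor:proh12} and the type-15 argument), and the termination of (\ref{eq:5.3.1}) by the equationally Noetherian hypothesis via K\"onig's lemma. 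Thus the only genuinely new work is to assemble these effectiveness statements into a single terminating procedure and to verify that no step secretly requires more than decidability of the theory of quasi-identities; everything else is bookkeeping over results already established in the excerpt.
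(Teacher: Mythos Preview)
Your proposal is correct and follows essentially the same approach as the paper: reduce to a finite system via equational Noetherianity, pass to generalised equations via Lemma~\ref{le:R1}, glue the solution trees $T_{\sol}(\Omega)$ below the root, and invoke Theorem~\ref{th:5.3.1}. You are in fact more explicit than the paper about the tree-gluing, the identification of the leaf groups, and the effectiveness bookkeeping; the paper's own proof is a terse paragraph that points to Theorem~\ref{th:5.3.1} and handles separately the non-$G$-group case by replacing $G_{R(S)}$ with the coefficient-free analogue $G'_{R'(S)}=F(X)/R'(S)$, a point you allude to but do not spell out.
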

\begin{proof}
Suppose first that $\widehat{G}$ is a finitely generated $G$-group $G$-generated by $X$, i.e. $\widehat{G}$ is generated by $G\cup X$. Let $S$ be the set of defining relations of $\widehat{G}$. We treat $S$ as a system of equations (possibly infinite) over $G$ (with coefficients from $G$). Since $G$ is equationally Noetherian, there exists a finite subsystem $S_0\subseteq S$ such that $R(S)=R(S_0)$. Since every $G$-homomorphism from $\widehat{G}=\factor{\langle G, X\rangle}{\ncl\langle S\rangle}$ to $G$ factors through $G_{R(S_0)}$, it suffices to describe the set of all homomorphisms from $G_{R(S_0)}$ to $G$. We now run the process for the system of equations $S_0$. Since there is a one-to-one correspondence between solutions of the system $S_0$ and homomorphisms from $G_{R(S_0)}$ to $G$, by Theorem \ref{th:5.3.1}, we obtain a description of $\Hom_G(G_{R(S_0)},G)$.

Suppose now that $\widehat{G}$ is not a $G$-group, $\widehat{G}=\langle X\mid S\rangle$ (note that the set $S$ may be infinite). We treat $S$ as a coefficient-free system of equations over $G$. Though, formally, coordinate groups are $G$-groups, in this case, we consider instead the group $G'_{R'(S)}=\factor{F(X)}{R'(S)}$, where $R'(S)=\bigcap\ker(\varphi)$ and the intersection is taken over all homomorphisms $\varphi$ from $F(X)$ to $G$ such that $S\subseteq \ker(\varphi)$. It is clear that $G'_{R'(S)}$ is a residually $G$ group. Since $G$ is equationally Noetherian, there exists a finite subsystem $S_0\subseteq S$ such that $R'(S)=R'(S_0)$. Every homomorphism from $\widehat{G}$ to $G$ factors through $G'_{R'(S_0)}$, it suffices to describe the homomorphisms from $G'_{R'(S_0)}$ to $G$. We now run the process for the system of equations $S_0$ and construct the solution tree, where to each vertex $v$ instead of $G_{R(\Omega_v^*)}$ we associate $G'_{R'(\Omega_v^*)}$ and the corresponding epimorphisms, homomorphisms and automorphisms are defined in a natural way. We thereby obtain a description of the set $\Hom(G'_{R'(S_0)},G)$.
\end{proof}

\begin{thm} \label{Ase}
For any system of equations $S(X)=1$ over an equationally Noetherian free product of groups $G=G_1*G_2$, there exists a finite family of nondegenerate triangular quasi-quadratic systems $C_1,\ldots, C_k$ over groups of the form $G \ast  {G_1}_{R(S_1)}\ast {G_2}_{R(S_2)}$, where ${G_i}_{R(S_i)}$
is a coordinate group over $G_i$, and morphisms of algebraic sets $p_i: V_G(C_i) \rightarrow V_G(S)$, $i = 1, \ldots,k$ such that for every $b \in V_G(S)$ there exists $i$ and $c \in V_G(C_i)$ for which $b = p_i(c)$, i.e.
$$
V_G(S) = p_1(V_G(C_1)) \cup \ldots \cup p_k(V_G(C_k)).
$$
If the universal Horn theory of $G$ is decidable, then the finite families $\{C_1,\ldots, C_k\}$ and $\{p_1,\dots, p_k\}$ can be constructed effectively.
\end{thm}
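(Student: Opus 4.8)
The plan is to derive Theorem \ref{Ase} from Theorem \ref{th:5.3.1} (equivalently, from Theorem \ref{ge}) by analysing the structure of the generalised equations attached to the leaves of $T_{\sol}(\Omega)$ together with the automorphism groups $A(\Omega_v)$ attached to the vertices. The starting point is Lemma \ref{le:R1} and Corollary \ref{co:R1}: given the system $S(X)=1$ over $G$, we pass to the finite collection $\GE(S)$ of generalised equations, so that $V_G(S)$ is covered by the images, under the word maps $P=(P_{x_1},\dots,P_{x_n})$, of the solution sets of the $\Omega\in\GE(S)$. It therefore suffices to prove the statement with $S(X)=1$ replaced by a generalised equation $\Omega$, and then compose the resulting morphisms of algebraic sets with the (already word-map) morphisms $P$ coming from $\GE(S)$. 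Since $\GE(S)$ is finite, and $T_{\sol}(\Omega)$ is finite with finitely many root-to-leaf paths, the union we obtain will be finite.

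For a fixed generalised equation $\Omega=\Omega_{v_0}$, run the process of Sections \ref{se:5}--\ref{se:5.5} to obtain $T_{\sol}(\Omega)$. For each root-to-leaf path $v_0\to v_1\to\dots\to v_n=v$, Theorem \ref{th:5.3.1} says every solution of $\Omega^\ast$ is of the form $\Phi_{\sigma,H^{(v)}}=\sigma_0\pi(v_0,v_1)\sigma_1\cdots\pi(v_{n-1},v_n)\sigma_n\pi_{H^{(v)}}$, with $\sigma_i\in A(\Omega_{v_i})$ and $H^{(v)}$ a solution of $\Omega_v$ — and at a leaf $v$ all sections of $\Omega_v$ are non-active constant sections, so $G_{R(\Omega_v^\ast)}$ is of the form $G\ast F(Z)\ast{G_1}_{R(S_1)}\ast{G_2}_{R(S_2)}$ (the free part from free variables, the constant-section parts being coordinate groups over $G_1$, $G_2$; this uses Lemma \ref{7-10} for the free part and the fact that the $G_i$-sections, being closed constant sections, encode systems of equations over $G_i$ with coefficients from $G_i$). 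The key structural input I would use here is the translation, well established in the free-group setting (Kharlampovich--Myasnikov), of the data "(coordinate groups along a path) + (groups of canonical automorphisms $A(\Omega_v)$)" into a single NTQ system: the automorphism groups $\VV(\Omega_v)$ for $7\le\tp(v)\le 10$ contribute extension-of-centraliser blocks (case (B) of the NTQ definition) or commutation blocks (case (C)); for $\tp(v)=12,15$ the automorphisms invariant with respect to the non-quadratic part are exactly the mapping-class-type automorphisms of the surface groups $S_1\ast\dots\ast S_k$ appearing in $\widehat\Omega_v$ (Lemma \ref{lem:raznqp}), which correspond to quadratic blocks (case (A)); the $\AA(\Omega_v)$ attached to regular periodic structures at type-$2$ leaves contribute abelian (centraliser) blocks. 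Assembling these blocks in the order dictated by the path, with $G_{R(\Omega_v^\ast)}$ at the leaf playing the role of the bottom group $G_{n+1}$, yields a triangular quasi-quadratic system $C$; non-degeneracy conditions (1) and (2) of the NTQ definition are guaranteed because, by construction, each $\Omega_{v_i}$ along the path carries a solution $H^{(i)}$ (so each $S_i=1$ has a solution in $G_{i+1}$) and because periods $P$ are chosen not to be proper powers (Lemma \ref{lem:case2}, Definition of period; this is precisely condition (2)). The word map $p_i\colon V_G(C_i)\to V_G(\Omega)$ is read off from formula \eqref{eq:5.3.3}: a solution of $C_i$ records $H^{(v)}$ together with the parameters $\sigma_j$ of the automorphisms, and substituting into $\sigma_0\pi(v_0,v_1)\sigma_1\cdots\sigma_n\pi_{H^{(v)}}$ gives a solution of $\Omega$, with all ingredients being group words in the parameters. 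Surjectivity onto $V_G(\Omega)$ is exactly statement \eqref{it:th1} of Theorem \ref{th:5.3.1}; that $p_i$ lands inside $V_G(\Omega)$ (hence is a morphism of algebraic sets in the sense of Section \ref{se:2-4}) is statement \eqref{it:th2}. Composing with the word maps from $\GE(S)$ and taking the union over all generalised equations in $\GE(S)$, all leaves, and all paths gives the required finite family $\{C_1,\dots,C_k\}$, $\{p_1,\dots,p_k\}$ and the decomposition $V_G(S)=p_1(V_G(C_1))\cup\dots\cup p_k(V_G(C_k))$.

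Effectiveness, under the hypothesis that the universal Horn theory of $G$ is decidable, follows by tracking decidability through the construction: $\GE(S)$ is effectively constructible (Lemma \ref{le:R1}); $T(\Omega)$ is effectively constructible and vertex types are decidable (Lemma after Proposition \ref{prop:TO}, using Lemma \ref{le:hom-check} for type $1$); the finite subtrees $T_0(\Omega)$, $T_{\dec}(\Omega)$ and finally $T_{\sol}(\Omega)$ are built by effective, terminating procedures (the termination of \eqref{eq:5.3.1} uses equational Noetherianity, which is about existence, not effectiveness, but the construction up to any finite stage is effective and we can detect termination since each step strictly enlarges the tree and proper epimorphisms are detectable by Lemma \ref{le:hom-check}); the automorphism groups $A(\Omega_v)$ are finitely generated with effectively computable generators (Lemmas \ref{lem:razker}, \ref{lem:raznqp}, Definition \ref{defn:AA} together with Remark \ref{rem:defperstr}); and the periodic structures, periods, and the functions $f_0,f_1$ are all computable. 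Hence the NTQ systems $C_i$ and the word maps $p_i$ are written down explicitly.

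I expect the main obstacle to be the bookkeeping in the step that converts "a path in $T_{\sol}(\Omega)$ decorated with automorphism groups" into a genuine NTQ system with the correct non-degeneracy properties — in particular, ensuring that the surface-group (quadratic) and centraliser-extension blocks are stacked in an order compatible with the triangular structure $S_1(X_1,\dots,X_n)=1,\dots,S_n(X_n)=1$, that the group word $u$ in each centraliser-extension block is not a proper power in the relevant $G_{i+1}$ (condition (2) of non-degeneracy), and that the base groups at the leaves really do have the advertised form $G\ast{G_1}_{R(S_1)}\ast{G_2}_{R(S_2)}$ with the $G_i$-parts being coordinate groups over $G_i$ rather than something larger. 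In the free-group case this is exactly the content of the passage from the "solution tree" to "NTQ systems" in Kharlampovich--Myasnikov's work; over a free product the constant sections introduce the extra factors ${G_i}_{R(S_i)}$, and one has to check that nothing about the factor equations obstructs the triangular stratification — this is where the separation of active/non-active and the standard-form conditions (all $G_i$-sections to the right, etc.) do the work, so the argument is structurally the same but requires care.
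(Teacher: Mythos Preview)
Your overall strategy matches the paper's: reduce to generalised equations via $\GE(S)$, build $T_{\sol}(\Omega)$, and convert each root-to-leaf path into an NTQ system by a case analysis on the type of the vertices carrying nontrivial automorphism groups, then induct along the path. The effectiveness argument is also the same.

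However, your correspondence between vertex types and NTQ block types is off in two places, and this is exactly the bookkeeping you flagged as the obstacle. First, for $7\le\tp(v)\le 10$ the contribution is \emph{not} a centraliser-extension block: by Lemma~\ref{7-10} one has $G_{R(\Omega_{v_k}^\ast)}\simeq G_{R(\overline{\Ker(\Omega_{v_k})}^\ast)}\ast F(Z)$, so the paper (Case~3) simply adjoins a free factor $F(Z)$ to the leaf group, i.e.\ a block of type~(D) (empty equation). Second, you have the roles of regular and singular periodic structures reversed. At a type-$2$ vertex $v_l$ along the path the relevant periodic structure is \emph{singular} (that is how the path continues past $v_l$ in $T_{\dec}$), and the paper (Case~2) builds an abelian block by adjoining variables $y_1,\dots,y_{|C^{(2)}|-1}$ commuting with each other and with the images of $h(\cc_2)$ and $h(C^{(1)})$. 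The \emph{regular} periodic structures enter instead at type-$15$ vertices (Case~5): there the quadratic part of $\Omega_{v_l}$ gives a quadratic block over $G_{R(\Omega_w^\ast)}$, and \emph{in addition}, for each regular periodic structure on $\Omega_{v_l}$ one uses the presentation of Lemma~\ref{2.10''}(\ref{spl3}) to adjoin conjugation equations $\phi(\bar u^{(i)})^{y_i}=\phi(\bar z^{(i)})$, which are the genuine centraliser-extension (type~(B)) blocks. So type~15 contributes both quadratic and centraliser-extension layers, while types~7--10 contribute only a free factor. Once you correct this dictionary, your inductive argument goes through as in the paper.
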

\begin{proof}
Since, by assumption $G$ is equationally Noetherian, there exists a finite subsystem $S_0$ of $S$ such that $V_G(S_0)=V_G(S)$. Note that $S_0$ can be effectively constructed if the universal Horn theory of $G$ is decidable. We consider the set of all partition tables $\PT$ of the system of equations $S_0$. By every partition table $\T\in\PT$, we construct the generalised equation $\Omega_\T$, see Section \ref{sec:consgeneq}. For every generalised equation $\Omega_\T$, we construct its solution tree $T_{\sol}(\Omega_\T)$.

By Lemma \ref{le:R1} (the notation from which we further use), it follows that there exists finitely many generalised equations $\{\Omega_\T\}$ (corresponding to the partition tables of $S_0$) such that every solution of the system $S(X)=1$ is a composition of the homomorphism $\pi_{\Omega_\T}$ and a solution of the generalised equation $\Omega_\T$. It therefore suffices to prove the statement of the theorem for solutions of generalised equations.

Consider a generalised equation $\Omega_\T=\Omega_{v_0}$. By Theorem \ref{th:5.3.1}, there are finitely many branches in the tree $T_{\sol}(\Omega_{v_0})=T_{\sol}$. To each of the branches, we will assign a finite family of homomorphisms $q_1,\dots,q_{r}$ and a finite family of NTQ systems $C_1,\dots, C_r$ such that all solutions that factor through this branch are a composition of one of the $q_i$'s and a solution of the NTQ system $C_i$. The family of homomorphisms $\{\pi_{\Omega_\T}q_i\}$ and the family of NTQ systems $\{C_i\}$ satisfy the statement of the theorem.

Consider a path $\p: v_0\to v_1\to \dots\to v_n = w$ in $T_{\sol}(\Omega_{v_0})$ from the root vertex $v_0$ to a leaf $w$.

To construct the family of NTQ systems assigned to the branch defined by $\p$, we begin from the leaf of the path $\p$, whose coordinate group, by definition, is an NTQ system and consider a subpath such that
\begin{itemize}
\item it contains at least one vertex with non-trivial group of automorphisms assigned to it,
\item all vertices of the subpath with non-trivial associated automorphism group are of the same type
\item and the canonical epimorphism from the coordinate group corresponding to the initial vertex of the subpath to the one corresponding to its terminal vertex is a proper epimorphism.
\end{itemize}
For every type of such subpath, we prove that one can construct quadratic systems of equations over the coordinate group corresponding to the terminal vertex of the path $\p$ and the corresponding homomorphisms $q_1,\dots, q_r$. The proof then follows by induction on the length of the path $\p$.

\subsubsection*{Case 1: the group of automorphisms assigned to every vertex $v_i$ is trivial.} In this case, all solutions that factor through the branch defined by the path $\p$ can be presented as a composition of the canonical epimorphism $\pi(v_0, w)$ and a solution of $\Omega_w$. Therefore, since the coordinate group of $\Omega_w$ is, by definition, an NTQ system, we set $q_1$ to be $\pi(v_0, w)$ and $C_1$ to be the coordinate group of $\Omega_w$.

\medskip

We further assume that there exists $i$ so that the group of automorphisms  $\VV({\Omega_i})$ assigned to the vertex $v_i$ is non-trivial, i.e. $\tp(v_i)\in \{2,7-10, 12,15\}$. Let $l$ be maximal such number $i$. It suffices to consider the case when the canonical epimorphism  $\pi(v_{l}, w)$ is proper.


\subsubsection*{Case 2: suppose that $\tp (v_l)= 2$.} In this case, every solution $H^{[l]}$ of $\Omega_{v_l}$ that factors through the path $\s=v_l\to v_{l+1}\to \dots\to w$ is $P$-periodic and defines a singular periodic structure $\P(H^{[l]},P)$ on $\Omega_{v_{l}}$ (this periodic structure is the same for all such solutions $H^{[l]}$).

If $\Omega_{v_l}$ is not periodised with respect to the periodic structure $\P(H^{[l]},P)$, then, by Lemma \ref{lem:23-1}, any solution of $\Omega_{v_l}$ is a composition of the canonical epimorphism $\pi(v_l, v_{l+1})$ and a solution of  $\Omega_{v_{l+1}}$, which, in turn, by Case 1, is a composition of the canonical epimorphism $\pi(v_{l+1}, w)$ and a solution of $\Omega_w$. In this case we set $q_1$ to be $\pi(v_l,w)$ and $C_1$ to be the coordinate group of $\Omega_w$.

Suppose that $\Omega_{v_l}$ is periodised and singular with respect to the periodic structure $\P(H^{[l]},P)$. By Lemma \ref{2.10''}, we have that $G_{R(\Omega _{v_{l}}^\ast)}$ is isomorphic to
$$
{\langle G, \bar x\mid R(\Psi \cup \mathcal{O})\rangle},
$$
where $\Psi$ and $\mathcal{O}$ are defined in statement (\ref{spl3}) of Lemma \ref{2.10''}.

By construction, see Section \ref{5.5.5}, if $|C^{(2)}|\ge 2$, then the coordinate group $G_{R(\Omega_{v_{l+1}}^\ast)}$ is generated by $(\bar x\smallsetminus \{h(C^{(2)}\})\cup h(\cc_2)$, where $\cc_2\in C^{(2)}$. Consider the group
$$
G'=\left< G_{R(\Omega _{v_{l+1}}^\ast)}, y_1,\dots, y_{|C^{(2)}|-1}\,\left|\, R\left([y_i,y_j]=1, [y_i, h(\cc_2)]=1, , [y_i, h(C^{(1)})]=1\right)\right.\right>,
$$
where $1\le i,j\le |C^{(2)}|-1$. By construction $G'$ is a coordinate group of an NTQ system over $G_{R(\Omega _{v_{l+1}}^\ast)}$. Furthermore, all solutions of $\Omega_{v_l}$ (that factor through the subpath $\s$) factor through $G'$.

We define $C_1$ to be
$$
C_1=\left< G_{R(\Omega _{w}^\ast)}, y_1,\dots, y_{|C^{(2)}|-1}\,\left|\,
R\left(
\begin{array}{l}
\, [y_i,y_j]=1, [y_i, \pi(v_{l+1}, w)(h(\cc_2))]=1,\\
\, [y_i, \pi(v_{l+1}, w)(h(C^{(1)}))]=1
\end{array}
\right)\right.\right>.
$$
By construction, $C_1$ is a coordinate group of an NTQ system over $G_{R(\Omega _{w}^\ast)}$. All solutions that factor through the subpath $\s$,  factor through $C_1$, therefore, we set $q$ to be the composition of the natural homomorphism from $G_{R(\Omega_{v_l}^*)}$ to $G'$ and the homomorphism from $G'$ to $C_1$ (induced by the canonical epimorphism $\pi(v_{l+1},w)$.

The case when $|C^{(2)}|=1$ is analogous.

\subsubsection*{Case 3: suppose that $\tp (v_{l})\in\{7-10\}$.} Consider the maximal subpath $\s$ of $\p$ from a vertex $v_k$ to $v_l$ such that $\tp(v_{i}) \notin \{2,12,15\}$ for all $k\le i \le l$.

By Lemma \ref{7-10}, the coordinate group $G_{R(\Omega _{v_k}^\ast)}$ is isomorphic to $G_{R(\Ker(\Omega_{v_{k}})^*)}*F(Z)$. In this case we set $C$ to be $G_{R(\Omega_{w}^*)}*F(Z)$ and $q_1$ to be the natural homomorphism  $G_{R(\Ker(\Omega_{v_{k}})^*)}*F(Z)$ to $C_1$ induced by the canonical epimorphism $\pi(v_k, w)$. Clearly, $C_1$ is an NTQ system over $G_{R(\Omega_{w}^*)}$ and every solution that factors through  $\s$ factors through $C$.

\subsubsection*{Case 4: suppose that $\tp(v_l)=12$.} Consider the maximal subpath of $\p$ from a vertex $v_k$ to $v_l$ such that $\tp(v_{i}) \notin \{2,7-10,15\}$ for all $k\le i \le l$.

In this case, by Lemma 2.6 in \cite{Razborov3}, the coordinate group $G_{R(\Omega_{v_k}^\ast)}$ is isomorphic to the coordinate group of a system of quadratic equations $S_1$ over $G_{R(\Omega_{w}^*)}$. We therefore set $C_1$ to be $G_{R(\Omega_{v_k}^\ast)}$.

\subsubsection*{Case 5: suppose that $\tp(v_l)=15$.} Let $[1,j+1]$ be the quadratic part of $\Omega_{v_l}$. If $j\ne 0$, as in Case 4, we consider the coordinate group $G'$ of the system of quadratic equations defined by the quadratic part $[1,j+1]$ of $\Omega_{v_l}$ over the coordinate group $G_{R(\Omega_w^*)}$. The quadratic-coefficient bases in $[1,j+1]$ are identified with their images in the group $G_{R(\Omega_w^*)}$ via the canonical homomorphism $\pi(v_l,w)$. By construction, there exists a natural homomorphism $\phi$ from $G_{R(\Omega_{v_l}^*)}$ to $G'$ defined by the map
$$
h_i\mapsto \left\{
              \begin{array}{ll}
                h_i, & \hbox{if $1\le i\le j$;} \\
                \pi(v_l,w)(h_i), & \hbox{if $i>j$.}
              \end{array}
            \right.
$$

Furthermore, if $v_l$ has outgoing auxiliary edges, then, by Lemma \ref{2.10''}, each regular periodic structure $\langle \P, R\rangle$ on $\Omega_{v_l}$ gives rise to an isomorphism of $G_{R(\Omega_{v_l}^*)}$ and the group
$$
\left< G, \bar x\, \left| \, R\left(\Psi,
\begin{array}{l}
u_{ie}^{h(e_i)}=z_{ie},\hbox{where } e\in T,\, e\in \Sh; 1\leq i\leq m,\\
\left[u_{ie_1},u_{ie_2}\right]=1,\hbox{where } e_j\in T, \,e_j\in \Sh, \, j=1,2; 1\leq i\leq m\\
\left[h(\cc_1),h(\cc_2)\right]=1, \hbox{where } \cc_1,\cc_2\in C^{(1)}\cup C^{(2)}
\end{array}
\right)
\right.
\right>
$$
where $\Psi$ is defined in statement (\ref{spl3}) of Lemma \ref{2.10''}.

For each periodic regular periodic  $\langle \P, R\rangle$ structure on $\Omega_{v_l}$, we set
$$
C_{\langle \P, R\rangle}=\left< G', y_{1},\dots, y_{m}\, \left| \,R\left({\phi(\bar{u}^{(i)})}^{y_{i}}=\phi({\bar{z}}^{(i)})
\right)
\right.
\right>,
$$
where $1\leq i\leq m$ and $\bar u^{(i)}$, ${\bar z^{(i)}}$ are sets of variables defined in (\ref{eq:uie}). The homomorphism $q_{\langle \P, R\rangle}$ is induced by the map
$$
x\in \bar x \mapsto \left\{
                      \begin{array}{ll}
                        \phi(x), & \hbox{if $x\ne h(e_i)$, $i=1,\dots, m$;} \\
                        y_i, & \hbox{if $x=h(e_i)$, $i=1,\dots, m$.}
                      \end{array}
                    \right.
$$
By construction, $C_{\langle \P, R\rangle}$ is a coordinate group of an NTQ system over $G_{R(\Omega_w^*)}$, and solutions that factor through the path $v_l\to\dots \to w$ (and define the regular periodic structure  $\langle \P, R\rangle$) factor through $C_{\langle \P, R\rangle}$.

\medskip

As described above, the coordinate groups assigned to leaves of the tree $T_{\sol}$ are of the form $G \ast F(h_1,\dots, h_{\rho_{A}})\ast {G_1}_{R(S_1)}\ast {G_2}_{R(S_2)}$, where ${G_i}_{R(S_i)}$ is a coordinate group over $G_i$, and therefore they are NTQ systems, as required. The proof now follows by induction on the length of the path $\p$.
\end{proof}

\end{document}